\documentclass[11pt]{amsart}
\usepackage{amsmath}
\usepackage{amssymb}
\usepackage{amsthm}
\usepackage{latexsym}
\usepackage{hyperref}
\usepackage{enumerate}
\usepackage{graphicx}
\usepackage[all]{xy}

\setlength{\unitlength}{1cm}
\setlength{\topmargin}{0cm}
\setlength{\textheight}{22cm}
\setlength{\oddsidemargin}{1cm}
\setlength{\textwidth}{14cm}
\setlength{\voffset}{-1cm}

\newtheorem{thm}{Theorem}[section]

\newtheorem{lem}[thm]{Lemma}
\newtheorem{prop}[thm]{Proposition}
\newtheorem{conj}[thm]{Conjecture}
\newtheorem{thmintro}{Theorem}
\newtheorem{conjintro}[thmintro]{Conjecture}

\theoremstyle{definition}
\newtheorem{defn}[thm]{Definition}
\newtheorem{rem}[thm]{Remark}

\newtheorem{ex}[thm]{Example}

\providecommand{\norm}[1]{\left\| #1 \right\|}
\newcommand{\enuma}[1]{\begin{enumerate}[\textup{(}a\textup{)}] {#1} \end{enumerate}}

\newcommand{\mb}{\mathbf}
\newcommand{\bW}{{\mathbf W}}
\newcommand{\bI}{{\mathbf I}}
\newcommand{\mh}{\mathbb}
\newcommand{\mr}{\mathrm}
\newcommand{\mc}{\mathcal}
\newcommand{\mf}{\mathfrak}

\newcommand{\isom}{\xrightarrow{\;\sim\;}}

\newcommand{\N}{\mathbb N}
\newcommand{\Z}{\mathbb Z}
\newcommand{\Q}{\mathbb Q}
\newcommand{\R}{\mathbb R}
\newcommand{\C}{\mathbb C}
\newcommand{\ep}{\epsilon}

\newcommand{\matje}[4]{\left(\begin{smallmatrix} #1 & #2 \\ 
#3 & #4 \end{smallmatrix}\right)}

\newcommand{\q}{/\!/}

\def\Hom{{\rm Hom}}
\def\End{{\rm End}}

\def\Irr{{\rm Irr}}
\def\Gal{{\rm Gal}}
\def\Jord{{\rm Jord}}
\def\SO{{\rm SO}}
\def\O{{\rm O}}
\def\rS{{\rm S}}
\def\Sp{{\rm Sp}}
\def\GL{{\rm GL}}

\def\PGL{{\rm PGL}}
\def\SL{{\rm SL}}

\def\St{{\rm St}}
\def\cC{{\mathcal C}}

\def\cS{{\mathcal S}}

\def\cN{{\mathcal N}}

\def\cL{{\mathcal L}}
\def\cH{{\mathcal H}}

\def\cO{{\mathcal O}}
\def\cE{{\mathcal E}}
\def\cF{{\mathcal F}}
\def\cR{{\mathfrak R}}
\def\fP{{\mathfrak P}}
\def\cZ{{\mathcal Z}}
\def\Fr{{\rm Frob}}

\def\reg{{\rm reg}}

\def\ind{{\rm ind}}

\def\nr{{\rm nr}}

\def\rU{{\rm U}}

\def\fs{{\mathfrak s}}
\def\ft{{\mathfrak t}}

\def\Rep{{\rm Rep}}

\def\Res{{\rm Res}}
\def\Stab{{\rm Stab}}

\def\bdd{{\rm bdd}}
\def\der{{\rm der}}
\def\ad{{\rm ad}}
\def\sc{{\rm sc}}
\def\Aut{{\rm Aut}}
\def\LLC{{\rm LLC}}

\def\Cent{{\rm Z}}
\def\cusp{{\rm cusp}}
\def\uni{{\rm un}}

\def\bdd{{\rm bdd}}
\def\fB{{\mathfrak B}}
\def\Omega{{\fB}}
\def\IC{{\rm{IC}}}
\def\restriction#1#2{\mathchoice
              {\setbox1\hbox{${\displaystyle #1}_{\scriptstyle #2}$}
              \restrictionaux{#1}{#2}}
              {\setbox1\hbox{${\textstyle #1}_{\scriptstyle #2}$}
              \restrictionaux{#1}{#2}}
              {\setbox1\hbox{${\scriptstyle #1}_{\scriptscriptstyle #2}$}
              \restrictionaux{#1}{#2}}
              {\setbox1\hbox{${\scriptscriptstyle #1}_{\scriptscriptstyle #2}$}
              \restrictionaux{#1}{#2}}}
\def\restrictionaux#1#2{{#1\,\smash{\vrule height .8\ht1 depth .85\dp1}}_{\,#2}}

\begin{document}

\title{Generalizations of the Springer correspondence and cuspidal Langlands parameters}

\author[A.-M. Aubert]{Anne-Marie Aubert}
\address{Institut de Math\'ematiques de Jussieu -- Paris Rive Gauche, 
U.M.R. 7586 du C.N.R.S., U.P.M.C., 4 place Jussieu 75005 Paris, France}
\email{anne-marie.aubert@imj-prg.fr}
\author[A. Moussaoui]{Ahmed Moussaoui}
\address{Department of Mathematics and Statistics, University of Calgary, 
2500 University Drive NW, Calgary, Alberta, Canada}
\email{ahmed.moussaoui@ucalgary.ca}
\author[M. Solleveld]{Maarten Solleveld}
\address{IMAPP, Radboud Universiteit Nijmegen, Heyendaalseweg 135, 
6525AJ Nijmegen, the Netherlands}
\email{m.solleveld@science.ru.nl}
\date{\today}
\subjclass[2010]{11S37, 20Gxx, 22E50}
\thanks{The second author gratefully acknowledges support from the Pacific Institute for the Mathematical Sciences (PIMS). The third author is supported by a NWO Vidi-grant, No. 639.032.528.}
\maketitle

\begin{abstract}
Let $\cH$ be any reductive $p$-adic group. We introduce a notion of cuspidality for 
enhanced Langlands parameters for $\cH$, which conjecturally puts supercuspidal 
$\cH$-representations in bijection with such L-parameters. We also define a 
cuspidal support map and Bernstein components for enhanced L-parameters, in
analogy with Bernstein's theory of representations of $p$-adic groups. We check
that for several well-known reductive groups these analogies are actually precise. 

Furthermore we reveal a new structure in the space of enhanced L-parameters for
$\cH$, that of a disjoint union of twisted extended quotients. This is an analogue
of the ABPS conjecture (about irreducible $\cH$-representations) on the Galois side
of the local Langlands correspondence. Only, on the Galois side it is no longer conjectural.
These results will be useful to reduce the problem of finding a local Langlands 
correspondence for $\cH$-representations to the corresponding problem for 
supercuspidal representations of Levi subgroups of $\cH$.

The main machinery behind this comes from perverse sheaves on algebraic groups.
We extend Lusztig's generalized Springer correspondence to disconnected complex
reductive groups $G$. It provides a bijection between, on the one hand, pairs
consisting of a unipotent element $u$ in $G$ and an irreducible representation of the
component group of the centralizer of $u$ in $G$, and, on the other hand, irreducible 
representations of a set of twisted group algebras of certain finite groups.
Each of these twisted group algebras contains the group algebra of a Weyl group, which
comes from the neutral component of $G$.

In 2025 an erratum was added, to repair Theorem 3.1.a.
\end{abstract}

\tableofcontents

\section*{Introduction}

As the title suggests, this paper consists of two parts. The first part is purely in
complex algebraic geometry, and is accessible without any knowledge of the Langlands
program or $p$-adic groups. We start with discussing the second part though,
which is an application of and a motivation for the first part.

The local Langlands correspondence (LLC) predicts a relation between two rather 
different kinds of objects: on the one hand irreducible representations of 
reductive groups over a local field $F$, on the other hand some sort of 
representations of the Weil--Deligne group of $F$. According to the original setup 
\cite{Bor,Lan1}, it should be possible to associate to every L-parameter a finite 
packet of irreducible admissible representations. Later this was improved by 
enhancing L-parameters \cite{Lus0,KaLu}, and the modern interpretation \cite{ABPS7,Vog} 
says that the LLC should be a bijection (when formulated appropriately).

We consider only non-archimedean local fields $F$, and we speak of the Galois
side versus the $p$-adic side of the LLC. The conjectural bijectivity makes it
possible to transfer many notions and ideas from on side of the LLC to the other. 
Indeed, a main goal of this paper is to introduce an analogue, on the Galois side,
of the Bernstein theory \cite{BeDe} for smooth representations of reductive 
$p$-adic groups. 

Bernstein's starting point is the notion of a supercuspidal representation. For
a long time it has been unclear how to translate this to the Galois side.
In \cite[Def. 4.12]{Mou} the second author discovered the (probably) correct notion for
split reductive $p$-adic groups, which we generalize here.

For maximal generality, we adhere to the setup for L-parameters from  \cite{Art2}.
Let $\mb W_F$ be the Weil group of $F$, let $\cH$ be a connected reductive group 
over $F$ and let ${}^L \cH = \cH^\vee \rtimes \mb W_F$ be its dual L-group.
Let $\cH^\vee_\ad$ be the adjoint group of $\cH^\vee$, and let $\cH^\vee_\sc$ be the 
simply connected cover of the derived group of $\cH^\vee_\ad$. 
Let $\phi : \mb W_F \times \SL_2 (\C) \to {}^L \cH$ be an L-parameter, let 
$Z_{\cH^\vee_\ad}(\phi (\mb W_F))$ be the centralizer of $\phi (\mb W_F)$ in 
$\cH^\vee_\ad$ and let 
\[
G = Z^1_{\cH^\vee_\sc}(\phi (\mb W_F))
\]
be its inverse image in $\cH^\vee_\sc$. To $\phi$ we associate the finite group 
$\cS_\phi := \pi_0 (Z^1_{\cH^\vee_\sc}(\phi))$, where $Z^1$ is again defined via 
$\cH^\vee_\ad$. We call any irreducible representation 
of $\cS_\phi$ an \emph{enhancement} of $\phi$ . The group $\cS_\phi$ coincides 
with the group considered by both Arthur in \cite{Art2} and Kaletha in \cite[\S 4.6]{Kal}.
A remarkable fact is that the group $\cS_\phi$ is isomorphic to the group 
$A_{G}(u_\phi) := \pi_0 (Z_G (u_\phi))$, where $u_\phi:= \phi (1,\matje{1}{1}{0}{1})$.

We propose (see Definition \ref{def:7.1}) to call an enhanced L-parameter
$(\phi,\rho)$ for $\cH$ \emph{cuspidal} if $u_\phi$ and $\rho$,
considered as data for the complex reductive group $G$,
form a cuspidal pair. By definition this means that the restriction of $\rho$ from
$A_G (u) = \pi_0 (Z_G (u))$ to $A_{G^\circ}(u)$ is a direct sum of cuspidal
representations in Lusztig's sense \cite{Lus1}. Intuitively, it says that $\rho$
or $\rho |_{A_{G^\circ} (u)}$ cannot be obtained (via an appropriate notion of 
parabolic induction) from any pair $(u',\rho')$ that can arise from a proper Levi
subgroup of $G^\circ$.
We emphasize that it is essential 
to use L-parameters enhanced with a representation of a suitable component group, 
for cuspidality cannot be detected from the L-parameter alone.

Let $\Irr_\cusp (\cH)$ be the set of supercuspidal $\cH$-representations (up to
isomorphism) and let $\Phi_\cusp (\cH)$ be the set of $\cH^\vee$-conjugacy 
classes of cuspidal L-parameters for $\cH$. It is known that in many cases
such cuspidal L-parameters do indeed parametrize supercuspidal representations, and
that moreover there is a nice bijection $\Irr_\cusp (\cH) \to \Phi_\cusp (\cH)$.

We call the enhanced L-parameters $(\phi,\rho)$ such that $\phi$ restricts trivially 
to the inertia group $\bI_F$ \emph{unipotent}. 
A representation $\pi$ of $\cH$ is said to be unipotent (or sometimes to have unipotent 
reduction) if for some parahoric subgroup $\fP$ of $\cH$, and the inflation $\sigma$ to $\fP$ 
of some unipotent cuspidal representation of its reductive quotient, the space 
$\Hom_\fP (\sigma,\pi)$ is nonzero.  
When $\cH$ is simple of adjoint type, Lusztig has proved in \cite{Lus2,Lus3} that the 
$\cH^\vee$-conjugacy classes of unipotent enhanced L-parameters are in bijection with the 
equivalence classes of unipotent irreducible representations of $\cH$. Under 
Lusztig's bijection the unipotent cuspidal enhanced L-parameters correspond to the 
unipotent supercuspidal irreducible representations of $\cH$.

When $(\phi,\rho)$ is a unipotent enhanced L-parameter, the group $G$ coincides 
with the group $Z^1_{\cH^\vee_\sc}(\phi (\Fr))$. In contrast, for $(\phi,\rho)$ arbitrary, 
the group $G$ is usually a proper subgroup of $Z^1_{\cH^\vee_\sc}(\phi (\Fr))$. 

Based on the notion of cuspidality that we have defined, we construct a 
\emph{cuspidal support map} for L-parameters 
(Definition \ref{def:7.8}). It assigns to every enhanced L-parameter for $\cH$ a
Levi subgroup $\cL \subset \cH$ and a cuspidal L-parameter for $\cL$, unique up 
to conjugation. We conjecture that this map is a precise analogue of Bernstein's
cuspidal support map for irreducible $\cH$-representations, in the sense that 
these cuspidal support maps commute with the respective local Langlands 
correspondences (assuming that these exist of course). 

A result for $p$-adic groups which has already been transferred to the Galois
side is the Langlands classification \cite{SiZi}. On the $p$-adic side it
reduces $\Irr (\cH)$ to the tempered duals of Levi subgroups of $\cH$, while
on the Galois side it reduces general (enhanced) L-parameters to bounded L-parameters
for Levi subgroups. We show (Lemma \ref{lem:8.3}) that our cuspidal support map
factors through through the Langlands classification on the Galois side, just 
like Bernstein's cuspidal support map on the $p$-adic side.\\

Recall that a crucial role in the Bernstein decomposition is played by inertial
equivalence classes of (super)cuspidal pairs for $\cH$. These consist of a Levi subgroup
$\cL \subset \cH$ and a supercuspidal representation thereof, up to equivalence by
$\cH$-conjugation and twists by unramified characters. Since the LLC for unramified
characters is known, we can easily translate this to a notion of inertial equivalence
classes of enhanced L-parameters (Definition \ref{def:8.4}). Using the cuspidal
support map, we can also partition the set of enhanced L-parameters $\Phi_e (\cH)$
into countably many Bernstein components $\Phi_e (\cH)^{\fs^\vee}$, parametrized
by the inertial equivalence classes $\fs^\vee$, see \eqref{eq:8.4}. 

Let $\cL \subset \cH$ be a Levi subgroup, and let 
\begin{equation*}
W(\cH,\cL) = N_{\cH}(\cL) / \cL
\end{equation*}
be its "Weyl" group. In \cite{ABPS7} it was shown to be naturally isomorphic to\\
$N_{\cH^\vee}(\cL^\vee \rtimes \mb W_F) / \cL^\vee$, so it acts on both 
$\Irr_\cusp (\cL)$ and $\Phi_\cusp (\cL)$.

Our main result provides a complete description of the space of enhanced
L-parameters $\Phi_e (\cH)$ in terms of cuspidal L-parameters for Levi subgroups,
and the associated Weyl groups. It discovers a new structure in $\Phi_e (\cH)$,
that of a union of extended quotients. It improves on both the Langlands 
classification and the theory of the Bernstein centre (on the Galois side of
the LLC).

Fix a character $\zeta_\cH$ of $Z(\cH^\vee_\sc)$ whose restriction to 
$Z(\cH^\vee_\sc)^{\mb W_F}$ corresponds via the Kottwitz isomorphism to the class 
of $\cH$ as an inner twist of its quasi-split inner form.
We indicate the subset of enhanced L-parameters 
$(\phi,\rho)$ such that $\rho$ extends $\zeta_\cH$ with a subscript $\zeta_\cH$.  
This $\zeta_\cH$ only plays a role when $Z(\cH^\vee_\sc)$ is not fixed by $\mb W_F$,
in particular it is redundant for inner twists of split groups.

\begin{thmintro}\label{thm:A} (See Theorem \ref{thm:10.3}) \\
Let $\mf{Lev}(\cH)$ be a set of representatives for the conjugacy classes of
Levi subgroups of $\cH$. There exists a bijection
\[
\Phi_{e,\zeta_\cH} (\cH) \; \longleftrightarrow \; \bigsqcup\nolimits_{\cL \in 
\mf{Lev}(\cH)} \big( \Phi_{\cusp,\zeta_\cH} (\cL) \q W(\cH,\cL) \big)_\kappa . 
\]
\end{thmintro}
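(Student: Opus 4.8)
The plan is to assemble the bijection from two inputs developed earlier in the paper: the cuspidal support map for enhanced L-parameters (Definition \ref{def:7.8}), and the generalized Springer correspondence for disconnected reductive groups, which organizes the ``fibres'' of that map into twisted extended quotients. First I would fix an inertial equivalence class $\fs^\vee$ for $\cH$, say coming from a pair $(\cL,\phi_{\cusp})$ with $\phi_{\cusp} \in \Phi_{\cusp,\zeta_\cH}(\cL)$, and restrict attention to the Bernstein component $\Phi_{e,\zeta_\cH}(\cH)^{\fs^\vee}$. The cuspidal support map sends every $(\phi,\rho)$ in this component to a point of the orbit $W(\cH,\cL) \cdot \phi_{\cusp}$ inside $\Phi_{\cusp,\zeta_\cH}(\cL)$. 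So the task splits into two independent pieces: (i) show that the union over all $\fs^\vee$ with underlying Levi $\cL$ recovers exactly $\Phi_{\cusp,\zeta_\cH}(\cL) \q W(\cH,\cL)$ as a base space, up to the twist $\kappa$; and (ii) show that over each such orbit the enhanced L-parameters of $\cH$ are parametrized by the appropriate extended quotient data, with the twist $\kappa$ accounting for the $2$-cocycle that appears in the component groups.

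The heart of the argument is step (ii), and this is where the generalized Springer correspondence for disconnected $G$ enters. Given $(\phi,\rho)$ with cuspidal support $(\cL,\phi_{\cusp})$, write $\phi|_{\mb W_F}$ for the restriction to the Weil group and form $G = Z^1_{\cH^\vee_\sc}(\phi(\mb W_F))$ as in the setup. The data $(u,\rho)$ with $u = \phi(1,\matje{1}{1}{0}{1})$ is a point in the enhanced unipotent variety of $G$, and the generalized Springer correspondence (extended to disconnected $G$, as in the first part of the paper) matches it with an irreducible representation of a twisted group algebra $\C[W_{\fj}, \natural_{\fj}]$ for a cuspidal datum $\fj$. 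The key point is to identify $W_{\fj}$, up to the relevant subtleties, with a stabilizer inside $W(\cH,\cL)$ of the point $\phi_{\cusp}$ in the orbit, and to identify the $2$-cocycle $\natural_{\fj}$ with the cocycle $\kappa$ that twists the extended quotient. Concretely, I would match the relative Weyl group $N_G(L)/L$ attached to the cuspidal support on the Galois side with $W(\cH,\cL)_{\phi_{\cusp}}$; this uses the isomorphism $W(\cH,\cL) \cong N_{\cH^\vee}(\cL^\vee \rtimes \mb W_F)/\cL^\vee$ recalled from \cite{ABPS7}, together with the observation that the centralizer constructions commute with passing to Levi subgroups.

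Having matched base and fibres component by component, I would glue: summing over the (finitely many) orbits $W(\cH,\cL)\cdot\phi_{\cusp}$ inside $\Phi_{\cusp,\zeta_\cH}(\cL)$, and then over $\cL \in \mf{Lev}(\cH)$, yields the full right-hand side $\bigsqcup_{\cL} \big( \Phi_{\cusp,\zeta_\cH}(\cL) \q W(\cH,\cL) \big)_\kappa$, while the left-hand side is the corresponding disjoint union of Bernstein components $\Phi_{e,\zeta_\cH}(\cH)^{\fs^\vee}$, which exhausts $\Phi_{e,\zeta_\cH}(\cH)$ by \eqref{eq:8.4}. I would also check that the bijection is independent of the choices of representatives (different choices of $\phi_{\cusp}$ in an orbit give conjugate data, hence the same extended quotient) and compatible with the $\zeta_\cH$-constraint throughout, which is where one must be careful when $Z(\cH^\vee_\sc)$ is not $\mb W_F$-fixed. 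The main obstacle I anticipate is precisely the bookkeeping of the $2$-cocycle $\kappa$: showing that the cocycle produced by Lusztig's construction on disconnected groups (built from the action on cohomology of a cuspidal local system) agrees, under the dictionary above, with the cocycle governing the twisted extended quotient on the parameter side — this requires a careful tracking of extensions of representations of component groups through the two parallel constructions, and is the step where the disconnectedness of $G$ genuinely matters rather than being a formal generalization.
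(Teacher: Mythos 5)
Your overall architecture is the same as the paper's (work one Bernstein component at a time, parametrize the fibre by the disconnected generalized Springer correspondence for $G = Z^1_{\cH^\vee_\sc}(\phi|_{\mb W_F})$, identify the relative Weyl group and its $2$-cocycle with the stabilizer and twist in the extended quotient via $W(\cH,\cL) \cong N_{\cH^\vee}(\cL^\vee \rtimes \mb W_F)/\cL^\vee$, then glue over inertial classes and Levi subgroups). But there is one concrete gap: you fibre the component by the cuspidal support map $\mathbf{Sc}$ of Definition \ref{def:7.8} and propose to match the fibre over a point $\phi_{\cusp}$ with representations of a twisted algebra of the stabilizer $W(\cH,\cL)_{\phi_{\cusp}}$. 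This fails fibrewise, because $\mathbf{Sc}$ multiplies $\phi|_{\mb W_F}$ by the cocharacter twist $\chi_{\phi,v}(\norm{\cdot}^{1/2})$, which depends on the unipotent part of $\phi$; hence a single $\mathbf{Sc}$-fibre mixes parameters with different restrictions to $\mb W_F$, so with different groups $G$, and its size is not controlled by the stabilizer of $\phi_{\cusp}$. For instance, take $\cH = \PGL_2 (F)$ and $\cL = T$: the Steinberg parameter (trivial on $\mb W_F$, regular unipotent, trivial enhancement) has cuspidal support $({}^L T, \varphi, \mathrm{triv})$ with $\varphi (w) = \matje{\norm{w}^{1/2}}{0}{0}{\norm{w}^{-1/2}}$, whose stabilizer in $W(\cH,T) \cong \Z / 2\Z$ is trivial, so the extended quotient contributes one element over that orbit; yet the $\mathbf{Sc}$-fibre over this point contains two parameters (Steinberg and $\varphi$ itself, trivial on $\SL_2(\C)$), while over the trivial character of $T$ the stabilizer is all of $\Z/2\Z$ (two elements in the extended quotient) but the $\mathbf{Sc}$-fibre has only one. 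So no bijection can be obtained fibrewise over $\mathbf{Sc}$, even though the totals agree.

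The repair is exactly the point the paper makes when it says the bijection is based on a \emph{modification} of the cuspidal support map: one must fibre by ${}^L \Psi$, which records $({}^L L, \phi|_{\mb W_F}, v, q\epsilon)$ without the $\norm{\cdot}^{1/2}$-twist. Then the fibre over a fixed datum is precisely $q\Psi_G^{-1}(q\ft)$ for a single group $G = Z^1_{\cH^\vee_\sc}(\phi_v |_{\mb W_F})$, which Theorem \ref{thm:6.3} identifies with $\Irr (\C[W_{q\ft},\kappa_{q\ft}])$, and Lemma \ref{lem:9.1} identifies $W_{q\ft}$ with the stabilizer of the \emph{untwisted} point $({}^L L,\phi_v,q\epsilon)$ in $W_{\fs^\vee}$ — not with the stabilizer of the true cuspidal support (in the example above these differ: $\Z/2\Z$ versus the trivial group). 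This choice is also what makes the bijection preserve boundedness. A secondary point you gloss over: the group playing the role of "$N_G(L)/L$" must be built from the quasi-Levi $M = Z_G (Z(L)^\circ)$ and the quasi-cuspidal support $q\Psi_G$ of Section \ref{sec:quasiLevi}, since $Z^1_{\cL^\vee_\sc}(\phi|_{\mb W_F})$ is disconnected; it is this refinement that makes the enhancement $q\epsilon$ a representation of the correct component group (so that it really is a cuspidal parameter for $\cL$) and that produces the cocycle $\kappa_{q\ft}$ entering the twisted extended quotient, as opposed to the cocycle $\natural_\cE$ attached to the connected-Levi cuspidal support.
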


Here $( \cdot \q \cdot )_\kappa$ denotes a twisted extended quotient, as defined 
in \eqref{eq:extquot}. The bijection is not entirely canonical, but we provide
a sharp bound on the non-canonicity. We note that the bijection is not based on
the earlier cuspidal support map, but rather on a modification thereof, which
preserves boundedness of L-parameters.

We expect that Theorem \ref{thm:A} will turn out to be an analogue of the ABPS 
conjecture \cite{ABPS7} on the Galois side of the LLC. To phrase this precisely in 
general, we need yet another ingredient. 

\begin{conjintro}\label{conj:B}
Let $\cH$ be a connected reductive group over a local non-archimedean field, and 
let $\Irr(\cH)$ denote the set of its irreducible smooth representations.
There exists a commutative bijective diagram
\[
\xymatrix{
\Irr (\cH) \ar@{<->}[r] \ar@{<->}[d] & \Phi_{e,\zeta_\cH} (\cH) \ar@{<->}[d] \\
\bigsqcup_{\cL \in \mf{Lev}(\cH)} \big( \Irr_\cusp (\cL) \q W(\cH,\cL) 
\big)_\kappa \ar@{<->}[r] & \bigsqcup_{\cL \in \mf{Lev}(\cH)} 
\big( \Phi_{\cusp,\zeta_\cH} (\cL) \q W(\cH,\cL) \big)_\kappa
}
\]
with the following maps:
\begin{itemize}
\item The right hand side is Theorem \ref{thm:A}.
\item The upper horizontal map is a local Langlands correspondence for $\cH$.
\item The lower horizontal map is obtained from local Langlands correspondences
for $\Irr_\cusp (\cL)$ by applying $( \cdot \q  W(\cH,\cL) )_\kappa$.
\item The left hand side is the bijection in the ABPS conjecture \cite[\S 2]{ABPS7}.
\end{itemize}
\end{conjintro}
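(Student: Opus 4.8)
The diagram pins down its own upper arrow: once the other three sides are in place, the only candidate for a local Langlands correspondence for $\cH$ fitting into it is the composite of the ABPS bijection \cite{ABPS7} (left), the lower horizontal bijection, and the inverse of Theorem \ref{thm:A} (right). So the plan has two parts: first construct the lower horizontal map and observe that the composite is then automatically a bijection with the right domain and codomain, and second verify that this composite satisfies the defining properties of an LLC for $\cH$. Throughout, the statement is genuinely conditional: it takes as input both the ABPS conjecture for $\cH$ and a local Langlands correspondence $\Irr_\cusp(\cL) \leftrightarrow \Phi_{\cusp,\zeta_\cH}(\cL)$ for the supercuspidal representations of every $\cL \in \mf{Lev}(\cH)$ (with $\cL=\cH$ allowed), compatible with the pertinent restriction of $\zeta_\cH$.

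For the first part I would apply the twisted extended quotient functor $(\,\cdot\, \q W(\cH,\cL))_\kappa$ to the supercuspidal LLC for each $\cL$ and take the disjoint union over $\mf{Lev}(\cH)$. For this to make sense one needs two compatibilities. First, under the isomorphism $W(\cH,\cL) \cong N_{\cH^\vee}(\cL^\vee \rtimes \mb W_F)/\cL^\vee$ of \cite{ABPS7}, the action of $W(\cH,\cL)$ on $\Irr_\cusp(\cL)$ must correspond to its action on $\Phi_{\cusp,\zeta_\cH}(\cL)$; this amounts to functoriality of the supercuspidal LLC for the relevant automorphisms together with its known compatibility with unramified twists. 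Second, for each inertial class one must identify the stabilizer $W_\pi \subset W(\cH,\cL)$ of a supercuspidal $\pi$ with the stabilizer $W_\phi$ of the corresponding parameter $\phi$, and then match the $2$-cocycle $\kappa$ twisting the extended quotient on the representation side with the one on the Galois side. I expect the cocycle matching to be the main obstacle: on the representation side $\kappa$ records the projective representations of $W_\pi$ arising from the intertwining operators of a Bernstein block, whereas on the Galois side it is produced in Theorem \ref{thm:A} out of the twisted group algebras attached to the generalized Springer correspondence for disconnected complex groups, and there is at present no conceptual bridge between the two beyond verifying the agreement in families of examples.

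Granting the first part, the composite $\Irr(\cH) \to \Phi_{e,\zeta_\cH}(\cH)$ is a bijection, and in the second part I would check it meets the LLC desiderata. Compatibility with parabolic induction should follow because both the ABPS bijection and Theorem \ref{thm:A} are built Levi-by-Levi, and because on the Galois side the cuspidal support map factors through the Langlands classification (Lemma \ref{lem:8.3}), mirroring the behaviour of Bernstein's cuspidal support on the $p$-adic side; one then also verifies compatibility with central and infinitesimal characters, with $L$- and $\varepsilon$-factors, and with the Whittaker normalization encoded in $\zeta_\cH$. A final subtlety is that Theorem \ref{thm:A} and the ABPS bijection are each canonical only up to the explicit bounds stated with them, so the base points of the extended quotients must be chosen consistently on the two sides; I expect these two sharp non-canonicity bounds to be exactly what permits such a choice, but making this uniform — on top of the two conjectural inputs — is what keeps the statement a conjecture rather than a theorem.
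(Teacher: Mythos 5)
The statement you are addressing is stated in the paper as a \emph{conjecture} (Conjecture \ref{conj:B}); the paper offers no proof of it, only the unconditional Galois-side ingredient (Theorem \ref{thm:A}) and a list of cases — inner forms of $\GL_n (F)$ and $\SL_n (F)$, split classical groups, principal series of split groups — where the compatibility is established in the cited references by case-by-case verification of existing Langlands correspondences. So there is no paper proof to compare your proposal against, and your text is, correctly, a strategy sketch rather than a proof: you acknowledge the two conjectural inputs (the ABPS bijection for $\cH$ and an LLC for $\Irr_\cusp (\cL)$ for every $\cL \in \mf{Lev}(\cH)$), and your identification of the main obstruction — matching the stabilizers in $W(\cH,\cL)$ and the $2$-cocycles $\kappa$ on the two sides, intertwining-operator cocycles of Bernstein blocks versus the cocycles $\kappa_{q\ft}$ coming from the twisted generalized Springer correspondence — is exactly where the substance lies (Example \ref{ex:B} shows these cocycles can be genuinely nontrivial, so the matching is not automatic).

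One caveat about your framing: defining the upper horizontal arrow as the composite of the other three sides does yield \emph{some} bijection $\Irr (\cH) \leftrightarrow \Phi_{e,\zeta_\cH}(\cH)$ and makes the square commute by fiat, but this inverts the intended logical content. The conjecture asserts that \emph{a local Langlands correspondence for $\cH$} — an object with independent expected characterizations (compatibility with the cuspidal support maps as in Conjecture \ref{conj:7.13}, with parabolic induction, temperedness, central characters, etc.) — fits into the diagram. With your construction the entire burden moves to your ``second part,'' namely verifying that the composite has these properties; that verification is precisely what is open in general, and the paper itself isolates the key piece of it (commutation of $\mathbf{Sc}$ with the LLC) as a separate conjecture known only in the listed cases. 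So your proposal is a reasonable reduction of Conjecture \ref{conj:B} to Conjecture \ref{conj:7.13}-type compatibilities plus the cocycle matching, but it neither proves the statement nor reproduces an argument from the paper, since the paper contains none.
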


With this conjecture one can reduce the problem of finding a LLC for $\cH$ to that
of finding local Langlands correspondences for supercuspidal representations of its
Levi subgroups. Conjecture \ref{conj:B} is currently known in the following cases:
\begin{itemize}
\item inner forms of $\GL_n (F)$ \cite[Theorem 5.3]{ABPS6},
\item inner forms of $\SL_n (F)$ \cite[Theorem 5.6]{ABPS6},
\item split classical groups \cite[\S 5.3]{Mou},
\item principal series representations of split groups \cite[\S 16]{ABPS5}.
\end{itemize}
In \cite{AMS}, we extensively use the results of the present paper in order to construct
(twisted) graded Hecke algebras $\mh H$ based on a (possibly disconnected) complex reductive 
group $G$ and a cuspidal local system $\cL$ on a unipotent orbit of a Levi subgroup $M$ of $G$, 
and to develop their representation theory. The algebras $\mh H$ generalize the graded Hecke 
algebras defined and investigated by Lusztig for connected $G$.\\[2mm]

Now we come to the main technique behind the above: generalizations of the Springer
correspondence. Let $G^\circ$ be a connected complex reductive group with a maximal
torus $T$ and Weyl group $W(G^\circ,T)$. Recall that the original Springer 
correspondence \cite{Spr} is a bijection between the irreducible representations
of $W(G^\circ,T)$ and $G^\circ$-conjugacy classes of pairs $(u,\eta)$, where 
$u \in G^\circ$ is unipotent and $\eta$ is an irreducible representation of 
$A_{G^\circ}(u) = \pi_0 (Z_{G^\circ}(u))$ which appears in the homology of the 
variety of Borel subgroups of $G^\circ$ containing $u$. 

Lusztig \cite{Lus1} generalized this to a setup which includes all pairs $(u,\eta)$
with $u \in G^\circ$ unipotent and $\eta \in \Irr (A_{G^\circ}(u))$. On the other 
side of the correspondence he replaced $\Irr (W (G^\circ,T))$ by a disjoint union 
$\sqcup_{\ft^\circ} \Irr (W_{\ft^\circ})$, where $\ft^\circ = [L,v,\epsilon]_{G^\circ}$ 
runs through cuspidal pairs $(v,\epsilon)$ for Levi subgroups $L$ of $G^\circ$, and 
$W_{\ft^\circ} = W(G^\circ,L)$ is the Weyl group associated to $\ft^\circ$.

More precisely, Lusztig first attaches to $(u,\eta)$ a cuspidal support $\ft^\circ =
\Psi_{G^\circ}(u,\eta)$, and then he constructs a bijection $\Sigma_{\ft^\circ}$ 
between $\Psi_{G^\circ}^{-1}(\ft^\circ)$ and $\Irr (W_{\ft^\circ})$. In Section 
\ref{sec:genSpr} we recall these constructions in more detail, and we prove:

\begin{thmintro}\label{thm:C}
The maps $\Psi_{G^\circ}$ and $\Sigma_{\ft^\circ}$ are equivariant with respect 
to algebraic automorphisms of the group $G^\circ$.
\end{thmintro}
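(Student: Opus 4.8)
The plan is to reduce everything to the functoriality of Lusztig's constructions under isomorphisms, and then observe that an algebraic automorphism $\sigma$ of $G^\circ$ induces a compatible system of isomorphisms on all the ingredients that enter the definitions of $\Psi_{G^\circ}$ and $\Sigma_{\ft^\circ}$. More precisely, first I would recall that both maps are built from the following data attached to a Levi subgroup $L$ of $G^\circ$ and a cuspidal pair $(v,\epsilon)$: the induced cuspidal local system on the variety of elements in $G^\circ$ with semisimple part conjugate to that of $v$, its equivariant intersection cohomology complex $\IC$, and the action of $W_{\ft^\circ} = W(G^\circ,L) = N_{G^\circ}(L)/L$ on that complex via normalized intertwiners. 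An algebraic automorphism $\sigma$ of $G^\circ$ sends a maximal torus $T$ to another maximal torus, a Levi $L$ to a Levi $\sigma(L)$, a cuspidal pair $(v,\epsilon)$ for $L$ to a cuspidal pair $(\sigma(v), \sigma^*\epsilon)$ for $\sigma(L)$ (cuspidality of a local system is an intrinsic property of the pair, defined via vanishing of certain restrictions, hence preserved by the induced equivalence on equivariant perverse sheaves), and it identifies $W(G^\circ,L)$ with $W(G^\circ,\sigma(L))$ equivariantly.

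The key steps, in order, are: \textbf{(1)} Fix $\sigma \in \Aut(G^\circ)$. Show that $\sigma$ carries unipotent classes to unipotent classes and induces for each unipotent $u$ a group isomorphism $A_{G^\circ}(u) \to A_{G^\circ}(\sigma u)$, hence a bijection on enhancements $\eta \mapsto \sigma_*\eta$; similarly $\sigma$ acts on the set of $G^\circ$-classes of cuspidal pairs and on the indexing set of the $\ft^\circ$'s. \textbf{(2)} Observe that $\sigma$ induces an equivalence of categories between $G^\circ$-equivariant (resp. $\sigma(L)$-equivariant) constructible derived categories which commutes with the parabolic induction functor $\mr{ind}_L^{G^\circ}$ used by Lusztig (this is because induction is defined by a correspondence diagram involving only $L$, its parabolic $P$, and $G^\circ$, all of which are transported by $\sigma$), and which intertwines the $W_{\ft^\circ}$-actions via the isomorphism $W(G^\circ,L) \cong W(G^\circ,\sigma L)$. \textbf{(3)} Conclude that $\sigma$ sends the $\IC$-sheaf indexed by $(L,v,\epsilon)$ to the one indexed by $(\sigma L, \sigma v, \sigma_*\epsilon)$, compatibly with the decomposition into isotypic pieces for the respective Weyl group actions; since $\Psi_{G^\circ}$ and $\Sigma_{\ft^\circ}$ are \emph{defined} by reading off which $\IC$-sheaf a given $(u,\eta)$ appears in and with which Weyl-group multiplicity space, the desired equivariance $\Psi_{G^\circ}(\sigma u, \sigma_*\eta) = \sigma \cdot \Psi_{G^\circ}(u,\eta)$ and the corresponding identity for $\Sigma_{\ft^\circ}$ follow formally.

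The main obstacle I anticipate is step \textbf{(2)}: one must check that $\sigma$ really does intertwine the $W_{\ft^\circ}$-action that Lusztig puts on the induced complex. That action is not tautological --- it is constructed from a choice of generators and a cocycle computation (or, in the cleaner treatment, from the canonical isomorphisms $\mr{ind}_L^{G^\circ}$ enjoys), and one needs to see that transporting everything by $\sigma$ yields the \emph{same} normalization Lusztig used for $G^\circ$ with the Levi $\sigma(L)$, rather than one differing by a character of $W_{\ft^\circ}$. The resolution is that Lusztig's normalization is itself canonical once one fixes the cuspidal local system (it is pinned down by requiring the induced action to be trivial on a specific cohomology degree, or by the requirement that $\mr{ind}$ be a functor), so it is automatically respected by any equivalence of categories coming from an abstract isomorphism of groups; thus no genuine choice is being compared and the intertwining is forced. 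A secondary, purely bookkeeping point is that $\Aut(G^\circ)$ need not preserve a \emph{fixed} set of representatives $\mf{Lev}$ or a fixed maximal torus, so "equivariance" must be phrased as in step (1): $\sigma$ permutes the indexing data, and $\Psi_{G^\circ}, \Sigma_{\ft^\circ}$ commute with these permutations; inner automorphisms act trivially on conjugacy classes, so the real content is for outer automorphisms, where the statement is exactly that these generalized Springer correspondences are functorial in $G^\circ$.
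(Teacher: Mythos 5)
Your proposal follows essentially the same route as the paper's proof (Theorem \ref{thm:3.2}): transport Lusztig's correspondence varieties, the local systems $\overline{\cE}$ and the complexes $K_{\overline{\mathbf{c}}}$ by the automorphism, use proper base change and $h^*\IC = \IC(h^*X,h^*\overline{\cE})$ together with the functorial isomorphism $\mathcal{A}_{h^*\cE}\simeq \mathcal{A}_{\cE}$, and read off the equivariance of $\Psi_{G^\circ}$ and $\Sigma_{\ft^\circ}$ from their defining characterizations in Theorem \ref{thmlus}. The normalization concern you raise in step (2) is resolved in the paper exactly as you suggest: the proof works directly with isotypic components for the endomorphism algebra $\mathcal{A}_{\cE}$, whose identification with $\overline{\Q_{\ell}}[W_{\ft^\circ}]$ is canonical (Theorem \ref{thm:2.2}), so no choice-dependent character twist can appear.
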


Given a Langlands parameter $\phi$ for $\cH$, we would like to apply this machinery 
to $G = Z^1_{\cH^\vee_\sc}(\phi (\mb W_F))$. However, we immediately run into the 
problem that this complex reductive group is usually not connected. Thus we need 
a generalization of Lusztig's correspondence to disconnected reductive groups. 
Although there exist generalizations of the Springer correspondence in various
directions \cite{AcHe,AHJR,AcSa,Lus1,Lus4,LS,Sor}, this particular issue has not 
yet been addressed in the literature.

We would like to have a version which transforms every pair $(u,\eta)$ for $G$
into an irreducible representation of some Weyl group. But this
turns out to be impossible! The problem is illustrated by Example \ref{ex:A}:
we have to use twisted group algebras of groups $W_\ft$ which are not necessarily
Weyl groups.

When $G$ is disconnected, we define the cuspidal support map by
\[
\Psi_G (u,\eta) = \Psi_{G^\circ}(u,\eta^\circ) / G\text{-conjugacy} , 
\]
where $\eta^\circ$ is any constituent of $\Res_{A_{G^\circ}(u)}^{A_G (u)} \eta$.
This is well-defined by the Ad$(G)$-equivariance of $\Psi_{G^\circ}$ from 
Theorem \ref{thm:C}.

For a cuspidal support $\ft = [L,v,\epsilon]_G$ (where $L$ is a Levi subgroup of
$G^\circ$), we put 
\[
W_\ft = N_G (L,v,\epsilon) / L \quad \text{and} \quad 
\ft^\circ = [L,v,\epsilon]_{G^\circ}. 
\]
Then $W_\ft$ contains $W_{\ft^\circ} = W(G^\circ,L)$ as a normal subgroup.

\begin{thmintro}\label{thm:D} 
(See Theorem \ref{thm:5.5} and Proposition \ref{prop:5.3}) \\
Let $\ft = [L,v,\epsilon]_G$ be a cuspidal support for $G$. There exist:
\begin{itemize}
\item a 2-cocycle $\natural_\ft\colon  W_\ft / W_{\ft^\circ} \times  
W_\ft / W_{\ft^\circ} \to \C^\times$,
\item a twisted group algebra $\C [W_\ft,\natural_\ft]$,
\item a bijection $\Psi_G^{-1}(\ft) \to \Irr (\C [W_\ft,\natural_\ft])$
which extends \cite{Lus1}.
\end{itemize}
Moreover the composition of the bijection with 
$\Res^{\C [W_\ft,\natural_\ft]}_{\C [W_{\ft^\circ}]}$ is canonical.
\end{thmintro}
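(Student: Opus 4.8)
The plan is to build the twisted group algebra $\C[W_\ft,\natural_\ft]$ and the bijection $\Psi_G^{-1}(\ft)\to\Irr(\C[W_\ft,\natural_\ft])$ by bootstrapping from Lusztig's connected-group correspondence via Clifford theory for the normal inclusion $W_{\ft^\circ}\trianglelefteq W_\ft$. First I would recall from \cite{Lus1} the $G^\circ$-side input: for $\ft^\circ=[L,v,\epsilon]_{G^\circ}$ there is a cuspidal local system $\mc E$ on the orbit of $v$ in $L$, an induced perverse sheaf on the unipotent variety of $G^\circ$, and Lusztig's bijection $\Sigma_{\ft^\circ}\colon\Psi_{G^\circ}^{-1}(\ft^\circ)\isom\Irr(W_{\ft^\circ})$ realized through the endomorphism algebra of that induced complex. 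The key preliminary step is Theorem \ref{thm:C}: the finite group $W_\ft/W_{\ft^\circ}$ (equivalently $N_G(L,v,\epsilon)/N_{G^\circ}(L,v,\epsilon)$, modulo checking $N_{G^\circ}(L,v,\epsilon)/L=W_{\ft^\circ}$ when $\epsilon$ is cuspidal) acts on $G^\circ$ by algebraic automorphisms fixing the $G^\circ$-conjugacy class $\ft^\circ$, hence acts compatibly on both $\Psi_{G^\circ}^{-1}(\ft^\circ)$ and, via the functoriality of induction of perverse sheaves, on $\Irr(W_{\ft^\circ})$ and on the endomorphism algebra $\C[W_{\ft^\circ}]$ itself — but only as outer automorphisms, i.e.\ a priori only projectively on the sheaf side. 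This is exactly where the $2$-cocycle is born.

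Next I would run Clifford theory. On the $p$-adic/representation side, $A_G(u)$ contains $A_{G^\circ}(u)$ as a normal subgroup with quotient a subquotient of $W_\ft/W_{\ft^\circ}$, and decomposing $\Res^{A_G(u)}_{A_{G^\circ}(u)}\rho$ into $W_\ft/W_{\ft^\circ}$-orbits of constituents $\eta^\circ$ organizes $\Psi_G^{-1}(\ft)$; for each orbit the pairs in $\Psi_G^{-1}(\ft)$ lying above it are governed by $\Irr$ of a twisted group algebra of the stabilizer of $\eta^\circ$, with a cocycle coming from the obstruction to lifting the projective action to an honest action on the $\eta^\circ$-isotypic space. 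The matching step is to identify, via $\Sigma_{\ft^\circ}$, this Clifford-theoretic picture on the $G$-side with the parallel Clifford-theoretic picture for $\C[W_{\ft^\circ}]\trianglelefteq\C[W_\ft,\natural_\ft]$: one defines $\natural_\ft$ to be precisely the cocycle measuring the failure of the $W_\ft/W_{\ft^\circ}$-action on $\C[W_{\ft^\circ}]$ (coming from Theorem \ref{thm:C} applied to the induced perverse sheaf, as in Proposition \ref{prop:5.3}) to come from an action on a module, and then one has a formal equivalence: $\Irr(\C[W_\ft,\natural_\ft])$ is in bijection with pairs (a $W_\ft/W_{\ft^\circ}$-orbit in $\Irr(W_{\ft^\circ})$, an irreducible projective representation of the stabilizer with the prescribed cocycle). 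Composing with $\Sigma_{\ft^\circ}$ orbit by orbit gives the desired bijection $\Psi_G^{-1}(\ft)\to\Irr(\C[W_\ft,\natural_\ft])$, and the compatibility of restrictions on both sides is built into the construction — which gives the final "canonical after restriction to $\C[W_{\ft^\circ}]$" assertion, since $\Sigma_{\ft^\circ}$ and the decomposition of $\Res\rho$ are canonical while only the extension across the quotient involves choices.

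The main obstacle, and the step I would spend the most care on, is showing that the cocycle $\natural_\ft$ extracted from the geometric side (the projective action of $N_G(L,v,\epsilon)/N_{G^\circ}(L,v,\epsilon)$ on Lusztig's induced perverse sheaf, via Theorem \ref{thm:C}) genuinely coincides with the Clifford-theory cocycle governing $\Psi_G^{-1}(\ft)$ on the other side — i.e.\ that the two extension problems have the same obstruction class in $H^2(W_\ft/W_{\ft^\circ},\C^\times)$. This requires tracking the action of an automorphism $g\in N_G(L,v,\epsilon)$ through Lusztig's construction: its action on the induced complex, on the $W_{\ft^\circ}$-equivariant structure, and on the $G^\circ$-equivariant local systems on unipotent orbits attached to the $\eta^\circ$, and checking these are intertwined by $\Sigma_{\ft^\circ}$ not merely up to isomorphism but compatibly with composition. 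One reduces this to Theorem \ref{thm:C}'s equivariance statement plus a naturality check on how $\Sigma_{\ft^\circ}$ interacts with the decomposition of induced complexes; once $\Psi_G$ is known well-defined (which the excerpt already grants us using Theorem \ref{thm:C}), the remaining work is to promote that well-definedness to an equivalence of the two Clifford-theoretic towers, keeping the cocycles matched.
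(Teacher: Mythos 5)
Your skeleton---Clifford theory for $A_{G^\circ}(u)\trianglelefteq A_G(u)$ matched, via Lusztig's correspondence and its equivariance (Theorem \ref{thm:3.2}), with Clifford theory for $\C[W_{\ft^\circ}]\subset\C[W_\ft,\natural_\ft]$, the cocycle being read off from the endomorphism algebra of the induced local system---is indeed the route the paper takes (Proposition \ref{prop:5.3}, Lemma \ref{lem:5.4}, Theorem \ref{thm:5.5}). But as written the plan has genuine gaps. First, the statement requires $\natural_\ft$ to be inflated from $W_\ft/W_{\ft^\circ}$, and this is not a formal consequence of ``the cocycle is born from a projective action'': a priori $\End_G(\pi_*\tilde{\cE})$ is merely some twisted group algebra of $W_\ft$. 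The inflated form is obtained by combining (i) Lusztig's canonical isomorphism $\End_{G^\circ}(\pi_*\tilde{\cE}^\circ)\cong\C[W_{\ft^\circ}]$, giving honestly multiplicative basis elements $b_w$ for $w\in W_{\ft^\circ}$; (ii) the splitting $W_\ft=\cR_\ft\ltimes W_{\ft^\circ}$ (Lemma \ref{lem:5.1}); and (iii) an explicit construction of intertwiners on the complement $\cR_\ft$ carried out in the cuspidal case (Section \ref{sec:cusp}), where the cocycle is computed as $\natural_\cE(\gamma,\gamma')=\epsilon\big(s(\gamma)s(\gamma')s(\gamma\gamma')^{-1}\big)$; this last step rests on Lusztig's classification of unipotent cuspidal pairs (Theorem \ref{thm:4.1}) and a case analysis for spin groups to justify taking $I^g_\epsilon=\mathrm{Id}$. ``Theorem \ref{thm:3.2} plus a naturality check'' does not produce this scalar-level information. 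Also, the cocycle does not measure a failure of the $W_\ft/W_{\ft^\circ}$-action on $\C[W_{\ft^\circ}]$ (that action is an honest action by conjugation); the obstruction lives in extending the $L$-equivariant structure of $\cE$ to $N_G(\ft)$, i.e.\ in the choices inside the one-dimensional spaces $\mc A_{\cE,\gamma}$.

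Second, to conclude that the two Clifford towers match and that the target is exactly $\Irr(\C[W_\ft,\natural_\ft])$, you need two separate cocycle statements which your plan conflates into one: on the unipotent side the stabilizer cocycle $\kappa_{\eta^\circ}$ must be identified with $\natural_\cE^{-1}$ (Lemma \ref{lem:5.4}, proved by realizing the intertwiners $I^\gamma_{\eta^\circ}$ geometrically as the $b_{r(\gamma)}$ acting on the relevant IC-cohomology---this is your ``main obstacle'' step), while on the Weyl-group side the Clifford cocycle $\kappa_{\rho^\circ}$ of $W_{\ft,\rho^\circ}/W_{\ft^\circ}$ must be shown cohomologically trivial (Proposition \ref{prop:5.7}, which uses Lemma \ref{lem:5.1} and a nontrivial input about Weyl groups from \cite[Proposition 4.3]{ABPS5}). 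Without the latter, the fibers over a matched orbit are $\Irr$ of algebras twisted by $\kappa_{\eta^\circ}$ and by $\kappa_{\rho^\circ}\natural_\ft$ respectively, and no bijection onto $\Irr(\C[W_\ft,\natural_\ft])$ follows. There is also a duality you ignore: the extension cocycle on the unipotent side is $\natural_\cE^{-1}$, not $\natural_\cE$, so the matching needs Lemma \ref{lem:1.3}; the resulting bijection sends $(u,\eta^\circ\rtimes\tau')$ to $\Sigma_{\ft^\circ}(u,\eta^\circ)\rtimes\tau'^*$. Your canonicity argument is right in spirit, but the paper secures it more strongly: the bijection is characterized intrinsically by $\Sigma_\ft^{-1}(\rho)=\big( \cC_u^G,\, \cH^{2d_C}\big(\IC(\overline{Y},\pi_*\tilde{\cE})_\rho\big)|\cC_u^G \big)$, so the only ambiguity is a character of $W_\ft/W_{\ft^\circ}$, which disappears after restriction to $\C[W_{\ft^\circ}]$.
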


Of course the proof of Theorem \ref{thm:D} starts with Lusztig's generalized
Springer correspondence for $G^\circ$. Ultimately it involves a substantial
part of the techniques and objects from \cite{Lus1}, in particular we consider
similar varieties and sheafs. In Section \ref{sec:cusp} we provide an expression
for the 2-cocycle $\natural_\ft$, derived from the cuspidal case $L = G^\circ$.

Yet $\Psi_G$ and Theorem \ref{thm:D} still do not suffice for our plans with 
Langlands parameters. Namely, suppose that $(\phi,\rho)$ is an enhanced L-parameter
for $\cH$ and apply $\Psi_G$ with $G = Z^1_{\cH^\vee_\sc}(\phi (\mb W_F))$ and 
$(u,\eta) = \big( \phi (1,\matje{1}{1}{0}{1}), \rho \big)$. We end up with 
$\ft = [L,v,\epsilon]_G$, where $L$ is a Levi subgroup of $G^\circ$. But the
cuspidal support map for L-parameters should produce an enhanced L-parameter for
a Levi subgroup $\cL$ of $\cH$, and that would involve a possibly disconnected
group $Z^1_{\cL^\vee_\sc}(\phi (\mb W_F))$ instead of $L$.

To resolve this problem, we consider \emph{quasi-Levi} subgroups of $G$. These are
groups of the form $M = Z_G (Z(L)^\circ)$, where $L \subset G^\circ$ is a Levi
subgroup (and hence $M^\circ = L$). With these one can define a \emph{quasi-cuspidal} 
support, a triple $(M,v,q\epsilon)$ with $v \in M^\circ$ unipotent and 
$q \epsilon \in \Irr (A_M (v))$ such that $\Res^{A_M (v)}_{A_{M^\circ}(v)} q \epsilon$
is a sum of cuspidal representations. The cuspidal support map $\Psi_G$ can be
adjusted to a canonical quasi-cuspidal support map $q \Psi_G$, see \eqref{eq:6.1}.
It is this map that gives us the cuspidal support of enhanced L-parameters.

To a quasi-cuspidal support $q \ft = [M,v,q \epsilon]_G$ we associate the group
$W_{q \ft} = N_G (M,v,q\epsilon) / M$, which (again) contains
$W_{\ft^\circ} = N_{G^\circ}(M^\circ) / M^\circ$.

\begin{thmintro}\label{thm:E} (See Theorem \ref{thm:6.3} and Lemma \ref{lem:6.2}) \\
Theorem \ref{thm:D} also holds with quasi-Levi subgroups and with the quasi-cuspidal
support $q \ft$ instead of $\ft$. It gives a bijection
$q \Psi_G^{-1}(q \ft) \to \Irr (\C [W_{q \ft},\kappa_{q \ft}])$ 
which is canonical in the same degree as for $\ft$.
\end{thmintro}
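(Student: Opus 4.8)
The plan is to deduce Theorem~\ref{thm:E} from Theorem~\ref{thm:D} by comparing the quasi-cuspidal support data for $G$ with ordinary cuspidal support data for a slightly enlarged group. Concretely, if $q\ft = [M,v,q\epsilon]_G$ is a quasi-cuspidal support with $M = Z_G(Z(L)^\circ)$ and $L = M^\circ$, then $q\epsilon \in \Irr(A_M(v))$ restricts to a sum of cuspidal representations of $A_{M^\circ}(v) = A_L(v)$. First I would record the structural fact that $M$ is itself a (possibly disconnected) reductive group of the type to which Theorem~\ref{thm:D} applies, with neutral component $L$, and that the notion of quasi-cuspidality of $(M,v,q\epsilon)$ is, by construction, exactly the condition that $(M,v,q\epsilon)$ be a \emph{cuspidal} triple for $M$ in the sense used for disconnected groups: the restriction to $A_{M^\circ}(v)$ is a sum of Lusztig-cuspidal representations. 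So the pair $(M, (v,q\epsilon))$ already \emph{is} a cuspidal support datum in the disconnected sense, just with $M$ playing the role of the ambient group on the Levi side.

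Next I would compare the fibres. On one side we have $q\Psi_G^{-1}(q\ft)$; on the other, the fibre $\Psi_G^{-1}(\ft)$ over the ordinary cuspidal support $\ft = [L,v,\epsilon]_G$ obtained by choosing a constituent $\epsilon$ of $\Res^{A_M(v)}_{A_L(v)} q\epsilon$. The key combinatorial point is that the map $q\Psi_G$ was defined (in \eqref{eq:6.1}) precisely as an adjustment of $\Psi_G$ that bundles together those ordinary cuspidal supports $[L,v,\epsilon]_G$ whose $\epsilon$ occur in a common $q\epsilon$; equivalently, the fibre $q\Psi_G^{-1}(q\ft)$ is a union, indexed by a torsor under $\mathrm{Irr}$-constituents, of ordinary fibres $\Psi_G^{-1}([L,v,\epsilon]_G)$, and this union is governed by the same Clifford-theoretic bookkeeping as the passage from $A_L(v)$-representations to $A_M(v)$-representations. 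Running Theorem~\ref{thm:D} for each $\epsilon$ gives bijections $\Psi_G^{-1}([L,v,\epsilon]_G) \to \Irr(\C[W_{[L,v,\epsilon]_G},\natural])$, and I would assemble these, via Clifford theory for the extension $1 \to W_{\ft^\circ} \to W_{q\ft} \to W_{q\ft}/W_{\ft^\circ} \to 1$ together with the extension $A_L(v) \triangleleft A_M(v)$, into a single bijection onto $\Irr(\C[W_{q\ft},\kappa_{q\ft}])$. The 2-cocycle $\kappa_{q\ft}$ is produced in this process as the obstruction cocycle measuring how the projective representations of $W_{q\ft}/W_{\ft^\circ}$ on the relevant multiplicity spaces fail to lift; it restricts to $\natural_\ft$ on $W_\ft/W_{\ft^\circ}$ when one further restricts along the inclusion coming from $L \subset M$.

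The canonicity statement I would obtain for free: Theorem~\ref{thm:D} already asserts that composing with $\Res^{\C[W_\ft,\natural_\ft]}_{\C[W_{\ft^\circ}]}$ is canonical, and since $W_{\ft^\circ} = N_{G^\circ}(L)/L$ is the same group for $q\ft$ as for $\ft$ (it only depends on $L = M^\circ$), the composite bijection $q\Psi_G^{-1}(q\ft) \to \Irr(\C[W_{q\ft},\kappa_{q\ft}]) \to \bigsqcup \Irr(\C[W_{\ft^\circ}])$ factors through the canonical one for $\ft$, hence is canonical in the same degree. The only genuine content beyond Theorem~\ref{thm:D} is thus the Clifford-theoretic comparison of $(A_M(v), A_L(v))$ with $(W_{q\ft}, W_{\ft^\circ})$ and the verification that the extended-quotient/twisted-group-algebra structures match up compatibly.

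The main obstacle I anticipate is precisely this matching of cocycles: showing that the 2-cocycle $\kappa_{q\ft}$ arising from the disconnected Clifford theory on the Springer side coincides (up to coboundary, and compatibly with the canonical restriction to $W_{\ft^\circ}$) with the one predicted by the component-group extension $A_L(v) \triangleleft A_M(v)$, and that these are compatible across the whole fibre rather than just chamberwise. Concretely this requires tracking how the intertwiners produced in the proof of Theorem~\ref{thm:D} — which live on the various perverse sheaves and their stalks — transform under the extra automorphisms coming from $M/L$ versus $G^\circ$, and checking that no new obstruction is introduced when one passes from $L$-cuspidal data to $M$-quasi-cuspidal data. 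I would isolate this as a lemma (the referenced Lemma~\ref{lem:6.2}) and handle the rest of Theorem~\ref{thm:E} as a formal consequence of it together with Theorem~\ref{thm:D}.
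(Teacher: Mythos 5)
Your overall strategy -- run Theorem \ref{thm:D} for the underlying connected cuspidal support and then pass to the quasi-version by Clifford theory, isolating the canonicity as a separate lemma about an endomorphism algebra -- is indeed the route the paper takes. But the central combinatorial step is stated backwards, and this affects which extension your Clifford theory has to be applied to. Since all constituents $\epsilon$ of $\Res^{A_M(v)}_{A_{M^\circ}(v)} q\epsilon$ are $A_M(v)$-conjugate, they all give the \emph{same} class $\ft = [M^\circ,\cC_v^{M^\circ},\cE]_G$; the map $q\Psi_G$ refines $\Psi_G$, so $q\Psi_G^{-1}(q\ft)$ is a \emph{subset} of the single ordinary fibre $\Psi_G^{-1}(\ft)$, and it is $\Psi_G^{-1}(\ft)$ that is partitioned by the quasi-fibres, indexed by the $N_G(\ft)$-orbits of cuspidal local systems $q\cE$ on $\cC_v^M$ containing $\cE$ (equivalently of $\rho_M \in \Irr\big(\End_M (\pi_*^M \widetilde{\cE_M})\big)$, via Proposition \ref{prop:4.4}). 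Consequently the relevant Clifford theory is not for $W_{\ft^\circ} \triangleleft W_{q\ft}$ together with $A_{M^\circ}(v) \triangleleft A_M(v)$, as you propose, but for the normal subgroup $M_\cE / M^\circ$ of $W_\ft$ inside the twisted algebra $\overline{\Q_{\ell}}[W_\ft,\natural_\cE]$ (handled in Lemma \ref{lem:6.1} via a Schur central extension $\widetilde{W_\ft}$ and \eqref{eq:6.6}): the quasi-fibre corresponds to those irreducibles of $\overline{\Q_{\ell}}[W_\ft,\natural_\cE]$ whose restriction contains $\rho_M$, the group $W_{q\ft}$ arises as the stabilizer quotient $\Stab_{W_\ft}(q\cE)/(M_\cE/M^\circ)$ as in \eqref{eq:6.3}, and $\kappa_{q\ft}$ is the Clifford cocycle $\kappa_{\rho_M}$ of $\rho_M$. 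Your assertion that $\kappa_{q\ft}$ ``restricts to $\natural_\ft$'' is not meaningful as stated, since $W_{q\ft}$ is a subquotient, not a subgroup, of $W_\ft$; the only general statement is that $\kappa_{q\ft}$ is trivial on $W_{\ft^\circ}$ and factors through $W_{q\ft}/W_{\ft^\circ}$, and the paper stresses that beyond the case $M_\cE = M^\circ$ this cocycle is genuinely different from $\natural_\cE$ and hard to compute.

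The canonicity is also not ``for free''. Factoring through the canonical statement for $\ft$ only controls the restriction to $\overline{\Q_{\ell}}[W_{\ft^\circ}]$; to get a bijection that is canonical in the same degree as in Theorem \ref{thm:D} one must build the $G$-equivariant local system $\pi_*(\widetilde{q\cE})$ attached to the quasi-cuspidal pair, identify it with the $\rho_M$-isotypic part $(\pi_* \tilde{\cE})_{\rho_M}$, prove $\End_G\big(\pi_*(\widetilde{q\cE})\big) \cong \C[W_{q\ft},\kappa_{q\ft}]$ up to characters of $W_{q\ft}/W_{\ft^\circ}$ (Lemma \ref{lem:6.2}, which requires choosing the elements $q b_w$ compatibly with the intertwiners $I^w$ used to define $\kappa_{\rho_M}$), and then characterize the bijection intrinsically by the condition $\cF = \cH^{2 d_C}\big( \IC (\overline{Y}, \pi_*(\widetilde{q\cE}))_\tau \big)|\cC_u^G$ as in Theorem \ref{thm:6.3}. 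You correctly flag a lemma in this place, but without the corrected fibre decomposition and the identification of $W_{q\ft}$ as a stabilizer quotient of $W_\ft$, the assembly and the cocycle comparison you describe do not go through as written.
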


The derivation of Theorem \ref{thm:E} from Theorem \ref{thm:D} relies to a large
extent on (elementary) results about twisted group algebras, which we put in
Section \ref{sec:twisted}. The bijection from Theorem \ref{thm:E} is extensively
used in Section \ref{sec:extquot}, for Theorem \ref{thm:A}.\\

\noindent \textbf{Acknowledgements.}
The authors thank Anthony Henderson for pointing out a mistake in an earlier version,
and the referee, for his extremely thorough and helpful report.

\section{Twisted group algebras and normal subgroups}
\label{sec:twisted}

Throughout this section $\Gamma$ is a finite group and $K$ is an algebraically 
closed field whose characteristic does not divide the order of $\Gamma$.
Suppose that $\natural : \Gamma \times \Gamma \to K^\times$ is a 2-cocycle, that is,
\begin{equation}\label{eq:1.1}
\natural (\gamma_1,\gamma_2 \gamma_3) \natural (\gamma_2,\gamma_3) =
\natural (\gamma_1,\gamma_2) \natural (\gamma_1 \gamma_2,\gamma_3)
\quad \forall \gamma_1,\gamma_2,\gamma_3 \in \Gamma .
\end{equation}
The $\natural$-twisted group algebra of $\Gamma$ is defined to be the $K$-vector 
space $K[\Gamma,\natural]$ with basis $\{ T_\gamma : \gamma \in \Gamma \}$ 
and multiplication rules
\begin{equation}\label{eq:1.2}
T_\gamma T_{\gamma'} = \natural (\gamma,\gamma') T_{\gamma \gamma'} \quad
\gamma, \gamma' \in \Gamma .
\end{equation}
Its representations can be considered as projective $\Gamma$-representations.
Schur showed (see \cite[Theorem 53.7]{CuRe}) that there exists a finite central
extension $\tilde \Gamma$ of $\Gamma$, such that 
\begin{itemize}
\item char$(K)$ does not divide $|\tilde \Gamma|$,
\item every irreducible projective $\Gamma$-representation over $K$ lifts to
an irreducible $K$-linear representation of $\tilde \Gamma$.
\end{itemize}
Then $K[\Gamma,\natural]$ is a direct summand of $K[\tilde \Gamma]$, namely the
image of a minimal idempotent in $K[\ker (\tilde \Gamma \to \Gamma)]$. The condition
on char$(K)$ ensures that $K[\tilde \Gamma]$ is semisimple, so $K[\Gamma,\natural]$
is also semisimple.

Let $N$ be a normal subgroup of $\Gamma$ and $(\pi,V_\pi)$ an irreducible
representation of $N$ over $K$. We abbreviate this to $\pi \in \Irr_K (N)$.
We want to analyse the set of irreducible $\Gamma$-representations whose 
restriction to $N$ contains $\pi$. 

More generally, suppose that $\natural$ is a 2-cocycle of $\Gamma / N$. We identify
it with a 2-cocycle $\Gamma \times \Gamma \to K^\times$ that factors through 
$(\Gamma / N)^2$. We also want to analyse the irreducible representations of
$K [\Gamma,\natural]$ that contain $\pi$.

For $\gamma \in \Gamma$ we define $\gamma \cdot \pi \in \Irr_K (N)$ by
\begin{equation}\label{eq:1.8}
(\gamma \cdot \pi) (n) = \pi (\gamma^{-1} n \gamma) . 
\end{equation}
This determines an action of $\Gamma$ and of $\Gamma / N$ on $\Irr (N)$. Let
$\Gamma_\pi$ be the isotropy group of $\pi$ in $\Gamma$. For every $\gamma \in 
\Gamma_\pi$ we choose a $I^\gamma = I^\gamma_\pi \in \Aut_K (V_\pi)$ such that
\begin{equation}\label{eq:1.3}
I^\gamma \circ \pi (\gamma^{-1} n \gamma) = \pi (n) \circ I^\gamma
\quad \forall n \in N .
\end{equation}
Thus $I^\gamma \in \Hom_N (\gamma \cdot \pi,\pi)$. Given another 
$\gamma' \in \Gamma$, we can regard $I^\gamma$ also as an element of 
$\Hom_N (\gamma' \cdot \gamma \cdot \pi, \gamma' \cdot \pi)$, and then it can be 
composed with $I^{\gamma'} \in \Hom_N (\gamma' \cdot \pi,\pi)$. By Schur's lemma
all these maps are unique up to scalars, so there exists a 
$\kappa_\pi (\gamma,\gamma') \in K^\times$ with
\begin{equation}\label{eq:1.4}
I^{\gamma \gamma'} = \kappa_\pi (\gamma,\gamma') I^\gamma \circ I^{\gamma'} . 
\end{equation}
On comparing this with \eqref{eq:1.1}, one sees that $\kappa_\pi : \Gamma_\pi \times
\Gamma_\pi \to K^\times$ is a 2-cocycle. Notice that the algebra 
$K [\Gamma_\pi,\kappa_\pi^{-1}]$ acts on $V_\pi$ by $T_\gamma \mapsto I^\gamma$.
Let $[\Gamma_\pi / N] \subset \Gamma_\pi$ be a set of representatives for 
$\Gamma_\pi / N$. We may pick the $I^\gamma$ such that 
\begin{equation}\label{eq:1.7}
I^{\tilde \gamma n} = I^{\tilde \gamma} \circ \pi (n) \quad 
\forall \tilde \gamma \in [\Gamma_\pi / N] , n \in N .
\end{equation}
It follows from \eqref{eq:1.3} that $I^{n \tilde \gamma} = \pi (n) \circ 
I^{\tilde \gamma}$ and that $\kappa_\pi$ factors as
\[
\kappa_\pi : \Gamma_\pi \times \Gamma_\pi \to \Gamma_\pi /N \times
\Gamma_\pi / N \to K^\times
\]
Let $\natural : \Gamma / N \times \Gamma / N \to K^\times$ be a 2-cocycle. Thus 
we can construct the twisted group algebras $K[\Gamma,\natural]$
and $K[\Gamma_\pi / N,\natural \kappa_\pi]$. To avoid confusion we denote the
standard basis elements of $K[\Gamma,\natural]$ by $S_\gamma$.

\begin{prop}\label{prop:1.1}
Let $(\tau,M)$ be a representation of $K[\Gamma_\pi / N,\natural \kappa_\pi]$.
\enuma{
\item The algebra $K[\Gamma_\pi,\natural]$ acts on $M \otimes_K V_\pi$ by
\[
S_\gamma (m \otimes v) = \tau (T_{\gamma N}) m \otimes I^\gamma (v) 
\quad h \in K[\Gamma_\pi / N,\natural \kappa_\pi], v \in V_\pi .
\]
\item The $K$-linear map 
\[
\begin{array}{lcccl}
T : & \ind_{K [N]}^{K[\Gamma_\pi,\natural]} (V_\pi) & \to & 
K[\Gamma_\pi / N,\natural \kappa_\pi] \otimes_K V_\pi & \\
& S_{\tilde \gamma} \otimes v & \mapsto & T_{\tilde \gamma N} 
\otimes I^{\tilde \gamma} (v) & \quad \tilde \gamma \in [\Gamma / N] 
\end{array} 
\]
is an isomorphism of $K[\Gamma_\pi,\natural]$-representations.
\item The map $M \mapsto \ind_{K[\Gamma_\pi,\natural]}^{K[\Gamma,\natural]}
(T^{-1} (M \otimes V_\pi))$ is an equivalence between the following categories:
\begin{itemize}
\item subrepresentations of the left regular representation of 
$K[\Gamma_\pi / N,\natural \kappa_\pi]$;
\item $K[\Gamma,\natural]$-subrepresentations of
$\ind_{K [N]}^{K[\Gamma,\natural]} (V_\pi)$. 
\end{itemize}
\item We write $\tau \ltimes \pi := 
\ind_{K [\Gamma_\pi]}^{K[\Gamma,\natural]} (M \otimes V_\pi)$.
For any representation $V$ of $K[\Gamma,\natural]$ there is an isomorphism
\[
\Hom_{K[\Gamma,\natural]}(\tau \ltimes \pi,V) \cong 
\Hom_{K[\Gamma_\pi / N, \natural \kappa_\pi]}(\tau, \Hom_N (\pi,V)) .
\]
}
\end{prop}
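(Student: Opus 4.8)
The plan is to prove the four parts in order, since each builds on the previous one. For part (a), I would simply verify that the proposed formula respects the multiplication rule \eqref{eq:1.2} of $K[\Gamma_\pi,\natural]$. Computing $S_\gamma S_{\gamma'}(m \otimes v)$ gives $\natural(\gamma N,\gamma' N)\,\tau(T_{\gamma N})\tau(T_{\gamma' N})m \otimes I^\gamma I^{\gamma'}(v)$, which by the definition of the product in $K[\Gamma_\pi/N,\natural\kappa_\pi]$ and by \eqref{eq:1.4} equals $\natural(\gamma N,\gamma' N)\kappa_\pi(\gamma,\gamma')^{-1}\tau(T_{\gamma\gamma' N})m \otimes \kappa_\pi(\gamma,\gamma') I^{\gamma\gamma'}(v)$; the two cocycle factors involving $\kappa_\pi$ cancel, leaving $\natural(\gamma\gamma' N,\ast)$ applied correctly — wait, one must track that $\tau$ is a representation of the algebra with cocycle $\natural\kappa_\pi$, so $\tau(T_{\gamma N})\tau(T_{\gamma' N}) = \natural(\gamma N,\gamma' N)\kappa_\pi(\gamma,\gamma')\tau(T_{\gamma\gamma' N})$, and then the $\kappa_\pi$ from \eqref{eq:1.4} cancels this one, giving exactly $\natural(\gamma N,\gamma' N)S_{\gamma\gamma'}(m\otimes v)$, as required. (There is also a routine check that the $N$-action embedded here, via $\tau$ being trivial on $N$ and $I^n = \pi(n)$, is consistent with \eqref{eq:1.7}.)

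For part (b), I would first check that $T$ is well-defined: the source $\ind_{K[N]}^{K[\Gamma_\pi,\natural]}(V_\pi)$ has the relation $S_{\tilde\gamma n}\otimes v = S_{\tilde\gamma}\otimes \pi(n)v$ up to the cocycle, and one verifies that $T_{\tilde\gamma n N}\otimes I^{\tilde\gamma n}(v) = T_{\tilde\gamma N}\otimes I^{\tilde\gamma}(\pi(n)v)$ using \eqref{eq:1.7}. Then I would check $K[\Gamma_\pi,\natural]$-equivariance by applying $S_\delta$ to both sides and using \eqref{eq:1.4} and \eqref{eq:1.7} again. Surjectivity is clear since the $T_{\tilde\gamma N}\otimes(\text{basis of }V_\pi)$ span the target; injectivity (equivalently bijectivity) follows by dimension count, both sides having dimension $[\Gamma_\pi:N]\cdot\dim V_\pi$, or by writing down the inverse explicitly.

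For part (c), the idea is that $\ind_{K[N]}^{K[\Gamma,\natural]}(V_\pi) = \ind_{K[\Gamma_\pi,\natural]}^{K[\Gamma,\natural]}\ind_{K[N]}^{K[\Gamma_\pi,\natural]}(V_\pi) \cong \ind_{K[\Gamma_\pi,\natural]}^{K[\Gamma,\natural]}(K[\Gamma_\pi/N,\natural\kappa_\pi]\otimes V_\pi)$ via part (b), by transitivity of induction. The subrepresentations of the left regular representation of $K[\Gamma_\pi/N,\natural\kappa_\pi]$ are its left ideals; under $M \mapsto T^{-1}(M\otimes V_\pi)$ these correspond to $K[\Gamma_\pi,\natural]$-subrepresentations of $\ind_{K[N]}^{K[\Gamma_\pi,\natural]}(V_\pi)$ whose restriction to $N$ is $\pi$-isotypic — but since $\ind_{K[N]}^{K[\Gamma_\pi,\natural]}(V_\pi)$ is already $\pi$-isotypic as an $N$-representation (as $\Gamma_\pi$ stabilizes $\pi$), \emph{all} its subrepresentations arise this way, and the functor $\ind_{K[\Gamma_\pi,\natural]}^{K[\Gamma,\natural]}(-)$ is exact and fully faithful on this subcategory because of the semisimplicity established at the start of the section (every algebra in sight is semisimple, so induction from $K[\Gamma_\pi,\natural]$ of a module whose $N$-restriction is $\pi$-isotypic loses no information — concretely, $\Hom_{K[\Gamma,\natural]}(\ind X,\ind Y)\cong\Hom_{K[\Gamma_\pi,\natural]}(X,\Res\ind Y)$ and Mackey theory identifies the relevant summand). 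The main technical point — and the one I expect to be the main obstacle — is verifying that this last functor is fully faithful, i.e. that no extra homomorphisms appear after inducing up to $\Gamma$; this is where one needs that $\Gamma_\pi$ is exactly the stabilizer of $\pi$, so that in the Mackey decomposition of $\Res_{\Gamma_\pi}\ind_{\Gamma_\pi}^\Gamma Y$ the double cosets other than the trivial one contribute representations of $N$ not containing $\pi$ (they contain $\Gamma$-conjugates of $\pi$ distinct from $\pi$), hence do not interfere.

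For part (d), I would use the standard adjunction (Frobenius reciprocity for twisted group algebras, valid by semisimplicity): $\Hom_{K[\Gamma,\natural]}(\ind_{K[\Gamma_\pi]}^{K[\Gamma,\natural]}(M\otimes V_\pi),V)\cong\Hom_{K[\Gamma_\pi,\natural]}(M\otimes V_\pi,\Res_{\Gamma_\pi}V)$. Then, using part (b) to rewrite $M\otimes V_\pi \cong \ind_{K[N]}^{K[\Gamma_\pi,\natural]}$ applied to the appropriate $M$-twisted thing — more precisely using that $M\otimes V_\pi$ as a $K[\Gamma_\pi,\natural]$-module is built from $M$ over the subalgebra acting on the $\Gamma_\pi/N$-part — I would apply Frobenius reciprocity once more across $N\subset\Gamma_\pi$ together with the fact that $\Hom_N(\pi,V)$ carries a natural $K[\Gamma_\pi/N,\natural\kappa_\pi]$-module structure (via $I^\gamma$), landing on $\Hom_{K[\Gamma_\pi/N,\natural\kappa_\pi]}(\tau,\Hom_N(\pi,V))$. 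The bookkeeping of the cocycles $\natural$ versus $\natural\kappa_\pi$ across these two adjunctions is the only delicate part here, and it is forced to work out by the compatibility already used in part (a).
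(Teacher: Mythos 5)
Your proposal is correct, and for parts (a), (b) and (d) it runs essentially parallel to the paper: (a) is the same cocycle cancellation $\tau(T_{\gamma N})\tau(T_{\gamma' N}) = \natural(\gamma,\gamma')\kappa_\pi(\gamma,\gamma')\tau(T_{\gamma\gamma' N})$ against $I^\gamma \circ I^{\gamma'} = \kappa_\pi(\gamma,\gamma')^{-1}I^{\gamma\gamma'}$, and (b) is the same equivariance computation using \eqref{eq:1.3}, \eqref{eq:1.4}, \eqref{eq:1.7} plus bijectivity of the $I^{\tilde\gamma}$. The genuine divergence is part (c): the paper does not prove it internally but cites \cite[Theorem 11.2.b]{SolGHA}, remarking only that the proof there applies once (a) and (b) are available; you instead supply the standard Clifford-theoretic argument directly — transitivity of induction, identification of $K[\Gamma_\pi,\natural]$-subrepresentations of $K[\Gamma_\pi/N,\natural\kappa_\pi]\otimes_K V_\pi$ with left ideals via $\pi$-isotypicity, and the Mackey observation that for $\gamma\notin\Gamma_\pi$ the summand $\gamma\otimes Y$ of $\Res_N \ind_{K[\Gamma_\pi,\natural]}^{K[\Gamma,\natural]}Y$ is $(\gamma\cdot\pi)$-isotypic with $\gamma\cdot\pi\not\cong\pi$, so that $\Hom_{K[\Gamma,\natural]}(\ind X,\ind Y)\cong \Hom_{K[\Gamma_\pi,\natural]}(X,Y)$. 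That is a correct and self-contained route; just make sure you also record the companion fact that every $K[\Gamma,\natural]$-subrepresentation of $\ind_{K[N]}^{K[\Gamma,\natural]}(V_\pi)$ is recovered as the induction of its $\pi$-isotypic part (semisimplicity plus the same Mackey decomposition gives this), since the statement is a bijection on subrepresentations, not only full faithfulness. For (d) your double adjunction, with the $K[\Gamma_\pi/N,\natural\kappa_\pi]$-module structure on $\Hom_N(\pi,V)$ given through the $I^\gamma$, is equivalent in substance to the paper's argument, which first reduces to $V=\ind_{K[N]}^{K[\Gamma,\natural]}(V_\pi)$ by semisimplicity and then computes via the identification $\Hom_N\big(V_\pi,\ind_{K[N]}^{K[\Gamma_\pi,\natural]}(V_\pi)\big)\cong K[\Gamma_\pi/N,\natural\kappa_\pi]$ of \eqref{eq:1.11}; the cocycle bookkeeping you defer is exactly the verification that $(T_{\gamma N}\cdot A)(v)=S_\gamma A((I^\gamma)^{-1}v)$ defines a $\natural\kappa_\pi$-twisted action, which works out as you predict.
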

\begin{proof}
(a) By \eqref{eq:1.3} and \eqref{eq:1.4}
\begin{align*}
S_\gamma (S_{\gamma'} (m \otimes v)) & = S_\gamma (\tau(T_{\gamma' N}) m \otimes
I^{\gamma'}(v)) \\
& = \tau(T_{\gamma N} T_{\gamma' N}) m \otimes I^{\gamma N} \circ I^{\gamma'}(v) \\
& = (\natural \kappa_\pi) (\gamma,\gamma') \tau(T_{\gamma \gamma' N}) m \otimes
\kappa_\pi (\gamma,\gamma')^{-1} I^{\gamma \gamma'}(v) \\
& = \natural (\gamma,\gamma') \tau (T_{\gamma \gamma' N}) m \otimes I^{\gamma \gamma'}(v) \\
& = \natural (\gamma,\gamma') S_{\gamma \gamma'} (m \otimes v) = 
(S_\gamma S_{\gamma'})(m \otimes v) .
\end{align*}
(b) Since every $I^{\tilde \gamma} : V_\pi \to V_\pi$ is bijective, so is $T$. 
For any $n \in N$:
\begin{multline}\label{eq:1.5}
T (S_n (S_{\tilde \gamma} \otimes v)) = T (S_{\tilde \gamma} \otimes \pi ({\tilde \gamma}^{-1}
n \tilde \gamma ) v)  
= T_{\tilde \gamma N} \otimes I^{\tilde \gamma} \circ \pi ({\tilde \gamma}^{-1} n 
\tilde \gamma ) (v) = \\
T_{\tilde \gamma N} \otimes \pi(n) I^{\tilde \gamma} (v) 
= S_n (T_{\tilde \gamma N} \otimes I^{\tilde \gamma} (v)) = 
S_n (T (S_{\tilde \gamma} \otimes v)) ,
\end{multline}
so $T$ is $N$-equivariant.
Let $\gamma_1 \in \Gamma$ and write $\gamma_1 \tilde \gamma = n {\tilde \gamma}_2$ with
$n \in N$ and ${\tilde \gamma}_2 \in [\Gamma / N ]$. By \eqref{eq:1.5}
\begin{align*}
T( S_{\gamma_1} (S_{\tilde \gamma} \otimes v)) & = T (\natural (\gamma_1, \tilde{\gamma})
S_n S_{{\tilde \gamma}_2} \otimes v) = \natural (\gamma_1, \tilde{\gamma}) S_n 
T (S_{{\tilde \gamma}_2} \otimes v) \\
& = \natural (\gamma_1, \tilde{\gamma}) S_n (T_{{\tilde \gamma}_2 N} \otimes 
I^{{\tilde \gamma}_2}(v)) = \natural (\gamma_1, \tilde{\gamma}) T_{{\tilde \gamma}_2 N} 
\otimes \pi (n) I^{{\tilde \gamma}_2}(v) \\
& = \natural (\gamma_1, \tilde{\gamma}) T_{n {\tilde \gamma}_2 N} 
\otimes I^{n {\tilde \gamma}_2}(v) = \natural (\gamma_1, \tilde{\gamma}) 
T_{\gamma_1 \tilde{\gamma} N} \otimes I^{\gamma_1 \tilde{\gamma}}(v) \\
& = \kappa_\pi (\gamma_1, \tilde{\gamma})^{-1} T_{\gamma_1 N} T_{\tilde{\gamma} N} \otimes 
\kappa_\pi (\gamma_1, \tilde{\gamma}) I^{\gamma_1} I^{\tilde{\gamma}}(v)
= T_{\gamma_1 N} T_{\tilde{\gamma} N} \otimes I^{\gamma_1} I^{\tilde{\gamma}}(v) \\
& = S_{\gamma_1} (T_{\tilde{\gamma} N} \otimes I^{\tilde{\gamma}}(v)) =
S_{\gamma_1} (T (S_{\tilde \gamma} \otimes v)) .
\end{align*}
(c) See \cite[Theorem 11.2.b]{SolGHA}. The proof over there applies because we already
have established parts (a) and (b).\\
(d) We already saw that all these algebras are semisimple. In particular $V$ is
completely reducible. Let $V'$ be the $\pi$-isotypical component of 
$\Res^{K[\Gamma,\natural]}_{K[N]}(V)$. Every $K[\Gamma,\natural]$-homomorphism from
$\tau \ltimes \pi$ has image in $K[\Gamma,\natural] \cdot V'$, so we may assume that
$V = K[\Gamma,\natural] \cdot V'$. Then $V$ can be embedded in a direct sum of copies
of $\ind_{K [N]}^{K[\Gamma,\natural]} (V_\pi)$. Hence it suffices to prove the claim
in the case that $V = \ind_{K [N]}^{K[\Gamma,\natural]} (V_\pi)$. 

By part (b) and the irreducibility of $\pi$
\begin{equation}\label{eq:1.11}
\Hom_N \Big( V_\pi, \ind_{K [N]}^{K[\Gamma,\natural]} (V_\pi) \Big) =
\Hom_N \Big( V_\pi, \ind_{K [N]}^{K[\Gamma_\pi,\natural]} (V_\pi) \Big)
\cong K[\Gamma_\pi / N, \natural \kappa_\pi ].
\end{equation}
By Frobenius reciprocity
\begin{equation}\label{eq:1.9}
\Hom_{K[\Gamma,\natural]} \Big( \tau \ltimes \pi, \ind_{K [N]}^{K[\Gamma,\natural]} 
(V_\pi) \Big) \cong \Hom_{K[\Gamma_\pi,\natural]} \Big( M \otimes V_\pi, 
\ind_{K [N]}^{K[\Gamma,\natural]} (V_\pi) \Big) 
\end{equation}
By \eqref{eq:1.11} the right hand side simplifies to
\begin{equation}\label{eq:1.12}
\Hom_{K[\Gamma_\pi,\natural]} \Big( M \otimes V_\pi, 
\ind_{K [N]}^{K[\Gamma_\pi,\natural]} (V_\pi) \Big) 
\cong \Hom_{K[\Gamma_\pi,\natural]} \Big( M \otimes V_\pi, K[\Gamma_\pi / N, 
\natural \kappa_\pi ] \otimes V_\pi \Big) . 
\end{equation}
As we have seen in part (b), $K[N]$ acts only on the second tensor legs, so
\begin{equation}\label{eq:1.10}
\Hom_{K[N]} \big( M \otimes V_\pi, K[\Gamma_\pi / N, 
\natural \kappa_\pi ] \otimes V_\pi \big) = 
\Hom_K \big( M, K[\Gamma_\pi / N, 
\natural \kappa_\pi ] \big) \otimes K \mathrm{Id}_{V_\pi} .
\end{equation}
An element $\phi = \phi' \otimes \mathrm{Id}_{V_\pi}$ of \eqref{eq:1.10} is a 
$K[\Gamma_\pi, \natural]$-homomorphism if and only if commutes with the action 
described in part (a). On $V_\pi$ it automatically commutes with the $I^\gamma$, 
so the condition becomes that $\phi'$ commutes with left multiplication by 
$T_{\gamma N}$. In other words, $\phi'$ needs to be in $\Hom_{K[\Gamma_\pi / N, 
\natural \kappa_\pi ]} (M, K[\Gamma_\pi / N, \natural \kappa_\pi ])$. In view of 
\eqref{eq:1.11}, \eqref{eq:1.9} is isomorphic with
\[
\Hom_{K[\Gamma_\pi / N, \natural \kappa_\pi ]} \Big( M, 
\Hom_N \big( V_\pi, \ind_{K [N]}^{K[\Gamma,\natural]} (V_\pi) \big) \Big) . \qedhere
\]
\end{proof}

This result leads to a version of Clifford theory.
We will formulate it in terms of extended quotients, see \cite[\S 2]{ABPS5} or
\cite[Appendix B]{ABPS6}. We briefly recall the necessary definitions.

Suppose that $\Gamma$ acts on some set $X$.
Let $\kappa$ be a given function which assigns to each 
$x \in X$ a 2-cocycle $\kappa_x : \Gamma_x \times \Gamma_x \to \C^\times$,
where $\Gamma_x = \{\gamma \in \Gamma : \gamma x = x\}$.   
It is assumed that $\kappa_{\gamma x}$ and $\gamma_* \kappa_x$ define the same
class in $H^2 (\Gamma_x , K^\times)$, where $\gamma_* : \Gamma_x \to \Gamma_{\gamma x}, 
\alpha \mapsto \gamma \alpha \gamma^{-1}$. Define 
\[
\widetilde X_\kappa : = \{(x,\rho) : x \in X, \rho \in \Irr \, K[\Gamma_x, \kappa_x] \}.
\]
We require, for every $(\gamma,x) \in \Gamma \times X$, a definite  algebra isomorphism
\begin{equation}\label{eq:can}
\phi_{\gamma,x} : K [\Gamma_x,\kappa_x] \to K [\Gamma_{\gamma x},\kappa_{\gamma x}]
\end{equation}
such that:
\begin{itemize}
\item $\phi_{\gamma,x}$ is inner if $\gamma x = x$;
\item $\phi_{\gamma',\gamma x} \circ \phi_{\gamma,x} = 
\phi_{\gamma' \gamma,x}$ for all $\gamma',\gamma \in \Gamma, x \in X$.
\end{itemize}
We call these maps connecting homomorphisms, because they are reminiscent 
of a connection on a vector bundle.
Then we can define $\Gamma$-action on $\widetilde X_\kappa$ by
\[
\gamma \cdot (x,\rho) = (\gamma x, \rho \circ \phi_{\gamma,x}^{-1}).
\]
We form the \emph{twisted extended quotient}
\begin{equation}\label{eq:extquot}
(X\q \Gamma)_\kappa : = \widetilde{X}_\kappa / \Gamma.
\end{equation}
Let us return to the setting of Proposition \ref{prop:1.1}.

\begin{thm}\label{thm:1.2}
Let $\kappa \natural$ be the family of 2-cocycles which assigns $\kappa_\pi \natural$
to $\pi \in \Irr_K (N)$. There is a bijection
\[
\begin{array}{ccc}
( \Irr_K (N) \q \, \Gamma / N )_{\kappa \natural} & \to & \Irr (K [\Gamma,\natural]) \\
(\pi,\tau) & \mapsto & \tau \ltimes \pi := 
\ind_{K[\Gamma_\pi,\natural]}^{K[\Gamma,\natural]} (V_\tau \otimes V_\pi) 
\end{array}
\]
\end{thm}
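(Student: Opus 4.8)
The plan is to derive Theorem \ref{thm:1.2} as a clean packaging of Proposition \ref{prop:1.1}, combining Clifford theory with the bookkeeping of extended quotients. First I would check that $\kappa\natural$ is a legitimate family of 2-cocycles for the $\Gamma/N$-action on $\Irr_K(N)$: one must exhibit the connecting homomorphisms \eqref{eq:can}. For $\gamma\in\Gamma$ and $\pi\in\Irr_K(N)$, conjugation sends an intertwiner $I^\delta_\pi$ to an intertwiner for $\gamma\cdot\pi$, and tracing through \eqref{eq:1.3}--\eqref{eq:1.4} gives an isomorphism $K[\Gamma_\pi,\kappa_\pi^{-1}]\to K[\Gamma_{\gamma\pi},\kappa_{\gamma\pi}^{-1}]$, hence (by inverting the cocycle and dividing out $N$, using the factorization established after \eqref{eq:1.7}) one for $K[\Gamma_\pi/N,\kappa_\pi\natural]\to K[\Gamma_{\gamma\pi}/N,\kappa_{\gamma\pi}\natural]$; the cocycle condition and innerness when $\gamma\pi=\pi$ follow from \eqref{eq:1.4}. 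This legitimizes the notation $(\Irr_K(N)\q\Gamma/N)_{\kappa\natural}$ and makes the asserted $\Gamma$-action on $\widetilde{\Irr_K(N)}_{\kappa\natural}$ well-defined; I would note that up to the $\Gamma$-action the choices of the $I^\gamma$ are immaterial.

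Next I would establish that the map is well-defined, i.e. $\tau\ltimes\pi$ depends only on the $\Gamma$-orbit of $(\pi,\tau)$. If $(\pi',\tau')=\gamma\cdot(\pi,\tau)$, then $\pi'=\gamma\cdot\pi$ and $V_\pi\cong V_{\pi'}$ as $N$-representations after transport by $\gamma$, while $\tau'=\tau\circ\phi_{\gamma,\pi}^{-1}$; conjugation by $S_\gamma$ inside $K[\Gamma,\natural]$ carries $K[\Gamma_\pi,\natural]$ to $K[\Gamma_{\pi'},\natural]$ and identifies the two induced modules $\ind_{K[\Gamma_\pi,\natural]}^{K[\Gamma,\natural]}(V_\tau\otimes V_\pi)$ and $\ind_{K[\Gamma_{\pi'},\natural]}^{K[\Gamma,\natural]}(V_{\tau'}\otimes V_{\pi'})$. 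This is a routine transport-of-structure argument once the connecting homomorphisms are in place.

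For injectivity and surjectivity I would use Proposition \ref{prop:1.1}(d) as the main tool. Surjectivity: given an irreducible $V$ of $K[\Gamma,\natural]$, restrict to $N$; by semisimplicity (established in part (d)) pick an irreducible constituent $\pi$, so $\Hom_N(\pi,V)\neq 0$. This $\Hom$-space carries a natural $K[\Gamma_\pi/N,\natural\kappa_\pi]$-module structure (the module $\Hom_N(V_\pi,\ind_{K[N]}^{K[\Gamma,\natural]}V_\pi)\cong K[\Gamma_\pi/N,\natural\kappa_\pi]$ from \eqref{eq:1.11}, intertwined with composition), and picking an irreducible $\tau$ inside it, part (d) gives $0\neq\Hom_{K[\Gamma,\natural]}(\tau\ltimes\pi,V)$, so $V$ is a quotient—hence, by irreducibility, isomorphic—to $\tau\ltimes\pi$; one also checks $\tau\ltimes\pi$ is itself irreducible by applying (d) with $V=\tau\ltimes\pi$ together with Frobenius reciprocity and the fact that $\Hom_N(\pi,\cdot)$ detects it. Injectivity: if $\tau\ltimes\pi\cong\tau'\ltimes\pi'$, then restricting to $N$ and using \eqref{eq:1.8} shows $\pi'$ lies in the $\Gamma$-orbit of $\pi$, so after acting by $\Gamma$ we may take $\pi'=\pi$; then part (d) (or equivalently (c), the Clifford-type equivalence of categories) forces $\tau\cong\tau'$ as $K[\Gamma_\pi/N,\natural\kappa_\pi]$-modules. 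The main obstacle, and the step deserving the most care, is precisely the bijectivity: one must be sure that distinct constituents $\pi$ of $\Res_N V$ are genuinely $\Gamma$-conjugate (standard Clifford theory, but it relies on $V$ being $K[\Gamma,\natural]$-irreducible rather than merely $N$-semisimple) and that $\Hom_N(\pi,V)$ is the right module over the twisted quotient algebra — both of which are exactly what Proposition \ref{prop:1.1}(d) is designed to deliver, so in the end Theorem \ref{thm:1.2} is a formal consequence once the extended-quotient formalism is set up correctly.
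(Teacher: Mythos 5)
Your proposal is correct and takes essentially the same route as the paper: there the theorem is deduced from Proposition \ref{prop:1.1} together with the cited result \cite[Theorem 11.2.(d)-(e)]{SolGHA}, whose content is exactly the Clifford-theoretic argument you spell out (irreducibility of and homomorphisms out of $\tau \ltimes \pi$ via part (d), constituents of $\Res_N$ forming a single $\Gamma$-orbit, transport of structure along $S_\gamma$), and the connecting homomorphisms are likewise identified as conjugation by $I^\gamma_\pi$. The only difference is that you write out in detail what the paper delegates to that citation.
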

\begin{proof}
With Proposition \ref{prop:1.1}, \cite[Appendix]{SolGHA} becomes valid in our situation.
The theorem is a reformulation of parts (d) and (e) of \cite[Theorem 11.2]{SolGHA}.
For completeness we note that the connecting homomorphism
\[
K [ \Gamma_\pi / N ,\kappa_\pi \natural] \to K [\Gamma_{\gamma \cdot \pi} / N,
\kappa_{\gamma \cdot \pi} \natural ]
\]
is given by conjugation with $I^\gamma_\pi$, as in \cite[(3)]{ABPS5}.
\end{proof}

For convenience we record the special case $\natural = 1$ of the above
explicitly. It is very similar to \cite[p. 24]{RaRa} and \cite[\S 51]{CuRe}.
\begin{align}
& \ind_N^{\Gamma_\pi}(\pi) \cong K[\Gamma_\pi / N,\kappa_\pi] \otimes_K V_\pi
\quad \text{as } \Gamma_\pi \text{-representations} , \\
& \Irr_K (\Gamma) \longleftrightarrow ( \Irr_K (N) \q \, \Gamma / N )_\kappa .
\label{eq:1.6}
\end{align}

It will also be useful to analyse the structure of $K[\Gamma,\natural]$ as 
a bimodule over itself. Let $K[\Gamma,\natural]^{op}$ be the opposite algebra,
and denote its standard basis elements by $S_\gamma \; (\gamma \in \Gamma)$.
\begin{lem}\label{lem:1.3}
\enuma{
\item There is a $K$-algebra isomorphism
\[
\begin{array}{cccc}
* : & K[\Gamma,\natural^{-1}] & \to & K[\Gamma,\natural]^{op} \\
& T_\gamma & \mapsto & T_\gamma^* = S_\gamma^{-1} .
\end{array}
\]
\item There is a bijection
\[
\begin{array}{ccc}
\Irr (K[\Gamma,\natural]) & \to & \Irr (K[\Gamma,\natural^{-1}]) \\
V & \mapsto & V^* = \Hom_K (V,K) ,
\end{array}
\]
where $(h \cdot \lambda)(v) = \lambda (h^* \cdot v)$ for 
$v \in V, \lambda \in V^*$ and $h \in K[\Gamma,\natural^{-1}]$.
\item Let $K[\Gamma,\natural] \oplus K[\Gamma,\natural^{-1}]$ act on
$K[\Gamma,\natural]$ by $(a,h) \cdot b = a b h^*$. \\
As $K[\Gamma,\natural] \oplus K[\Gamma,\natural^{-1}]$-modules
\[
K[\Gamma,\natural] \cong 
\bigoplus\nolimits_{V \in \Irr (K[\Gamma,\natural])} V \otimes V^* . 
\]
}
\end{lem}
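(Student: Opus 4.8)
The plan is to verify each of the three parts by direct computation, using only the multiplication rule \eqref{eq:1.2} and the cocycle identity \eqref{eq:1.1}, together with semisimplicity (which is already established in this section). First I would record the basic fact that in $K[\Gamma,\natural]$ one has $T_\gamma^{-1} = \natural(\gamma,\gamma^{-1})^{-1} T_{\gamma^{-1}}$, and more generally compute $S_\gamma^{-1} S_{\gamma'}^{-1}$ in terms of $S_{(\gamma'\gamma)^{-1}}$: using \eqref{eq:1.2} and \eqref{eq:1.1} one gets a scalar which I expect to be exactly $\natural^{-1}(\gamma,\gamma')$. That computation is the technical heart of part (a): it shows that $\gamma \mapsto S_\gamma^{-1}$ is an algebra homomorphism $K[\Gamma,\natural^{-1}] \to K[\Gamma,\natural]^{op}$. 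Bijectivity is clear since the $S_\gamma^{-1}$ form a basis of $K[\Gamma,\natural]^{op}$, so $*$ is an isomorphism.

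For part (b), part (a) lets me transport representations: if $V$ is a $K[\Gamma,\natural]$-module, then the formula $(h\cdot\lambda)(v) = \lambda(h^* v)$ for $h \in K[\Gamma,\natural^{-1}]$ makes $V^* = \Hom_K(V,K)$ into a $K[\Gamma,\natural^{-1}]$-module, because $*$ is an \emph{anti}-isomorphism $K[\Gamma,\natural^{-1}] \to K[\Gamma,\natural]$ (equivalently an isomorphism onto the opposite algebra), so the contragredient construction goes through verbatim as in the ordinary group-algebra case. Since $\dim V^* = \dim V$ and double-dual returns to $V$ (here one checks $V^{**}\cong V$ using that $(\,\cdot\,)^*$ applied twice is the identity on the relevant cocycles, i.e. $(\natural^{-1})^{-1}=\natural$), the map $V \mapsto V^*$ is a bijection $\Irr(K[\Gamma,\natural]) \to \Irr(K[\Gamma,\natural^{-1}])$; irreducibility of $V^*$ follows from irreducibility of $V$ via the anti-isomorphism.

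For part (c), I would first check that the prescribed formula $(a,h)\cdot b = abh^*$ really defines a module structure: this needs $*$ to be an algebra anti-homomorphism (so that $h^*$ multiplied on the right behaves like a right action), which is exactly part (a). Then $K[\Gamma,\natural]$, viewed as a module over $K[\Gamma,\natural]\otimes K[\Gamma,\natural^{-1}]$, is precisely the regular bimodule over the semisimple algebra $A := K[\Gamma,\natural]$, rewritten via $A^{op}\cong K[\Gamma,\natural^{-1}]$. By the standard structure theory of semisimple algebras (Wedderburn: $A \cong \bigoplus_V \End_K(V)$ over the simple modules, and $\End_K(V)\cong V\otimes V^*$ as an $A\otimes A^{op}$-module), one gets $A \cong \bigoplus_{V\in\Irr(A)} V\otimes_K V^*$ as $A\otimes A^{op}$-modules, and identifying $A^{op}$ with $K[\Gamma,\natural^{-1}]$ and the right action with $h\mapsto h^*$ gives the claimed decomposition. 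The only genuine obstacle is bookkeeping in the scalar computations of part (a)—keeping track of which cocycle and which inverse appears at each step—but once that is pinned down, parts (b) and (c) are formal consequences of part (a) plus semisimplicity, with no further difficulty.
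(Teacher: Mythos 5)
Your plan is correct, and for parts (a) and (b) it is essentially the paper's argument: a direct check that $\gamma\mapsto S_\gamma^{-1}$ is multiplicative into the opposite algebra, plus the formal fact that dualizing gives a bijection between the irreducible modules of a finite-dimensional algebra and of its opposite. One bookkeeping caution on (a): the clean identity to verify is
$S_{\gamma'}^{-1}S_\gamma^{-1}=(S_\gamma S_{\gamma'})^{-1}=\natural(\gamma,\gamma')^{-1}S_{\gamma\gamma'}^{-1}$,
i.e.\ you should work with the \emph{inverses of the basis elements} $S_{\gamma\gamma'}^{-1}=T^*_{\gamma\gamma'}$ and multiply in the reversed (opposite) order; if instead you literally expand $S_\gamma^{-1}S_{\gamma'}^{-1}$ against the basis element $S_{(\gamma'\gamma)^{-1}}$, extra factors such as $\natural(\gamma'\gamma,(\gamma'\gamma)^{-1})$ and $\natural(e,e)$ appear and the scalar is not simply $\natural^{-1}(\gamma,\gamma')$, nor are the arguments in that order. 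This is exactly the bookkeeping you flag, and once the opposite-algebra convention is fixed the computation closes as expected.

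For part (c) your route genuinely differs from the paper's. You apply Wedderburn directly to the semisimple algebra $A=K[\Gamma,\natural]$: $A\cong\bigoplus_V \End_K(V)\cong\bigoplus_V V\otimes V^*$ as $A\otimes A^{op}$-modules, and then use (a) to identify $A^{op}$ with $K[\Gamma,\natural^{-1}]$ and the right regular action with $b\mapsto bh^*$. The paper instead invokes a Schur extension $\tilde\Gamma$ of $\Gamma$ (already introduced to prove semisimplicity), decomposes $K[\tilde\Gamma]$ as a $\tilde\Gamma\times\tilde\Gamma^{op}$-representation by the classical untwisted statement, and restricts to the direct factor $K[\Gamma,\natural]$ cut out by a minimal central idempotent. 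Your argument is more self-contained at this point (it needs only semisimplicity, which the section has already established, plus standard structure theory), while the paper's is shorter on the page because it recycles the covering group; both are valid, and your verification that $(a,h)\cdot b=abh^*$ is a module structure via $(hh')^*=h'^*h^*$ is the right way to see that the bimodule in question is just the regular $A$-$A$-bimodule in disguise.
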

\begin{proof}
(a) The map is $K$-linear by definition, and it clearly is bijective. For
$\gamma, \gamma' \in \Gamma$:
\[
T_\gamma^* \cdot T_{\gamma'}^* = S_\gamma^{-1} \cdot S_{\gamma'}^{-1} =
(S_{\gamma'} \cdot S_\gamma )^{-1} = (\natural (\gamma,\gamma') 
S_{\gamma \gamma'})^{-1} = \natural (\gamma,\gamma')^{-1} T_{\gamma \gamma'}^* ,
\]
so * is an algebra homomorphism. \\
(b) Trivial, it holds for any finite dimensional algebra and its opposite.\\
(c) Let $\tilde \Gamma$ be a Schur extension of $\Gamma$, as on page \pageref{eq:1.2}. 
As a representation of $\tilde \Gamma \times (\tilde \Gamma)^{op}$, 
$K[\tilde \Gamma]$ decomposes in the asserted manner. Hence the same holds for its
direct factor $K[\Gamma,\natural]$.
\end{proof}

\section{The generalized Springer correspondence}
\label{sec:genSpr}

Let $G$ be a connected complex reductive group.
The generalized Springer correspondence for $G$ has been constructed by Lusztig. 
We will recall the main result of \cite{Lus1}, and then we prove that Lusztig's
constructions are equivariant with respect to automorphisms of algebraic groups.

Let $l$ be a fixed prime number, and let $\overline{\Q_{\ell}}$
be an algebraic closure of $\Q_{\ell}$. For compatibility with the literature we
phrase our results with $\overline{\Q_{\ell}}$-coefficients. However, by their
algebro-geometric nature everything works just as well with coefficients in
any other algebraically closed field of characteristic zero.

For $u$ a unipotent element in $G$, we denote by $A_{G}(u)$ the group of 
components $\Cent_{G}(u)/\Cent_{G}(u)^\circ$ of the centralizer in 
$G$ of $u$. We set
\[
\cN_{G}^+ := \{ (u,\eta ) : u \in G \text{ unipotent} , 
\eta \in \Irr_{\overline{\Q_{\ell}}} (A_{G} (u)) \} / G\text{-conjugacy}.
\]
The set $\cN_{G}^+$ is canonically in bijection with the set of pairs 
$(\cC_u^{G},\cF)$, where $\cC_u^{G}$ is the $G$-conjugacy class 
of a unipotent element $u\in G$ 
and $\cF$ is an irreducible $G$-equivariant local system on $\cC_u^{G}$.
The bijection associates to $(\cC_u^{G},\cF)$ an element $u \in \cC_u^G$
and the representation of $A_G (u)$ on the stalk $\cF_u$.

Let $P$ be a parabolic subgroup of $G$ with unipotent radical $U$, and let $L$ be 
a Levi factor of $P$. Let $v$ be a unipotent element in $L$. The group 
$\Cent_{G}(u)\times\Cent_{L}(v)U$ acts on the variety 
\begin{equation}\label{eq:2.2}
Y_{u,v} := \left\{y\in G \;:\, y^{-1}uy\in vU\right\} 
\end{equation}
by $(g,p)\cdot y=gyp^{-1}$, with $g\in\Cent_{G}(u)$, $p\in\Cent_{L}(v) U$, 
and $y\in Y_{u,v}$. We have 
\[
\dim Y_{u,v}\le d_{u,v} :=
\frac{1}{2}(\dim \Cent_{G}(u)+\dim\Cent_{L}(v))+\dim U.
\] 
The group $A_{G}(u)\times A_{L} (v)$ acts on the set of irreducible components of 
$Y_{u,v}$ of dimension $d_{u,v}$; we denote by $\sigma_{u,v}$ the corresponding 
permutation representation.
 
Let $\langle \;,\;\rangle_{A_{G} (u)}$ be the usual scalar product of the set of class 
functions on the finite group $A_{G} (u)$ with values in $\overline{\Q_{\ell}}$. 
An irreducible representation $\eta$ of $A_{G} (u)$ is called \emph{cuspidal} 
(see \cite[Definition 2.4]{Lus1} and \cite[\S 0.4]{LS}) if 
\begin{equation}
\langle \eta,\sigma_{u,v}\rangle_{A_{G} (u)}\ne 0 \text{ implies that } P=G.
\end{equation} 
If $A_G (u)$ has a cuspidal representation, then \cite[Proposition 2.8]{Lus1} implies
that $u$ is a distinguished unipotent element of $G$, i.e. not contained in any
proper Levi subgroup of $G$. However, in general not every distinguished unipotent
element supports a cuspidal representation.
The set of irreducible cuspidal representations of $A_{G} (u)$ 
(over $\overline{\Q_{\ell}}$) is denoted by $\Irr_\cusp (A_{G} (u))$, and we write
\[
\cN_{G}^0 = \{ (u,\eta ) : u \in G \text{ unipotent, } 
\eta \in \Irr_\cusp (A_{G} (u)) \} 
/ G\text{-conjugacy} .
\]
Given a pair $(u,\eta)\in\cN_{G}^+$, there exists a triple $(P,L,v)$ 
as above and an 
\[
\epsilon\in \Irr_\cusp (A_{L} (v)) \quad \text{such that} \quad 
\langle \eta \otimes \epsilon^*,\sigma_{u,v}\rangle_{A_G (u) \times A_L (v)}\ne 0,
\]
where $\epsilon^*$ is the dual of $\epsilon$ (see \cite[\S~6.2]{Lus1} and 
\cite[\S 0.4]{LS}). Moreover $(P,L,v,\epsilon)$ is unique up to $G$-conjugation 
(see \cite[Prop.~6.3]{Lus1} and \cite{LS}). We denote by 
$\ft:=[L,\cC_v^{L},\epsilon]_{G}$ the $G$-conjugacy class of 
$(L,v,\epsilon)$ and we call it the \emph{cuspidal support} of the pair $(u,\eta)$.
The centre $Z(G)$ maps naturally to $A_{G}(u)$ and to $A_L (v)$. 
By construction \cite[Theorem 6.5.a]{Lus1}
\begin{equation}\label{eq:2.13}
\eta \text{ and } \epsilon \text{ have the same } Z(G)\text{-character.}  
\end{equation}
Let $\cS_{G}$ denote the set consisting of all triples $(L,\cC_v^{L},\epsilon)$ 
(up to $G$-conjugacy) where $L$ is a Levi subgroup of a parabolic subgroup of $G$,
$\cC_v^{L}$ is the $L$-conjugacy class of a unipotent element $v$ in $L$ and 
$\epsilon\in\Irr_\cusp (A_{L} (v))$. Let 
\begin{equation}\label{eq:2.1}
\Psi_{G}\colon\cN_{G}^+\to \cS_{G}
\end{equation}
be the map defined by sending the $G$-conjugacy class of $(u,\eta)$ 
to its cuspidal support. By \eqref{eq:2.13} this map preserves the 
$Z(G)$-characters of the involved representations.

In \cite[§3.1]{Lus1}, Lusztig defined a partition of $G$ in a finite number of 
irreducible, smooth, locally closed subvarieties, stable under conjugation. 
For all $g \in G$, we denote by $g_s$ the semisimple part of $g$. We say that 
$g \in G$ (or its conjugacy class) is isolated if $Z_{G}(g_s)^{\circ}$ is not 
contained in any proper Levi subgroup of $G$. In particular every unipotent
conjugacy class is isolated.

Let $L$ be a Levi subgroup of $G$ and $S \subset L$ the inverse image of an isolated 
conjugacy class of $L/Z_{L}^{\circ}$ by the natural projection map 
$L \twoheadrightarrow L/Z_{L}^{\circ}$. Denote by 
\[
S_{\reg}=\{g \in S, \,\, Z_G(g_s)^{\circ} \subset L \}
\] 
the set of regular elements in $S$.
Consider the irreducible, smooth, locally closed subvariety of $G$ defined by 
\[ 
Y_{(L,S)} = \bigcup\nolimits_{g\in G} g S_{\reg} g^{-1} = 
\bigcup\nolimits_{x \in S_{\reg}} \mathcal{C}_{x}^G. 
\] 
We remark that $Y_{(L,S)}$ depends only on the $G$-conjugacy class of $(L,S)$.
 
Now, let $P=L U_P$ a parabolic subgroup of $G$ with Levi factor $L$,  denote  
$\overline{\mathbf{c}}=(P,L,S), \,\, \mathbf{c}=(L,S)$ and let 
\begin{align*}
& \widehat{X}_{\overline{\mathbf{c}}}=\{(g,x) \in G \times G, 
\,\, x^{-1}g x \in \overline{S} \cdot U_P \}, \\
& X_{\overline{\mathbf{c}}}=\{(g,xP) \in G \times G/P, \,\, 
x^{-1}g x \in \overline{S} \cdot U_P \},
\end{align*} 
where $\overline{S}$ is the closure of $S$. The subgroup $P$ acts freely by 
translation on right on the second coordinate of an element of 
$\widehat{X}_{\overline{\mathbf{c}}}$  and $\widehat{X}_{\overline{\mathbf{c}}}/P = 
X_{\overline{\mathbf{c}}}$. After \cite[4.3]{Lus1}, the projection on the 
first coordinate $\phi_{\overline{\mathbf{c}}} : X_{\overline{\mathbf{c}}} 
\longrightarrow G$ is proper and its image is $\overline{Y}_{\mathbf{c}}$.

The group $Z_L^{\circ}$ acts on $\overline{S}$ by translation and $L$ acts on 
$\overline{S}$ by conjugation. This gives rises to an action of $Z_L^{\circ}\times L$ 
on $\overline{S}$. The orbits form a stratification of $\overline{S}$, in which $S$ 
is the unique open stratum. Denote by $\sigma_{\overline{\mathbf{c}}} : 
\widehat{X}_{\overline{\mathbf{c}}} \longrightarrow \overline{S}$ the map which 
associates to $(g,x)$ the projection of $x^{-1}gx \in \overline{S} \cdot U_P$ 
on the factor $\overline{S}$ and $\varpi_{P} : \widehat{X}_{\overline{\mathbf{c}}} 
\longrightarrow X_{\overline{\mathbf{c}}}$ the map defined for all 
$(g,x) \in \widehat{X}_{\overline{\mathbf{c}}}$ by $\varpi_{P}(g,x)=(g,xP)$. 
To sum up, we have the following diagram: 
\[
\xymatrix{
 & & \ar[lldd]^{\sigma_{\overline{\mathbf{c}}}} \widehat{X}_{\overline{\mathbf{c}}} 
 \ar[rrd]^{\varpi_P}& & \\
 & &  & & X_{\overline{\mathbf{c}}} \ar[ld]^{\phi_{\overline{\mathbf{c}}}} \\
\overline{S}& &  & \overline{Y}_{\mathbf{c}}
}
\]
By taking image inverse under $\sigma_{\overline{\mathbf{c}}}$, the stratification of 
$\overline{S}$ gives a stratification of $\widehat{X}_{\overline{\mathbf{c}}}$. 
The stratum $\widehat{X}_{\overline{\mathbf{c}},\alpha}$ (corresponding to the open 
stratum $S$) is open and dense. We denote by $\sigma_{\overline{\mathbf{c}},\alpha}$ 
the restriction of $\sigma_{\overline{\mathbf{c}}}$ to 
$\widehat{X}_{\overline{\mathbf{c}},\alpha}$. Every stratum of 
$\widehat{X}_{\overline{\mathbf{c}}}$ is $P$-invariant and their images in 
$X_{\overline{\mathbf{c}}}=\widehat{X}_{\overline{\mathbf{c}}}/P$ form a stratification 
of $X_{\overline{\mathbf{c}}}$, with $X_{\overline{\mathbf{c}},\alpha}=
\widehat{X}_{\overline{\mathbf{c}},\alpha}/P$ open and dense.

Let $\mathcal{E}$ be an irreducible $L$-equivariant cuspidal local system on $S$. Then 
$(\sigma_{\overline{\mathbf{c}},\alpha})^{*}\mathcal{E}$ is a $G\times P$-equivariant 
local system on $\widehat{X}_{\overline{\mathbf{c}},\alpha}$. There exists a 
unique $G$-equivariant local system on $X_{\overline{\mathbf{c}},\alpha}$, denoted by 
$\overline{\mathcal{E}}$, such that $(\sigma_{\overline{\mathbf{c}},\alpha})^{*} 
\mathcal{E}=(\varpi_{P})^{*} \overline{\mathcal{E}}$.

We denote by $\widetilde{Y}_{\mathbf{c}}=\phi_{\overline{\mathbf{c}}}^{-1}(Y_{\mathbf{c}})$, 
$\pi_{\mathbf{c}}=\restriction{\phi_{\overline{\mathbf{c}}}}{\widetilde{Y}_{\mathbf{c}}}$, 
$\widetilde{\mathcal{E}}=\restriction{\overline{\mathcal{E}}}{\widetilde{Y}_{\mathbf{c}}}$ 
and $$\mathcal{A}_{\mathcal{E}}=\End_{\mathcal{D} 
Y_{\mathbf{c}}}((\pi_{\mathbf{c}})_{*}\widetilde{\mathcal{E}}) \simeq \End_{\mathcal{D}_{G} 
Y_{\mathbf{c}}}((\pi_{\mathbf{c}})_{*}\widetilde{\mathcal{E}}), $$ where 
$\mathcal{D} Y_{\mathbf{c}}$ (resp. $\mathcal{D}_{G} Y_{\mathbf{c}}$) 
is the bounded derived category of $\overline{\Q}_{\ell}$-constructible sheaves 
(resp. $G$-equivariant) on $Y_{\mathbf{c}}$. We denote by $\Irr(\mathcal{A}_{\mathcal{E}})$ 
the set of (isomorphism classes of)  simple $\mathcal{A}_{\mathcal{E}}$-modules and 
$\overline{\Q}_{\ell}$ the constant sheaf.

Let $K_{\overline{\mathbf{c}}}=\IC(X_{\overline{\mathbf{c}}},\overline{\mathcal{E}})$ 
the intersection cohomology complex of Deligne--Goresky--MacPherson on 
$X_{\overline{\mathbf{c}}}$, with coefficients in $\overline{\mathcal{E}}$. Then 
$(\phi_{\overline{\mathbf{c}}})_{!}K_{\overline{\mathbf{c}}}$ is a complex on 
$\overline{Y}_{\mathbf{c}}$.

\begin{thm}\label{thmlus}
\cite[Theorem 6.5]{Lus1} \ \\
Let $\mathfrak{t}=[L,\mathcal{C}_{v}^{L},\mathcal{E}] \in \mathcal{S}_{G}$, 
$(S,\mathcal{E})=(Z_{L}^{\circ} \cdot \mathcal{C}_{v}^{L}, 
\overline{\Q}_{\ell} \boxtimes \mathcal{E})$ the corresponding cuspidal pair for 
$L$ and $P$ a parabolic subgroup of $G$ with Levi factor $L$. As before, 
we denote by $\overline{\mathbf{c}}=(P,L,S), \,\, \mathbf{c}=(L,S)$ and 
$(\phi_{\overline{\mathbf{c}}})_{!}K_{\overline{\mathbf{c}}}$ the corresponding complex 
on $\overline{Y}_{\mathbf{c}}$.
\begin{enumerate}
\item Let $(\mathcal{C}_{u}^{G},\mathcal{F}) \in \mathcal{N}_{G}^{+}$. 
Then $\Psi_{G}(\mathcal{C}_{u}^{G},\mathcal{F})=(L,\mathcal{C}_{v}^{L},\mathcal{E})$, 
if and only if the following conditions are satisfied : 
\begin{enumerate}
\item $\mathcal{C}_{u}^{G} \subseteq \overline{Y}_{\mathbf{c}}$ ;
\item $\mathcal{F}$ is a direct summand of 
$\restriction{R^{2 d_{\mathcal{C}_{u}^{G},\mathcal{C}_{v}^{L}}} 
(f_{\overline{\mathbf{c}}})_{!}(\overline{\mathcal{E}})}{\mathcal{C}_{u}^{G}}$, 
where $f_{\overline{\mathbf{c}}}$ is the restriction of $\phi_{\overline{\mathbf{c}}}$ 
to $X_{\overline{\mathbf{c}},\alpha} \subset X_{\overline{\mathbf{c}}}$, 
$d_{\mathcal{C}_{u}^{G},\mathcal{C}_{v}^{L}}=(\nu_G-\frac{1}{2} 
\dim \mathcal{C}_{u}^{G})-(\nu_L-\frac{1}{2} \dim \mathcal{C}_{v}^{L})$, and 
$\nu_{G}$ (resp. $\nu_{L}$) is the number of positive roots of $G$ (resp. $L$).
\end{enumerate}
\item The natural morphism $$\restriction{R^{2 d_{\mathcal{C}_{u}^{G},
\mathcal{C}_{v}^{L}}} (f_{\overline{\mathbf{c}}})_{!}(\overline{\mathcal{E}})}{
\mathcal{C}_{u}^{G}} \longrightarrow\restriction{ \mathcal{H}^{2d_{\mathcal{C}_{u}^{G},
\mathcal{C}_{v}^{L}}}((\phi_{\overline{\mathbf{c}}})_{!}K_{\overline{\mathbf{c}}})}{
\mathcal{C}_{u}^{G}}$$ given by the imbedding of $X_{\overline{\mathbf{c}},\alpha}$ 
into $X_{\overline{\mathbf{c}}}$ as an open subset, is an isomorphism.
\item For all $\rho \in \Irr(\mathcal{A}_{\mathcal{E}})$, let $((\phi_{\overline{
\mathbf{c}}})_{!}K_{\overline{\mathbf{c}}})_{\rho}$ the $\rho$-isotypical component 
of $(\phi_{\overline{\mathbf{c}}})_{!}K_{\overline{\mathbf{c}}}$, i.e. 
\[
(\phi_{\overline{\mathbf{c}}})_{!}K_{\overline{\mathbf{c}}}=
\bigoplus\nolimits_{\rho \in \Irr(\mathcal{A}_{\mathcal{E}})} \rho \boxtimes 
((\phi_{\overline{\mathbf{c}}})_{!}K_{\overline{\mathbf{c}}})_{\rho}.
\]
Let $\overline{Y}_{\mathbf{c},\uni}$ be the variety of unipotent elements in 
$\overline{Y}_{\mathbf{c}}$. There exists an unique pair $(\mathcal{C}_{u}^{G},\mathcal{F}) 
\in \mathcal{N}_{G}^{+}$ which satisfies the following conditions: 
\begin{enumerate}
\item $\mathcal{C}_{u}^{G} \subset \overline{Y}_{\mathbf{c}}$;
\item $\restriction{((\phi_{\overline{\mathbf{c}}})_{!}K_{\overline{\mathbf{c}}})_{\rho}}{
\overline{Y}_{\mathbf{c},\uni}}$ is isomorphic to 
$\IC(\overline{\mathcal{C}_{u}^{G}},\mathcal{F})[2d_{\mathcal{C}_{u}^{G},\mathcal{C}_{v}^{L}}]$ 
extended by $0$ on $\overline{Y}_{\mathbf{c},\uni}-\overline{\mathcal{C}_{u}^{G}}$.
\end{enumerate}
In particular, $\displaystyle \mathcal{F}=
\restriction{\mathcal{H}^{2d_{\mathcal{C}_{u}^{G},\mathcal{C}_{v}^{L}}}
\left(((\phi_{\overline{\mathbf{c}}})_{!}K_{\overline{\mathbf{c}}})_{\rho} \right)}{
\mathcal{C}_{u}^{G}}$ and \\
$\rho = \Hom_G \big( \cF, \cH^{2d_{\mathcal{C}_{u}^{G}, \mathcal{C}_{v}^{L}}} \left( 
(\phi_{\overline{\mathbf{c}}})_{!} K_{\overline{\mathbf{c}}} \right) \big|_{\cC_u^G} \big)$. 
The map 
\[
\Sigma_{\ft} : \Psi_G^{-1}(\ft) \to \Irr (\mathcal{A}_{\mathcal{E}}) 
\]
which associates $\rho$ to $(\mathcal{C}_{u}^{G},\mathcal{F})$ is a bijection.
\end{enumerate}
\end{thm}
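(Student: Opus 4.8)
The plan is to follow Lusztig's original argument in \cite{Lus1}, whose engine is the decomposition theorem of Beilinson--Bernstein--Deligne--Gabber applied to the proper map $\phi_{\overline{\mathbf{c}}}$ and the pure intersection cohomology complex $K_{\overline{\mathbf{c}}}=\IC(X_{\overline{\mathbf{c}}},\overline{\mathcal{E}})$. First I would record that, since $\phi_{\overline{\mathbf{c}}}$ is proper, $(\phi_{\overline{\mathbf{c}}})_! K_{\overline{\mathbf{c}}}=(\phi_{\overline{\mathbf{c}}})_* K_{\overline{\mathbf{c}}}$ is a finite direct sum of shifted $G$-equivariant IC sheaves on $\overline{Y}_{\mathbf{c}}$, and that over the open dense stratum $Y_{\mathbf{c}}$ the map $\pi_{\mathbf{c}}$ factors through the smooth fibration $\varpi_P$ and a finite covering whose deck group is the relative Weyl group $W_\ft:=N_G(L,\mathcal{C}_v^L,\mathcal{E})/L$. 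Consequently $\mathcal{A}_{\mathcal{E}}=\End((\pi_{\mathbf{c}})_*\widetilde{\mathcal{E}})$ equals the endomorphism algebra of the local system $(\pi_{\mathbf{c}})_*\widetilde{\mathcal{E}}$ on $Y_{\mathbf{c}}$, whose monodromy is the representation of $\pi_1(Y_{\mathbf{c}})$ induced from $\mathcal{E}$ along a subgroup of index $|W_\ft|$; a priori this presents $\mathcal{A}_{\mathcal{E}}$ as a twisted group algebra $\overline{\Q_{\ell}}[W_\ft,\natural]$.

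The technical heart is to show that $\natural$ is cohomologically trivial, so that $\mathcal{A}_{\mathcal{E}}\cong\overline{\Q_{\ell}}[W_\ft]$ with $W_\ft$ a Coxeter group. This is exactly the point at which the cuspidality of $\mathcal{E}$ is used: following \cite{Lus1} one reduces to the sub-Levi subgroups of $N_G(L)/L$ of semisimple rank one and checks, via the explicit classification of cuspidal pairs, that the relevant rank-one central extensions split. Granting this, $\Irr(\mathcal{A}_{\mathcal{E}})=\Irr(W_\ft)$ and one obtains the isotypical decomposition $(\phi_{\overline{\mathbf{c}}})_! K_{\overline{\mathbf{c}}}=\bigoplus_{\rho\in\Irr(\mathcal{A}_{\mathcal{E}})}\rho\boxtimes((\phi_{\overline{\mathbf{c}}})_! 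K_{\overline{\mathbf{c}}})_\rho$ of the statement.

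Next, for a fixed $\rho$, I would analyse the summand $((\phi_{\overline{\mathbf{c}}})_! K_{\overline{\mathbf{c}}})_\rho$, which by the decomposition theorem is a shifted IC sheaf on $\overline{Y}_{\mathbf{c}}$. Restricting it to the variety $\overline{Y}_{\mathbf{c},\uni}$ of unipotent elements and combining the cleanness of the cuspidal local system $\mathcal{E}$, so that $\IC(\overline{S},\overline{\mathcal{E}})$ is $\overline{\mathcal{E}}$ extended by zero, with the dimension estimate $\dim Y_{u,v}\le d_{u,v}$ recorded before the statement, which bounds the fibres of $\phi_{\overline{\mathbf{c}}}$ over unipotent elements, one finds that the top nonzero cohomology of this restriction lies in degree $2d_{\mathcal{C}_u^G,\mathcal{C}_v^L}$, is supported on the closure of a single unipotent class $\mathcal{C}_u^G$, and equals $\IC(\overline{\mathcal{C}_u^G},\mathcal{F})[2d_{\mathcal{C}_u^G,\mathcal{C}_v^L}]$ extended by zero for a unique $(\mathcal{C}_u^G,\mathcal{F})\in\mathcal{N}_G^+$; the same bound shows that strata of $X_{\overline{\mathbf{c}}}$ below the open one contribute nothing to $\mathcal{H}^{2d_{\mathcal{C}_u^G,\mathcal{C}_v^L}}((\phi_{\overline{\mathbf{c}}})_! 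K_{\overline{\mathbf{c}}})$ along $\mathcal{C}_u^G$, which is part (2) and yields the displayed formulas for $\mathcal{F}$ and $\rho$. It then remains to see that $\Sigma_\ft$ is a bijection onto $\Irr(\mathcal{A}_{\mathcal{E}})$: injectivity is immediate, since $\rho=\Hom_G(\mathcal{F},\mathcal{H}^{2d_{\mathcal{C}_u^G,\mathcal{C}_v^L}}((\phi_{\overline{\mathbf{c}}})_! K_{\overline{\mathbf{c}}})|_{\mathcal{C}_u^G})$ recovers $\rho$ from $(\mathcal{C}_u^G,\mathcal{F})$ and the summands are pairwise non-isomorphic, while the identification of the domain of $\Sigma_\ft$ with $\Psi_G^{-1}(\ft)$, and hence surjectivity, is part (1). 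I would prove (1) by comparing the two incarnations of the cuspidal support: by proper base change the stalk at a unipotent $u$ of $R^{2d_{\mathcal{C}_u^G,\mathcal{C}_v^L}}(f_{\overline{\mathbf{c}}})_!(\overline{\mathcal{E}})$ is computed from the top cohomology of a variety closely related to $Y_{u,v}$ of \eqref{eq:2.2} twisted by $\overline{\mathcal{E}}$, so that the resulting $A_G(u)$-representation is the $\epsilon$-isotypic part (on the $A_L(v)$-factor) of $\sigma_{u,v}$; hence $\mathcal{F}$, i.e. $\eta$, occurs there precisely when $\langle\eta\otimes\epsilon^*,\sigma_{u,v}\rangle_{A_G(u)\times A_L(v)}\ne0$, i.e. precisely when $\Psi_G(\mathcal{C}_u^G,\mathcal{F})=(L,\mathcal{C}_v^L,\mathcal{E})$ in the sense of the definition of $\Psi_G$.

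The \emph{main obstacle} is the pair of deep inputs used above: the cleanness of cuspidal local systems (needed for the IC description in (3) and, via the support bound, for (2)) and the triviality of the cocycle $\natural$. In \cite{Lus1} both are settled by reduction to small semisimple rank together with the classification of cuspidal pairs, and it is this classification input that makes the proof non-formal; once it is in hand, the remaining steps are an exercise in the decomposition theorem, proper base change, and the support estimate $\dim Y_{u,v}\le d_{u,v}$.
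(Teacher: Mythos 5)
A point of calibration first: the paper does not prove Theorem \ref{thmlus} at all -- it is recalled verbatim from \cite[Theorem 6.5]{Lus1}, and the surrounding text only fixes notation. So your proposal has to be measured against Lusztig's argument, and in broad outline that is what you reproduce: properness of $\phi_{\overline{\mathbf{c}}}$ and the decomposition theorem, proper base change, the estimate $\dim Y_{u,v}\le d_{u,v}$, and cuspidality entering through the vanishing characterization $\langle \eta\otimes\epsilon^{*},\sigma_{u,v}\rangle\ne 0\Rightarrow P=G$. In particular your reduction of part (1) to computing the stalk of $R^{2d_{\mathcal{C}_u^G,\mathcal{C}_v^L}}(f_{\overline{\mathbf{c}}})_{!}\overline{\mathcal{E}}$ at $u$ in terms of $\sigma_{u,v}$ is exactly how the two descriptions of the cuspidal support get matched, and your bijectivity argument for $\Sigma_\ft$ from part (3) is fine.

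Two corrections, though. What you call the technical heart -- triviality of the cocycle, so that $\mathcal{A}_{\mathcal{E}}\cong \overline{\Q_{\ell}}[W_\ft]$ with $W_\ft$ a Coxeter group -- is not part of this statement and is not needed for it: Theorem \ref{thmlus} only uses $\Irr(\mathcal{A}_{\mathcal{E}})$, and the isotypical decomposition in (3) requires only semisimplicity of $\mathcal{A}_{\mathcal{E}}$, which is automatic for a twisted group algebra over $\overline{\Q_{\ell}}$ (cf.\ Section \ref{sec:twisted}, or \cite[Proposition 3.5]{Lus1}). The untwisting is the separate Theorem \ref{thm:2.2}, i.e.\ \cite[Theorem 9.2]{Lus1}, which in \cite{Lus1} comes after Theorem 6.5; building it into this proof is at best unnecessary and at worst circular. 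Second, cleanness of the cuspidal pair $(S,\mathcal{E})$ is not an ingredient of Lusztig's proof and is certainly not ``settled in \cite{Lus1}'': the vanishing you need on the lower strata of $X_{\overline{\mathbf{c}}}$ (hence part (2), and the identification of the top cohomology of the $\rho$-component with an $\IC$ sheaf of a single unipotent class in (3)) is obtained there from the support/degree bounds satisfied by $K_{\overline{\mathbf{c}}}$ on those strata combined with the fibre dimension estimates of \cite[\S 6]{Lus1}. Invoking cleanness is a legitimate but genuinely different (and historically later) shortcut, and if you use it you must cite it as a separate deep input rather than attribute it to \cite{Lus1}. With those two adjustments your sketch agrees with the argument the paper is implicitly relying on by citation.
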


The relation of Theorem \ref{thmlus} with the classical Springer correspondence 
goes via $\mc A_\cE$, which turns out to be isomorphic to the group algebra of 
a Weyl group. We define
\begin{equation} \label{eqn Wtcirc}
W_{\ft} := N_{G}(\ft) / L = N_G (L,\cC_v^L,\cE) / L .
\end{equation} 

\begin{thm}\label{thm:2.2} \cite[Theorem 9.2]{Lus1} 
\enuma{
\item $W_{\ft} = N_{G}(L) / L$.
\item $N_{G}(L) / L$ is the Weyl group of the root system $R(G,Z(L)^\circ)$.
\item There exists a canonical algebra isomorphism $\mc A_\cE \cong \overline{\Q_{\ell}}[W_\ft]$.
Together with Theorem \ref{thmlus}.(3) this gives a canonical bijection 
$\Psi_G^{-1}(\ft) \to \Irr_{\Q_{\ell}}(W_\ft)$.}
\end{thm}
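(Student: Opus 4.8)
The plan is to dispatch parts (a) and (b) by structure theory of Levi subgroups, and to put the real effort into the canonical isomorphism $\mc A_\cE\cong\overline{\Q_{\ell}}[W_\ft]$ of (c), which is a geometric study of the complex $(\phi_{\overline{\mb c}})_!K_{\overline{\mb c}}$ and of the covering $\pi_{\mb c}$.

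For (b), fix a maximal torus $T\subseteq L$. Since $L$ is a Levi subgroup, $L=Z_G(Z(L)^\circ)$, so an element normalizes $L$ precisely when it normalizes $Z(L)^\circ$. A Frattini argument (all maximal tori of $L$ are $L$-conjugate) gives $N_G(L)=L\cdot(N_G(L)\cap N_G(T))$, hence $N_G(L)/L\cong N_{W(G,T)}(Z(L)^\circ)/W(L,T)$: the stabilizer of the subtorus $Z(L)^\circ$ in $W(G,T)$ modulo the parabolic subgroup $W(L,T)$. By the crystallographic case of Howlett's theorem on normalizers of parabolic subgroups of reflection groups, this quotient is the Weyl group of the root system $R(G,Z(L)^\circ)$ of nonzero weights of $Z(L)^\circ$ on $\mf g$; equivalently it is generated by the reflections $s_\alpha$ coming from the minimal Levi subgroups $L_\alpha\supsetneq L$ with $N_{L_\alpha}(L)\ne L$. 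For (a), the inclusion $W_\ft\subseteq N_G(L)/L$ is immediate, and equality amounts to the $N_G(L)$-invariance of the pair $(\cC_v^L,\cE)$. By (b) it suffices that each $s_\alpha$ fix $(\cC_v^L,\cE)$; choosing a representative in $N_{L_\alpha}(L)$ reduces this to the rank-one situation where $L$ is a maximal Levi of $G=L_\alpha$ and $N_G(L)/L=\{1,s\}$. One then verifies, using Lusztig's classification of cuspidal pairs \cite{Lus1,LS}, that the cuspidal pair on $L$ is rigid — pinned down by its central character together with the isolated unipotent class $\cC_v^L$, both of which are $s$-stable. This rank-one rigidity, which ultimately rests on the explicit list of cuspidal local systems (including the exceptional groups), is the first of the two genuine difficulties.

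For (c): the semisimplicity of $(\pi_{\mb c})_*\widetilde\cE$ (decomposition theorem, $\pi_{\mb c}$ proper) makes $\mc A_\cE$ a semisimple algebra, and — this already uses cuspidality — every simple summand of $(\phi_{\overline{\mb c}})_!K_{\overline{\mb c}}$ has full support $\overline Y_{\mb c}$, so $\mc A_\cE$ may be computed over the open stratum $Y_{\mb c}$. The construction then proceeds in four steps. (i) Over $Y_{\mb c}$ the map $\pi_{\mb c}$ is a finite covering, and by (a) it is Galois with group $N_G(L)/L=W_\ft$; for $L=T$ this is the covering of the regular semisimple locus of $G$ that is Galois with group $W(G,T)$. (ii) By (a) and the $L$-equivariance of $\cE$, the local system $\widetilde\cE$ carries a projective $W_\ft$-action, so each $w\in W_\ft$ produces an endomorphism $\theta_w$ of $(\pi_{\mb c})_*\widetilde\cE$, and a dimension count (the twisted group algebra structure of the endomorphism algebra of a pushforward along a Galois covering) shows the $\theta_w$ span $\mc A_\cE$. (iii) Writing $\theta_w\theta_{w'}=c(w,w')\,\theta_{ww'}$, one shows that the $2$-cocycle $c$ on $W_\ft$ is trivial for the natural normalization of the $\theta_w$; this is precisely where cuspidality is decisive, through Lusztig's cleanness theorem for cuspidal local systems \cite{Lus1}, which forces the monodromy obstruction to vanish. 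One thus obtains the canonical algebra isomorphism $\mc A_\cE\cong\overline{\Q_{\ell}}[W_\ft]$. (iv) Composing it with the bijection $\Sigma_\ft\colon\Psi_G^{-1}(\ft)\to\Irr(\mc A_\cE)$ from Theorem \ref{thmlus}(3) yields the desired canonical bijection $\Psi_G^{-1}(\ft)\to\Irr_{\Q_{\ell}}(W_\ft)$. I expect the main obstacle to be exactly the vanishing of the cocycle $c$ in step (iii), together with the rank-one rigidity in (a): these are the two places where cuspidality does real work, and both rely on Lusztig's cleanness theorem and the case-by-case classification of cuspidal local systems.
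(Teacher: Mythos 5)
The paper itself gives no proof of this statement: Theorem \ref{thm:2.2} is quoted verbatim from \cite[Theorem 9.2]{Lus1}, so the benchmark is Lusztig's argument in \cite[\S 9]{Lus1}. Measured against that, your proposal has a genuine gap already in (b): your argument never uses the hypothesis that $L$ carries a cuspidal pair, yet (b) is simply false for a general Levi subgroup. Take $G = \GL_3 (\C)$ and $L = \GL_1 (\C) \times \GL_2 (\C)$: then $N_G (L)/L = 1$, while $R(G, Z(L)^\circ)$ is of rank one and its Weyl group has order $2$. Howlett's theorem computes the quotient $\mathrm{Stab}_{W(G,T)}(Z(L)^\circ)/W(L,T)$, but it does not identify it with the Weyl group of the restricted root system; in general that quotient can be strictly smaller than $W(R(G,Z(L)^\circ))$ (as above) or contain non-reflection elements. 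The actual content of \cite[Theorem 9.2]{Lus1} is that cuspidality of $(\cC_v^L,\cE)$ forces, for \emph{every} $\alpha \in R(G,Z(L)^\circ)$, i.e.\ for every rank-one Levi $M \supsetneq L$, the existence of a nontrivial element of $N_M (L)/L$ which moreover stabilizes $(\cC_v^L,\cE)$; your argument supplies no mechanism producing these reflections. Since your proof of (a) rests on (b) (``it suffices that each $s_\alpha$ fix \dots''), it collapses with it; and even in the rank-one reduction, the assertion that the $Z(L)$-character of $\cE$ is $s$-stable is exactly the delicate point, not a given --- compare the even spin case, where the outer automorphism interchanges the two relevant cuspidal characters, an issue this paper must itself confront in Theorem \ref{thm:4.1} and Lemma \ref{lem:4.2}.

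For (c), your steps (i), (ii) and (iv) are a reasonable summary of \cite[\S 3--4]{Lus1}: $\mc A_\cE$ is a $W_\ft$-graded twisted group algebra of dimension $|W_\ft|$, computable over $Y_{\mb c}$ because $(\phi_{\overline{\mb c}})_! K_{\overline{\mb c}} \cong \IC (\overline{Y}_{\mb c}, (\pi_{\mb c})_* \widetilde{\cE})$. But the decisive step (iii), the vanishing of the $2$-cocycle together with a canonical choice of the basis elements, is only asserted, and the mechanism you invoke (cleanness of cuspidal local systems ``forcing the monodromy obstruction to vanish'') is not an argument: cleanness concerns the boundary behaviour of $\IC (\overline{\cC_v^L},\cE)$ and does not by itself trivialize a cocycle on $W_\ft$. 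Lusztig obtains the group-algebra structure by a finer analysis (normalizing $\theta_\sigma$ with $\theta_\sigma^2 = 1$ in each rank-one subalgebra and verifying compatibility), and even then the isomorphism is canonical only up to the sign twist, as the paper points out immediately after the theorem. So as written the proposal establishes neither (b), nor the hard direction of (a), nor the trivialization needed in (c).
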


In fact there exist two such canonical algebra isomorphisms, for one can always twist 
with the sign representation of $W_\ft$. When we employ generalized Springer correspondences 
in relation with the local Langlands correspondence, we will always use the isomorphism 
$\mc A_\cE \cong \overline{\Q_{\ell}}[W_\ft]$ such that the trivial $W_\ft$-representation 
is the image of $(\cC_v^G, \cE)$ under Theorems \ref{thmlus} and \ref{thm:2.2}. 
(Here we extend $\cE \; G$-equivariantly to $\cC_v^G$, compare with \cite[9.5]{Lus1}.)

Let $H$ be a group which acts on the connected complex reductive group $G$ by
algebraic automorphisms.
Then $H$ acts also on $\mathcal{N}_{G}^{+}$ and $\mathcal{S}_{G}$. Indeed, let $h \in H$,  
$(\mathcal{C}^{G}_{u},\mathcal{F}) \in \mathcal{N}_{G}^{+}$, $\mathfrak{t}=
[L,\mathcal{C}_{v}^{L},\cE]_{G} \in \mathcal{S}_{G}$ and $\rho \in \Irr(W_{\mathfrak{t}})$. 
Since $h (G) = G$, ${}^h \mathcal{C}_{u}^{G} = \mathcal{C}_{h \cdot u}^{G}$ 
is a unipotent orbit of $G$. Similarly, ${}^h L$ is a Levi subgroup of 
$G$, ${}^h \mathcal{C}_{v}^{L}$ is a unipotent orbit of ${}^h L$, etc. 

We denote by $h^*$ the pullback of sheaves along the isomorphism $h^{-1} : G \to G$. 
Thus $h^{*} \mathcal{F}$ (resp. $h^{*}\mathcal{L}$) is a local system on 
$\mathcal{C}^{G}_{h \cdot u}$ (resp. ${}^{h} \mathcal{C}_{v}^{L}$). Keeping the 
above notation, the action of $H$ on $\mathcal{N}_{G}^{+}, \,\, 
\mathcal{S}_{G}$ and $\Irr(W)$ is given by
\[
h \cdot (\mathcal{C}^{G}_{u},\mathcal{F})=(\mathcal{C}^{G}_{h \cdot u},h^{*} 
\mathcal{F}), \qquad h \cdot [L,\mathcal{C}_{v}^{L},\mathcal{L}]=[{}^{h} L,
\mathcal{C}^{{}^{h} L}_{h \cdot v}, h^{*} \mathcal{L}] 
\]
and $h \cdot \rho = \rho^{h} \in \Irr(W_{h \cdot \mathfrak{t}})$.

\begin{thm}\label{thm:3.2}
The Springer correspondence for $G$ is $H$-equivariant. More precisely, for all 
$h \in H$, the following diagrams are commutative:
\[
\xymatrix{
\mathcal{N}_{G}^{+} \ar[rr]^{\Psi_{G}} \ar[d]^{h} & & \mathcal{S}_{G}^{+} \ar[d]^{h} & & 
\Psi_G^{-1}(\ft) \ar[d]^{h} \ar[rr]_{\Sigma_{\ft}} & & \Irr(W_{\mathfrak{t}}) \ar[d]^h\\
\mathcal{N}_{G}^{+} \ar[rr]^{\Psi_{G}} & & \mathcal{S}_{G}^{+} & & \Psi_G^{-1}(h \cdot \ft) 
\ar[rr]_{\Sigma_{h \cdot \ft}} & & \Irr(W_{h \cdot \mathfrak{t}})
}
\]
In other words, for all $h \in H$, $(\mathcal{C}_{u}^{G},\mathcal{F}) \in 
\Psi_G^{-1}(\ft) \subset \mathcal{N}_{G}^{+}$:
\[
\Psi_{G}(h \cdot (\mathcal{C}_{u}^{G}, \mathcal{F})) = 
h \cdot \Psi_{G}(\mathcal{C}_{u}^{G},\mathcal{F}) \quad 
\mbox{and} \quad \Sigma_{h \cdot \ft}(h \cdot (\mathcal{C}_{u}^{G},\mathcal{F}))=
h \cdot \Sigma_{\ft}(\mathcal{C}_{u}^{G},\mathcal{F}).
\]
\end{thm}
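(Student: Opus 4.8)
The plan is to reduce everything to the defining properties in Theorem \ref{thmlus}, exploiting the fact that an algebraic automorphism $h$ of $G$ transports \emph{all} of Lusztig's geometric data functorially. First I would fix $h \in H$ and a cuspidal support $\ft = [L,\cC_v^L,\cE]_G$, together with a parabolic $P = L U_P$. Applying $h$ gives a parabolic ${}^h P = {}^h L \cdot {}^h U_P$ with Levi factor ${}^h L$, and $h^* \cE$ is an irreducible ${}^h L$-equivariant cuspidal local system on ${}^h S = Z_{{}^h L}^\circ \cdot \cC_{h\cdot v}^{{}^h L}$ (cuspidality is preserved because $\sigma_{u,v}$ is defined purely in terms of the varieties $Y_{u,v}$, which $h$ maps isomorphically to $Y_{h\cdot u, h\cdot v}$). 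The key observation is then that $h$, viewed as an isomorphism of varieties, carries $\widehat X_{\overline{\mb c}}$, $X_{\overline{\mb c}}$, $\overline Y_{\mb c}$, the maps $\sigma_{\overline{\mb c}}, \varpi_P, \phi_{\overline{\mb c}}$, and the stratifications, to the corresponding objects for $\overline{{}^h \mb c} = ({}^h P, {}^h L, {}^h S)$. Consequently $h^* \overline\cE = \overline{h^* \cE}$, $h$ intertwines $K_{\overline{\mb c}}$ with $K_{\overline{{}^h \mb c}}$, and hence $h^* \big( (\phi_{\overline{\mb c}})_! K_{\overline{\mb c}} \big) \cong (\phi_{\overline{{}^h\mb c}})_! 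K_{\overline{{}^h\mb c}}$. This last isomorphism is the crux: once it is established, the commutativity of both squares will follow almost formally.

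For the cuspidal support square, I would argue via condition (1) of Theorem \ref{thmlus}. If $\Psi_G(\cC_u^G,\cF) = \ft$, then $\cC_u^G \subseteq \overline Y_{\mb c}$ and $\cF$ is a summand of $R^{2d}(f_{\overline{\mb c}})_!(\overline\cE)|_{\cC_u^G}$. Applying $h^*$ and using $h(\overline Y_{\mb c}) = \overline Y_{{}^h \mb c}$, $h(\cC_u^G) = \cC_{h\cdot u}^G$, and $h^*\overline\cE = \overline{h^*\cE}$, together with the fact that $h$ commutes with $f_{\overline{\mb c}}$ up to the evident relabelling, shows that $\cC_{h\cdot u}^G \subseteq \overline Y_{{}^h\mb c}$ and $h^*\cF$ is a summand of $R^{2d}(f_{\overline{{}^h\mb c}})_!(\overline{h^*\cE})|_{\cC_{h\cdot u}^G}$ — note the numerical shift $d = d_{\cC_u^G,\cC_v^L}$ is unchanged since $h$ preserves dimensions of orbits and the root-count invariants $\nu_G, \nu_L$. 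By the uniqueness clause in Theorem \ref{thmlus}(1), this forces $\Psi_G(\cC_{h\cdot u}^G, h^*\cF) = [{}^h L, \cC_{h\cdot v}^{{}^h L}, h^*\cE] = h\cdot \ft$, which is exactly $\Psi_G(h\cdot(\cC_u^G,\cF)) = h\cdot\Psi_G(\cC_u^G,\cF)$.

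For the second square, I would transport the endomorphism-algebra description. The isomorphism $h^*\big((\pi_{\mb c})_*\widetilde\cE\big) \cong (\pi_{{}^h\mb c})_*\widetilde{h^*\cE}$ induces an algebra isomorphism $\mc A_\cE \isom \mc A_{h^*\cE}$, and under Theorem \ref{thm:2.2} this matches the relabelling isomorphism $\overline{\Q_\ell}[W_\ft] \isom \overline{\Q_\ell}[W_{h\cdot\ft}]$ coming from $h : N_G(L)/L \to N_G({}^h L)/{}^h L$ — here one uses that $h$ carries $Z(L)^\circ$ to $Z({}^h L)^\circ$ and the root system $R(G, Z(L)^\circ)$ to $R(G, Z({}^h L)^\circ)$, and that the normalization via the trivial representation (the image of $(\cC_u^G,\cE)$) is manifestly $h$-stable. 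Taking $\rho$-isotypical components of the isomorphism $h^*\big((\phi_{\overline{\mb c}})_! K_{\overline{\mb c}}\big) \cong (\phi_{\overline{{}^h\mb c}})_! K_{\overline{{}^h\mb c}}$ and reading off $\cF$ and $\rho$ via the formulas in Theorem \ref{thmlus}(3) then yields $\Sigma_{h\cdot\ft}(h\cdot(\cC_u^G,\cF)) = h\cdot\Sigma_\ft(\cC_u^G,\cF)$.

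I expect the main obstacle to be purely bookkeeping rather than conceptual: making precise and correct the statement that $h$, as an isomorphism of the (non-intrinsic) auxiliary varieties $\widehat X_{\overline{\mb c}}$ and $X_{\overline{\mb c}}$, genuinely identifies \emph{Lusztig's chosen $\overline\cE$} with $\overline{h^*\cE}$ — since $\overline\cE$ is characterized only up to the uniqueness property $(\sigma_{\overline{\mb c},\alpha})^*\cE = (\varpi_P)^*\overline\cE$, one must check that $h^*\overline\cE$ satisfies the analogous property for $\overline{{}^h\mb c}$ and then invoke that uniqueness. Once this functoriality lemma is nailed down, everything else is an application of the uniqueness assertions already built into Theorem \ref{thmlus}, so there is no genuinely hard analytic or geometric input beyond what Lusztig has provided.
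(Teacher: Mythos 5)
Your proposal is correct and follows essentially the same route as the paper: transport all of Lusztig's auxiliary data ($\widehat X_{\overline{\mb c}}$, $X_{\overline{\mb c}}$, $\overline Y_{\mb c}$, the maps $\sigma, \varpi_P, \phi_{\overline{\mb c}}$) along $h$, deduce $h^*\overline{\cE} = \overline{h^*\cE}$ from the uniqueness characterization, use (proper) base change to compare the $R^{2d_C}(f)_!$ sheaves for part (1), and use compatibility of intersection cohomology complexes with the isomorphism $h$ (the paper cites Goresky--MacPherson for $h^*K_{\overline{\mb c}} = K_{h\cdot\overline{\mb c}}$) to match $\rho$-isotypic components and invoke the characterization in Theorem \ref{thmlus}(3) for part (2). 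The "crux" you single out — verifying that $h^*\overline{\cE}$ satisfies the defining property and then invoking uniqueness — is exactly the first step of the paper's argument.
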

\begin{proof}
We keep the notations of Theorem \ref{thmlus}. 
Let $h \in H$, $(\mathcal{C}_{u}^{G},\mathcal{F}) \in \mathcal{N}_{G}^{+}$, $P$ 
a parabolic subgroup of $G$ with Levi factor $L$, $v \in L$ a unipotent element and 
$\cE$ an irreducible cuspidal $L$-equivariant local system on $\mathcal{C}_{v}^{L}$ 
such that  
\[
\Psi_{G}(\mathcal{C}_{u}^{G},\mathcal{F})=[L,\mathcal{C}_{v}^{L},\cE] \in 
\mathcal{S}_{G}.
\]
As in Theorem \ref{thmlus}, let $(S,\mathcal{E})=(Z_{L}^{\circ} \cdot 
\mathcal{C}_{v}^{L}, \overline{\Q}_{\ell} \boxtimes \cE)$ be the corresponding 
cuspidal pair for $L$ and $\overline{\mathbf{c}}=(P,L,S), \,\, \mathbf{c}=(L,S)$.
After (1) in Theorem \ref{thmlus}, $\mathcal{C}_{u}^{G} \subset \overline{Y}_{\mathbf{c}}$, 
so ${}^h\mathcal{C}_{u}^{G} \subset {}^{h} \overline{Y}_{\mathbf{c}}=
\overline{Y}_{h \cdot \mathbf{c}}$, where $h \cdot \mathbf{c}=({}^{h} L,{}^{h} S) $.
Consider the maps 
\[
\begin{array}[t]{rcl}
\widehat{X}_{h \cdot \overline{\mathbf{c}}} & \longrightarrow & 
\widehat{X}_{\overline{\mathbf{c}}}\\
(g,x) & \longmapsto &  ({}^{h^{-1}}g,{}^{h^{-1}}x)
\end{array}, \,\,\begin{array}[t]{rcl}
X_{h \cdot \overline{\mathbf{c}}} & \longrightarrow & X_{\overline{\mathbf{c}}}\\
(g,x \,{}^{h}P) & \longmapsto &  ({}^{h^{-1}}g,{}^{h^{-1}}xP)
\end{array}, \,\,
\begin{array}[t]{rcl}
G & \longrightarrow & G \\
g & \longmapsto & {}^{h^{-1}}g
\end{array}.
\]
and the following diagrams:
\[
\xymatrix{
\widehat{X}_{h \cdot \overline{\mathbf{c}},\alpha} \ar[r]^{h} \ar[d]^{\sigma_{h 
\cdot \overline{\mathbf{c}},\alpha}} & \widehat{X}_{\overline{\mathbf{c}},\alpha} 
\ar[d]^{\sigma_{\overline{\mathbf{c}},\alpha}} \\
{}^{h} S \ar[r]^{h} & S
},\,\, 
\xymatrix{
\widehat{X}_{h \cdot \overline{\mathbf{c}},\alpha} \ar[r]^{h} \ar[d]^{\varpi_{{}^{h} P}} & 
\widehat{X}_{\overline{\mathbf{c}},\alpha} \ar[d]^{\varpi_{P}} \\
X_{h \cdot \overline{\mathbf{c}},\alpha} \ar[r]^{h} & X_{\overline{\mathbf{c}},\alpha}
},\,\, 
\xymatrix{
X_{h \cdot \overline{\mathbf{c}},\alpha} \ar[r]^{h} \ar[d]^{f_{h \cdot 
\overline{\mathbf{c}}}} & X_{\overline{\mathbf{c}},\alpha} \ar[d]^{f_{\overline{\mathbf{c}}}} \\
\overline{Y}_{h \cdot \mathbf{c}} \ar[r]^{h} & \overline{Y}_{\mathbf{c}}
}.
\]
The first two commutative diagrams show that:
\begin{eqnarray*}
(\sigma_{h \cdot \overline{\mathbf{c}},\alpha})^{*} (h^{*} \mathcal{E}) & = & 
h^{*} (\sigma_{\overline{\mathbf{c}},\alpha})^{*} (\mathcal{E})\\
& = & h^{*} (\varpi_P)^{*} (\overline{\mathcal{E}})
\; = \; (\varpi_{{}^{h} P})^{*} (h^{*} \overline{\mathcal{E}}) .
\end{eqnarray*}
By unicity, this shows that $h^{*} \overline{\mathcal{E}}=\overline{h^{*}\mathcal{E}}$. 
The third cartesian diagram shows, by the proper base change theorem, that
\[
h^{*} R^{2d_C} (f_{\overline{\mathbf{c}}})_{!}(\overline{\mathcal{E}}) \cong
R^{2d_C} (f_{h \cdot \overline{\mathbf{c}}})_{!}(h^{*} \overline{\mathcal{E}}) =
R^{2d_C} (f_{h \cdot \overline{\mathbf{c}}})(\overline{h^{*} \mathcal{E}}).
\]
Because 
\begin{eqnarray*} 
0 & \neq \Hom_{\mathcal{D}\mathcal{C}_{u}^{G}}(\mathcal{F},
\restriction{R^{2 d_C} (f_{\overline{\mathbf{c}}})_{!}(\overline{\mathcal{E}})}{
\mathcal{C}_{u}^{G}}))  & \cong \Hom_{\mathcal{D}{}^{h}\mathcal{C}_{u}^{G}}(h^{*} 
\mathcal{F},h^{*} \restriction{R^{2 d_C} (f_{\overline{\mathbf{c}}})_{!}
(\overline{\mathcal{E}})}{{}^{h} \mathcal{C}_{u}^{G}}))\\
& & \cong \Hom_{\mathcal{D}{}^{h}\mathcal{C}_{u}^{G}}(h^{*} \mathcal{F},
\restriction{R^{2 d_C} (f_{h \cdot \overline{\mathbf{c}}})_{!}(\overline{h^{*}
\mathcal{E}})}{{}^{h} \mathcal{C}_{u}^{G}})) \neq 0,
\end{eqnarray*}
with $d_C = d_{\mathcal{C}_{u}^{G},\mathcal{C}_{v}^{L}}=d_{{}^{h} 
\mathcal{C}_{u}^{G},{}^{h} \mathcal{C}_{v}^{L}}$. Thus $h^{*} \mathcal{F}$ 
is a direct summand of $\restriction{R^{2 d} (f_{h \cdot \overline{\mathbf{c}}})_{!}
(\overline{h^{*} \mathcal{E}})}{{}^{h} \mathcal{C}_{u}^{G}}$ and after 
Theorem \ref{thmlus}, $\Psi_{G}$ is $H$-equivariant.

According to \cite[Proposition 5.4]{Gor}
\[
h^{*} K_{\overline{\mathbf{c}}} = 
h^{*} \IC(X_{\overline{\mathbf{c}}},\overline{\mathcal{E}}) = \IC(h^{*} 
X_{\overline{\mathbf{c}}},h^{*} \overline{\mathcal{E}}) = \IC (X_{h \cdot 
\overline{\mathbf{c}}},\overline{h^{*}  \mathcal{E}}) = K_{h \cdot \overline{\mathbf{c}}}.
\]
Let $\rho \in \Irr(\mathcal{A}_{\mathcal{E}})$. By functoriality, 
$\mathcal{A}_{h^{*} \mathcal{E}} \simeq \mathcal{A}_{\mathcal{E}}$ and by considering 
the third commutative diagram, we get:
\begin{eqnarray*}
\Hom_{\mathcal{A}_{\mathcal{E}}}(\rho,(\phi_{\overline{\mathbf{c}}})_{!}
K_{\overline{\mathbf{c}}})  \cong & \Hom_{\mathcal{A}_{h^{*} \mathcal{E}}}(h^{*} \rho,
h^{*}  (\phi_{\overline{\mathbf{c}}})_{!}K_{\overline{\mathbf{c}}})\\
\cong &  \Hom_{\mathcal{A}_{h^{*} \mathcal{E}}}(\rho^{h}, (\phi_{h \cdot 
\overline{\mathbf{c}}})_{!}K_{h \cdot\overline{\mathbf{c}}}) \\
h^{*} ((\phi_{\overline{\mathbf{c}}})_{!}K_{\overline{\mathbf{c}}})_{\rho} \cong & 
((\phi_{h \cdot \overline{\mathbf{c}}})_{!}K_{h \cdot \overline{\mathbf{c}}})_{h^{*} \rho}
\end{eqnarray*}
Since $\restriction{((\phi_{\overline{\mathbf{c}}})_{!}K_{\overline{\mathbf{c}}})_{\rho}}
{\overline{Y}_{\mathbf{c},\uni}} \simeq \IC(\overline{\mathcal{C}_{u}^{G}},
\mathcal{F})[2d_{\mathcal{C}_{u}^{G},\mathcal{C}_{v}^{L}}]$, we have 
\begin{eqnarray*}
h^{*}\restriction{((\phi_{\overline{\mathbf{c}}})_{!}K_{\overline{\mathbf{c}}})_{\rho}}{
\overline{Y}_{\mathbf{c},\uni}} \cong h^{*} \IC(\overline{\mathcal{C}_{u}^{G}},
\mathcal{F})[2d_{\mathcal{C}_{u}^{G},\mathcal{C}_{v}^{L}}]\\
((\phi_{h \cdot \overline{\mathbf{c}}})_{!}K_{h \cdot \overline{\mathbf{c}}})_{h^{*} \rho} 
\cong \IC(\overline{{}^h\mathcal{C}_{u}^{G}},h^{*} \mathcal{F})[2d_{{}^{h} 
\mathcal{C}_{u}^{G},{}^{h} \mathcal{C}_{v}^{L}}]
\end{eqnarray*}
According to the characterization (3) of Theorem \ref{thmlus}, this shows that 
$\Sigma_{\ft}$ is $H$-equivariant.
\end{proof}

\section{Disconnected groups: the cuspidal case}
\label{sec:cusp}

First we recall Lusztig's classification of unipotent cuspidal pairs 
for a connected reductive group. \texttt{See Erratum in the appendix!}

\begin{thm}\label{thm:4.1} (Lusztig) \\
Let $G^\circ$ be a connected complex reductive group and write 
$Z = Z(G^\circ) / Z(G^\circ)^\circ$. 
\enuma{
\item Fix an Aut$(G^\circ)$-orbit $X$ of characters $Z \to \overline{\Q_\ell}^\times$.
There is at most one unipotent conjugacy class $\cC_u^{G^\circ}$ which carries a 
cuspidal local system on which $Z$ acts as an element of $X$. Moreover
$\cC_u^{G^\circ}$ is Aut$(G^\circ)$-stable and distinguished in $G^\circ$.
\item Every cuspidal local system $\cE$ on $\cC_u^{G^\circ}$ is uniquely determined
by the character by which $Z$ acts on it.
\item The dimension of the cuspidal representation $\cE_u$ of $A_{G^\circ} (u)$
is a power of two (possibly $2^0 = 1$). It is one if $G^\circ$ contains no factors 
which are isomorphic to spin or half-spin groups. 
} 
\end{thm}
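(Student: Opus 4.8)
The plan is to reduce to the case of a quasi-simple group, invoke Lusztig's explicit case-by-case classification of unipotent cuspidal local systems from \cite{Lus1}, and supplement it with the $\Aut(G^\circ)$-equivariance of the generalized Springer correspondence (Theorem~\ref{thm:3.2}) to obtain the equivariance assertions in (a).

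For the reduction, recall that every unipotent element of $G^\circ$ lies in the derived subgroup, and that $G^\circ$ receives a central isogeny from $Z(G^\circ)^\circ \times G_1 \times \cdots \times G_k$ with each $G_i$ quasi-simple. A cuspidal local system on a direct product is an external tensor product of cuspidal local systems on the factors; the torus $Z(G^\circ)^\circ$ contributes nothing (only the constant sheaf at its identity); and a central isogeny transforms cuspidal local systems, and their central characters, in the controlled way described in \cite[\S2]{Lus1}. Hence it suffices to prove (a), (b), (c) for quasi-simple $G^\circ$: under this reduction the unipotent class carrying a given cuspidal local system is the product of the corresponding classes in the $G_i$, while $Z = Z(G^\circ)/Z(G^\circ)^\circ$ and the central characters factor accordingly.

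For quasi-simple $G^\circ$, Lusztig's tables (\cite[\S\S10--15]{Lus1}) show that for each character $\chi$ of $Z$ there is at most one unipotent class supporting a cuspidal local system on which $Z$ acts by $\chi$, and that this local system is then uniquely determined by $\chi$; inspecting the tables, the class in question is in each case characterised inside $G^\circ$ in a way invariant under all algebraic automorphisms, so it is $\Aut(G^\circ)$-stable. Combined with Theorem~\ref{thm:3.2} (by which an automorphism $\sigma$ carries a cuspidal pair to one whose central character is obtained from the original via the automorphism $\sigma$ induces on $Z$), this gives both that every character in the $\Aut(G^\circ)$-orbit $X$ is realised on this same, $\Aut(G^\circ)$-stable, class — which is (a) — and the uniqueness of the local system stated in (b). For (c), the same tables show $\cE_u$ is one-dimensional in every type except the spin and half-spin groups, where $A_{G^\circ}(u)$ is a $2$-group (a central extension of an elementary abelian $2$-group) on which $\cE_u$ restricts to an irreducible representation of dimension a power of $2$, of Clifford/extraspecial type.

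The non-formal ingredients are the bookkeeping in the reduction step — tracking the unipotent class, the component group $A_{G^\circ}(u)$, the cuspidal local system, and its $Z$-central character through products and, above all, central isogenies — and the power-of-two assertion for spin and half-spin groups, which rests on Lusztig's explicit determination of $A_{G^\circ}(u)$ and of the cuspidal local systems supported there. I expect the central-isogeny case of the reduction to be the trickiest point; everything else amounts to assembling \cite{Lus1} and combining it with Theorem~\ref{thm:3.2}.
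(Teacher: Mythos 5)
Your proposal is correct and follows essentially the same route as the paper: reduce via \cite[\S 2.10]{Lus1} to the almost simple (simply connected) case, invoke Lusztig's case-by-case classification in \cite[\S 10, \S 14--15]{Lus1} for (b) and (c), and use Aut-equivariance for the statements in (a). The only minor difference is the order of the argument in (a) --- you deduce ``one class per orbit $X$'' from Aut-stability of each per-character class read off the tables, whereas the paper isolates the one subtle case (spin groups with two cuspidal-supporting classes) and distinguishes the classes by the character value on the Aut-fixed element $-1$ --- but both rest on the same case-by-case input, so this is not a genuinely different approach.
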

\begin{proof}
In \cite[\S 2.10]{Lus1} it is explained how the classification can be reduced to 
simply connected, almost simple groups. Namely, first one notes that dividing out 
$Z(G^\circ)^\circ$ does not make an essential difference. Next everything is lifted 
to the simply connected cover $\tilde G$ of the semisimple group $G^\circ / 
Z(G^\circ)^\circ$. Since every automorphism of $G^\circ / Z(G^\circ)^\circ$ can be
lifted to one of $\tilde G$, the canonical image of $X$ is contained in a unique 
Aut$(\tilde G)$-orbit $\tilde X$ on $\Irr_{\overline{\Q_\ell}}(\tilde Z)$, where 
$\tilde Z$ is the $Z$ for $\tilde G$. Furthermore $\tilde G$ is a direct product of 
almost simple, simply connected groups, and $\tilde X$ decomposes as an analogous
product. Therefore it suffices to establish the theorem for simple, simply connected 
groups $G_{\sc}$.

(a) and (b) are shown in the case-by-case calculations in \cite[\S 10 and \S 14--15]{Lus1}.
But (a) is not made explicit there, so let us comment on it. There are only few cases in
which one really needs an Aut$(G_\sc)$-orbit $X_\sc$ in $\Irr_{\overline{\Q_\ell}}(Z(G_\sc))$.
Namely, only the spin groups $\mr{Spin}_N (\C)$ where $N > 1$ is simultaneously a square and 
a triangular number. These groups have precisely two unipotent conjugacy classes, say
$\cC_+$ and $\cC_-$, that carry a cuspidal local system. Let $\{1,-1\}$ be the kernel of 
$\mr{Spin}_N (\C) \to SO_N (\C)$, a characteristic subgroup of $\mr{Spin}_N (\C)$.
Lusztig's classification shows that $-1$ acts as $\epsilon$ on every cuspidal local system
supported on $\cC_\epsilon$. As $-1$ is fixed by Aut$(G_\sc)$, $X_\sc$ determines a unique
character of $\{1,-1\}$ and thus specifies $\cC_+$ or $\cC_-$.

This proves the first part of (a). For the second part, we notice that every algebraic 
automorphism of $G^\circ$ maps a cuspidal local system on a unipotent conjugacy class in 
$G^\circ$ with $Z$-character in $X$ to another such local system. By 
\cite[Proposition 2.8]{Lus1} every such conjugacy class is distinguished in $G^\circ$. 

(c) is obvious in types $A_n, C_n$ and $E_6$, for then $A_{G_{\sc}} (u)$
is abelian. For the root systems $E_8, F_4$ and $G_2$, $A_{G_{\sc}} (u)$ is a symmetric 
group and $\cE_u$ is the sign representation \cite[\S 15]{Lus1}. In type $E_7$ 
\cite[Table 9]{Miz} shows that $A_{G_{\sc}} (u) \cong S_3 \times S_2$. According
to \cite[\S 15.6]{Lus1}, $(\cE)_u$ again has dimension one (it is the tensor product
of the sign representations of $S_3$ and $S_2$).

In types $B_n$ and $D_n ,\; G_\sc = \mathrm{Spin}_N (\C)$ is a spin group. All the
cuspidal local systems $\cE$ for which the action of $Z(G_\sc)$ factors through 
$Z(\SO_N (\C))$ are one-dimensional, for $A_{\SO_N (\C)}(u)$ is abelian.
If the character by which $Z(G_\sc)$ acts on $\cE$ is not of this kind, then
\cite[Proposition 14.4]{Lus1} says that dim$(\cE_u)$ is a power of two. In that case
the original $G^\circ$ has an almost direct factor isomorphic to $\mr{Spin}_N (\C)$ or 
to a half-spin group $\mathrm{HSpin}_N (\C) = \mathrm{Spin}_N (\C) / \{1,\omega\}$ 
(here $N \in 4 \N$ and $\omega \in Z(\mathrm{Spin}_N (\C)) \setminus \{1,-1\}$). 
\end{proof}

Let $G$ be a disconnected complex reductive group with neutral component $G^\circ$.
We want to classify unipotent cuspidal pairs for $G$ in terms of those for $G^\circ$.

First we define them properly. For $u \in G^\circ$ we call an irreducible 
representation of $A_G (u)$ cuspidal if its restriction to $A_{G^\circ}(u)$ is a 
direct sum of irreducible cuspidal $A_{G^\circ}(u)$-representations. The set of
irreducible cuspidal representations of $A_G (u)$ (over $\overline{\Q_{\ell}}$) is
denoted by $\Irr_\cusp (A_G (u))$. We write
\[
\cN_G^0 = \{ (u,\eta) : u \in G \text{ unipotent, } \eta \in \Irr_\cusp (A_G (u)) \} 
/ G\text{-conjugacy} .
\]
Notice that the unipotency forces $u \in G^\circ$. Every $(u,\eta) \in \cN_G^0$
gives rise to a unique $G$-equivariant local system $\cF$ on $\cC_u^G$. We call any
$G$-equivariant local system on $\cC_u^G$ cuspidal if and only if it arises in this
way. Thus we may identify $\cN_G^0$ with the set of pairs $(\cC_u^G,\cF)$ where
$\cC_u^G$ is a unipotent conjugacy class in $G$ and $\cF$ is a cuspidal local
system on it.
For example, if $G^\circ$ is a torus, then $u=1$ and every irreducible representation
of $A_G (u) = G / G^\circ$ is cuspidal.

It follows from \eqref{eq:1.6} that there is a bijection
\[
\Irr_\cusp (A_G (u)) \longleftrightarrow \Big( \Irr_\cusp (A_{G^\circ}(u)) \q 
A_G (u) / A_{G^\circ}(u) \Big)_\kappa .
\]
So we want to identify the 2-cocycles $\kappa_\epsilon$ for $\epsilon \in 
\Irr_\cusp (A_{G^\circ}(u))$.

We note that there are natural isomorphisms
\begin{equation}\label{eq:4.1}
A_G (u) / A_{G^\circ} (u) \leftarrow Z_G (u) / Z_{G^\circ}(u) \to G / G^\circ . 
\end{equation}
In fact Theorem \ref{thm:4.1}.a implies that $\cC_u^G = \cC_u^{G^\circ}$,
which accounts for the surjectivity of the map to the right.

Recall from \cite[Lemma 4.2]{ABPS5} that the short exact sequence
\begin{equation}\label{eq:4.5}
1 \to \pi_0 (Z_{G^\circ}(u) / Z (G^\circ)) \to \pi_0 (Z_G (u) / Z(G^\circ)) 
\to G / G^\circ \to 1
\end{equation}
is split. However, the short exact sequence
\begin{equation}\label{eq:4.9}
1 \to \pi_0 (Z_{G^\circ}(u) / Z (G^\circ)^\circ ) \to 
\pi_0 (Z_G (u) / Z(G^\circ)^\circ) \to G / G^\circ \to 1
\end{equation}
need not be split. We choose a map 
\begin{equation}\label{eq:4.10}
s : G / G^\circ \to Z_G (u)  
\end{equation}
such that the induced map $G / G^\circ \to \pi_0 (Z_G (u) / Z(G^\circ))$ 
is a group homomorphism that splits \eqref{eq:4.5}. The proof of 
\cite[Lemma 4.2]{ABPS5} shows that we can take $s (g G^\circ)$ in $Z_G (G^\circ)$ 
whenever the conjugation action of $g$ on $G^\circ$ is an
inner automorphism of $G^\circ$. For all $\gamma,\gamma' \in G / G^\circ$
\[
s(\gamma) s(\gamma') s(\gamma \gamma')^{-1} \in Z(G^\circ) Z_{G^\circ}(u)^\circ ,
\]
because it represents the neutral element of $\pi_0 (Z_G (u) / Z(G^\circ))$.

Let $(\cC_u^{G^\circ},\cE) \in \cN_{G^\circ}^0$. 
The group $Z_{G^\circ}(u)^\circ$ acts trivially on $\epsilon = \cE_u$ and by 
cuspidality $Z(G^\circ) \subset Z(L)$ acts according to a character. Therefore
\begin{equation}\label{eq:4.6}
\natural_\cE (\gamma,\gamma') := \epsilon \big( s(\gamma) s(\gamma') 
s(\gamma \gamma')^{-1} \big)  
\end{equation}
lies in $\overline{\Q_{\ell}}^\times$. Comparing with \eqref{eq:1.1}, 
one checks easily that 
\begin{equation}\label{eq:4.12}
\natural_\cE : G / G^\circ \times G / G^\circ \to \overline{\Q_{\ell}}^\times
\end{equation}
is a 2-cocycle. We note that another element $u' \in \cC_u^G$ would give the same 
cocycle: just conjugate $s$ with a $g \in G^\circ$ such that $g u g^{-1} = u'$ and use 
the same formulas. Although $\natural_\cE$ depends on the choice of $s$, its 
class in $H^2 (G/G^\circ, \overline{\Q_{\ell}}^\times)$ does not. Indeed, 
suppose that $s'$ is another splitting as in \eqref{eq:4.10}. Since $s' (\gamma)$ 
and $s(\gamma)$ represent the same element of $\pi_0 (Z_G (u) / Z(G^\circ))$, there exist 
\[
z(\gamma) \in Z(G^\circ) \text{ such that } s' (\gamma) s(\gamma)^{-1} \in 
z(\gamma) Z_{G^\circ}(u)^\circ .
\]
As $Z_{G^\circ}(u)^\circ$ is normal in $Z_G (u)$ and contained in the kernel of $\epsilon$,
\begin{multline*}
\epsilon \big( s'(\gamma) s'(\gamma') s'(\gamma \gamma')^{-1} \big) =
\epsilon \big( s(\gamma) z(\gamma) s(\gamma') z(\gamma') s(\gamma \gamma')^{-1} 
z(\gamma \gamma')^{-1} \big) = \\ \natural_\cE (\gamma,\gamma') \,
\epsilon (z(\gamma)) \epsilon( z(\gamma')) \epsilon \big( z(\gamma \gamma')^{-1} \big) . 
\end{multline*}
Therefore $s'$ gives rise to a 2-cocycle that differs from \eqref{eq:4.12} by a coboundary,
and the cohomology class of $\natural_\cE$ depends only on $\cE$. 
Via the isomorphism \eqref{eq:4.1} we also get a 2-cocycle
\[
\natural_\cE : A_G (u) / A_{G^\circ} (u) \times A_G (u) / A_{G^\circ} (u) 
\to \overline{\Q_{\ell}}^\times .
\]
It will turn out that the 2-cocycles $\natural_\cE$ are trivial in many cases,
in particular whenever $Z(G^\circ)$ acts trivially on $\cE$. But sometimes
their cohomology class is nontrivial. 

\begin{ex}\label{ex:A}
Consider the following subgroup of $\SL_2 (\C)^5$:
\[
Q = \big\{ (\pm I_2) \times I_8, \matje{\pm i}{0}{0}{\mp i} \times I_4 \times -I_4,
\matje{0}{\pm i}{\pm i}{0} \times I_2 \times -I_2 \times I_2 \times -I_2,
\matje{0}{\mp 1}{\pm 1}{0} \times I_2 \times -I_4 \times I_2 \big\}
\]
It is isomorphic to the quaternion group of order 8. We take
$G = N_{\SL_{10}(\C)}(Q)$. Then
\begin{align*}
& G^\circ = Z_{\SL_{10}(\C)}(Q) = 
\big( Z(\GL_2 (\C)) \times \GL_2 (\C)^4 \big) \cap \SL_{10}(\C) , \\
& Z(G^\circ) = \Big\{ (z_j)_{j=1}^5 \in 
Z (\GL_2 (\C))^5 : \prod\nolimits_{j=1}^5 z_j^2 = 1 \Big\} .
\end{align*}
By \cite[\S 10.1--10.3]{Lus1} there exists a unique cuspidal pair for $G^\circ$,
namely \\ $(u = I_2 \times \matje{1}{1}{0}{1}^{\otimes 4}, \epsilon)$
with $\epsilon$ the nontrivial character of                                             
\[
A_{G^\circ}(u) = Z(G^\circ) / Z(G^\circ)^\circ \cong \{ \pm 1 \}.
\]
We note that the canonical map $Q \to A_G (u)$ is an isomorphism and that
\[
G / G^\circ \cong A_G (u) / A_{G^\circ}(u) \cong Q / \{\pm 1\} \cong (\Z / 2 \Z)^2. 
\]
There is a unique irreducible representation of $A_G (u)$ whose restriction to
$A_{G^\circ}(u)$ contains $\epsilon$, and it has dimension 2.

The group $S_5$ acts on $\GL_2 (\C)^5$ by permutations. Let $P_\sigma \in 
\GL_{10}(\C)$ be the matrix corresponding to a permutation $\sigma \in S_5$. 
Representatives for $G / G^\circ$ in $Z_G (u)$ are
\begin{equation}\label{eq:4.8}
\big\{ 1, \matje{i}{0}{0}{-i} P_{(23)(45)}, \matje{0}{i}{i}{0} P_{(24)(35)},
\matje{0}{-1}{1}{0} P_{(25)(34)} \big\} .
\end{equation}
The elements \eqref{eq:4.8} provide a splitting of \eqref{eq:4.5}, but \eqref{eq:4.9} 
is not split in this case. Then $\natural_\cE$ is the nontrivial cocycle of 
$G / G^\circ$ determined by the 2-dimensional projective representation with image 
$\big\{1, \matje{i}{0}{0}{-i}, \matje{0}{i}{i}{0},\matje{0}{-1}{1}{0} \big\}$. 

The twisted group algebra $\overline{\Q_{\ell}}[G / G^\circ,\natural_\cE]$ is isomorphic
with $M_2 (\overline{\Q_{\ell}})$. In particular it has precisely one irreducible
representation. This agrees with the number of representations of $A_G (u)$ that 
we want to obtain. Notice that, without the twisting, $\overline{\Q_{\ell}}[G/G^\circ]$
would have four inequivalent irreducible representations, too many for this situation.
\end{ex}

We return to our general setup.
Let $G_\cE$ be the subgroup of $G$ that stabilizes $\cE$ (up to isomorphism).
It contains $G^\circ$ and by Theorem \ref{thm:4.1}.b it coincides with the stabilizer 
of the $Z(G^\circ)$-character of $\cE$. By \eqref{eq:4.1} there are group
isomorphisms
\begin{equation}\label{eq:4.3}
A_G (u)_\epsilon / A_{G^\circ}(u) \leftarrow Z_G (u)_\epsilon / Z_{G^\circ}(u)
\to G_\cE / G^\circ .
\end{equation}

\begin{lem}\label{lem:4.2}
Let $(u,\epsilon) \in \cN_{G^\circ}^0$. Then we can take $\kappa_\epsilon = 
\natural_\cE^{-1}$ as 2-cocycles of $A_G (u)_\epsilon / A_{G^\circ}(u)$.
\end{lem}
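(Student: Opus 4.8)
The plan is to exhibit, for $\gamma$ in $A_G(u)_\epsilon/A_{G^\circ}(u)$, explicit intertwiners $I^\gamma$ of the kind used in \eqref{eq:1.3} for the triple $(N,\Gamma,\pi)=(A_{G^\circ}(u),A_G(u)_\epsilon,\epsilon)$, with $V_\pi=\cE_u$, and then read off the cocycle $\kappa_\epsilon$ from \eqref{eq:1.4}. Via \eqref{eq:4.3} I identify $A_G(u)_\epsilon/A_{G^\circ}(u)$ with $G_\cE/G^\circ$, and for such a $\gamma$ I use the representative $s(\gamma)\in Z_G(u)\cap G_\cE$ provided by the section \eqref{eq:4.10}. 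Since $\cC_u^{G^\circ}$ is $G$-stable (Theorem \ref{thm:4.1}(a)), conjugation by $s(\gamma)$ is an automorphism of $\cC_u^{G^\circ}$ fixing $u$; since $s(\gamma)$ stabilises $\cE$ and, by Theorem \ref{thm:4.1}(b), $\cE$ is the unique cuspidal local system on $\cC_u^{G^\circ}$ with its $Z(G^\circ)$-character, one gets an isomorphism $\theta_\gamma\colon (\mathrm{Ad}\,s(\gamma))^*\cE\isom\cE$, unique up to a scalar by Schur's lemma ($\cE$ being irreducible). I set $I^\gamma:=(\theta_\gamma)_u\in\Aut(\cE_u)=\Aut(V_\pi)$; choosing $\theta_\gamma$ compatibly with the $G^\circ$-equivariant structures, its stalk at the fixed point $u$ carries the $A_{G^\circ}(u)$-action twisted by $\mathrm{Ad}\,s(\gamma)$ to $\epsilon$, which is exactly \eqref{eq:1.3} (up to the labelling convention $\gamma\leftrightarrow\gamma^{-1}$, coming from the choice between pulling back and pushing forward along $\mathrm{Ad}\,s(\gamma)$; this is what eventually distinguishes $\natural_\cE$ from $\natural_\cE^{-1}$).

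Next I would compute products. Writing $s(\gamma)s(\gamma')=z\,s(\gamma\gamma')$ with $z=z_{\gamma,\gamma'}:=s(\gamma)s(\gamma')s(\gamma\gamma')^{-1}\in Z(G^\circ)Z_{G^\circ}(u)^\circ$, one has $(\mathrm{Ad}\,s(\gamma)s(\gamma'))^*=(\mathrm{Ad}\,s(\gamma\gamma'))^*\circ(\mathrm{Ad}\,z)^*$. Comparing, again by Schur's lemma, the two isomorphisms $(\mathrm{Ad}\,s(\gamma)s(\gamma'))^*\cE\isom\cE$ — one assembled from $\theta_\gamma$ and $\theta_{\gamma'}$, the other from $\theta_{\gamma\gamma'}$ composed with the canonical $G^\circ$-equivariance isomorphism of $\cE$ along $\mathrm{Ad}\,z$ (legitimate since $z\in Z_{G^\circ}(u)\subset G^\circ$) — and passing to the stalk at $u$, which is fixed by all the relevant conjugations, I get $I^\gamma\circ I^{\gamma'}$ equal to $I^{\gamma\gamma'}$ times the scalar by which $z$ acts on $\cE_u$, times the Schur scalar of the comparison. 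By cuspidality $Z_{G^\circ}(u)^\circ$ acts trivially on $\cE_u$ while $Z(G^\circ)$ acts by the $Z(G^\circ)$-character of $\cE$, so $z$ acts by the scalar $\epsilon(z)=\natural_\cE(\gamma,\gamma')$ of \eqref{eq:4.6}. Hence $\kappa_\epsilon=\natural_\cE^{-1}$ up to the residual Schur scalar.

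The main obstacle is to dispose of that residual scalar. Since $\kappa_\epsilon$ is only pinned down modulo coboundaries (by \eqref{eq:1.7}), and since the twisted group algebra depends only on the class, it suffices to see the residual scalar is a coboundary and then rescale the $\theta_\gamma$ so that $\kappa_\epsilon=\natural_\cE^{-1}$ on the nose. I would arrange this by choosing the isomorphisms $\theta$ coherently over the whole preimage of $G_\cE/G^\circ$ in $Z_G(u)$: taking the canonical equivariance isomorphism on $Z_{G^\circ}(u)$, and, as in the proof of \cite[Lemma 4.2]{ABPS5}, taking $s(\gamma)\in Z_G(G^\circ)$ whenever $\mathrm{Ad}\,s(\gamma)$ is an inner automorphism of $G^\circ$, so that there $(\mathrm{Ad}\,s(\gamma))^*\cE=\cE$ literally and $\theta_\gamma$ can be taken to be the identity on $\cE_u$. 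The only projective obstruction then surviving is the one already packaged into $\natural_\cE$, whose defining $2$-cocycle takes its values in the subgroup $Z(G^\circ)Z_{G^\circ}(u)^\circ$ on which $\epsilon$ is visible only through the one-dimensional $Z(G^\circ)$-character. Beyond this, one just has to keep the $\gamma$-versus-$\gamma^{-1}$ and pullback-versus-pushforward conventions consistent, which fixes the sign of the exponent in $\natural_\cE^{\pm 1}$ and yields the asserted identity $\kappa_\epsilon=\natural_\cE^{-1}$.
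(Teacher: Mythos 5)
Your reduction of the lemma to a normalization problem is sound, and the mechanism you isolate is the same one driving the paper's proof: attach to each $\gamma\in G_\cE/G^\circ$ an intertwiner $I^\gamma_\epsilon$ coming from an isomorphism $(\mathrm{Ad}\,s(\gamma))^*\cE\isom\cE$, note that $s(\gamma)s(\gamma')s(\gamma\gamma')^{-1}$ lies in $Z(G^\circ)Z_{G^\circ}(u)^\circ$, and use cuspidality to see that it acts on $\cE_u$ by the scalar $\natural_\cE(\gamma,\gamma')$ of \eqref{eq:4.6}. But at the point where you stop, the lemma is not yet proved: what remains is precisely the assertion that the ``residual Schur scalar'' $c(\gamma,\gamma')$ can be normalized away, i.e.\ that the $\theta_\gamma$ (equivalently the $I^\gamma_\epsilon$) can be chosen so that no projective obstruction beyond $\natural_\cE^{\pm 1}$ survives. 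You only arrange this for those $\gamma$ whose action on $G^\circ$ is inner, where $s(\gamma)$ may be taken in $Z_G(G^\circ)$ and $\theta_\gamma=\mathrm{id}$. For $\gamma$ acting by outer automorphisms of $G^\circ$ --- which includes the very common case of permutations of isomorphic almost simple factors, exactly the situation of Example \ref{ex:A} --- you give no construction and no vanishing argument; the sentence ``the only projective obstruction then surviving is the one already packaged into $\natural_\cE$'' is the statement to be proved, and it does not follow from Schur's lemma or from the freedom to rescale by coboundaries. A priori the extension of the projective $A_G(u)_\epsilon/A_{G^\circ}(u)$-action on $V_\epsilon$ could carry an additional nontrivial class $c$, and your argument only yields $\kappa_\epsilon = c\,\natural_\cE^{-1}$ (up to the pullback/pushforward convention fixing the exponent, which is indeed harmless).

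This is where the paper's proof does its real work, and it is not avoidable by soft arguments: one passes to the simply connected cover $G_\sc$ of $G^\circ/Z(G^\circ)^\circ$, decomposes into almost simple factors (permutations of isomorphic factors are handled by honest permutation operators on $V_\sigma^{\otimes n}$, which compose multiplicatively and contribute no extra cocycle), and then invokes Lusztig's classification via Theorem \ref{thm:4.1}.c. If $\dim V_\epsilon = 1$ one takes $I^\gamma_\epsilon = \mathrm{Id}$ outright, as in \eqref{eq:3.2}. If $\dim V_\epsilon>1$ then $G_\sc$ is a spin group of type $B$ or $D$ with $\epsilon(-1)\neq 1$; in type $B$ all automorphisms are inner, and in type $D$ (with $D_4$ excluded because $N=j(j+1)/2$) the unique outer automorphism interchanges the two cuspidal pairs on which $-1$ acts by $-1$, hence does not stabilize $\cE$ and cannot occur in $G_\cE$. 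So in every case the elements of $G_\cE$ act by inner automorphisms on the relevant factors, $I^\gamma_\epsilon=\mathrm{Id}$ is admissible, and only then does one obtain the exact multiplication rule \eqref{eq:4.2} and hence $\kappa_\epsilon=\natural_\cE^{-1}$. Your sheaf-theoretic packaging of the intertwiners is fine, but without this classification-based step the key normalization --- and with it the lemma --- is missing.
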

\begin{proof}
With \eqref{eq:4.3} we translate the lemma to a statement about cocycles of \\
$Z_G (u)_\ep / Z_{G^\circ}(u)$. For $g \in Z_G (u)_\epsilon$ we have to find 
$I^g_\epsilon : V_\epsilon \to V_\epsilon$ such that
\begin{equation}\label{eq:3.1}
I^g_\epsilon \circ \epsilon (h) \circ (I^g_\epsilon)^{-1} = 
\epsilon (g h g^{-1}) \qquad \forall h \in Z_{G^\circ}(u) . 
\end{equation}
Since $Z_G (u) = s(G / G^\circ) Z_{G^\circ}(u)$, it suffices to find 
$I^{s (\gamma)}_\ep$ for $\gamma \in G_\cE / G^\circ$. Namely, then we can put
$I^{s(\gamma) h}_\epsilon = I^{s (\gamma)}_\epsilon \circ \epsilon (h)$ for
$h \in Z_{G^\circ}(u)$, as in \eqref{eq:1.7}.

Let us consider $(\cC_u^{G^\circ},\cE)$ as a cuspidal local system for the
simply connected cover $G_{\sc}$ of $G^\circ / Z(G^\circ)^\circ$. The action of
$G$ on $G^\circ$ by conjugation lifts to an action on $G_{\sc}$ and $Z(G^\circ)^\circ$
acts trivially on $\epsilon$. Hence it suffices to construct $I^{s(\gamma)}_\epsilon$
for $\epsilon$ as a representation of $A_{G_{\sc}}(u)$. 

Then $(A_{G_{\sc}}(u),\epsilon)$
decomposes as a direct product over almost simple factors of $G_{\sc}$. Factors
with different cuspidal local systems have no interaction, so we may assume that 
$G_{\sc} = H^n , \epsilon = \sigma^{\otimes n}$ with $H$ simply connected and almost
simple. The conjugation action of $G$ on $H^n$ is a combination of permutations of
$\{1,2,\ldots,n\}$ and automorphisms of $H$. If $g \in G$ permutes the factors of 
$H^n$ according to $\tau \in S_n$, then we can construct $I^g_{\sigma^{\otimes n}}$ 
as the permutation $\tau$ of $V_\sigma^{\otimes n}$, combined with some automorphisms
of the vector space $V_\sigma$. In this way we reduce to the case where $G_{\sc}$ is
almost simple. 

Whenever $\epsilon$ is one-dimensional, we simply put 
\begin{equation}\label{eq:3.2}
I^\gamma_\epsilon = I^g_\epsilon = \mathrm{Id}_{V_\epsilon} \quad \text{for} 
\quad g = s(\gamma) \in s (G_\cE / G^\circ) .
\end{equation}
To deal with the remaining cases, we recall from Theorem \ref{thm:4.1}.c that in
all those instances $G_\sc = \mathrm{Spin}_N (\C)$ is a spin group and that the
action of $Z(G_\sc)$ on $\cE$ does not factor through $Z(\SO_N (\C))$.

Suppose first that $N \geq 3$ is odd. Then $G_\sc$ is of type $B_{(N-1)/2}$ and all
its automorphisms are inner. As explained after \eqref{eq:4.10}, we can take
$s (G_\cE / G^\circ)$ in $Z_{G_\cE}(G^\circ)$. Thus \eqref{eq:3.1} can be fulfilled
by defining $I^g_\epsilon = \mathrm{Id}_{V_\epsilon}$. 

Next we suppose that $N$ is even, so $G_\sc$ is of type $D_{N/2}$. By 
\cite[Proposition 14.6]{Lus1} $N = j (j+1) / 2$ for some $j \geq 2$, and in particular
$G_\sc$ is not isomorphic to the group $\mathrm{Spin}_8 (\C)$ of type $D_4$. 
Let us write $Z(G_\sc) = \{1,-1,\omega,-\omega\}$, where
\[
\{1,-1\} = \ker \big( \mathrm{Spin}_N (\C) \to \SO_N (\C) \big) . 
\]
Our assumptions entail that $\epsilon (-1) \neq 1$. For both characters of $Z(G_\sc)$
with $\epsilon (-1) = -1$ there is exactly one cuspidal pair $(\cC_u^{G_\sc},\cE)$ on 
which $Z (G_\sc)$ acts in this way \cite[Proposition 14.6]{Lus1}. The group of outer 
automorphisms of $G_\sc$ has precisely two elements. It interchanges $\omega$ and
$-\omega$, and hence it interchanges the two cuspidal pairs in question. Therefore 
the conjugation action of $G_\cE$ on $G_\sc$ is by inner automorphisms
of $G_\sc$. Now the same argument as in the $N$ odd case shows that we may take
$I^g_\epsilon = \mathrm{Id}_{V_\epsilon}$. 

Thus \eqref{eq:3.2} works in all cases under consideration. The defining property 
of $s$ entails that
\begin{equation}\label{eq:4.2}
I^\gamma_\epsilon \circ I^{\gamma'}_\epsilon = \natural_\cE (\gamma,\gamma') 
I^{\gamma \gamma'}_\epsilon .
\end{equation}
Together \eqref{eq:4.3} this shows that the lemma holds when $G_{\sc}$ is almost 
simple. In view of our earlier reduction steps, that implies the general case.
\end{proof}

Notice that $Y := \cC_u^G Z(G^\circ)^\circ$ is a union of $G$-conjugacy classes in 
$G^\circ$. Tensoring $\cE$ with the constant sheaf on $Z(G^\circ)^\circ$, we obtain a 
$G^\circ$-equivariant cuspidal local system on $Y$. We also denote that by $\cE$.

Next we build a $G$-equivariant local system on $Y$ which contains 
every extension of $\epsilon$ to an irreducible representation of $A_G (u)$. 
The construction is the same as in \cite[\S 3.2]{Lus1}, only for a disconnected group. 
Via the map
\begin{equation}\label{eq:4.13}
Y \times G \to Y : (y,g) \mapsto g^{-1} y g 
\end{equation}
we pull $\cE$ back to a local system $\hat{\cE}$ on $Y \times G$.
It is $G \times G^\circ$-equivariant for the action
\begin{equation}\label{eq:4.14}
(h_1,h_0) \cdot (y,g) = (h_1 y h_1^{-1},h_1 g h_0^{-1}) . 
\end{equation}
The $G^\circ$ action is free, so we can divide it out and obtain a 
$G$-equivariant local system $\tilde{\cE}$ on $Y \times G / G^\circ$ such that its pull 
back under the natural quotient map is isomorphic to $\hat{\cE}$, see \cite[2.6.3]{BeLu}.
Let $\pi : Y \times G / G^\circ \to Y$ be the projection on the first coordinate. It is a
$G$-equivariant fibration, so the direct image $\pi_* \tilde{\cE}$ is a $G$-equivariant
local system on $Y$. With \eqref{eq:4.1} we see that its stalk at $y \in Y$ 
is isomorphic, as $Z_{G^\circ}(y)$-representation, to
\[
\bigoplus_{g \in G / G^\circ} (\tilde \cE )_{y,g G^\circ} \cong
\bigoplus_{g \in Z_G (y) / Z_{G^\circ}(y)} (\cE _{g^{-1} y g}) \cong
\bigoplus_{g \in Z_G (y) / Z_{G^\circ}(y)} g \cdot (\cE )_y .
\]
The elements of $Z_G (y)$ permute these subspaces $\cE_y$ in the expected way, so
\begin{equation}\label{eq:4.4}
(\pi_* \tilde{\cE})_y \cong \ind_{Z_{G^\circ}(y)}^{Z_G (y)} (\cE)_y
\quad \text{as} \; Z_G(y)\text{-representations.}
\end{equation}
In other words, we can consider $\pi_* \tilde{\cE}$ as the induction of $(\cE )_Y$
from $G^\circ$ to $G$.

\begin{lem}\label{lem:4.3}
The $G$-endomorphism algebra of $\pi_* \tilde{\cE}$ is isomorphic with
$\overline{\Q_{\ell}}[G_\cE / G^\circ,\natural_\cE]$. Once $\natural_\cE$ has been chosen, 
the isomorphism is canonical up to twisting by characters of $G_\cE / G^\circ$.
\end{lem}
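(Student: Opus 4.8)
The plan is to compute $\End_G(\pi_* \tilde{\cE})$ directly from the stalk description \eqref{eq:4.4} together with the general nonsense about endomorphisms of induced equivariant local systems. First I would recall that a $G$-endomorphism of $\pi_* \tilde{\cE}$ is determined by its effect on a single stalk, say at $y = u$, compatibly with the $Z_G(u)$-action; since $\pi_* \tilde{\cE}$ is (the local system attached to) the induced representation $\ind_{Z_{G^\circ}(u)}^{Z_G(u)}(\cE)_u = \ind_{A_{G^\circ}(u)}^{A_G(u)} \epsilon$, one has
\[
\End_G (\pi_* \tilde{\cE}) \cong \End_{A_G(u)} \big( \ind_{A_{G^\circ}(u)}^{A_G(u)} \epsilon \big) .
\]
The precise justification that $G$-equivariant endomorphisms of $\pi_*\tilde\cE$ correspond to $Z_G(u)$-equivariant endomorphisms of the stalk uses that $Y$ is a single $G$-orbit up to the central torus factor and that $\cE$ is irreducible, so $(\cE)_u$ is an irreducible $Z_{G^\circ}(u)$-representation; this is the same bookkeeping as in \cite[\S 3.2]{Lus1}, adapted to disconnected $G$.

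Next I would invoke the representation theory from Section \ref{sec:twisted}. By \eqref{eq:1.6} applied with $\Gamma = A_G(u)$, $N = A_{G^\circ}(u)$, $\pi = \epsilon$, the algebra $\End_{A_G(u)}(\ind_{A_{G^\circ}(u)}^{A_G(u)} \epsilon)$ decomposes according to the $A_G(u)$-orbit of $\epsilon$ on $\Irr(A_{G^\circ}(u))$. But that orbit is a single point: by Theorem \ref{thm:4.1}.b the cuspidal local system $\cE$ is determined by its $Z(G^\circ)$-character, and $G_\cE$ is exactly the stabilizer of that character, so all $G$-conjugates of $\epsilon$ which again occur in $\ind_{A_{G^\circ}(u)}^{A_G(u)} \epsilon$ are isomorphic to $\epsilon$ itself — in other words $A_G(u)_\epsilon$ is all of $A_G(u)$ after restricting to the relevant induced piece, or more carefully, the stalk $(\cE)_u$ only gets permuted within its isomorphism class. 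Hence, by part (a) of the recalled special case \eqref{eq:1.6} (namely $\ind_N^{\Gamma_\pi}(\pi) \cong K[\Gamma_\pi/N,\kappa_\pi]\otimes_K V_\pi$) together with Schur's lemma,
\[
\End_{A_G(u)}\big( \ind_{A_{G^\circ}(u)}^{A_{G}(u)_\epsilon} \epsilon \big) \cong \overline{\Q_{\ell}}\big[ A_G(u)_\epsilon / A_{G^\circ}(u), \kappa_\epsilon \big]^{op} ,
\]
and $A_G(u)_\epsilon / A_{G^\circ}(u) \cong G_\cE / G^\circ$ via \eqref{eq:4.3}.

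Then I would plug in Lemma \ref{lem:4.2}, which gives $\kappa_\epsilon = \natural_\cE^{-1}$, and use Lemma \ref{lem:1.3}.(a) to identify the opposite algebra $\overline{\Q_{\ell}}[G_\cE/G^\circ, \natural_\cE^{-1}]^{op} \cong \overline{\Q_{\ell}}[G_\cE/G^\circ, \natural_\cE]$. This yields the claimed isomorphism
\[
\End_G(\pi_*\tilde{\cE}) \cong \overline{\Q_{\ell}}[G_\cE / G^\circ, \natural_\cE] .
\]
For the final sentence about canonicity: the only choices made are the operators $I^{s(\gamma)}_\epsilon$ (equivalently the section $s$, once $\natural_\cE$ is fixed), and Schur's lemma says each $I^\gamma_\epsilon$ is unique up to a scalar; rescaling them replaces the identification by a twist coming from a function $G_\cE/G^\circ \to \overline{\Q_{\ell}}^\times$, which — since it must still be compatible with the already-chosen cocycle $\natural_\cE$ — is forced to be a character of $G_\cE/G^\circ$. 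So the isomorphism is canonical up to twisting by such a character.

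I expect the main obstacle to be the first step: carefully justifying that $G$-equivariant endomorphisms of the pushforward $\pi_*\tilde{\cE}$ are faithfully and fully captured by $Z_G(u)$-equivariant endomorphisms of the stalk at $u$, in the disconnected setting. One must check that $\pi : Y \times G/G^\circ \to Y$ really is a $G$-equivariant fibration with the stalk computation \eqref{eq:4.4} holding $Z_G(y)$-equivariantly for every $y$, not just as abstract vector spaces, and that no extra endomorphisms appear from the monodromy of $Y$ (here the key input is that $\cE$ extends to a local system on the full set $Y$ of regular elements of $\cC_u^G Z(G^\circ)^\circ$, so the relevant fundamental group acts through $Z_G(u)/Z_{G^\circ}(u)^\circ$ and nothing more). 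Once that reduction is in place, the rest is a formal consequence of the twisted-group-algebra machinery of Section \ref{sec:twisted} and Lemmas \ref{lem:4.2} and \ref{lem:1.3}.
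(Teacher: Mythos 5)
Your route is genuinely different from the paper's. The paper proves Lemma \ref{lem:4.3} at the level of sheaves: it invokes \cite[Proposition 3.5]{Lus1} (which is observed to persist for disconnected $G$) to get that $\End_G (\pi_* \tilde{\cE})$ is \emph{canonically} a direct sum of one-dimensional subspaces $\mathcal{A}_\gamma$ indexed by $\gamma \in G_\cE / G^\circ$, and then extends each stalk operator $I^\gamma_\epsilon$ from the proof of Lemma \ref{lem:4.2} to an automorphism $I^\gamma_\cE$ of $(\cE)_Y$ as in \eqref{eq:4.17}, hence of $\hat{\cE}$, $\tilde{\cE}$ and $\pi_* \tilde{\cE}$, landing in $\mathcal{A}_\gamma$; the relation \eqref{eq:4.2} then exhibits the twisted group algebra, and canonicity up to characters follows because any other basis only rescales the lines $\mathcal{A}_\gamma$. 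Your plan instead reduces to the stalk at $u$ and runs the Clifford theory of Section \ref{sec:twisted}; this is essentially the computation the paper carries out later, in the proof of Proposition \ref{prop:4.4}, but there it is used \emph{with} Lemma \ref{lem:4.3} as an input ("this matches precisely with $\End_G(\pi_*\tilde{\cE})$") rather than as a proof of it.

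Two points need attention. First, the step you yourself flag --- that restriction to the stalk gives an isomorphism $\End_G (\pi_* \tilde{\cE}) \cong \End_{A_G(u)}\big( \ind_{A_{G^\circ}(u)}^{A_G (u)} \epsilon \big)$ --- is not a routine reduction: it is exactly the geometric content that the paper extracts from \cite[Proposition 3.5]{Lus1} and from the explicit automorphisms $I^\gamma_\cE$. Injectivity is easy ($Y$ is irreducible), but surjectivity requires showing that the equivariant monodromy of $\pi_*\tilde{\cE}$ acts on the stalk only through $A_G(u)$ and that every $A_G(u)$-equivariant endomorphism of the stalk extends $G$-equivariantly over all of $Y$; your heuristic (trivial monodromy along the $Z(G^\circ)^\circ$-direction because the system is $\overline{\Q_{\ell}} \boxtimes \cE$ on $S \cong Z(G^\circ)^\circ \times \cC_u^{G^\circ}$) is the right idea, but as written the heart of the lemma is asserted rather than proved --- and note the paper needs the sheaf-level operators $I^\gamma_\cE$ again later (Proposition \ref{prop:5.3}, Lemma \ref{lem:5.4}), so its construction is not a detour one can simply bypass. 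Second, the claim that ``the $A_G(u)$-orbit of $\epsilon$ is a single point'' is false in general: for $a \in A_G(u) \setminus A_G(u)_\epsilon$ (equivalently $g \notin G_\cE$) the conjugate $a \cdot \epsilon$ has a different $Z(G^\circ)$-character by Theorem \ref{thm:4.1}.b, so it is \emph{not} isomorphic to $\epsilon$, and all such conjugates do occur in $\Res^{A_G(u)}_{A_{G^\circ}(u)} \ind_{A_{G^\circ}(u)}^{A_G(u)} \epsilon$. Fortunately you do not need this claim: Frobenius reciprocity together with the definition of $A_G(u)_\epsilon$ (exactly as in the proof of Proposition \ref{prop:4.4}) gives directly $\End_{A_G(u)}\big( \ind_{A_{G^\circ}(u)}^{A_G(u)} \epsilon \big) \cong \overline{\Q_{\ell}}\big[ A_G(u)_\epsilon / A_{G^\circ}(u), \kappa_\epsilon \big]^{op}$, after which Lemma \ref{lem:4.2}, Lemma \ref{lem:1.3}.a and \eqref{eq:4.3} finish the identification as you indicate; your canonicity argument via rescaling the $I^\gamma_\epsilon$ is fine and agrees with the paper's.
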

\begin{proof}
By \cite[Proposition 3.5]{Lus1}, which applies also in the disconnected case, \\
$\End_G (\pi_* \tilde{\cE})$ is canonically a direct sum of one-dimensional
subspaces $\mathcal A_\gamma$ with $\gamma \in G_\cE / G^\circ$. We need to
specify one element in each of these subspaces to obtain a twisted group algebra.
Recall the isomorphisms \eqref{eq:4.3} and the map $s$ from \eqref{eq:4.10}. 
For $g = s(\gamma) \in s (G_\cE / G^\circ)$ we define 
\[
I^\gamma_\epsilon = I^g_\epsilon : (\cE)_u \to (\cE )_u
\]
as in the proof of Lemma \ref{lem:4.2}. We already saw in \eqref{eq:4.2} that
the $I^\gamma_\epsilon$ span an algebra isomorphic to $\overline{\Q_{\ell}}[G_\cE / 
G^\circ,\natural_\cE]$. Each $I^\gamma_\epsilon$ extends uniquely to an 
isomorphism of $G$-equivariant local systems 
\begin{equation}\label{eq:4.17}
I^\gamma_\cE : (\cE )_Y \to \mathrm{Ad}(\gamma)^* (\cE )_Y. 
\end{equation}
We can consider this as a family of $\overline{\Q_{\ell}}$-linear maps 
\[
I^\gamma_\cE : (\hat \cE )_{(y,g)} = (\cE )_{g^{-1} y g} \to
(\hat \cE )_{(y,g \gamma^{-1})} = (\cE )_{\gamma g^{-1} y g \gamma^{-1}} .
\]
Consequently the $I^\gamma_\cE$ induce automorphisms of $\hat \cE$, of 
$\tilde \cE$ and of $\pi_* \tilde{\cE}$. The latter automorphism belongs to
$\mathcal A_\gamma$ and we take it as element of the
required basis of $\End_G (\pi_* \tilde{\cE})$.

Any other choice of an isomorphism as in the lemma would differ from the first
one by an automorphism of $\overline{\Q_{\ell}}[G_\cE / G^\circ,\natural_\cE]$
which stabilizes each of the subspaces $\overline{\Q_{\ell}} T_\gamma$. Every such
automorphism is induced by a character of $G_\cE / G^\circ$.
\end{proof}

We note that the isomorphism in Lemma \ref{lem:4.3} is in general not canonical,
because $s$ and the constructions in the proof of Lemma \ref{lem:4.2} are not. 
In the final result of this section, we complete the classification of unipotent
cuspidal local systems on $G$.

\begin{prop}\label{prop:4.4}
There exists a canonical bijection
\[
\begin{array}{ccc}
\Irr ( \End_G (\pi_* \tilde{\cE}) ) & \longleftrightarrow & 
\{ \cF : (\cC_u^G,\cF) \in \cN_G^0 ,\, \Res^G_{G^\circ} \cF \text{ contains } \cE \} \\
\rho & \mapsto & \Hom_{\End_G (\pi_* \tilde{\cE})}(\rho,\pi_* \tilde{\cE}) \\
\Hom_G (\cF, \pi_* \tilde \cE) & \reflectbox{$\mapsto$} & \cF
\end{array}
\]
Upon choosing an isomorphism as in Lemma \ref{lem:4.3}, we obtain a bijection
\[
\Irr ( \overline{\Q_{\ell}}[G_\cE/G^\circ,\natural_\cE] ) \longleftrightarrow
\{ (u,\eta) \in \cN_G^0 : \Res^{A_G (u)}_{A_{G^\circ}(u)} \eta 
\text{ contains } (\cE )_u \} .
\]
\end{prop}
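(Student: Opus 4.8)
The plan is to assemble the bijection from three ingredients that are already in place: the decomposition of $\pi_* \tilde{\cE}$ as a $G$-equivariant local system coming from \eqref{eq:4.4}, the general semisimple-algebra formalism of Section \ref{sec:twisted}, and Lemma \ref{lem:4.3} identifying the endomorphism algebra. First I would record that, since $G_\cE/G^\circ$ is a finite group and $\overline{\Q_\ell}$ has characteristic zero, the algebra $\End_G(\pi_* \tilde{\cE})$ is semisimple; hence $\pi_* \tilde{\cE}$ decomposes $G$-equivariantly as $\bigoplus_\rho \rho \otimes \cF_\rho$ with $\rho$ running over $\Irr(\End_G(\pi_* \tilde{\cE}))$ and $\cF_\rho = \Hom_{\End_G(\pi_* \tilde{\cE})}(\rho, \pi_* \tilde{\cE})$ an irreducible $G$-equivariant local system on $Y$. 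Restricting to the open dense stratum $\cC_u^G$ (recall $\cC_u^G = \cC_u^{G^\circ}$ by Theorem \ref{thm:4.1}.a) and passing to stalks at $u$, this becomes a decomposition of $Z_G(u)$-representations, and by \eqref{eq:4.4} the total space is $\ind_{Z_{G^\circ}(u)}^{Z_G(u)}(\cE)_u$, which factors through $A_G(u)$ and equals $\ind_{A_{G^\circ}(u)}^{A_G(u)}\epsilon$.

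Next I would invoke the special case \eqref{eq:1.6} of Clifford theory with $\Gamma = A_G(u)$, $N = A_{G^\circ}(u)$, $\pi = \epsilon$: the isotropy group $\Gamma_\epsilon = A_G(u)_\epsilon$ satisfies $\Gamma_\epsilon/N \cong G_\cE/G^\circ$ via \eqref{eq:4.3}, and by Lemma \ref{lem:4.2} the relevant 2-cocycle is $\kappa_\epsilon = \natural_\cE^{-1}$ (up to the fixed choices). Thus the irreducible $A_G(u)$-representations whose restriction to $A_{G^\circ}(u)$ contains $\epsilon$ are in canonical bijection with $\Irr(\overline{\Q_\ell}[\Gamma_\epsilon/N, \kappa_\epsilon])$, and by Proposition \ref{prop:1.1}(a) each such $\eta = \tau \ltimes \epsilon$ is built by inducing $V_\tau \otimes V_\epsilon$; in particular the multiplicity of $\eta$ in $\ind_{A_{G^\circ}(u)}^{A_G(u)}\epsilon$ equals $\dim\tau$. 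Comparing the two decompositions of the same $A_G(u)$-representation $\ind_{A_{G^\circ}(u)}^{A_G(u)}\epsilon$ — one indexed by $\Irr(\End_G(\pi_*\tilde\cE))$ with multiplicity $\dim\rho$, the other indexed by $\Irr(\overline{\Q_\ell}[\Gamma_\epsilon/N,\kappa_\epsilon])$ with multiplicity $\dim\tau$ — and using that $\End_G(\pi_*\tilde\cE) \cong \overline{\Q_\ell}[G_\cE/G^\circ, \natural_\cE]$ by Lemma \ref{lem:4.3} while $\kappa_\epsilon = \natural_\cE^{-1}$, I would match the two labelling sets. The mild cocycle discrepancy ($\natural_\cE$ versus $\natural_\cE^{-1}$) is harmless: by Lemma \ref{lem:1.3}(b) there is a bijection $\Irr(\overline{\Q_\ell}[\Gamma, \natural]) \leftrightarrow \Irr(\overline{\Q_\ell}[\Gamma,\natural^{-1}])$ via $V \mapsto V^*$, and on the geometric side the pairing between a module and its contragredient is exactly what appears when one writes $\pi_*\tilde\cE$ as a bimodule (Lemma \ref{lem:1.3}(c)); one checks that the natural identification $\cF_\rho \leftrightarrow \eta$ sends $\rho$ to the representation $\tau = \rho^*$ with $\eta = \tau \ltimes \epsilon$, so that $\cF_\rho$'s stalk at $u$ is the $A_G(u)$-representation $\eta$, whence $(\cC_u^G, \cF_\rho) \in \cN_G^0$ and $\Res^G_{G^\circ}\cF_\rho$ contains $\cE$.

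For the converse — that every $\cF$ with $(\cC_u^G,\cF)\in\cN_G^0$ and $\Res^G_{G^\circ}\cF \supseteq \cE$ arises this way — I would argue that such an $\cF$ corresponds to an $\eta \in \Irr(A_G(u))$ containing $\epsilon$, hence by Frobenius reciprocity appears in $\ind_{A_{G^\circ}(u)}^{A_G(u)}\epsilon = (\pi_*\tilde\cE)_u$; since $\pi_*\tilde\cE$ is a semisimple $G$-equivariant local system determined by its stalk at the point of the dense stratum, $\cF$ occurs as one of the $\cF_\rho$. Finally, the canonicity claims: the first bijection is canonical because it is phrased purely in terms of the abstract algebra $\End_G(\pi_*\tilde\cE)$ and the $G$-module $\pi_*\tilde\cE$, with no auxiliary choices; the second bijection inherits exactly the non-canonicity of the isomorphism in Lemma \ref{lem:4.3}, i.e. it is well-defined up to twisting by characters of $G_\cE/G^\circ$, which is precisely the ambiguity absorbed into the choice of the $I^\gamma_\epsilon$. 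The main obstacle I anticipate is bookkeeping the dualization: one must be careful that the "$\Hom_{\End_G(\pi_*\tilde\cE)}(\rho, -)$" convention and the "$\tau \ltimes \epsilon$" convention of Theorem \ref{thm:1.2} are matched with the correct variance, so that $\natural_\cE^{-1}$ on one side lines up with $\kappa_\epsilon$ on the other; everything else is a direct application of results already established.
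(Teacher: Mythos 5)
Your proposal is correct and follows essentially the same route as the paper: reduce by $G$-equivariance to the stalk at $u$, use \eqref{eq:4.4} together with Proposition \ref{prop:1.1}/Theorem \ref{thm:1.2}, identify the cocycle via Lemma \ref{lem:4.2} and the endomorphism algebra via Lemma \ref{lem:4.3}, and handle the $\natural_\cE$ versus $\natural_\cE^{-1}$ dualization with Lemma \ref{lem:1.3}, arriving at the stalk of $\Hom_{\End_G(\pi_*\tilde{\cE})}(\rho,\pi_*\tilde{\cE})$ being $\rho^* \ltimes \epsilon$. The only stylistic difference is that the paper makes the "matching of labelling sets" explicit by computing the $A_G(u)$-endomorphism algebra of $\ind_{A_G(u)_\epsilon}^{A_G(u)}(\overline{\Q_\ell}[A_G(u)_\epsilon/A_{G^\circ}(u),\natural_\cE^{-1}]\otimes\epsilon)$ via Frobenius reciprocity, rather than comparing multiplicities, but your appeal to the bimodule decomposition of Lemma \ref{lem:1.3}.c supplies the same identification.
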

\begin{proof}
The first map is canonical because its definition does not involve any arbitrary
choices. To show that it is a bijection, we fix an isomorphism as in Lemma 
\ref{lem:4.3}. By $G$-equivariance, it suffices to consider the claims at the stalk
over $u$. Then we must look for irreducible $A_G (u)$-representations that contain
$\epsilon$. By \eqref{eq:4.3}, \eqref{eq:4.4} and Proposition \ref{prop:1.1}.b
\[
(\pi_* \tilde{\cE} )_u \cong \ind_{A_{G^\circ}(u)}^{A_G (u)} (\cE )_u \cong
\ind_{A_G (u)_\epsilon}^{A_G (u)} \big( \overline{\Q_{\ell}}[A_G (u)_\epsilon /
A_{G^\circ}(u),\kappa_\epsilon] \otimes \epsilon \big) .
\]
By Lemma \ref{lem:4.2} the right hand side is
\begin{equation}\label{eq:4.7}
\ind_{A_G (u)_\epsilon}^{A_G (u)} \big( \overline{\Q_{\ell}}[A_G (u)_\epsilon /
A_{G^\circ}(u),\natural_\cE^{-1}] \otimes \epsilon \big) .  
\end{equation}
By Frobenius reciprocity and the definition of $A_G (u)_\epsilon$, the 
$A_G (u)$-endomorphism algebra of \eqref{eq:4.7} is
\begin{equation}\label{eq:4.11}
\End_{A_G (u)_\epsilon} \big( \overline{\Q_{\ell}}[A_G (u)_\epsilon /
A_{G^\circ}(u),\natural_\cE^{-1}] \otimes \epsilon \big) .
\end{equation}
The description of the $A_G (u)_\epsilon$-action in Proposition \ref{prop:1.1}.a shows 
that it is \\
$\overline{\Q_{\ell}}[A_G (u)_\epsilon / A_{G^\circ}(u), \natural_\cE^{-1}]^{op}$, 
acting by multiplication from the right.
By \eqref{eq:4.3} and Lemma \ref{lem:1.3}.a, \eqref{eq:4.11} can be 
identified with $\overline{\Q_{\ell}}[G_\cE/G^\circ,\natural_\cE]$. Lemma \ref{lem:4.3} 
shows that this matches precisely with $\End_G (\pi_* \tilde{\cE})$. With
\eqref{eq:4.7} and Lemma \ref{lem:1.3}.c it follows that
\begin{align}\label{eq:3.3}
& \big( \pi_* \tilde \cE \big)_u \cong \bigoplus\nolimits_{\rho \in 
\Irr ( \End_G (\pi_* \tilde{\cE}) )}
\rho \otimes \ind_{A_G (u)_\epsilon}^{A_G (u)} (\rho^* \otimes \epsilon) , \\
\nonumber & \Hom_{\End_G (\pi_* \tilde{\cE})}(\rho, (\pi_* \tilde{\cE})_u ) \cong
\ind_{A_G (u)_\epsilon}^{A_G (u)} (\rho^* \otimes \epsilon) , 
\end{align}
where $\rho^* \in \Irr (\overline{\Q_{\ell}}[A_G (u)_\epsilon /
A_{G^\circ}(u),\kappa_\epsilon])$ is the contragredient of $\rho$. By Lemma
\ref{lem:1.3}.b and Proposition \ref{prop:1.1}.c every irreducible 
$A_G (u)$-representation containing $\epsilon$ is of the form 
$\ind_{A_G (u)_\epsilon}^{A_G (u)} (\rho^* \otimes \epsilon)$, for a unique
$\rho \in \Irr ( \End_G (\pi_* \tilde{\cE}) )$. Hence the maps from left to right
in the statement are bijective.

Let $(\cC_u^G,\cF) \in \cN_G^0$ be such that $\Res^G_{G^\circ} \cF$ contains $\cE$.
By what we have just shown, $\cF_u \cong \ind_{A_G (u)_\epsilon}^{A_G (u)} 
(\rho^* \otimes \epsilon)$, for a unique $\rho$. By \eqref{eq:3.3}
\[
\Hom_G (\cF, \pi_* \tilde \cE) = 
\Hom_{Z_G (u)} \big( \cF_u, ( \pi_* \tilde \cE )_u \big) =
\Hom_{A_G (u)} \big( \ind_{A_G (u)_\epsilon}^{A_G (u)} 
(\rho^* \otimes \epsilon) ,( \pi_* \tilde \cE )_u \big) \cong \rho ,
\]
which provides the formula for the inverse of the above bijection.
\end{proof}

\section{Disconnected groups: the non-cuspidal case}
\label{sec:noncusp}

We would like to extend the generalized Springer correspondence
for $G^\circ$ to $G$. First define the source and target properly.

\begin{defn}\label{defn:5.1}
For $\cN_G^+$ we use exactly the same definition as in the connected case:
\[
\cN_G^+ = \{ (u,\eta) : u \in G \text{ unipotent }, 
\eta \in \Irr_{\overline{\Q_{\ell}}} (A_G (u)) \} / G\text{-conjugacy} .
\]
As $\cS_G$ we take the same set as for $G^\circ$, but now considered up to
$G$-conjugacy:
\[
\cS_G = \{ \text{unipotent cuspidal supports for } G^\circ \}
/ G\text{-conjugacy} .
\]
For $\ft = [L,\cC_v^L,\cE]_G \in \cS_G$, let $N_G (\ft)$ be the stabilizer of 
$(L,\cC_v^L,\cE)$ in $G$. We define $W_\ft$ as the component group of $N_G (\ft)$.
\end{defn}

In the above notations, the group $L$ stabilizes $(L,\cC_v^L,\cE)$ and any 
element of $G$ which stabilizes $(L,\cC_v^L,\cE)$ must normalize $L$. Hence 
$L$ is the neutral component of $N_G (\ft)$ and $W_\ft = N_G (\ft) / L$ is a 
subgroup of $W(G,L) = N_G (L) / L$. 

As in the connected case, $\cN_G^+$ is canonically in bijection with the
set of pairs $(\cC^G_u,\cF)$, where $\cC^G_u$ is the $G$-conjugacy class of
a unipotent element $u$ and $\cF$ is an irreducible $G$-equivariant 
local system on $\cC^G_u$.

We define a map $\Psi_G : \cN_G^+ \to \cS_G$ in the following way. Let
$(u,\eta) \in \cN_G^+$. With Theorem \ref{thm:1.2} we can write
$\eta = \eta^\circ \rtimes \tau$ with $\eta^\circ \in \Irr (A_{G^\circ}(u))$.
Moreover $\eta^\circ$ is uniquely determined by $\eta$ up to $A_G (u)$-conjugacy.
Then $(u,\eta^\circ) \in \cN_{G^\circ}^+$. Using \eqref{eq:2.1} we put
\begin{equation}\label{eq:5.1}
\Psi_G (u,\eta) := \Psi_{G^\circ} (u,\eta^\circ) / G\text{-conjugacy}
\end{equation}
By the $G$-equivariance of $\Psi_{G^\circ}$ (Theorem \ref{thm:3.2}), 
$\Psi_G (u,\eta)$ does not depend on the choice of $\eta^\circ$. Write 
\[
\ft^\circ = [L,\cC_v^L,\cE]_{G^\circ}
\]
and consider $\Sigma_{\ft^\circ} (u,\eta^\circ) \in \Irr (W_{\ft^\circ})$. 
Just as in \eqref{eq:5.1}, $(L,\cC_v^L,\cE,\Sigma_{\ft^\circ} (u,\eta^\circ))$ 
is uniquely determined by $(u,\eta)$, up to $G$-conjugacy.

We would like to define $\Sigma_\ft$ such that $\Sigma_\ft(u,\eta)$
is a representation of $W_\ft$ whose restriction to $W_{\ft^\circ}$ contains
$\Sigma_{\ft^\circ} (u,\eta^\circ)$. However, in general this does not work. 
It turns out that we have to twist the group algebra $\overline{\Q_{\ell}} [W_\ft]$ 
with a certain 2-cocycle, which is trivial on $W_{\ft^\circ}$. 
In fact we have already seen this in Section \ref{sec:cusp}. Over there
$L = G^\circ ,\; W_{\ft^\circ} = 1 ,\; W_\ft = G_\cE / G^\circ$ and in 
Example \ref{ex:A} the group algebra of $W_\ft$ had to be twisted by a 
nontrivial 2-cocycle.

This twisting by nontrivial cocycles is only caused by the relation between 
irreducible representations of $A_{G^\circ}(u)$ and $A_G (u)$. The next
two results show that the group $W_\ft$, considered on its own, would not
need such twisting.

\begin{lem}\label{lem:5.1}
There exists a subgroup $\cR_\ft \subset W_\ft$ such that
$W_\ft = \cR_\ft \ltimes W_{\ft^\circ}$. 
\end{lem}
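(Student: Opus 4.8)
The statement asserts that the extension
\[
1 \to W_{\ft^\circ} \to W_\ft \to W_\ft / W_{\ft^\circ} \to 1
\]
splits. The natural strategy is to realize $W_{\ft^\circ}$ as a reflection group acting on a torus and to find a complement inside $W_\ft$ using the geometry of the root datum, exactly the kind of argument used for analogous Weyl-group extensions in \cite{ABPS5}. First I would recall from Theorem \ref{thm:2.2}.(b) that $W_{\ft^\circ} = N_{G^\circ}(L)/L$ is the Weyl group of the root system $R(G^\circ, Z(L)^\circ)$, which acts faithfully on the torus $Z(L)^\circ$ (equivalently on $X_*(Z(L)^\circ)\otimes\R$). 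On the other hand $W_\ft = N_G(\ft)/L$ acts on $Z(L)^\circ$ as well, since every element of $N_G(\ft)$ normalizes $L$ and hence $Z(L)^\circ$; so $W_\ft$ embeds in $\Aut(Z(L)^\circ) \cong \Aut(X_*(Z(L)^\circ))$, and under this embedding $W_{\ft^\circ}$ is the subgroup generated by the reflections $s_\alpha$, $\alpha \in R(G^\circ, Z(L)^\circ)$.

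The key point is that $W_\ft$, acting on the real vector space $\mf{t} = X_*(Z(L)^\circ)\otimes\R$, permutes the set of roots $R(G^\circ, Z(L)^\circ)$: this is because $N_G(\ft)$ permutes the Levi subgroups of $G^\circ$ containing $L$ (it normalizes $G^\circ$ and $L$), equivalently it permutes the minimal such Levis, which correspond to the root subgroups, and it also preserves $Z(L)^\circ$, hence it sends each $\alpha$ to another element $w\alpha$ of $R(G^\circ, Z(L)^\circ)$. Consequently $W_\ft$ normalizes $W_{\ft^\circ}$ inside $\Aut(\mf t)$ (which we already know, since $W_{\ft^\circ} \trianglelefteq W_\ft$) and, more usefully, $W_\ft$ acts on the set of chambers of the hyperplane arrangement of $R(G^\circ, Z(L)^\circ)$ in $\mf t$. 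Now fix a chamber $C$. Since $W_{\ft^\circ}$ acts simply transitively on the chambers, the stabilizer
\[
\cR_\ft := \{ w \in W_\ft : w C = C \}
\]
is a complement: for any $w \in W_\ft$, there is a unique $w_0 \in W_{\ft^\circ}$ with $w_0 (wC) = C$, so $w_0 w \in \cR_\ft$ and $w \in W_{\ft^\circ}\cR_\ft$; and $\cR_\ft \cap W_{\ft^\circ} = 1$ because a nontrivial element of $W_{\ft^\circ}$ moves $C$. Thus $W_\ft = \cR_\ft \ltimes W_{\ft^\circ}$, with $\cR_\ft \cong W_\ft / W_{\ft^\circ}$.

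The step I expect to require the most care is verifying that $W_\ft$ genuinely preserves the root system $R(G^\circ, Z(L)^\circ)$ as a subset of $X^*(Z(L)^\circ)$, rather than merely permuting reflections up to rescaling; one must check that the reflection hyperplanes, and hence the chamber structure, are honestly permuted. This follows from the fact that the roots in $R(G^\circ, Z(L)^\circ)$ are precisely the nonzero weights of $Z(L)^\circ$ on $\Lie(G^\circ)$, and $N_G(\ft) \subset N_G(L)$ acts on $\Lie(G^\circ)$ compatibly with its action on $Z(L)^\circ$, so it permutes these weights; this is genuinely a permutation of the finite set $R(G^\circ, Z(L)^\circ)$, and that is all that is needed for the chamber-stabilizer argument. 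A subtlety worth a remark is that $R(G^\circ, Z(L)^\circ)$ need not be reduced, but since $W_{\ft^\circ}$ still acts simply transitively on the chambers of its hyperplane arrangement (the arrangement being the same as for the associated reduced system), the argument goes through unchanged.
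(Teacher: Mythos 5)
Your argument is essentially the paper's own proof: the paper likewise defines $\cR_\ft$ as the stabilizer in $W_\ft$ of a fixed positive subsystem of $R(G^\circ,Z(L)^\circ)$ (equivalent to your chamber) and invokes the simply transitive action of $W_{\ft^\circ}=W(G^\circ,L)$ on positive systems together with normality of $W(G^\circ,L)$ in $W(G,L)$, so your proposal is correct and takes the same route. One harmless overstatement: $W_\ft$ need not embed into $\Aut (Z(L)^\circ)$ (elements of the quasi-Levi $Z_G(Z(L)^\circ)$ outside $L$ act trivially), but your argument only uses that $W_\ft$ permutes the roots and that $W_{\ft^\circ}$ acts simply transitively on chambers, so nothing breaks.
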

\begin{proof}
Thanks to Theorem \ref{thm:2.2}, we know that $W_{\ft^\circ}$ equals 
$W(G^\circ,L)=N_{G^\circ}(L) / L$. On the other hand, $W_\ft \subset W(G,L)$ acts 
on the root system $R(G^\circ,Z(L)^\circ)$.
Fix a positive subsystem and let $\cR_\ft$ be its stabilizer in
$W_\ft$. Since $W(G^\circ,L)$ is the Weyl group of the root system 
$R(G^\circ,Z(L)^\circ)$ (see Theorem \ref{thm:2.2}), it acts simply transitively
on the collection of positive systems in $R(G^\circ,Z(L)^\circ)$. As
$W(G^\circ,L)$ is normal in $W(G,L)$, we obtain the decomposition of
$W_\ft$ as a semidirect product.
\end{proof}

\begin{prop}\label{prop:5.7}
Let $\pi \in \Irr_{\overline{\Q_{\ell}}} (W_{\ft^\circ})$. 
The cohomology class of $\kappa_\pi$ in \\ 
$H^2 (W_{\ft,\pi} / W_{\ft^\circ},\overline{\Q_{\ell}}^\times)$ is trivial. 
\end{prop}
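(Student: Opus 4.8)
The plan is to translate the statement into an extension problem and then solve it by a structural reduction to irreducible Coxeter factors. Recall first that the cohomology class of $\kappa_\pi$ in $H^2(W_{\ft,\pi}/W_{\ft^\circ},\overline{\Q_{\ell}}^\times)$ is exactly the obstruction to extending $\pi$ to a genuine representation of $W_{\ft,\pi}$: by \eqref{eq:1.3}--\eqref{eq:1.4} a choice of operators $I^\gamma$ makes $V_\pi$ into a module over $\overline{\Q_{\ell}}[W_{\ft,\pi}/W_{\ft^\circ},\kappa_\pi^{-1}]$ lifting the $W_{\ft^\circ}$-action, and one can rescale the $I^\gamma$ so that $I^{\gamma\gamma'}=I^\gamma I^{\gamma'}$ — thereby getting an honest extension of $\pi$ — precisely when $\kappa_\pi$ is a coboundary. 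So I would reduce to producing \emph{some} extension of $\pi$ to $W_{\ft,\pi}$. Intersecting the semidirect product decomposition $W_\ft=\cR_\ft\ltimes W_{\ft^\circ}$ of Lemma \ref{lem:5.1} with $W_{\ft,\pi}$ (which contains $W_{\ft^\circ}$, since $W_{\ft^\circ}$ acts on $\pi$ by inner automorphisms) gives $W_{\ft,\pi}=\cR_{\ft,\pi}\ltimes W_{\ft^\circ}$ with $\cR_{\ft,\pi}:=\cR_\ft\cap W_{\ft,\pi}$.

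Next I would exploit that, by its very construction in Lemma \ref{lem:5.1}, $\cR_\ft$ stabilizes a positive system of $R(G^\circ,Z(L)^\circ)$, hence permutes the simple roots, hence acts on $W_{\ft^\circ}$ by automorphisms preserving a fixed set of Coxeter generators — that is, by diagram automorphisms. First reduce to the case where $\cR_{\ft,\pi}$ acts \emph{faithfully}: if $K\trianglelefteq\cR_{\ft,\pi}$ is the kernel of this action, then $K$ centralizes $W_{\ft^\circ}$, and taking $I^\gamma=\mr{Id}_{V_\pi}$ for $\gamma\in K$ one checks via Schur's lemma that $\kappa_\pi$ is inflated from a $2$-cocycle of $\cR_{\ft,\pi}/K$; since inflation sends coboundaries to coboundaries this loses nothing. (This also explains Example \ref{ex:A}: there $W_{\ft^\circ}=1$, so $\cR_{\ft}$ is all kernel and $\kappa_\pi$ is trivial, the nontrivial twist being $\natural_{\cE}$, a different cocycle.) Now write $W_{\ft^\circ}=\prod_i W_i$ as a product of irreducible Coxeter groups and $\pi=\boxtimes_i\pi_i$; the faithful $\cR_{\ft,\pi}$ permutes the $W_i$ preserving isomorphism types, and I would partition the $W_i$ into $\cR_{\ft,\pi}$-orbits. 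For an orbit of size $m$ all its $W_i$ are identified with one irreducible Weyl group $W_0$, the corresponding factor of $\pi$ is $\pi_0^{\boxtimes m}$ for a single $\pi_0\in\Irr_{\overline{\Q_{\ell}}}(W_0)$ (as $\pi$ is $\cR_{\ft,\pi}$-stable), and the restriction of $\cR_{\ft,\pi}$ to that orbit lies in $\Lambda_0\wr S_m$, where $\Lambda_0$ is the stabilizer of $\pi_0$ in the diagram-automorphism group of $W_0$. It then suffices, orbit by orbit, to extend $\pi_0^{\boxtimes m}$ to $W_0^m\rtimes(\Lambda_0\wr S_m)$ and restrict the product of these extensions to $W_{\ft,\pi}$. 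The extension is built in two steps: (i) extend $\pi_0$ to $W_0\rtimes\Lambda_0$, which is possible because for an irreducible finite Weyl group the diagram-automorphism group — hence any subgroup $\Lambda_0$ — is one of $1$, $\Z/2\Z$ and $S_3$ (the $S_3$ only in type $D_4$), all of trivial Schur multiplier, so $H^2(\Lambda_0,\overline{\Q_{\ell}}^\times)=0$ and an extension $\widetilde\pi_0$ exists; (ii) $\widetilde\pi_0^{\boxtimes m}$ is an $S_m$-invariant representation of $(W_0\rtimes\Lambda_0)^m$ restricting to $\pi_0^{\boxtimes m}$ on $W_0^m$, and letting $S_m$ permute the tensor factors extends it to $(W_0\rtimes\Lambda_0)\wr S_m=W_0^m\rtimes(\Lambda_0\wr S_m)$.

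I expect step (ii) to be the real point. One cannot finish by cohomology vanishing alone, because the quotients $W_{\ft,\pi}/W_{\ft^\circ}$ that occur may contain symmetric groups $S_m$ with $m\ge 4$ (from orbits of $\ge 4$ mutually isomorphic irreducible components of $R(G^\circ,Z(L)^\circ)$), and these have nontrivial Schur multiplier; it is precisely the explicit ``permute the tensor factors'' construction on the wreath product that circumvents this. The remaining inputs are elementary: the splitting of Lemma \ref{lem:5.1}, the passage to the faithful quotient above, the classification of diagram automorphisms of irreducible Weyl groups, and the triviality of the Schur multipliers of $1$, $\Z/2\Z$ and $S_3$.
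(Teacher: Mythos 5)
Your proposal is correct, but it does not follow the paper's text: the paper disposes of Proposition \ref{prop:5.7} in one line, by observing that Lemma \ref{lem:5.1} puts us in the situation of \cite[Proposition 4.3]{ABPS5} and then citing that result. What you have written is, in effect, a self-contained proof of the cited statement: you use Lemma \ref{lem:5.1} in exactly the same way (to get $W_{\ft,\pi}=\cR_{\ft,\pi}\ltimes W_{\ft^\circ}$ with $\cR_{\ft,\pi}$ acting by automorphisms preserving the simple reflections of the Weyl group $W_{\ft^\circ}=W(R(G^\circ,Z(L)^\circ))$), and then you verify directly that every $\cR_{\ft,\pi}$-stable $\pi$ extends to $W_{\ft,\pi}$, which is equivalent to the vanishing of $[\kappa_\pi]$. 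The individual steps are all sound: the passage to the faithful quotient via Schur's lemma and inflation, the decomposition into irreducible Coxeter factors and $\cR_{\ft,\pi}$-orbits, the observation that (after adjusting the identifications $W_i\cong W_0$ by elements of $\cR_{\ft,\pi}$, which you use implicitly and should state) the image on an orbit lands in $\Lambda_0\wr S_m$ with $\Lambda_0\subset\{1,\Z/2\Z,\Z/3\Z,S_3\}$ of trivial Schur multiplier, and the explicit tensor-factor-permutation extension to the wreath product, which is indeed what handles the $S_m$ part where mere cohomology vanishing would fail. So the trade-off is: the paper buys brevity by outsourcing the content to \cite{ABPS5}, while your argument makes the mechanism transparent and keeps the proof internal to the paper; it also makes explicit exactly where the potential obstruction (nontrivial Schur multipliers of symmetric groups permuting isomorphic factors) is circumvented, and why this is consistent with the genuinely nontrivial cocycle $\natural_\cE$ of Example \ref{ex:A}, which lives on a different quotient.
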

\begin{proof}
This is the statement of \cite[Proposition 4.3]{ABPS5}, which is applicable
by Lemma \ref{lem:5.1}.
\end{proof}

Let $N_G^+ (\ft)$ be the inverse image of $\cR_\ft$ in $N_G (\ft) \subset N_G (L)$.
Then $L = N_G^+ (\ft)^\circ$ and $\cR_\ft \cong N_G^+ (\ft) / N_G^+ (\ft)^\circ$.
Thus $(\cC_v^L,\cE)$ can be considered as a cuspidal pair for $N_G^+ (\ft)^\circ$. 
In \eqref{eq:4.2} we constructed a 2-cocycle 
$\natural_\cE : \cR_\ft \times \cR_\ft \to \overline{\Q_{\ell}}^\times$.
With Lemma \ref{lem:5.1} we can also consider it as a 2-cocycle of
$W_\ft$, trivial on $W_{\ft^\circ}$:
\begin{equation}\label{eq:5.3}
\natural_\cE : W_\ft / W_{\ft^\circ} \times  W_\ft / W_{\ft^\circ}  
\to \overline{\Q_{\ell}}^\times .
\end{equation}

\begin{lem}\label{lem:5.2}
Let $\cF^\circ$ be the $G^\circ$-equivariant local system on $\cC_u^{G^\circ}$ 
corresponding to $\eta^\circ \in \Irr_{\overline{\Q_{\ell}}}(A_{G^\circ}(u))$.
There are natural isomorphisms 
\[
W_{\ft,\Sigma_{\ft^\circ}(u,\eta^\circ)} / W(G^\circ,L) \to 
G_{(\cC_u^{G^\circ},\cF^\circ)} / G^\circ \leftarrow 
Z_G (u)_{\eta^\circ} / Z_{G^\circ}(u) \to A_G (u)_{\eta^\circ} / A_{G^\circ}(u) .
\]
\end{lem}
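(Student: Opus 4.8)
The plan is to build all three isomorphisms from the chain of maps already established, and to verify that the two maps into the ``middle'' group $G_{(\cC_u^{G^\circ},\cF^\circ)} / G^\circ$ have the same image, so that they can be composed. First I would recall the isomorphisms \eqref{eq:4.1} and \eqref{eq:4.3}: the maps
\[
Z_G (u)_{\eta^\circ} / Z_{G^\circ}(u) \to A_G (u)_{\eta^\circ} / A_{G^\circ}(u)
\quad\text{and}\quad
Z_G (u)_{\eta^\circ} / Z_{G^\circ}(u) \to G / G^\circ
\]
are restrictions of \eqref{eq:4.1} to the subgroup stabilizing $\eta^\circ$; these are isomorphisms onto their images essentially by definition, since stabilizing $\eta^\circ$ as a representation of $A_{G^\circ}(u)$ is the same as stabilizing the local system $\cF^\circ$, and the isomorphisms in \eqref{eq:4.1} are compatible with the conjugation action of $G$ on the relevant objects. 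Thus the right-hand two arrows of the displayed chain, and the identification of the image of the middle-left arrow with $G_{(\cC_u^{G^\circ},\cF^\circ)}/G^\circ$, follow from the material of Section~\ref{sec:cusp} applied not to a cuspidal $\eta^\circ$ but to an arbitrary $\eta^\circ \in \Irr(A_{G^\circ}(u))$ --- the splitting arguments around \eqref{eq:4.1}--\eqref{eq:4.3} did not use cuspidality, only that $u \in G^\circ$ is unipotent (indeed $\cC_u^G = \cC_u^{G^\circ}$ holds here because $G/G^\circ$ maps onto $Z_G(u)/Z_{G^\circ}(u)$).

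The substantive point is the leftmost arrow, $W_{\ft,\Sigma_{\ft^\circ}(u,\eta^\circ)} / W(G^\circ,L) \to G_{(\cC_u^{G^\circ},\cF^\circ)} / G^\circ$. Here I would use the $G$-equivariance of Lusztig's generalized Springer correspondence for $G^\circ$, which is exactly Theorem~\ref{thm:3.2}: for $g \in G$, normalizing $G^\circ$, we have $\Sigma_{g\cdot\ft^\circ}(g\cdot(\cC_u^{G^\circ},\cF^\circ)) = g \cdot \Sigma_{\ft^\circ}(u,\eta^\circ)$. Restricting attention to those $g \in N_G(\ft)$ that fix $\ft^\circ$ (equivalently $g \in N_G(L,\cC_v^L,\cE)$), we get an action of $N_G(\ft)$ on $\Irr(W_{\ft^\circ})$, and $g$ fixes $\Sigma_{\ft^\circ}(u,\eta^\circ)$ if and only if $g \cdot (\cC_u^{G^\circ},\cF^\circ) = (\cC_u^{G^\circ},\cF^\circ)$, i.e. $g \in G_{(\cC_u^{G^\circ},\cF^\circ)}$. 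Passing to component groups (the neutral component of $N_G(\ft)$ being $L$, which acts trivially since $L$-conjugation fixes $(\cC_u^{G^\circ},\cF^\circ)$ and acts trivially on $W_{\ft^\circ}$-representations), this gives the isomorphism
\[
W_{\ft,\Sigma_{\ft^\circ}(u,\eta^\circ)} / W(G^\circ,L)
\;\isom\;
\big( N_G(\ft) \cap G_{(\cC_u^{G^\circ},\cF^\circ)} \big) / \big( L \cdot G^\circ\text{-part} \big),
\]
which one then identifies with $G_{(\cC_u^{G^\circ},\cF^\circ)}/G^\circ$.

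For that last identification I would argue that $N_G(\ft) \cap G_{(\cC_u^{G^\circ},\cF^\circ)}$ already surjects onto $G_{(\cC_u^{G^\circ},\cF^\circ)}/G^\circ$: any $g$ stabilizing $(\cC_u^{G^\circ},\cF^\circ)$ stabilizes its cuspidal support, hence can be adjusted by an element of $G^\circ$ (which preserves $(\cC_u^{G^\circ},\cF^\circ)$ since $u$ is fixed and $G^\circ$-conjugacy is built into the pair) so as to normalize $(L,\cC_v^L,\cE)$ --- this is where one invokes the uniqueness up to $G^\circ$-conjugacy of the cuspidal support together with Theorem~\ref{thm:3.2} to know $g\cdot\ft^\circ$ is $G^\circ$-conjugate to $\ft^\circ$. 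Naturality of all four maps is then automatic: each is the restriction of a map (from \eqref{eq:4.1}, \eqref{eq:4.3}, or Theorem~\ref{thm:3.2}) that is already natural in $G$ and $(u,\eta^\circ)$. The main obstacle I anticipate is bookkeeping with the three different ``levels'' of component groups --- $\pi_0$ relative to $Z(G^\circ)$, relative to $Z(G^\circ)^\circ$, and plain $N_G(\ft)/L$ --- and making sure the stabilizer conditions match up exactly across the isomorphisms of \eqref{eq:4.1}; but since cuspidality played no role in those isomorphisms, this is routine diagram-chasing rather than a new idea.
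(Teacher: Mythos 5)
Your argument follows the paper's own proof: the leftmost isomorphism is obtained from the natural injection $W_\ft / W(G^\circ,L) \cong N_G (L,\cC_v^L,\cE)/N_{G^\circ}(L) \to G/G^\circ$ together with the equivariance of $\Psi_{G^\circ}$ and $\Sigma_{\ft^\circ}$ (Theorem~\ref{thm:3.2}), and the remaining arrows come from the correspondence between $\eta^\circ$ and $\cF^\circ$ via \eqref{eq:4.1}--\eqref{eq:4.3}, so you are taking essentially the same route, merely spelling out the surjectivity steps the paper leaves implicit. One inessential slip: your parenthetical claim that $\cC_u^G = \cC_u^{G^\circ}$ (which Theorem~\ref{thm:4.1}.a yields only in the cuspidal situation) may fail for a general unipotent $u$, but nothing in your argument actually uses it.
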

\begin{proof}
There is a natural injection 
\begin{equation}\label{eq:5.4}
W_\ft / W(G^\circ,L) \cong N_G (L,\cC_v^L,\cE) / N_{G^\circ}(L) \to G / G^\circ .
\end{equation}
By Theorem \ref{thm:3.2} an element of $W_\ft / W(G^\circ,L)$ stabilizes 
$\Sigma_{\ft^\circ}(u,\eta^\circ) = \Sigma_{\ft^\circ} (\cC_u^{G^\circ},\cF^\circ)$ 
if and only if its image in 
$G / G^\circ$ stabilizes $(\cC_u^{G^\circ},\cF^\circ)$. The second isomorphism is 
a direct consequence of the relation between $\cF^\circ$ and $\eta^\circ$.
\end{proof}
With this lemma we transfer \eqref{eq:5.3} to a 2-cocycle
\begin{equation}\label{eq:5.8}
\natural_\cE : A_G (u)_{\eta^\circ} / A_{G^\circ}(u) \times
A_G (u)_{\eta^\circ} / A_{G^\circ}(u) \to \overline{\Q_{\ell}}^\times.
\end{equation}
Our construction of $\Sigma_\ft$ will generalize that of $\Sigma_{\ft^\circ}$ in
\cite{Lus1}, in particular we use similar equivariant local systems. 
Recall that $(L,\cC_v^L,\cE)$ is a cuspidal support. As in \cite[\S 3.1]{Lus1}
we put $S = \cC_v^L Z(L)^\circ$ and we extend $\cE$ to  a local system on $S$.
We say that an element $y \in S$ is regular if $Z_G (y_s)^\circ$, the connected 
centralizer of the semisimple part of $y$, is contained in $L$.
Consider the variety $Y = Y_{(L,S)}$ which is the union of all conjugacy classes
in $G$ that meet the set of regular elements $S_\reg$. We build equivariant local 
systems $\hat \cE$ on 
\[
\hat Y := \{ (y,g) \in Y \times G : g^{-1} y g \in S_\reg \}
\]
and $\tilde \cE$ on $\tilde Y := \hat Y / L$ as in 
\eqref{eq:4.13} and \eqref{eq:4.14}, only with $L$ instead of $G^\circ$. 
The projection map 
\[
\pi : \tilde Y \to Y ,\; (y,g) \mapsto y
\]
is a fibration with fibre $N_G (L) / L$, so
\begin{equation}\label{eq:5.20}
\pi : \tilde Y \to Y \text{ gives a $G$-equivariant local system }
\pi_* \tilde{\cE} \text{ on } Y .
\end{equation}
By Theorem \ref{thm:4.1}.a $N_G (L)$ stabilizes $\cC_v^L$, so $N_G (L) / L \cong 
Z_{N_G (L)} (v) / Z_L (v)$. The stalk of $\pi_* \tilde{\cE}$ at $y \in S_{\reg}$ 
is isomorphic, as representation of $Z_L (y) = Z_L (v)$, to
\begin{equation}\label{eq:5.21}
(\pi_* \tilde{\cE})_y \cong \bigoplus_{g \in N_G (L) / L} (\tilde{\cE})_{y,gL}
\cong \bigoplus_{g \in Z_{N_G (L)} (v) / Z_L (v)} \cE_{g^{-1} y g} \cong
\bigoplus_{g \in Z_{N_G (L)} (v) / Z_L (v)} g \cdot \cE_y .
\end{equation}
On the part $Y^\circ$ of $Y$ that is $G^\circ$-conjugate to $S_{\reg}$, $\pi_* 
\tilde{\cE}$ can also be considered as a $G^\circ$-equivariant local system.
As such $(\pi_* \tilde{\cE} )_{Y^\circ}$ contains the analogous local system
$\pi_* \tilde{\cE}^\circ$ for $G^\circ$ as a direct summand. 

The following result generalizes Lemma \ref{lem:4.3}.

\begin{prop}\label{prop:5.3}
The $G$-endomorphism algebra of $\pi_* \tilde{\cE}$ is isomorphic with
$\overline{\Q_{\ell}}[W_\ft,\natural_\cE]$. Once $\natural_\cE$ has been chosen
via \eqref{eq:4.6}, the isomorphism is canonical up to twisting by characters
of $W_\ft / W_{\ft^\circ}$.
\end{prop}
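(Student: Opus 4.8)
The plan is to mimic the cuspidal case (Lemma \ref{lem:4.3}) as closely as possible, using the semidirect-product decomposition $W_\ft = \cR_\ft \ltimes W_{\ft^\circ}$ from Lemma \ref{lem:5.1} to reduce the non-cuspidal piece to the already-understood case $L = G^\circ$. First I would recall that for the connected group $G^\circ$, Theorem \ref{thm:2.2} (together with \cite[Prop. 3.5]{Lus1}) gives a canonical algebra isomorphism $\End_{G^\circ}(\pi_* \tilde{\cE}^\circ) \cong \overline{\Q_\ell}[W_{\ft^\circ}]$ once the sign-normalization is fixed as in the paragraph after Theorem \ref{thm:2.2}. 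The task is then to extend this to all of $W_\ft$ by adjoining, for each $\gamma \in \cR_\ft$, an endomorphism $B_\gamma$ of $\pi_* \tilde{\cE}$ lying over the conjugation action of a representative in $N_G^+(\ft)$, in such a way that $B_\gamma B_{\gamma'} = \natural_\cE(\gamma,\gamma') B_{\gamma\gamma'}$ and $B_\gamma$ normalizes the copy of $\overline{\Q_\ell}[W_{\ft^\circ}]$ correctly.

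The key steps, in order: (1) Apply \cite[Prop. 3.5]{Lus1} in the disconnected setting (as was already invoked in Lemma \ref{lem:4.3}) to see that $\End_G(\pi_* \tilde{\cE})$ decomposes canonically as $\bigoplus_{w \in W_\ft} \mc A_w$ with each $\mc A_w$ one-dimensional, and $\mc A_w \neq 0$ exactly when $w$ stabilizes $\cE$; since $\ft = [L,\cC_v^L,\cE]_G$ is a $G$-conjugacy class fixed by $W_\ft$ by definition, this holds for all $w \in W_\ft$, so $\dim \End_G(\pi_* \tilde{\cE}) = |W_\ft|$. (2) For $w \in W_{\ft^\circ}$, take the basis element coming from the canonical isomorphism for $G^\circ$ (the summand $(\pi_*\tilde\cE)_{Y^\circ}$ containing $\pi_*\tilde\cE^\circ$ identifies these subspaces, using the restriction of the $G$-equivariant structure to $G^\circ$). (3) For $\gamma \in \cR_\ft$, choose a representative $g_\gamma = s(\gamma) \in N_G^+(\ft)$ as in \eqref{eq:4.10}, and build $B_\gamma$ exactly as the operator $I^\gamma_\cE$ of \eqref{eq:4.17} in the proof of Lemma \ref{lem:4.3}: one has an isomorphism of $L$-equivariant local systems $\cE_S \to \mr{Ad}(\gamma)^* \cE_S$ (unique up to scalar, normalized via the $I^\gamma_\epsilon$ of Lemma \ref{lem:4.2} on the stalk), which pulls back along \eqref{eq:4.13}–\eqref{eq:4.14} to an automorphism of $\hat\cE$, descends to $\tilde\cE$, and pushes forward to $\pi_*\tilde\cE$. (4) Verify the cocycle relation: on stalks the $B_\gamma$ restrict to the $I^\gamma_\epsilon$, which satisfy $I^\gamma_\epsilon I^{\gamma'}_\epsilon = \natural_\cE(\gamma,\gamma') I^{\gamma\gamma'}_\epsilon$ by \eqref{eq:4.2}, and by $G$-equivariance this stalkwise identity propagates to $\pi_*\tilde\cE$. (5) Check that the $B_\gamma$ for $\gamma \in \cR_\ft$ together with the $W_{\ft^\circ}$-part generate all of $\End_G(\pi_*\tilde\cE)$ with the right relations: a product $B_\gamma \cdot (\text{basis element for } w)$ lies in $\mc A_{\gamma w}$ and is nonzero, so by the dimension count in step (1) we get a full basis indexed by $W_\ft = \cR_\ft W_{\ft^\circ}$, and the multiplication is governed by $\natural_\cE$ (inflated from $W_\ft/W_{\ft^\circ}$) on the cross-terms; hence $\End_G(\pi_*\tilde\cE) \cong \overline{\Q_\ell}[W_\ft,\natural_\cE]$. (6) For the canonicity statement, argue as at the end of Lemma \ref{lem:4.3}: any two such isomorphisms differ by an automorphism of $\overline{\Q_\ell}[W_\ft,\natural_\cE]$ fixing each line $\overline{\Q_\ell}T_w$, i.e. by a character of $W_\ft$; but on the $W_{\ft^\circ}$-part the isomorphism was fixed canonically (by the sign-normalization), so the remaining ambiguity is exactly a character of $W_\ft/W_{\ft^\circ}$.

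The main obstacle I expect is step (3)–(4): making the operators $B_\gamma$ genuinely well-defined as automorphisms of the local system $\pi_*\tilde\cE$ (not merely of its stalk at $u$ or $v$), and checking that the semidirect-product structure of $W_\ft$ is compatible with the cocycle $\natural_\cE$ being inflated from $\cR_\ft \cong W_\ft/W_{\ft^\circ}$ rather than genuinely living on $W_\ft$. Concretely: one must ensure that the normalization \eqref{eq:1.7}-type choice $I^{\tilde\gamma n}_\epsilon = I^{\tilde\gamma}_\epsilon \circ \epsilon(n)$ used in Lemma \ref{lem:4.2} is compatible with the $W_{\ft^\circ}$-normalization coming from Lusztig's connected theory, so that the two families of basis elements splice together into a single twisted group algebra with the asserted cocycle. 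This is where the decomposition $W_\ft = \cR_\ft \ltimes W_{\ft^\circ}$ does the real work: it lets us define $B_w$ for $w = \gamma w_0$ ($\gamma \in \cR_\ft$, $w_0 \in W_{\ft^\circ}$) unambiguously as $B_\gamma \cdot (\text{Lusztig basis element for } w_0)$, and then \eqref{eq:4.2} plus $G^\circ$-equivariance of Lusztig's isomorphism give the cocycle identity in general. The rest (proper base change, descent of equivariant local systems along free quotients via \cite[2.6.3]{BeLu}, functoriality of $\pi_*$) is routine and parallels Section \ref{sec:cusp} verbatim.
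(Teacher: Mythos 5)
Your proposal is correct and follows essentially the same route as the paper's proof: it invokes Lusztig's \cite[Proposition 3.5]{Lus1} in the disconnected setting to get the one-dimensional decomposition of $\End_G (\pi_* \tilde{\cE})$ indexed by $W_\ft$, takes the $W_{\ft^\circ}$-part of the basis from the canonical connected isomorphism \eqref{eq:5.5}, builds the $\cR_\ft$-part from the intertwiners $I^\gamma_\cE$ of \eqref{eq:4.17} so that \eqref{eq:4.2} gives the cocycle relation, splices the two via Lemma \ref{lem:5.1}, and deduces the canonicity statement from the fact that any two normalizations differ by a character of $W_\ft / W_{\ft^\circ}$. This matches the paper's argument step for step.
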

\begin{proof}
First we note that the results and proofs of \cite[\S 3]{Lus1} are also valid
for the disconnected group $G$. By \cite[Proposition 3.5]{Lus1} 
$\End_{\overline{\Q_{\ell}}}(\pi_* \tilde{\cE}) = \End_G (\pi_* \tilde{\cE})$, and 
according to \cite[Remark 3.6]{Lus1} it is a twisted group algebra of $W_\ft$. 
It remains to determine the 2-cocycle. Again by \cite[Proposition 3.5]{Lus1}, 
$\End_G (\pi_* \tilde{\cE})$ is naturally a direct sum of one-dimensional
$\overline{\Q_{\ell}}$-vector spaces $\mc A_{\cE,w} \; (w \in W_\ft)$. An element of
$\mc A_{\cE,w}$ consists of a system of $\overline{\Q_{\ell}}$-linear maps 
\begin{equation}\label{eq:5.22}
\tilde{\cE}_{y,g} = \cE_{g^{-1} y g} \to \tilde{\cE}_{y,g w^{-1}} =
\cE_{w g^{-1} y g w^{-1}}
\end{equation}
and is determined by a single $L$-intertwining map 
$\cE \to \mathrm{Ad}(w)^* \cE$.

For $w \in W_{\ft^\circ}$ any element $b_w \in \mc A_{\cE,w}$ also acts on 
$\pi_* \tilde{\cE}^\circ$. In \cite[Theorem 9.2.d]{Lus1} a canonical isomorphism
\begin{equation}\label{eq:5.5}
\End_{G^\circ} (\pi_* \tilde{\cE}^\circ) \cong \overline{\Q_{\ell}}[W_{\ft^\circ}] , 
\end{equation}
was constructed. Via this isomorphism we pick the $b_w \; (w \in W_{\ft^\circ})$,
then
\begin{equation}\label{eq:5.6}
w \mapsto b_w \text{ is a group homomorphism } W_{\ft^\circ} \to
\Aut_G (\pi_* \tilde{\cE}) .
\end{equation}
In view of Lemma \ref{lem:5.1}, we still to have find suitable $b_\gamma \in 
\mc A_{\cE,\gamma}$ for $\gamma \in \cR_\ft$. Let $n_\gamma \in N_G^+ (\ft)$ be
a lift of $\gamma \in N_G^+ (\ft) / L$. By \cite[\S 3.4-3.5]{Lus1} the
choice of $b_\gamma$ is equivalent to the choice of an automorphism 
$I^\gamma_\cE$ of $(\cE )_S$ that lifts the map
\[
S \to S : g \mapsto n_\gamma g n_\gamma^{-1} . 
\]
Precisely such an automorphism was constructed (with the group $N_G^+ (\ft)$ in
the role of $G$) in \eqref{eq:4.17}. We pick the unique $b_\gamma \in 
\mc A_{\cE,\gamma}$ corresponding to this $I^\gamma_\cE$. Then the multiplication
rules for the $b_\gamma$ are analogous to those for the $I^\gamma_\cE$, so by 
Lemma \ref{lem:4.3} we get
\begin{equation}\label{eq:5.7}
b_\gamma \cdot b_{\gamma'} = \natural_\cE (\gamma,\gamma') b_{\gamma \gamma'}
\qquad \gamma, \gamma' \in \cR_\ft .
\end{equation}
Using Lemma \ref{lem:5.1} we define $b_{\gamma w} = b_\gamma b_w$ for
$\gamma \in \cR_\ft, w \in W_{\ft^\circ}$. Now \eqref{eq:5.6} and \eqref{eq:5.7}
imply that $b_w \cdot b_{w'} = \natural_\cE (w,w') b_{w w'}$ for all
$w, w' \in W_\ft$.

The only noncanonical part in the construction of the above isomorphism is the
choice of the $b_\gamma \in \mc A_{\cE,\gamma}$ with $\gamma \in \cR_\ft$. 
Any other choice would differ from the above by an automorphism of 
$\overline{\Q_{\ell}}[\cR_\ft,\natural_\cE]$ which stabilizes each of the 
one-dimensional subspaces $\mc A_{\cE,w}$. Every such automorphism is induced 
by a character of $\cR_\ft \cong W_\ft / W_{\ft^\circ}$.
\end{proof}

Let $(u,\eta^\circ) \in \cN_{G^\circ}^+$.
Recall the cocycle $\kappa_{\eta^\circ}$ of $A_G (u)_{\eta^\circ} / A_{G^\circ}(u)$
constructed from $\eta^\circ \in \Irr (A_{G^\circ}(u))$ in \eqref{eq:1.4}. Like 
$\natural_\cE$ it depends on some choices, but its cohomology class does not.

\begin{lem}\label{lem:5.4}
We can choose $\kappa_{\eta^\circ}$ equal to $\natural_\cE^{-1}$ from \eqref{eq:5.8}.
\end{lem}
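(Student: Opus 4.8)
The plan is to pin down a specific family of intertwiners for computing $\kappa_{\eta^\circ}$ by transporting the cuspidal intertwiners $I^\gamma_\cE$ of \eqref{eq:4.17} through Lusztig's construction, and then to read off the cocycle by comparing the conventions in \eqref{eq:4.2} and \eqref{eq:1.4}. Write $\pi = \Sigma_{\ft^\circ}(u,\eta^\circ) \in \Irr(W_{\ft^\circ})$, let $\cF^\circ$ be the $G^\circ$-equivariant local system attached to $\eta^\circ$, and set $d = d_{\cC_u^{G^\circ},\cC_v^L}$. By Theorem \ref{thmlus} (for $G^\circ$) together with Theorem \ref{thm:2.2}, $\cF^\circ$ arises from $\cE$ by a functorial recipe: extend $\cE$ to $\overline{\cE}$, form $K_{\overline{\mathbf c}} = \IC(X_{\overline{\mathbf c}},\overline{\cE})$ and the complex $(\phi_{\overline{\mathbf c}})_! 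K_{\overline{\mathbf c}}$ on $\overline{Y}_{\mathbf c}$ (which is independent of the chosen parabolic $P$), split off the $\pi$-isotypic summand for the action of $\mc A_\cE \cong \overline{\Q_\ell}[W_{\ft^\circ}]$, pass to $\cH^{2d}$, and restrict to $\cC_u^{G^\circ}$. Call this composite $\overline{\Q_\ell}$-linear functor $\Phi$, so $\cF^\circ = \Phi(\cE)$.

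Next I would use the $N_G^+(\ft)$-equivariance of $\Phi$. Let $\gamma \in \cR_\ft$ stabilise $\pi$, so that via Lemma \ref{lem:5.2} its image in $G/G^\circ$ corresponds to a class in $A_G(u)_{\eta^\circ}/A_{G^\circ}(u)$; choose a lift $n_\gamma \in N_G^+(\ft)$. Conjugation by $n_\gamma$ is an algebraic automorphism of $G^\circ$ fixing $\ft^\circ$, and because $\Sigma_{\ft^\circ}$ is a bijection and $n_\gamma\cdot\pi\cong\pi$, Theorem \ref{thm:3.2} shows that it fixes $(\cC_u^{G^\circ},\cF^\circ)$; in particular it stabilises $\cC_u^{G^\circ}$. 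Running the argument of the proof of Theorem \ref{thm:3.2} with the chosen identification of $\cE$ with its $\mathrm{Ad}(n_\gamma)$-conjugate taken to be the cuspidal intertwiner $I^\gamma_\cE$ of \eqref{eq:4.17} — the same $I^\gamma_\cE$ appearing in the proof of Proposition \ref{prop:5.3} — yields an isomorphism $\Phi(I^\gamma_\cE)$ of $\cF^\circ$ with its $n_\gamma$-conjugate. Composing with the canonical $G^\circ$-equivariant identifications and passing to the stalk at $u$, after replacing $n_\gamma$ by a translate $m_\gamma \in Z_G(u)$ having the same image in $G/G^\circ$, produces a definite $I^{m_\gamma}_{\eta^\circ} \in \Hom_{A_{G^\circ}(u)}(m_\gamma\cdot\eta^\circ,\eta^\circ)$ as in \eqref{eq:1.3}. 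Extending these by \eqref{eq:1.7} fixes the maps entering \eqref{eq:1.4}, hence a representative of $\kappa_{\eta^\circ}$.

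It then remains to compute. Since $\Phi$ is a linear functor and $n_\gamma n_{\gamma'}$ differs from $n_{\gamma\gamma'}$ only by an element of $L = N_G^+(\ft)^\circ$ (which acts canonically trivially on $\cE$ and on $\cF^\circ$), the cuspidal relation \eqref{eq:4.2}, i.e. $I^\gamma_\cE \circ I^{\gamma'}_\cE = \natural_\cE(\gamma,\gamma')\, I^{\gamma\gamma'}_\cE$ with $\natural_\cE$ the cocycle of \eqref{eq:4.6}, is carried over to $I^{m_\gamma}_{\eta^\circ}\circ I^{m_{\gamma'}}_{\eta^\circ} = \natural_\cE(\gamma,\gamma')\, I^{m_{\gamma\gamma'}}_{\eta^\circ}$. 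Comparing with the defining relation \eqref{eq:1.4}, namely $I^{m_{\gamma\gamma'}}_{\eta^\circ} = \kappa_{\eta^\circ}(\gamma,\gamma')\, I^{m_\gamma}_{\eta^\circ}\circ I^{m_{\gamma'}}_{\eta^\circ}$, gives $\kappa_{\eta^\circ} = \natural_\cE^{-1}$ once the two cocycles are matched on $A_G(u)_{\eta^\circ}/A_{G^\circ}(u)$ through Lemma \ref{lem:5.2} and \eqref{eq:5.8}. This is the natural extension of the cuspidal case $L = G^\circ$, Lemma \ref{lem:4.2}, in which $\Phi$ is the identity.

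The main obstacle is the middle step: one must check that the proof of Theorem \ref{thm:3.2} goes through when the acting group operates through $N_G^+(\ft)$ and the identifications of $\cE$ with its conjugates are taken to be the specific cuspidal intertwiners of \eqref{eq:4.17} rather than arbitrary ones, and that projecting onto the $\pi$-isotypic summand, taking $\cH^{2d}$, restricting to $\cC_u^{G^\circ}$, and moving the lift into $Z_G(u)$ are carried out compatibly, so that no spurious scalars creep in. One should also record the (standard) independence of $(\phi_{\overline{\mathbf c}})_! K_{\overline{\mathbf c}}$ from $P$, which is what lets $\mathrm{Ad}(n_\gamma)$ act on it even though $n_\gamma$ need not normalise $P$. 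With these compatibilities granted, the cocycle identity follows at once from the two displayed multiplication rules.
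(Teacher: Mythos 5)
Your proposal is correct and follows essentially the same route as the paper: one transports the cuspidal intertwiners $I^\gamma_\cE$ of \eqref{eq:4.17} through Lusztig's $\IC$-construction to obtain definite intertwiners $I^\gamma_{\eta^\circ}$, and then reads off $\kappa_{\eta^\circ}=\natural_\cE^{-1}$ by comparing the multiplication rule \eqref{eq:4.2} with the convention \eqref{eq:1.4}. The compatibility you flag as the main obstacle is exactly what the paper's phrasing avoids: instead of re-running the equivariance argument of Theorem \ref{thm:3.2}, it uses the basis elements $b_{r(\gamma)}\in \End_G (\pi_* \tilde{\cE})$ already constructed from the $I^\gamma_\cE$ in Proposition \ref{prop:5.3} (so satisfying \eqref{eq:5.7}), notes via \eqref{eq:5.10} that $b_{r(\gamma)}$ carries $\mathrm{Ad}(\gamma)^*(\pi_* \tilde{\cE}^\circ)$ to $\pi_* \tilde{\cE}^\circ$, and takes the induced maps on the $\rho^\circ$-isotypic $\IC$-cohomology restricted to $\cC_u^{G^\circ}$ as the $I^\gamma_{\eta^\circ}$, whence the cocycle identity is immediate without any separate scalar bookkeeping.
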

\begin{proof}
Let $s : G_{\cC_u^{G^\circ}} /G^\circ \to Z_G (u)$ be as in \eqref{eq:4.10}. 
As a $G^\circ$-equivariant local system on $Y^\circ$, 
\[
( \pi_* \tilde{\cE} )_{Y^\circ} = \bigoplus\nolimits_{\gamma \in s (G_{\cC_u^{G^\circ}} 
/ G^\circ)} \mathrm{Ad}(\gamma)^* (\pi_* \tilde{\cE}^\circ ) .
\]
Every summand is of the same type as $\pi_* \tilde{\cE}^\circ$, so we can apply all 
the constructions of \cite{Lus1} to $\pi_* \tilde{\cE}$. In particular we can build
\begin{equation}\label{eq:5.10}
\cH^{2d_C} \big( \IC (\overline{Y},\pi_* \tilde{\cE}) \big) | \cC_u^{G^\circ} 
\cong \bigoplus\nolimits_{\gamma \in s (G_{\cC_u^{G^\circ}} / G^\circ)} 
\mathrm{Ad}(\gamma)^* \cH^{2d_C} 
\big( \IC (\overline{Y^\circ},\pi_* \tilde{\cE}^\circ) \big) | \cC_u^{G^\circ} ,
\end{equation}
see \cite[Theorem 6.5]{Lus1}. Write 
\[
\rho^\circ = \Sigma_{\ft^\circ}(u,\eta^\circ) \in \Irr (W_{\ft^\circ}) .
\]
Let $d_C = d_{\cC_u^{G^\circ},\cC_v^L}$ be as in Theorem \ref{thmlus}. 
Then $A_{G^\circ}(u)$ acts on 
\[
\mathrm{Ad}(\gamma)^* V_{\eta^\circ} = \mathrm{Ad}(\gamma)^* \cH^{2d_C} \big( 
\IC (\overline{Y^\circ},\pi_* \tilde{\cE}^\circ)_{\rho^\circ} \big) | \cC_u^{G^\circ} 
\]
as $\gamma \cdot \eta^\circ$. Let $r (\gamma) \in \cR_\ft \cong W_\ft / W_{\ft^\circ}$
correspond to $\gamma G^\circ \in G / G^\circ$ under Lemma \ref{lem:5.2}. 
By construction $b_{r(\gamma)} \in \End_G (\pi_* \tilde{\cE})$ maps the 
$G^\circ$-local system $\mathrm{Ad}(\gamma)^* (\pi_* \tilde{\cE}^\circ )$ to
$\pi_* \tilde{\cE}^\circ$. Suppose that $\gamma$ stabilizes $\eta^\circ$.
For $I^\gamma_{\eta^\circ}$ we take the map
\[
\mathrm{Ad}(\gamma)^* \cH^{2d_C} \Big( \IC (\overline{Y^\circ},\pi_* 
\tilde{\cE}^\circ)_{\rho^\circ} \Big) | \cC_u^{G^\circ} \to
\cH^{2d_C} \Big( \IC (\overline{Y^\circ},\pi_* \tilde{\cE}^\circ)_{\rho^\circ} \Big)
| \cC_u^{G^\circ} 
\]
induced by $b_{r(\gamma)}$. It commutes with the action of $Z_G (u)$, so it can be
regarded as an element of $\Hom_{A_{G^\circ} (u)}(\gamma \cdot \eta^\circ,
\eta^\circ)$. Then
\begin{multline*}
\kappa_{\eta^\circ}^{-1}(\gamma,\gamma') = I^\gamma_{\eta^\circ} \circ 
I^{\gamma'}_{\eta^\circ} \circ (I^{\gamma \gamma'}_{\eta^\circ})^{-1} = 
b_{r(\gamma)} b_{r(\gamma')} b_{r(\gamma \gamma')}^{-1} =
\natural_\cE (r(\gamma),r(\gamma')) = \natural_\cE (\gamma,\gamma') ,
\end{multline*}
where we used \eqref{eq:5.7} for the third equality.
\end{proof}

Now we can state the main result of the first part of the paper.

\begin{thm}\label{thm:5.5}
Let $\ft = [L,\cC_v^L,\cE]_G \in \cS_G$. There exists a canonical bijection
\[
\begin{array}{cccc}
\Sigma_\ft : & \Psi_G^{-1}(\ft) & \to & \Irr (\End_G (\pi_* \tilde{\cE})) \\
& (\cC_u^G ,\cF) & \mapsto & \Hom_G \Big( \cF , \cH^{2d_C} 
\big( \IC (\overline{Y},\pi_* \tilde{\cE}) \big) | \cC_u^G \Big) .
\end{array}
\]
Suppose that $\rho \in \Irr \big( \End_G (\pi_* \tilde{\cE})\big)$ contains 
$\rho^\circ \in \Irr \big( \End_{G^\circ} (\pi_* \tilde{\cE}^\circ) \big)$ and 
that the unipotent conjugacy class of 
$\Sigma_{\ft^\circ}^{-1}(\rho^\circ)$ is represented by $u \in G^\circ$. Then
\[
\Sigma_\ft^{-1}(\rho) = \Big( \cC_u^G, \cH^{2d_C} 
\big( \IC (\overline{Y},\pi_* \tilde{\cE})_\rho \big) | \cC_u^G \Big) ,
\]
where $d_C$ is as in Theorem \ref{thmlus}.

Upon choosing an isomorphism as in Proposition \ref{prop:5.3}, we obtain a bijection
\[
\Psi_G^{-1}(\ft) \to \Irr ( \overline{\Q_{\ell}} [W_\ft,\natural_\cE]) .
\]
\end{thm}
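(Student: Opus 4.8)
The plan is to run Lusztig's construction of \cite[\S 3]{Lus1} and Theorem \ref{thmlus} for the (possibly disconnected) group $G$ and the $G$-equivariant local system $\pi_* \tilde{\cE}$ of \eqref{eq:5.20}, and then to match the outcome with Lusztig's generalized Springer correspondence for $G^\circ$ by means of the Clifford theory of Section \ref{sec:twisted}. All the geometric input carries over unchanged: over the open piece $Y^\circ$ of $Y$ one has $\pi_* \tilde{\cE}|_{Y^\circ} = \bigoplus_\gamma \mathrm{Ad}(\gamma)^* (\pi_* \tilde{\cE}^\circ)$, a finite direct sum of translates of the connected object (this is \eqref{eq:5.10}, already exploited in the proof of Lemma \ref{lem:5.4}), Lusztig's arguments are local, and $\End_G (\pi_* \tilde{\cE})$ is semisimple by Proposition \ref{prop:5.3}; so $\IC(\overline{Y}, \pi_* \tilde{\cE})$, its isotypic decomposition $\bigoplus_{\rho} \IC(\overline{Y}, \pi_* \tilde{\cE})_\rho \boxtimes \rho$ over $\rho \in \Irr (\End_G (\pi_* \tilde{\cE}))$, and the local systems $\cH^{2d_C}(\IC(\overline{Y}, \pi_* \tilde{\cE})_\rho)|\cC_u^G$ all make sense just as in \cite{Lus1}. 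I will \emph{define} $\Sigma_\ft^{-1}$ by the stated formula and then prove it is a well-defined bijection onto $\Psi_G^{-1}(\ft)$. Writing $\mathcal{K}_u := \cH^{2d_C}(\IC(\overline{Y}, \pi_* \tilde{\cE}))|\cC_u^G$, an $\big(A_G (u),\End_G(\pi_*\tilde{\cE})\big)$-bimodule, we have $\cH^{2d_C}(\IC(\overline{Y}, \pi_* \tilde{\cE})_\rho)|\cC_u^G = \Hom_{\End_G(\pi_*\tilde{\cE})}(\rho, \mathcal{K}_u)$.

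The heart of the matter is a reduction to $G^\circ$. Fix $\rho^\circ \in \Irr (W_{\ft^\circ}) \cong \Irr \big(\End_{G^\circ}(\pi_*\tilde{\cE}^\circ)\big)$ and write $(u,\eta^\circ) := \Sigma_{\ft^\circ}^{-1}(\rho^\circ)$. Restricting to a $G^\circ$-class $\cC_u^{G^\circ} \subseteq \cC_u^G$, equation \eqref{eq:5.10} combined with Theorem \ref{thmlus}.3 for $G^\circ$ (which gives the irreducibility of the connected multiplicity spaces) and the $\mathrm{Ad}(G)$-equivariance of $\Sigma_{\ft^\circ}$ (Theorem \ref{thm:3.2}) shows that $\Hom_{W_{\ft^\circ}}(\rho^\circ, \mathcal{K}_u) \cong \ind_{A_{G^\circ}(u)}^{A_G(u)} \eta^\circ$ as $A_G(u)$-representations, the residual $\End_G(\pi_*\tilde{\cE})$-action being carried by the maps $b_\gamma$ from the proof of Proposition \ref{prop:5.3}. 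Now, for $\rho$ containing $\rho^\circ$, write $\rho = \tau \ltimes \rho^\circ$ (Theorem \ref{thm:1.2}); Proposition \ref{prop:1.1}.d then gives
\[
\Hom_{\End_G(\pi_*\tilde{\cE})}(\rho, \mathcal{K}_u) \cong
\Hom_{\overline{\Q_{\ell}}[W_{\ft,\rho^\circ}/W_{\ft^\circ},\, \natural_\cE \kappa_{\rho^\circ}]}\big( \tau, \ind_{A_{G^\circ}(u)}^{A_G(u)} \eta^\circ \big) ,
\]
where $W_{\ft,\rho^\circ}$ is the stabilizer of $\rho^\circ$ in $W_\ft$. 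On the right, Lemma \ref{lem:5.2} identifies $W_{\ft,\rho^\circ}/W_{\ft^\circ}$ with $A_G(u)_{\eta^\circ}/A_{G^\circ}(u)$, Lemma \ref{lem:5.4} identifies $\kappa_{\eta^\circ}$ with $\natural_\cE^{-1}$, and Proposition \ref{prop:5.7} makes $\kappa_{\rho^\circ}$ a coboundary, so $\natural_\cE\kappa_{\rho^\circ}$ and $\kappa_{\eta^\circ}$ are mutually inverse $2$-cocycles. Hence, by Proposition \ref{prop:1.1}.b and Lemma \ref{lem:1.3}, the displayed space is the irreducible $A_G(u)$-representation $\eta$ precisely when $\tau = \tau_\eta^*$ — where $\tau_\eta \in \Irr(\overline{\Q_{\ell}}[A_G(u)_{\eta^\circ}/A_{G^\circ}(u),\kappa_{\eta^\circ}])$ is the twisted representation corresponding to $\eta$ under \eqref{eq:1.6} and $\tau_\eta^*$ its contragredient (Lemma \ref{lem:1.3}.b) — and is $0$ otherwise.

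Summing over all unipotent classes, this shows at once that every $\cH^{2d_C}(\IC(\overline{Y}, \pi_* \tilde{\cE})_\rho)|\cC_u^G$ is an irreducible local system, that the pairs $\big(\cC_u^G, \cH^{2d_C}(\IC(\overline{Y}, \pi_* \tilde{\cE})_\rho)|\cC_u^G\big)$ exhaust $\Psi_G^{-1}(\ft)$ without repetition, and hence that $\rho \mapsto \Sigma_\ft^{-1}(\rho)$ is a bijection $\Irr (\End_G (\pi_* \tilde{\cE})) \to \Psi_G^{-1}(\ft)$; the stated compatibility with $\Sigma_{\ft^\circ}$ holds by construction, and in particular for $G = G^\circ$ one recovers \cite{Lus1}. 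Since this bijection is built without any arbitrary choice, it is canonical; transporting it through an isomorphism $\End_G(\pi_*\tilde{\cE}) \cong \overline{\Q_{\ell}}[W_\ft,\natural_\cE]$ as in Proposition \ref{prop:5.3} yields the final bijection, canonical up to tensoring with characters of $W_\ft/W_{\ft^\circ}$.

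I expect the real difficulty to lie not in the geometry — transferring \cite[\S 3]{Lus1} and Theorem \ref{thmlus} to disconnected $G$ is routine once one has the $\gamma$-decomposition of $\pi_* \tilde{\cE}$ over $Y^\circ$ — but in the cocycle bookkeeping of the middle step: one must check that the $2$-cocycle governing $\ind_{A_{G^\circ}(u)}^{A_G(u)} \eta^\circ$ (namely $\kappa_{\eta^\circ} = \natural_\cE^{-1}$, Lemma \ref{lem:5.4}) is genuinely inverse to the one governing $\overline{\Q_{\ell}}[W_\ft,\natural_\cE]$ along $W_{\ft^\circ}$ (namely $\natural_\cE\kappa_{\rho^\circ} = \natural_\cE$, Propositions \ref{prop:5.3} and \ref{prop:5.7}), with all the intermediate identifications — Lemma \ref{lem:5.2} and the compatibility of the maps $I^\gamma_{\eta^\circ}$, $I^\gamma_\cE$, $b_\gamma$ from the proofs of Lemmas \ref{lem:4.3} and \ref{lem:5.4} — simultaneously consistent, so that the final bijection is exactly as canonical as claimed.
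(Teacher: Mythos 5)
Your proposal is correct and takes essentially the same route as the paper: the decisive ingredient in both is the computation of the stalk of $\cH^{2d_C}\big( \IC (\overline{Y},\pi_* \tilde{\cE}) \big)$ at $u$ via \eqref{eq:5.10}, Proposition \ref{prop:1.1}, Theorem \ref{thm:1.2}, Lemma \ref{lem:1.3} and the cocycle identifications of Lemmas \ref{lem:5.2}, \ref{lem:5.4} and Proposition \ref{prop:5.7} (this is \eqref{eq:5.15}--\eqref{eq:5.19} in the paper). The only difference is organizational: the paper first builds the bijection from the chain of extended-quotient bijections \eqref{eq:5.11}--\eqref{eq:5.14} and then uses the stalk computation to verify the canonical formula for $\Sigma_\ft^{-1}(\rho)$, whereas you define the map by that formula and use the same computation to establish bijectivity.
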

\begin{proof}
First we show that there exists a bijection $\Sigma_\ft$ between the indicated sets. 
To this end we may fix an isomorphism 
\begin{equation}\label{eq:5.9}
\End_G (\pi_* \tilde{\cE}) \cong \overline{\Q_{\ell}} [W_\ft,\natural_\cE]
\end{equation}
as in Proposition \ref{prop:5.3}. In particular it restricts to
\[
\End_{G^\circ} (\pi_* \tilde{\cE}^\circ) \cong \overline{\Q_{\ell}}[W_{\ft^\circ}].
\]
Let us compare $\Psi_G^{-1}(\ft)$ with $\Psi_{G^\circ}^{-1}(\ft^\circ)$. For every
$(u,\eta^\circ) \in \Psi_{G^\circ}^{-1}(\ft^\circ)$ we can produce an element of 
$\Psi^{-1}_G (\ft)$ by extending $\eta^\circ$ to an irreducible representation 
$\eta$ of $A_G (u)$. By Lemma \ref{lem:5.4} and Proposition
\ref{prop:1.1}.c the only way to do so is taking $\eta$ of the form
\begin{equation}\label{eq:5.24}
\eta^\circ \rtimes \tau' = \ind_{A_G (u)_{\eta^\circ}}^{A_G (u)} 
(\eta^\circ \otimes \tau') \text{ with } \tau' \in \Irr \big (\overline{\Q_{\ell}}[
A_G (u)_{\eta^\circ} / A_{G^\circ} (u), \natural_\cE^{-1}] \big) .
\end{equation}
In view of Theorem \ref{thm:1.2} and Lemma \ref{lem:5.2} that yields a bijection
\begin{equation}\label{eq:5.11}
\begin{array}{ccc}
\big( \Psi_{G^\circ}^{-1}(\ft^\circ) \q \, W_\ft / W_{\ft^\circ} 
\big)_{\natural_\cE^{-1}} & \longleftrightarrow & \Psi_G^{-1}(\ft)  \\
((u,\eta^\circ),\tau') & \mapsto & (u,\eta^\circ \rtimes \tau') . 
\end{array}
\end{equation}
By Lemma \ref{lem:1.3}.b there is a bijection
\begin{equation}\label{eq:5.12}
\begin{array}{ccc}
\Irr \big (\overline{\Q_{\ell}}[W_\ft / W_{\ft^\circ}, \natural_\cE] \big) & 
\longleftrightarrow &
\Irr \big (\overline{\Q_{\ell}}[W_\ft / W_{\ft^\circ}, \natural_\cE^{-1}] \big) \\
V & \mapsto & V^* = \Hom_{\overline{\Q_{\ell}}}(V,\overline{\Q_{\ell}}) .
\end{array}
\end{equation}
Recall from Proposition \ref{prop:5.7} that for any $\rho^\circ \in 
\Irr_{\overline{\Q_{\ell}}} (W_{\ft^\circ})$ the cohomology class of $\kappa_{\rho^\circ}$ 
in $H^2 (W_{\ft,\rho^\circ} / W_{\ft^\circ},\overline{\Q_{\ell}}^\times)$ is
trivial. With Theorem \ref{thm:1.2} we get a bijection
\begin{equation}\label{eq:5.13}
\begin{array}{ccc}
\big( \Irr_{\overline{\Q_{\ell}}} (W_{\ft^\circ})) \q \, W_\ft / W_{\ft^\circ} 
\big)_{\natural_\cE} & \longleftrightarrow & 
\Irr (\overline{\Q_{\ell}}[W_\ft,\natural_\cE])  \\
(\rho^\circ,\tau) & \mapsto & \rho^\circ \rtimes \tau . 
\end{array} 
\end{equation}
From \eqref{eq:5.11}, \eqref{eq:5.12} and \eqref{eq:5.13} we obtain a bijection
\begin{equation}\label{eq:5.14}
\begin{array}{ccc}
\Psi_G^{-1}(\ft) & \longleftrightarrow &
\Irr (\overline{\Q_{\ell}}[W_\ft,\natural_\cE]) \\
(u,\eta^\circ \rtimes \tau') & \mapsto & 
\Sigma_{\ft^\circ}(u,\eta^\circ) \rtimes \tau'^* \\
\Sigma_{\ft^\circ}^{-1}(\rho^\circ) \rtimes \tau^* & 
\text{\reflectbox{$\mapsto$}} & \rho^\circ \rtimes \tau
\end{array}
\end{equation}
Together with \eqref{eq:5.9} we get a candidate for $\Sigma_\ft$, and we know that this
candidate is bijective. To prove that it is canonical, it suffices to see that it 
satisfies the given formula for $\Sigma_\ft^{-1}(\rho)$. That formula involves a 
$G$-equivariant local system on $\cC_u^G$. Since $\End_G (\pi_* \tilde \cE)$ is
semisimple, we only have to determine its stalk at $u$, as a $A_G (u)$-representations. 
It follows from \eqref{eq:5.10} that this stalk is
\[
\cH^{2d_C} \big( \IC (\overline{Y},\pi_* \tilde{\cE}) \big)_u 
\cong \ind_{A_{G^\circ}(u)}^{A_G (u)} \cH^{2d_C} \big( \IC 
(\overline{Y^\circ},\pi_* \tilde{\cE}^\circ) \big)_u .
\]
We abbreviate $\Sigma (u) = \{ \rho^\circ = \Sigma_{\ft^\circ}(u,\eta^\circ) : 
(u,\eta^\circ) \in \Psi_{G^\circ}^{-1}(\ft^\circ)\}$. Decomposing \\$\cH^{2d_C} \big( 
\IC (\overline{Y^\circ},\pi_* \tilde{\cE}^\circ) \big)_u$ as a representation of 
$\overline{\Q_{\ell}} [A_{G^\circ} (u)] \times \End_{G^\circ}(\pi_* (\tilde{\cE}^\circ))$, 
like in \cite[\S 3.7]{Lus1}, the right hand side becomes 
\begin{equation}\label{eq:5.15}
\ind_{A_{G^\circ}(u)}^{A_G (u)} \Big( \bigoplus\nolimits_{\rho^\circ \in \Sigma (u)} 
V_{\eta^\circ} \otimes V_{\rho^\circ} \Big) .
\end{equation}
By Proposition \ref{prop:1.1} and Lemma \ref{lem:5.4} this is isomorphic to
\begin{equation}\label{eq:5.16}
\bigoplus_{\rho^\circ \in \Sigma (u)} \ind_{A_G (u)_{\eta^\circ}}^{A_G (u)} \Big(
\overline{\Q_{\ell}} [A_G (u)_{\eta^\circ} / A_{G^\circ}(u),\natural_\cE^{-1}] \otimes
V_{\eta^\circ} \otimes V_{\rho^\circ} \Big) := 
\bigoplus_{\rho^\circ \in \Sigma (u)} B_{\rho^\circ} .
\end{equation}
(This equality defines $B_{\rho^\circ}$.)
Let us analyse the action of $\End_G (\pi_* \tilde{\cE})$ on \eqref{eq:5.16}. 
By \eqref{eq:5.9} and Lemma \ref{lem:5.2} there is a subalgebra $\overline{\Q_{\ell}} 
[W_{\ft,\rho^\circ},\natural_\cE]$, which stabilizes $B_{\rho^\circ}$. Moreover,
by Lemma \ref{lem:1.3}.a
\[
\overline{\Q_{\ell}} [A_G (u)_{\eta^\circ} / A_{G^\circ}(u),\natural_\cE^{-1}] \cong
\overline{\Q_{\ell}} [W_{\ft,\rho^\circ} / W_{\ft^\circ},\natural_\cE^{-1}] \cong 
\overline{\Q_{\ell}} [W_{\ft,\rho^\circ} / W_{\ft^\circ},\natural_\cE]^{op} .
\]
By Lemma \ref{lem:1.3}.c it decomposes as
\begin{equation}\label{eq:5.17}
\overline{\Q_{\ell}} [A_G (u)_{\eta^\circ} / A_{G^\circ}(u),\natural_\cE^{-1}] \cong
\bigoplus_{\tau' \in \Irr (\overline{\Q_{\ell}} [W_{\ft,\rho^\circ} / W_{\ft^\circ},
\natural_\cE^{-1}])} \hspace{-1cm} V_{\tau'} \otimes V_{\tau'}^* \cong 
\bigoplus_{\tau \in \Irr (\overline{\Q_{\ell}} [W_{\ft,\rho^\circ} / W_{\ft^\circ},
\natural_\cE])} \hspace{-1cm} V_{\tau}^* \otimes V_\tau .
\end{equation}
Recall that $W_{\ft^\circ}$ is a normal subgroup of $W_\ft$ and that $W_\ft$
acts on $\Irr_{\overline{\Q_{\ell}}}(W_{\ft^\circ})$, as in \eqref{eq:1.8}. Then
$W_\ft / W_{\ft,\rho^\circ}$ is in bijection with the $W_\ft$-orbit of 
$\rho^\circ$, so
\begin{equation}\label{eq:5.18}
\bigoplus\nolimits_{\rho^i \in W_\ft \cdot \rho^\circ} B_{\rho^i} =
\overline{\Q_{\ell}} [W_\ft,\natural_\cE] B_{\rho^\circ} \cong
\ind_{\overline{\Q_{\ell}}[W_{\ft,\rho^\circ},\natural_\cE]}^{\overline{\Q_{\ell}}
[W_\ft,\natural_\cE]} B_{\rho^\circ} .
\end{equation}
It follows from \eqref{eq:5.16}, \eqref{eq:5.17} and \eqref{eq:5.18} that
\eqref{eq:5.15} is isomorphic to 
\begin{multline}\label{eq:5.19}
\bigoplus_{\rho^\circ \in \Sigma (u) / W_\ft} \ind_{\overline{\Q_{\ell}}[W_{\ft,\rho^\circ},
\natural_\cE]}^{\overline{\Q_{\ell}} [W_\ft,\natural_\cE]} 
\ind_{A_G (u)_{\eta^\circ}}^{A_G (u)}
\Big( \bigoplus_{\tau \in \Irr (\overline{\Q_{\ell}} [W_{\ft,\rho^\circ} / W_{\ft^\circ},
\natural_\cE])} \hspace{-1cm} V_\tau^* \otimes V_\tau \otimes V_{\eta^\circ} \otimes 
V_{\rho^\circ} \Big) = \\
\bigoplus_{\rho^\circ \in \Sigma (u) / W_\ft} \bigoplus_{\tau \in \Irr 
(\overline{\Q_{\ell}} [W_{\ft,\rho^\circ} / W_{\ft^\circ},\natural_\cE])} \hspace{-1cm} 
\ind_{A_G (u)_{\eta^\circ}}^{A_G (u)} (V_\tau^* \otimes V_{\eta^\circ}) \otimes
\ind_{\overline{\Q_{\ell}}[W_{\ft,\rho^\circ},\natural_\cE]}^{\overline{\Q_{\ell}}
[W_\ft,\natural_\cE]} (V_\tau \otimes V_{\rho^\circ}) .
\end{multline}
Let $\rho = \rho^\circ \rtimes \tau = \ind_{\overline{\Q_{\ell}}[W_{\ft,\rho^\circ},
\natural_\cE]}^{\overline{\Q_{\ell}} [W_\ft,\natural_\cE]} 
(V_\tau \otimes V_{\rho^\circ})$. By \eqref{eq:5.19} 
\begin{multline*}
\cH^{2d_C} \big( \IC (\overline{Y},\pi_* \tilde{\cE})_\rho \big) \big)_u =
\Hom_{\overline{\Q_l}[W_\ft,\natural_\cE]} \Big( \rho, \cH^{2d_C} \big( 
\IC (\overline{Y},\pi_* \tilde{\cE}) \big)_u \Big) \cong \\
\ind_{A_G (u)_{\eta^\circ}}^{A_G (u)} (V_\tau^* \otimes V_{\eta^\circ}) =
\tau^* \ltimes \eta^\circ = \Sigma_{\ft^\circ}^{-1}(\rho^\circ) \rtimes \tau^*.
\end{multline*}
Hence the formula for $\Sigma_\ft^{-1}$ given in the theorem agrees with
the bijection \eqref{eq:5.14}.

Let us also compare the given formula for $\Sigma_\ft$ with the above constructions. 
By Theorem \ref{thm:1.2} $\cF_u \cong \eta^\circ \rtimes \tau'$ with $\eta^\circ \in 
\Irr_{\overline{\Q_l}}(A_{G^\circ}(u))$. We rewrite
\[
\Hom_G \Big( \cF , \cH^{2d_C} \big( \IC (\overline{Y},\pi_* \tilde{\cE}) \big) 
| \cC_u^G \Big) = \Hom_{A_G (u)} \Big( \eta^\circ \rtimes \tau' , 
\cH^{2d_C} \big( \IC (\overline{Y}, \pi_* \tilde{\cE}) \big)_u \Big) .
\]
By \eqref{eq:5.19} this equals 
\[
\ind_{\overline{\Q_{\ell}}[W_{\ft,\rho^\circ},\natural_\cE]}^{\overline{\Q_{\ell}}
[W_\ft,\natural_\cE]} (V^*_{\tau'} \otimes V_{\rho^\circ}) =
\tau'^* \ltimes \rho^\circ = \Sigma_{\ft^\circ}(u,\eta^\circ) \rtimes \tau'^* .
\]
Hence $\Sigma_\ft$ as given agrees with \eqref{eq:5.14}.
\end{proof}

The maps $\Psi_G$ and $\Sigma_\ft$ are compatible with restriction to Levi
subgroups, in the following sense. Let $H \subset G$ be an algebraic subgroup
such that $H \cap G^\circ$ is a Levi subgroup of $G^\circ$. Suppose that
$u \in H^\circ$ is unipotent. By \cite[\S 3.2]{Ree1} 
\begin{equation}\label{eq:5.23}
Z_G (u)^\circ \cap H = Z_{G^\circ}(u)^\circ \cap H \quad \text{equals} \quad
Z_{H^\circ}(u)^\circ = Z_H (u)^\circ .
\end{equation}
Hence the natural map $A_H (u) \to A_G (u)$ is injective and we can regard
$A_H (u)$ as a subgroup of $A_G (u)$. Let $\pi_*^H \widetilde{\cE_H}$ be the
$H$-equivariant local system on $\cC_u^H$ constructed like $\pi_* \tilde{\cE}$ but
for the group $H$. By Proposition \ref{prop:5.3} $\End_H (\pi_*^H \widetilde{\cE_H})$
is naturally a subalgebra of $\End_G (\pi_* \tilde{\cE})$.

\begin{thm}\label{thm:5.6}
Let $\eta \in \Irr_{\overline{\Q_{\ell}}}(A_G (u))$ and 
$\eta_H \in \Irr_{\overline{\Q_{\ell}}}(A_H (u))$. 
\enuma{
\item If $\eta_H$ appears in $\Res^{A_G (u)}_{A_H (u)}(\eta)$, then
$\Psi_G (u,\eta) = \Psi_H (u,\eta_H) / G$-conjugacy.
\item Suppose that $\Psi_G (u,\eta) = \Psi_H (u,\eta_H) / G$-conjugacy.
Then $\Sigma_{\Psi_H (u,\eta_H)}(u,\eta_H)$ is a constituent of 
$\Res_{\End_H (\pi_*^H \widetilde{\cE_H})}^{ \End_G (\pi_* \tilde{\cE})} 
\Sigma_\ft (u,\eta)$ if and only if $\eta_H$
is a constituent of $\Res^{A_G (u)}_{A_H (u)}(\eta)$.
} 
\end{thm}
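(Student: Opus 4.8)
The plan is to reduce both statements to their counterparts for the connected groups $G^\circ$ and $H^\circ$, which are due to Lusztig, and then to bridge the gap with the Clifford theory of Section~\ref{sec:twisted} and the explicit description of $\Sigma_\ft$ in Theorem~\ref{thm:5.5}. For part (a) I would first unwind \eqref{eq:5.1}: $\Psi_G(u,\eta) = \Psi_{G^\circ}(u,\eta^\circ)$ up to $G$-conjugacy for any constituent $\eta^\circ$ of $\Res^{A_G(u)}_{A_{G^\circ}(u)}\eta$, and likewise $\Psi_H(u,\eta_H) = \Psi_{H^\circ}(u,\eta_H^\circ)$ up to $H$-conjugacy. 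Applying \eqref{eq:5.23} to the pairs $(H,H^\circ)$, $(G,G^\circ)$ and $(G^\circ,H^\circ)$, all of $A_{H^\circ}(u)$, $A_H(u)$, $A_{G^\circ}(u)$ embed compatibly into $A_G(u)$. Since $\eta_H$ occurs in $\Res^{A_G(u)}_{A_H(u)}\eta$, transitivity of restriction together with Clifford theory lets me choose the constituent $\eta^\circ$ of $\Res^{A_G(u)}_{A_{G^\circ}(u)}\eta$ so that $\eta_H^\circ$ occurs in $\Res^{A_{G^\circ}(u)}_{A_{H^\circ}(u)}\eta^\circ$. Then part (a) is exactly the connected statement that $\Psi_{G^\circ}(u,\eta^\circ) = \Psi_{H^\circ}(u,\eta_H^\circ)$ up to $G^\circ$-conjugacy whenever $\eta_H^\circ$ occurs in $\Res^{A_{G^\circ}(u)}_{A_{H^\circ}(u)}\eta^\circ$, namely the compatibility of Lusztig's cuspidal support map with restriction to Levi subgroups \cite[\S 8]{Lus1}. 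After replacing the triples by suitable representatives, I may assume $\ft = [L,\cC_v^L,\cE]_G$ and $\ft_H = [L,\cC_v^L,\cE]_H$ with one and the same cuspidal datum $(L,\cC_v^L,\cE)$, so that $\pi_*^H\widetilde{\cE_H}$ is built from it exactly as $\pi_*\tilde\cE$, only inside $H$.

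For part (b) I would again pass through the connected case. By Theorem~\ref{thm:5.5}, together with \eqref{eq:5.10} and the decomposition of $\cH^{2d_C}(\IC(\overline Y,\pi_*\tilde\cE))|_{\cC_u^G}$ carried out after \eqref{eq:5.15}, this sheaf, as a module over $\overline{\Q_{\ell}}[A_G(u)] \otimes \End_G(\pi_*\tilde\cE)$, is $\bigoplus_\eta \eta \boxtimes \Sigma_\ft(u,\eta)$, the sum over the $\eta \in \Irr(A_G(u))$ with $\Psi_G(u,\eta) = \ft$ whose local system is supported on $\cC_u^G$; the analogous decomposition holds for $H$. Using Frobenius reciprocity for the inclusion of semisimple algebras $\End_H(\pi_*^H\widetilde{\cE_H}) \hookrightarrow \End_G(\pi_*\tilde\cE)$ (over which the larger algebra is free), the assertion then becomes the equality of multiplicities
\begin{equation*}
\big[\Res^{\End_G(\pi_*\tilde\cE)}_{\End_H(\pi_*^H\widetilde{\cE_H})}\Sigma_\ft(u,\eta) : \Sigma_{\ft_H}(u,\eta'_H)\big]
= \big[\Res^{A_G(u)}_{A_H(u)}\eta : \eta'_H\big] ,
\end{equation*}
for all $\eta$ and $\eta'_H$ as above.

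To obtain this identity I would restrict $\cH^{2d_C}(\IC(\overline Y,\pi_*\tilde\cE))|_{\cC_u^G}$ along $A_H(u) \hookrightarrow A_G(u)$ and $\End_H(\pi_*^H\widetilde{\cE_H}) \hookrightarrow \End_G(\pi_*\tilde\cE)$ and compare it with the corresponding $H$-module. Since, as observed around \eqref{eq:5.10}, $(\pi_*\tilde\cE)|_{Y^\circ}$ is a sum of twists $\mathrm{Ad}(\gamma)^*(\pi_*\tilde\cE^\circ)$ of the connected object, Lusztig's restriction-to-a-Levi constructions apply summand by summand; assembling them and tracking the endomorphism-algebra actions identifies the restricted module with a sum of copies of the $H$-analogue of $\cH^{2d_C}(\IC(\overline Y,\pi_*\tilde\cE))|_{\cC_u^G}$, and comparing $\eta'_H \boxtimes \Sigma_{\ft_H}(u,\eta'_H)$-isotypic parts — with \eqref{eq:5.23} reading off the constituents of $\Res^{A_G(u)}_{A_H(u)}\eta$ from the stalks — gives the displayed equality, hence part (b). Alternatively one can argue entirely inside twisted group algebras, starting from the explicit formula \eqref{eq:5.14} and combining the connected case with a Mackey-type comparison of the two Clifford towers $W_{\ft^\circ} \lhd W_\ft$ and $W_{\ft_H^\circ} \lhd W_{\ft_H}$.

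The main obstacle is making the connected restriction compatibility \cite[\S 8]{Lus1} — which in its original form involves only the untwisted Weyl-group algebras $\overline{\Q_{\ell}}[W(H^\circ,L)] \subseteq \overline{\Q_{\ell}}[W(G^\circ,L)]$ — genuinely $\End_H(\pi_*^H\widetilde{\cE_H})$-equivariant rather than merely $\overline{\Q_{\ell}}[W_{\ft_H^\circ}]$-equivariant, i.e.\ transporting it across the twists $\overline{\Q_{\ell}}[W_\ft,\natural_\cE]$ and $\overline{\Q_{\ell}}[W_{\ft_H},\natural_\cE]$. The crucial point is that by \eqref{eq:4.6} the cocycle $\natural_\cE$ is intrinsic to the cuspidal pair $(\cC_v^L,\cE)$ up to coboundary and does not see the ambient group; this lets me choose the splittings $s$ of Section~\ref{sec:cusp}, the intertwiners $I^\gamma_\cE$ of \eqref{eq:4.17}, and the isomorphisms of Proposition~\ref{prop:5.3} for $H$ and for $G$ compatibly (they differ only by characters of $W_\ft/W_{\ft^\circ}$, resp.\ $W_{\ft_H}/W_{\ft_H^\circ}$). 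With such compatible choices the twisted bookkeeping via Proposition~\ref{prop:1.1} and Lemma~\ref{lem:1.3} runs in parallel for $H$ and $G$, and the connected statement carries over to the disconnected one.
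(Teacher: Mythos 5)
Part (a) of your proposal is fine and is essentially the paper's argument: choose $\eta_H^\circ \subset \Res^{A_H(u)}_{A_{H^\circ}(u)}\eta_H$ and a constituent $\eta^\circ$ of $\Res^{A_G(u)}_{A_{G^\circ}(u)}\eta$ containing it, then apply the definition \eqref{eq:5.1} and Lusztig's connected restriction theorem \cite[Theorem 8.3.a]{Lus1}.

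For part (b) there is a genuine gap in your main argument. You propose to restrict the bimodule $\bigoplus_\eta \eta \boxtimes \Sigma_\ft(u,\eta)$ along the inclusions $A_H(u) \subset A_G(u)$ and $\End_H (\pi_*^H \widetilde{\cE_H}) \subset \End_G (\pi_* \tilde{\cE})$ and to identify the result with a direct sum of copies of the $H$-analogue. This identification cannot hold: if some $\eta$ restricts to $\eta_H \oplus \eta'_H$ with $\eta_H \not\cong \eta'_H$, then (granting the theorem) $\Res \Sigma_\ft(u,\eta)$ contains both $\Sigma_{\ft_H}(u,\eta_H)$ and $\Sigma_{\ft_H}(u,\eta'_H)$, so the restricted bimodule contains off-diagonal summands such as $\eta_H \boxtimes \Sigma_{\ft_H}(u,\eta'_H)$, which occur in no sum of copies of the $H$-bimodule. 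Moreover "restriction to a Levi" in \cite[\S 8]{Lus1} is a geometric functor attached to a parabolic, not restriction of the stalk along the subalgebras, so the summand-by-summand transfer you invoke is not available; and even if such an identification held, comparing isotypic parts would only give an identity summed over $\eta$, not the per-$\eta$ equivalence asserted in (b) -- indeed the pairing of constituents of $\Res \eta$ with constituents of $\Res \Sigma_\ft(u,\eta)$ is exactly what is to be proved, so the argument is circular at that point. The route the paper actually takes is the one you mention only as a one-line "alternative": write $\eta = \eta^\circ \rtimes \tau^*$ and $\eta'_H = \eta_H^\circ \rtimes \tau_H^*$, compute $\Hom_{A_H(u)}(\eta'_H,\eta)$ by Proposition \ref{prop:1.1}.d as in \eqref{eq:5.25}--\eqref{eq:5.26}, carry out the strictly parallel computation for $\Hom_{\End_H (\pi_*^H \widetilde{\cE_H})}(\rho_H,\rho)$ with $\rho = \rho^\circ \rtimes \tau$, and then match the two factorizations term by term using \cite[Theorem 8.3.b]{Lus1} for the connected multiplicities and Lemmas \ref{lem:1.3} and \ref{lem:5.2} for the cocycle and duality bookkeeping (here the same $\natural_\cE$ serves for $H$ and $G$, which is the point your final paragraph correctly raises). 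To repair your proposal you would need to replace the sheaf-restriction step by this Clifford-theoretic comparison, carried out in detail rather than asserted.
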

\textbf{Remark.} In \cite[\S 8]{Lus1} both parts were proven in the connected case, 
for $G^\circ$ and $H^\circ$. As in this source, it is likely that in part (b) the 
multiplicity of $\eta_H$ in $\eta$ equals the multiplicity of 
$\Sigma_{\Psi_H (u,\eta_H)}(u,\eta_H)$ in $\Sigma_\ft (u,\eta)$. However, it seems
difficult to prove that with the current techniques. We will return to this
issue in \cite{AMS}.
\begin{proof}
(a) Let $\eta_H^\circ$ be an irreducible constituent of 
$\Res^{A_H (u)}_{A_{H^\circ} (u)}(\eta_H)$ and let $\eta^\circ$ be an irreducible
constituent of $\Res^{A_G (u)}_{A_{G^\circ} (u)}(\eta)$ which contains 
$\eta_H^\circ$. By the definition \eqref{eq:5.1} and by \cite[Theorem 8.3.a]{Lus1}
there are equalities up to $G$-conjugacy:
\[
\Psi_G (u,\eta) = \Psi_{G^\circ} (u,\eta) = 
\Psi_{H^\circ}(u,\eta_H^\circ) = \Psi_H (u,\eta_H) .
\]
(b) Write $\eta = \eta^\circ \rtimes \tau^*$ as in \eqref{eq:5.24}. Similarly,
we can write any irreducible representation of $A_H (u)$ as 
$\eta_H = \eta_H^\circ \rtimes \tau_H^*$ with $\eta_H^\circ \in 
\Irr_{\overline{\Q_{\ell}}}(A_{H^\circ}(u))$ and \\$\tau_H^* \in \Irr (\overline{\Q_{\ell}}
[W_{\ft_H,\eta_H^\circ} / W_{\ft_H^\circ},\natural_\cE^{-1}] )$. 
As representations of $A_{G^\circ} (u)$:
\begin{equation}\label{eq:5.25}
\eta = \ind_{A_G (u)_{\eta^\circ}}^{A_G (u)}(V_{\eta^\circ} \otimes V_\tau^*) \cong
\bigoplus_{a \in A_G (u) / A_G (u)_{\eta^\circ}} (a \cdot \eta^\circ) \otimes
(a \cdot V_\tau^*) ,
\end{equation}
where $A_{G^\circ}(u)$ acts trivially on the parts $a \cdot V_\tau^*$.
Using Proposition \ref{prop:1.1}.d and \eqref{eq:5.25} we compute
\begin{multline*}
\Hom_{A_H (u)}(\eta_H,\eta) \cong \Hom_{\overline{\Q_{\ell}}
[A_H (u)_{\eta_H^\circ} / A_{H^\circ}(u),\natural_\cE^{-1}]} \Big( \tau_H^* , 
\Hom_{A_{H^\circ}(u)}(\eta_H^\circ, \eta) \Big) \cong \\
\bigoplus_{a \in A_G (u) / A_G (u)_{\eta^\circ}} \Hom_{\overline{\Q_{\ell}}
[A_H (u)_{\eta_H^\circ} / A_{H^\circ}(u),\natural_\cE^{-1}]} \Big( \tau_H^* ,
\Hom_{A_{H^\circ}(u)}(\eta_H^\circ, a \cdot \eta^\circ) \otimes a \cdot V_\tau^* \Big) .
\end{multline*}
Here $\overline{\Q_{\ell}} [A_H (u)_{\eta_H^\circ} / A_{H^\circ}(u),\natural_\cE^{-1}]$ does
not act on $\Hom_{A_{H^\circ}(u)}(\eta_H^\circ, a \cdot \eta^\circ)$, so we can
rearrange the last line as
\begin{equation}\label{eq:5.26}
\bigoplus_{a \in A_G (u) / A_G (u)_{\eta^\circ}} \Hom_{A_{H^\circ}(u)}(\eta_H^\circ, 
a \cdot \eta^\circ) \otimes \Hom_{\overline{\Q_{\ell}} [A_H (u)_{\eta_H^\circ} / 
A_{H^\circ}(u),\natural_\cE^{-1}]} (\tau_H^*,a \cdot V_\tau^* ) .
\end{equation}
Notice that $\eta \cong a \cdot \eta \cong a \cdot \eta^\circ \rtimes a \cdot \tau^*$.
We conclude from \eqref{eq:5.26} that $\Hom_{A_H (u)}(\eta_H,\eta)$ is nonzero if and 
only if $\eta \cong \eta^\circ \rtimes \tau^*$ where $\Hom_{A_{H^\circ}(u)}(\eta_H^\circ, 
\eta^\circ) \neq 0$ and \\ $\Hom_{\overline{\Q_{\ell}} [A_H (u)_{\eta_H^\circ} / 
A_{H^\circ}(u),\natural_\cE^{-1}]} (\tau_H^*,\tau^* ) \neq 0$.

Write $\rho = \Sigma_\ft (u,\eta)$ and let $\rho_H = \rho_H^\circ \rtimes \tau_H \in
\Irr (\End_H (\pi_*^H \widetilde{\cE_H}))$. Just as in \eqref{eq:5.26} one shows
that $\Hom_{\End_H (\pi_*^H \widetilde{\cE_H})}(\rho_H,\rho)$ is nonzero if and only
if $\rho \cong \rho^\circ \rtimes \tau$ with $\Hom_{\overline{\Q_{\ell}}[W_{\ft_H^\circ}]}
(\rho_H^\circ,\rho^\circ) \neq 0$ and $\Hom_{\overline{\Q_{\ell}}[W_{\ft_H,\rho_H^\circ} /
W_{\ft_H^\circ},\natural_\cE]}(\tau_H,\tau) \neq 0$.

Write $\ft_H = \Psi_H (u,\eta_H), \ft_H^\circ = \Psi_{H^\circ} (u,\eta_H^\circ)$ and 
consider $\rho_H^\circ = \Sigma_{\ft_H^\circ}(u,\eta_H^\circ)$. Then
\[
\rho_H = \rho_H^\circ \rtimes \tau_H \quad \text{equals} \quad  
\Sigma_{\ft_H}(u,\eta_H) = \Sigma_{\ft_H}(u,\eta_H^\circ \rtimes \tau_H^*) .
\]
By \cite[Theorem 8.3.b]{Lus1} 
\[                              
\dim_{\overline{\Q_{\ell}}} \Hom_{A_{H^\circ}(u)}(\eta_H^\circ, \eta^\circ) =
\dim_{\overline{\Q_{\ell}}} \Hom_{\overline{\Q_{\ell}}[W_{\ft_H^\circ}]} 
(\rho_H^\circ,\rho^\circ)
\]
and from Lemmas \ref{lem:1.3} and \ref{lem:5.2} we see that
\[                              
\dim_{\overline{\Q_{\ell}}} \Hom_{\overline{\Q_{\ell}} [A_H (u)_{\eta_H^\circ} / 
A_{H^\circ}(u),\natural_\cE^{-1}]} (\tau_H^*,\tau^* ) =  
\dim_{\overline{\Q_{\ell}}} \Hom_{\overline{\Q_{\ell}}[W_{\ft_H,\rho_H^\circ} /
W_{\ft_H^\circ},\natural_\cE]}(\tau_H,\tau) .
\]
The above observations entail that $\Hom_{A_H (u)}(\eta_H,\eta)$ is nonzero if 
and only if \\$\Hom_{\End_H (\pi_*^H \widetilde{\cE_H})}(\rho_H,\rho)$ is nonzero.
\end{proof}

\section{A version with quasi-Levi subgroups}
\label{sec:quasiLevi}

For applications to Langlands parameters we need a version of the generalized
Springer correspondence which involves a disconnected version of Levi subgroups. 
Recall that every Levi subgroup $L$ of $G^\circ$ is of the form 
$L = Z_{G^\circ} (Z(L)^\circ)$.

\begin{defn}
Let $G$ be a possibly disconnected complex reductive algebraic group, and
let $L \subset G^\circ$ be a Levi subgroup. Then we call $Z_G (Z(L)^\circ)$
a quasi-Levi subgroup of $G$. 
\end{defn}

Notice that $Z_G (Z(L)^\circ)$ also has neutral component $L$ and connected
centre $Z(L)^\circ$. Hence there is canonical bijection between Levi subgroups 
and quasi-Levi subgroups of $G$.
We will also need some variations on other previous notions.

\begin{defn}
A unipotent cuspidal quasi-support for $G$ is a triple $(M,v,q \epsilon)$
where $M \subset G$ is a quasi-Levi subgroup, $v \in M^\circ$ is unipotent
and $q \epsilon \in \Irr_\cusp (A_M (v))$. We write
\[
q \cS_G = \{ \text{cuspidal unipotent quasi-supports for } G \} / 
G\text{-conjugacy} . 
\]
\end{defn}
Like before, we will also think of unipotent cuspidal quasi-supports as
triples $(M,\cC_v^M,q \cE)$, where $q \cE$ is a cuspidal local system on $\cC_v^M$.
We want to define a canonical map 
\[
q \Psi_G : \cN_G^+ \to q \cS_G ,
\]
and to analyse its fibers. Of course this map should just be $\Psi_G$ if $G$ is
connected. 

Let $\ft = [M^\circ,\cC_v^{M^\circ},\cE ]_G$ and suppose that $(u,\eta) \in
\cN_G^+$ with $\Psi_G (u,\eta) = \ft$. Obviously, the cuspidal quasi-support of 
$(u,\eta)$ will involve the quasi-Levi subgroup $M = Z_G (Z(M^\circ)^\circ)$.
From Theorem \ref{thm:5.5} we get
\[
\rho = \Sigma_\ft (u,\eta) \in \Irr (\End_G (\pi_* \tilde{\cE})) . 
\]
Let $\pi_*^M \widetilde{\cE_M}$ be the $M$-equivariant local system on $\cC_v^M$
built from $\cE$ in the same way as $\pi_* \tilde{\cE}$, only with $M$ instead of $G$.
From Proposition \ref{prop:5.3} we see that $\End_G (\pi_* \tilde{\cE}) \cong
\overline{\Q_{\ell}}[W_\ft , \natural_\cE]$ naturally contains a subalgebra 
\[
\End_M (\pi_*^M \widetilde{\cE_M}) \cong \overline{\Q_{\ell}}[M_\cE / M^\circ, \natural_\cE] .
\]
As $M_\cE / M^\circ$ is normal in $W_\ft = N_G (\ft) / M^\circ$, the latter group
acts on $\End_M (\pi_*^M \widetilde{\cE_M})$ by conjugation in $\overline{\Q_{\ell}}[W_\ft , 
\natural_\cE]$. Let $\rho_M \in \Irr \big( \End_M (\pi_*^M \widetilde{\cE_M}) \big)$ be a 
constituent of $\rho$ as $\End_M (\pi_*^M \widetilde{\cE_M})$-representation. 
By the irreducibility of $\rho$ as $\overline{\Q_{\ell}}[W_\ft ,\natural_\cE]$-representation,
$\rho_M$ is unique up to conjugation by $W_\ft$. Let us write 
$\ft_M = [M^\circ,\cC_v^{M^\circ}, \cE ]_M \in \cN_M^0$. By Proposition \ref{prop:4.4}
\begin{equation}\label{eq:6.11}
q \cE := \Hom_{\End_M (\pi_*^M \tilde{\cE_M})} 
\big( \rho_M, \pi_*^M \widetilde{\cE_M} \big) = (\pi_*^M \tilde{\cE})_{\rho_M} 
\end{equation}
is a cuspidal local system on $\cC_v^M = \cC_v^{M^\circ}$, and 
$\Sigma_{\ft_M}(\cC_v^M, q \cE) = \rho_M$. Since any other choice $\rho'_M$ is 
conjugate to $\rho_M$ by an element of $N_G (\ft) ,\; (M,\cC_v^M,q \cE)$ is 
determined by $(u,\eta)$, up to $G$-conjugacy. Thus we can canonically define
\begin{equation}\label{eq:6.1}
q \Psi_G (u,\eta) = [M,\cC_v^M,q \cE]_G . 
\end{equation}
Let $\cF$ be the irreducible local system on $\cC_u^G$ with $\cF_u = \eta$ and
let $Y = Y_{(M^\circ,S)} \subset G$, where $S = \cC_v^M Z(M)^\circ$. From
Theorem \ref{thm:5.5} we see that $\cF = \cH^{2 d_C} (\IC (\overline Y, \pi_* 
\tilde{\cE})_\rho ) | \cC_u^G$. To $(M,\cC_v^M,q \cE)$ we can apply the same 
constructions as to $(L,\cC_v^L,\cE)$ in \eqref{eq:5.20}, in particular 
there is a version $q\pi_*$ of $\pi_*$ based on $G$ and $M$. Distinguishing the 
various operations $\pi_*$, we get usual isomorphisms of local systems on $Y$:
\begin{equation}\label{eq:6.7}
q \pi_* (\widetilde{q \cE}) =
q \pi_* \Big( \widetilde{(\pi_*^M \tilde{\cE_M})_{\rho_M}} \Big) \cong 
(\pi_* \tilde{\cE} )_{\rho_M} .
\end{equation}
Since $\pi_* \tilde{\cE}$ is semisimple, $\IC (\overline{Y},
(\pi_* \tilde{\cE} )_{\rho_M}) \cong \IC (\overline{Y},\pi_* \tilde{\cE})_{\rho_M}$.
Hence $\cF$ is also a direct summand of 
\begin{equation}\label{eq:6.2}
\cH^{2 d_C} \big( \IC \big( \overline Y, q\pi_* ( \widetilde{q \cE}) \big) \big)
| \cC_u^G \cong \cH^{2 d_C} \big( \IC (\overline Y, 
\pi_* \tilde{\cE})_{\rho_M} \big) | \cC_u^G .
\end{equation}
It follows that the characterization of $\Psi_G$ given in \cite[Theorem 6.5]{Lus1}
remains valid for $q \Psi_G$. In particular $\cF$ and $q \cE$ have the same
$Z(G)$-character and
\begin{equation}\label{eq:6.14}
q \Psi_G \text{ preserves } Z(G)\text{-characters.} 
\end{equation}
We abbreviate 
\[
q \ft = [M,\cC_v^M, q \cE]_G.
\]
Let $N_G (q \ft)$ be the stabilizer of $(M,\cC_v^M,q \cE)$ in $G$. It normalizes $M$ 
and contains $M$, because $(q \cE)_v \in \Irr (A_M (v))$. Every element of 
$N_G (q\ft)$ maps $\cE$ to a $M$-associate local system on $\cC_v^{M^\circ}$,
because $q \cE$ is a $M$-equivariant local system which, as $M^\circ$-equivariant
sheaf, contains $\cE$. Hence $N_G (q \ft) = N_G (\ft,q\cE) M$.

Analogous to $W_\ft = N_G (\ft) / M^\circ$, we define
\begin{equation}\label{eq:6.17}
W_{q \ft} = N_G (q \ft) / M . 
\end{equation}
There are natural isomorphisms 
\begin{equation}\label{eq:6.3}
W_{q \ft} \cong N_G (\ft , q \cE) M / M \cong N_G (\ft , q \cE) / M_\cE \cong 
\Stab_{W_\ft} (q \cE) / (M_\cE / M^\circ) .
\end{equation}
The group $W_{\ft} = N_{G^\circ}(M^\circ) / M^\circ$ is isomorphic to 
$N_{G^\circ}(\ft) M / M$, and there is a natural injection
\begin{equation}\label{eq:6.16}
W_{\ft} \cong N_{G^\circ}(\ft) M / M \to W_{q \ft} . 
\end{equation}

\begin{lem}\label{lem:6.1}
There exists a 2-cocycle $\kappa_{q \ft}$ of $W_{q \ft}$ such that:
\enuma{
\item there is a bijection 
\[
q \Psi_G^{-1} (q \ft) \to \Irr \big( \overline{\Q_{\ell}} [W_{q \ft},\kappa_{q \ft}] \big) ,
\]
\item $\kappa_{q \ft}$ factors through $W_{q \ft} / W_{\ft}$ and
$\overline{\Q_{\ell}}[W_{\ft}]$ is canonically embedded in 
$\overline{\Q_{\ell}}[W_{q \ft}, \kappa_{q \ft}]$.
}
\end{lem}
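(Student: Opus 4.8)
The plan is to realize $\overline{\Q_{\ell}}[W_{q\ft},\kappa_{q\ft}]$ as an endomorphism algebra and then read off the two assertions from Theorem \ref{thm:5.5}. Keep the notation of Section \ref{sec:quasiLevi}: $\ft=[M^\circ,\cC_v^{M^\circ},\cE]_G$, $M=Z_G(Z(M^\circ)^\circ)$, and $\rho_M\in\Irr(\End_M(\pi_*^M\widetilde{\cE_M}))$ the constituent of $\rho=\Sigma_\ft(u,\eta)$ corresponding to $q\cE$ via Proposition \ref{prop:4.4}, so that $q\ft=[M,\cC_v^M,q\cE]_G$. I would first build the $G$-equivariant local system $\pi_*(\widetilde{q\cE})$ on $\overline{Y}$ from $(M,\cC_v^M,q\cE)$ by the construction of \eqref{eq:5.20} with the quasi-Levi $M$ in place of a Levi; here the fibration $\pi$ has fibre $N_G(M)/M=N_G(M^\circ)/M$, and $G$-equivariance forces its endomorphism algebra to be indexed by the stabilizer of $q\cE$ in that fibre, which is $N_G(q\ft)/M=W_{q\ft}$ by \eqref{eq:6.17}. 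The arguments of \cite[\S3]{Lus1} are valid in this disconnected generality, exactly as invoked in the proof of Proposition \ref{prop:5.3}, and show that $\End_G(\pi_*\widetilde{q\cE})=\End_{\overline{\Q_{\ell}}}(\pi_*\widetilde{q\cE})$ is a twisted group algebra $\overline{\Q_{\ell}}[W_{q\ft},\kappa_{q\ft}]$ for a $2$-cocycle $\kappa_{q\ft}$ of $W_{q\ft}$, canonically decomposed as $\bigoplus_{w\in W_{q\ft}}\mc A_{q\cE,w}$ into one-dimensional pieces. This defines $\kappa_{q\ft}$.

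For part (a): by Theorem \ref{thm:5.5} the map $\Sigma_\ft$ is a bijection $\Psi_G^{-1}(\ft)\to\Irr(\End_G(\pi_*\tilde\cE))$, and by the definition \eqref{eq:6.1} of $q\Psi_G$ together with \eqref{eq:6.2} a pair $(u,\eta)\in\cN_G^+$ lies in $q\Psi_G^{-1}(q\ft)$ if and only if $\Psi_G(u,\eta)=\ft$ and $\Sigma_\ft(u,\eta)$ occurs in $(\pi_*\tilde\cE)_{\rho_M}$. Since $\pi_*\widetilde{q\cE}\cong(\pi_*\tilde\cE)_{\rho_M}$ by \eqref{eq:6.7} and $\pi_*\tilde\cE$ is semisimple, $\Irr(\End_G(\pi_*\widetilde{q\cE}))$ is canonically in bijection with the set of those $\Sigma_\ft(u,\eta)$; composing with $\Sigma_\ft$ gives the bijection of (a). One can equivalently obtain the same $\kappa_{q\ft}$ purely algebraically, via the twisted Clifford theory of Section \ref{sec:twisted} applied to the normal subgroup $M_\cE/M^\circ\trianglelefteq W_\ft$ — normal because $N_G(\ft)$ normalizes $M$, so $M_\cE=M\cap N_G(\ft)\trianglelefteq N_G(\ft)$ — and the irreducible $\overline{\Q_{\ell}}[M_\cE/M^\circ,\natural_\cE]$-module $\rho_M$, after passing to a Schur cover to reduce the $\natural_\cE$-twisted situation to the untwisted case \eqref{eq:1.6}; this also identifies $W_{q\ft}$ with $\Stab_{W_\ft}(\rho_M)/(M_\cE/M^\circ)$, in agreement with \eqref{eq:6.3}.

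For part (b) I would repeat the last part of the proof of Proposition \ref{prop:5.3}. By \eqref{eq:6.16}, $W_{\ft^\circ}=N_{G^\circ}(M^\circ)/M^\circ$ is a subgroup of $W_{q\ft}$; it is normal because it is the Weyl group of the root system $R(G^\circ,Z(M^\circ)^\circ)$, hence normal in $W(G,M^\circ)=N_G(M^\circ)/M^\circ$, while $W_{\ft^\circ}\cap(M/M^\circ)=1$ since $M\cap G^\circ=M^\circ$. Exactly as in Lemma \ref{lem:5.1}, fixing a positive subsystem of $R(G^\circ,Z(M^\circ)^\circ)$ gives a complement $\cR_{q\ft}$ with $W_{q\ft}=\cR_{q\ft}\ltimes W_{\ft^\circ}$. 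On the locus $\overline{Y}^\circ$ of $\overline{Y}$ that is $G^\circ$-conjugate to the regular set, $\pi_*\widetilde{q\cE}$ is a $G^\circ$-equivariant local system containing the analogous local system for $G^\circ$, whose endomorphism algebra is the \emph{untwisted} group algebra $\overline{\Q_{\ell}}[W_{\ft^\circ}]$ by Lusztig's canonical isomorphism \eqref{eq:5.5}. Choosing the basis elements $b_w\in\mc A_{q\cE,w}$ for $w\in W_{\ft^\circ}$ through this isomorphism makes $w\mapsto b_w$ a group homomorphism on $W_{\ft^\circ}$ (as in \eqref{eq:5.6}), and setting $b_{\gamma w}=b_\gamma b_w$ for $\gamma\in\cR_{q\ft}$, $w\in W_{\ft^\circ}$ then yields, by the cocycle bookkeeping of the proof of Proposition \ref{prop:5.3}, that $\kappa_{q\ft}$ factors through $W_{q\ft}/W_{\ft^\circ}$ and that $\overline{\Q_{\ell}}[W_{\ft^\circ}]=\bigoplus_{w\in W_{\ft^\circ}}\mc A_{q\cE,w}$ is a canonically embedded subalgebra of $\overline{\Q_{\ell}}[W_{q\ft},\kappa_{q\ft}]$.

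The step I expect to be the main obstacle is making the first paragraph fully rigorous: checking that the machinery of \cite[\S3]{Lus1} — the identification of $\End_G(\pi_*\,\cdot\,)$ with a twisted group algebra, its decomposition into the one-dimensional pieces $\mc A_{q\cE,w}$, and the proper base change and intersection cohomology statements of Theorem \ref{thmlus} — really does go through with the \emph{disconnected} quasi-Levi $M$ playing the role of a Levi subgroup, so that it produces precisely the group $W_{q\ft}$ and the cocycle $\kappa_{q\ft}$, and that the identification $\pi_*(\widetilde{q\cE})\cong(\pi_*\tilde\cE)_{\rho_M}$ of \eqref{eq:6.7} is compatible with all of this. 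The group-theoretic points in (b) (normality of $W_{\ft^\circ}$ in $W_{q\ft}$, existence of $\cR_{q\ft}$, descent of $\kappa_{q\ft}$) are routine once that is settled, being direct transcriptions of arguments already made for $W_\ft$ in Section \ref{sec:noncusp}.
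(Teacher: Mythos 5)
Your argument is correct in substance, but it takes a genuinely different route from the paper's proof of Lemma \ref{lem:6.1}. You define $\kappa_{q\ft}$ geometrically, as the 2-cocycle of the twisted group algebra $\End_G(\pi_*(\widetilde{q\cE}))$ — in effect you prove (most of) Lemma \ref{lem:6.2} first and then read off parts (a) and (b); your deduction of (a) from the semisimplicity of $\pi_*\tilde{\cE}$, the identification $\pi_*(\widetilde{q\cE}) \cong (\pi_*\tilde{\cE})_{\rho_M}$ of \eqref{eq:6.7} and Theorem \ref{thm:5.5} is sound. The paper instead proves Lemma \ref{lem:6.1} purely algebraically: Theorem \ref{thm:5.5} converts $\Psi_G^{-1}(\ft)$ into $\Irr(\overline{\Q_{\ell}}[W_\ft,\natural_\cE])$, a Schur cover $\widetilde{W_\ft}$ with idempotent $p_\cE$ reduces the twisted situation to ordinary Clifford theory for the normal subgroup lying over $M_\cE/M^\circ$, Theorem \ref{thm:1.2} yields the extended quotient \eqref{eq:6.6}, and the fibre over the $W_\ft$-orbit of $\rho_M$ is $\Irr\big(\overline{\Q_{\ell}}[W_{\ft,\rho_M}/(M_\cE/M^\circ),\kappa_{\rho_M}]\big)$ with $W_{\ft,\rho_M}/(M_\cE/M^\circ)\cong W_{q\ft}$ by \eqref{eq:6.3}; part (b) then follows from \eqref{eq:6.15}, which lets one take $I^w_{\rho_M}=\mathrm{Id}$ for $w\in W_{\ft^\circ}$, so $\kappa_{\rho_M}$ is trivial there. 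This is exactly the "equivalent algebraic" aside in your second paragraph, so you have the paper's proof available as a fallback. What each approach buys: the algebraic route needs no new geometry beyond Theorem \ref{thm:5.5} and Section \ref{sec:twisted}, and postpones the delicate geometric input to Lemma \ref{lem:6.2}, where it is needed for canonicity; your geometric route delivers the endomorphism-algebra incarnation immediately, but it must shoulder up front the claim (which the paper asserts only in Lemma \ref{lem:6.2}) that Lusztig's constructions in \cite[\S 3]{Lus1} go through for the disconnected quasi-Levi $M$. Two points of care if you pursue your route: in part (b), transferring the canonical $W_{\ft^\circ}$-basis into $\End_G(\pi_*(\widetilde{q\cE}))$ is cleanest not via restriction to $Y^\circ$ (where $\pi_*\tilde{\cE}^\circ$ occurs with multiplicity, so "choosing through the isomorphism \eqref{eq:5.5}" is ambiguous) but, as in the paper, by letting the already-chosen $b_w\in\End_G(\pi_*\tilde{\cE})$ of Proposition \ref{prop:5.3} act on the direct summand $(\pi_*\tilde{\cE})_{\rho_M}$, which they preserve because they commute with $\overline{\Q_{\ell}}[M_\cE/M^\circ,\natural_\cE]$ by \eqref{eq:6.15}; and in your normality argument note that $W_{q\ft}$ sits inside $N_G(M^\circ)/M$, not $N_G(M^\circ)/M^\circ$, so one should say that $N_{G^\circ}(M^\circ)M$ is normal in $N_G(M^\circ)$ — a cosmetic fix that does not affect the semidirect decomposition $W_{q\ft}=\cR_{q\ft}\ltimes W_{\ft^\circ}$.
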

\begin{proof}
(a) Recall the bijection $\Psi_G^{-1}(\ft) \to \Irr \big( \overline{\Q_{\ell}} 
[W_\ft,\natural_\cE] \big)$ from Theorem \ref{thm:5.5}. With \cite[\S 53]{CuRe} we
can find a central extension $\widetilde{W_\ft}$ of $W_\ft$ and a minimal idempotent
$p_\cE \in \overline{\Q_{\ell}}[\ker (\widetilde{W_\ft} \to W_\ft)]$, such that
\begin{equation}\label{eq:6.4}
\overline{\Q_{\ell}}[W_\ft,\natural_\cE ] \cong p_\cE \overline{\Q_{\ell}}[\widetilde{W_\ft}] . 
\end{equation}
Let $N \subset \widetilde{W_\ft}$ be the inverse image of $M_\cE / M^\circ \subset
W_\ft$. It is a normal subgroup of $\widetilde{W_\ft}$ because $M_\cE = 
M \cap N_G (\ft)$ is normal in $N_G (\ft)$. We note that 
\begin{equation}\label{eq:6.5}
\widetilde{W_\ft} / N \cong W_\ft / (M_\cE / M^\circ) \cong N_G (\ft) / M_\cE .
\end{equation}
As a consequence of \eqref{eq:6.4}
\begin{equation}\label{eq:6.10}
\overline{\Q_{\ell}}[M_\cE / M^\circ,\natural_\cE ] \cong p_\cE \overline{\Q_{\ell}}[N] .  
\end{equation}
By Theorem \ref{thm:1.2} there is a bijection
\[
\begin{array}{ccc} 
\Irr_{\overline{\Q_{\ell}}}(\widetilde{W_\ft}) & \longleftrightarrow &
\big( \Irr_{\overline{\Q_{\ell}}}(N) \q \widetilde{W_\ft} / N \big)_\kappa \\
\pi \rtimes \sigma & \longleftrightarrow  & (\pi,\sigma) .
\end{array}
\]
With Proposition \ref{prop:1.1}.c we can restrict it to representations on which
$p_\cE$ acts as the identity. With \eqref{eq:6.4} that yields a bijection 
\begin{equation}\label{eq:6.6}
\Irr \big( \overline{\Q_{\ell}}[W_\ft,\natural_\cE] \big) \longleftrightarrow 
\Big( \Irr \big( \overline{\Q_{\ell}}[M_\cE / M^\circ,\natural_\cE] \big) \q 
W_\ft / (M_\cE / M^\circ) \Big)_\kappa .
\end{equation}
Under the bijections from Theorem \ref{thm:5.5} and \eqref{eq:6.6}, the set
$q \Psi_G^{-1}(q \ft) \subset \Psi_G^{-1} (\ft)$ is mapped to the fiber of
$W_\ft \cdot \rho_M$ (with respect to the map from the extended quotient on 
the right hand side of \eqref{eq:6.6} to the corresponding ordinary quotient).
By the definition of extended quotients, this fiber is in bijection with $\Irr 
\big( \overline{\Q_{\ell}}[W_{\ft,\rho_M} / (M_\cE / M^\circ), \kappa_{\rho_M}] \big)$.
By the equivariance of the Springer correspondence, the stabilizer of 
$\Sigma_{\ft_M}(\cC_v^M, q \cE) = \rho_M$ in 
$W_\ft / (M_\cE / M^\circ)$ is $\Stab_{W_\ft}(q \cE) / (M_\cE / M^\circ)$,
which by \eqref{eq:6.3} is isomorphic with $W_{q \ft}$. Thus the composition
of Theorem \ref{thm:5.5} and \eqref{eq:6.6} provides the required bijection,
with $\kappa_{\rho_M}$ as cocycle.\\
(b) Consider $w \in W_{\ft}$ with preimage $\tilde w \in \widetilde{W_\ft}$.
Since $M_\cE / M^\circ \cong M_\cE G^\circ / G^\circ$, $w$ commutes with
$M_\cE / M^\circ$. As $\natural_\cE$ is trivial on $W_{\ft}$, moreover
for all $m \in M_\cE / M^\circ$ 
\begin{equation}\label{eq:6.15}
T_w T_m (T_w )^{-1} = T_m \qquad \text{in } \overline{\Q_{\ell}}[W_\ft,\natural_\cE] . 
\end{equation}
Hence $W_{\ft}$ stabilizes $\rho_M$ and 
\[
W_{\ft} \cong W_{\ft} (M_\cE / M^\circ) / (M_\cE / M^\circ)
\quad \text{is contained in} \quad W_{\ft,\rho_M} / (M_\cE / M^\circ) .
\]
It also follows from \eqref{eq:6.15} that we can take $I^w_{\rho_M} = 
\mathrm{Id}_{V_{\rho_M}}$. In view of Proposition \ref{prop:1.1}.a, the 2-cocycle
$\kappa_{\rho_M}$ on $W_{\ft^\circ}$ agrees with $\natural_\cE |_{W_\ft} = 1$.
Via \eqref{eq:6.16} we consider $W_{\ft}$ as a subgroup of $W_{q \ft}$. 
Then the subalgebra of $\overline{\Q_{\ell}}[W_{q \ft}, \kappa_{q \ft}]$ spanned
by the $T_w$ with $w \in W_{\ft}$ is simply $\overline{\Q_{\ell}}[W_{\ft}]$.
\end{proof}

We will make the bijection of Lemma \ref{lem:6.1}.a canonical, by replacing
$\overline{\Q_{\ell}} [W_{q \ft},\kappa_{q \ft}]$ with the endomorphism algebra of the
equivariant local system \eqref{eq:6.7}.

\begin{lem}\label{lem:6.2}
Let $\kappa_{q \ft}$ be as in Lemma \ref{lem:6.1}. There is an isomorphism 
\[
\End_G \big( q\pi_* (\widetilde{q \cE}) \big) \cong 
\overline{\Q_{\ell}} [W_{q \ft},\kappa_{q \ft}] , 
\]
and it is canonical up to automorphisms of the right hand side which come from
characters of $W_{q \ft} / W_{\ft^\circ}$.
Under this isomorphism $\overline{\Q_{\ell}} [W_{\ft^\circ}]$ to corresponds to
$\End_{G^\circ}(\pi_* \tilde{\cE})$, which acts on $q\pi_* (\widetilde{q \cE})$
via \eqref{eq:6.7}.
\end{lem}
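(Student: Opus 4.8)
The plan is to reduce the claim to the isomorphism $\End_G(\pi_*\tilde{\cE})\cong\overline{\Q_{\ell}}[W_\ft,\natural_\cE]$ of Proposition~\ref{prop:5.3} by a corner‑algebra computation. First I would fix such an isomorphism and observe that it restricts on the subalgebra $\End_M(\pi_*^M\widetilde{\cE_M})\subseteq\End_G(\pi_*\tilde{\cE})$ to the corresponding isomorphism $\End_M(\pi_*^M\widetilde{\cE_M})\cong\overline{\Q_{\ell}}[M_\cE/M^\circ,\natural_\cE]$ for the quasi‑Levi $M$. Here $M_\cE/M^\circ$ is a normal subgroup of $W_\ft$, because $M_\cE=M\cap N_G(\ft)$ is normal in $N_G(\ft)$; and the restriction statement holds because in the construction of Proposition~\ref{prop:5.3} one may take the complement $\cR_\ft$ of Lemma~\ref{lem:5.1} to contain $M_\cE/M^\circ$ (the latter centralizes $Z(M^\circ)^\circ$, hence acts trivially on $R(G^\circ,Z(M^\circ)^\circ)$), while for $M$ one has $W_{\ft_M^\circ}=1$ and $W_{\ft_M}=M_\cE/M^\circ$, so the $M$‑version of that construction uses exactly the corresponding generators.

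Next I would apply \eqref{eq:6.7}: since $\pi_*\tilde{\cE}$ is semisimple, $\pi_*(\widetilde{q\cE})\cong(\pi_*\tilde{\cE})_{\rho_M}$, the multiplicity space of the $\rho_M$‑isotypic part of $\pi_*\tilde{\cE}$ regarded as a module over $\End_M(\pi_*^M\widetilde{\cE_M})$. Choosing a primitive idempotent $f$ of $\End_M(\pi_*^M\widetilde{\cE_M})$ in the matrix block corresponding to $\rho_M$, one has $f\pi_*\tilde{\cE}\cong(\pi_*\tilde{\cE})_{\rho_M}$ as $G$‑equivariant local systems, and therefore
\[
\End_G(\pi_*(\widetilde{q\cE}))\;\cong\;\End_G(f\pi_*\tilde{\cE})\;=\;f\,\End_G(\pi_*\tilde{\cE})\,f\;\cong\;f\,\overline{\Q_{\ell}}[W_\ft,\natural_\cE]\,f .
\]

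It remains to identify this corner algebra and to match its cocycle with $\kappa_{q\ft}$. I would proceed exactly as in the proof of Lemma~\ref{lem:6.1}: untwist by passing to the central extension $\widetilde{W_\ft}$ and the minimal idempotent $p_\cE$ with $\overline{\Q_{\ell}}[W_\ft,\natural_\cE]\cong p_\cE\overline{\Q_{\ell}}[\widetilde{W_\ft}]$, under which $f$ becomes a primitive idempotent of $p_\cE\overline{\Q_{\ell}}[\widetilde N]$, with $\widetilde N$ the inverse image of $M_\cE/M^\circ$, lying over a lift $\tilde\rho_M$ of $\rho_M$. Classical Clifford theory for group algebras, as in \eqref{eq:1.6} and \cite[\S 51]{CuRe}, then gives $f\,\overline{\Q_{\ell}}[\widetilde{W_\ft}]\,f\cong\overline{\Q_{\ell}}[(\widetilde{W_\ft})_{\tilde\rho_M}/\widetilde N,\kappa_{\tilde\rho_M}]\cong\overline{\Q_{\ell}}[W_{\ft,\rho_M}/(M_\cE/M^\circ),\kappa_{\rho_M}]$. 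By the equivariance of the generalized Springer correspondence for $M$ (Theorem~\ref{thm:5.5}, via Theorem~\ref{thm:3.2} and Proposition~\ref{prop:4.4}) the stabilizer $W_{\ft,\rho_M}$ of $\rho_M$ in $W_\ft$ equals the stabilizer of $q\cE$, so by \eqref{eq:6.3} one gets $W_{\ft,\rho_M}/(M_\cE/M^\circ)\cong W_{q\ft}$, and $\kappa_{\rho_M}$ is precisely the cocycle $\kappa_{q\ft}$ produced in the proof of Lemma~\ref{lem:6.1}, which factors through $W_{q\ft}/W_{\ft^\circ}$. The non‑canonicity is inherited from Proposition~\ref{prop:5.3}: a character of $W_\ft/W_{\ft^\circ}$ descends to a character of $W_{q\ft}\cong W_{\ft,\rho_M}/(M_\cE/M^\circ)$ since it is trivial on $W_{\ft^\circ}$. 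Finally, as $\natural_\cE$ is trivial on $W_{\ft^\circ}$ and the $T_w$ with $w\in W_{\ft^\circ}$ commute with $\overline{\Q_{\ell}}[M_\cE/M^\circ,\natural_\cE]$ by \eqref{eq:6.15}, hence with $f$, the assignment $w\mapsto T_w f$ embeds $\overline{\Q_{\ell}}[W_{\ft^\circ}]$ into $f\,\overline{\Q_{\ell}}[W_\ft,\natural_\cE]\,f$; this recovers the embedding of Lemma~\ref{lem:6.1}(b), and tracing through \eqref{eq:6.7} it is the action of $\End_{G^\circ}(\pi_*\tilde{\cE})=\overline{\Q_{\ell}}[W_{\ft^\circ}]$ on $\pi_*(\widetilde{q\cE})$.

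The step I expect to be the main obstacle is the first one: verifying carefully that the isomorphism of Proposition~\ref{prop:5.3} for $G$ restricts on $\End_M(\pi_*^M\widetilde{\cE_M})$ to the one for $M$ — i.e.\ that $\cR_\ft$ can be chosen to contain $M_\cE/M^\circ$ and that the two constructions share the relevant generators — together with the bookkeeping needed to conclude that the corner algebra is $\overline{\Q_{\ell}}[W_{q\ft},\kappa_{q\ft}]$ on the nose, so that the last assertion of the lemma holds exactly and not merely up to a cohomologous cocycle.
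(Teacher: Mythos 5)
Your proposal is correct and reaches the lemma by a genuinely different organization of the argument than the paper's. The paper's proof first re-applies Lusztig's constructions from \cite[\S 3]{Lus1} to the quasi-cuspidal datum $(M,\cC_v^M,q\cE)$ itself, which yields a priori that $\End_G (\pi_* (\widetilde{q \cE}))$ is a canonical direct sum of one-dimensional subspaces $q \mc A_w$ indexed by $W_{q\ft}$; the cocycle is then pinned down by viewing $V_{\rho_M}\otimes \pi_*(\widetilde{q\cE})$ as a direct summand of $\pi_*\tilde{\cE}$, analysing the stalk over $v$, and factoring the basis elements $b_w$ of Proposition \ref{prop:5.3} as $q b_w \otimes I^w$ via Proposition \ref{prop:1.1}.b, with the same intertwiners $I^w$ that produced $\kappa_{\rho_M}=\kappa_{q\ft}$ in Lemma \ref{lem:6.1}; taking $q b_w = b_w$ for $w \in W_{\ft^\circ}$ gives the final assertion. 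You avoid re-invoking the geometry: via \eqref{eq:6.7} you realize $\pi_*(\widetilde{q\cE})$ as $f\,\pi_*\tilde{\cE}$ for a primitive idempotent $f$ in the $\rho_M$-block of $\End_M (\pi_*^M \widetilde{\cE_M})$, so that $\End_G(\pi_*(\widetilde{q\cE})) \cong f\,\overline{\Q_{\ell}}[W_\ft,\natural_\cE]\,f$, and then compute this corner algebra by classical Clifford theory in the Schur cover $\widetilde{W_\ft}$, exactly parallel to Lemma \ref{lem:6.1}.a. What the paper's route buys is the canonical $W_{q\ft}$-grading and the distinguished elements $q b_w$, which are used again later (Proposition \ref{prop:6.4}, Lemma \ref{lem:10.2}, Theorem \ref{thm:10.3}.b), plus a transparent match of conventions with Lemma \ref{lem:6.1}; your route is shorter on geometry but needs the bookkeeping you yourself flag: that the Proposition \ref{prop:5.3} isomorphism restricts on $\End_M(\pi_*^M\widetilde{\cE_M})$ to that of Lemma \ref{lem:4.3} (your observation that $M_\cE / M^\circ \subset \cR_\ft$, because $M_\cE$ centralizes $Z(M^\circ)^\circ$ and hence fixes $R(G^\circ,Z(M^\circ)^\circ)$ pointwise, does settle this, and the paper assumes the same compatibility just before Lemma \ref{lem:6.1}); that the corner algebra carries the cocycle $\kappa_{\rho_M}$ itself rather than its inverse or a merely cohomologous one (watch the opposite-algebra step $fBf \cong \End_B (Bf)^{op}$ for $B = \overline{\Q_{\ell}}[W_\ft,\natural_\cE]$ against the conventions of Proposition \ref{prop:1.1} and Lemma \ref{lem:1.3}.a); and that the outcome is independent of the choice of $f$, which holds because conjugating $f$ within its block acts trivially on the factor $\End_G(\pi_*(\widetilde{q\cE}))$ in \eqref{eq:6.9}, so that the only residual indeterminacy is the character twist of $W_{q\ft}/W_{\ft^\circ}$ allowed by the lemma.
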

\begin{proof}
Like Proposition \ref{prop:5.3}, the larger part of this result follows from 
\cite[\S 3]{Lus1}. The constructions over there apply
equally well to quasi-Levi subgroups of the possibly disconnected group $G$.
These arguments show that, as a $\overline{\Q_{\ell}}$-vector space, $\End_G (q\pi_* 
(\widetilde{q \cE}))$ is in a canonical way a direct sum of one-dimensional 
subspaces $q \mc A_w$ indexed by $W_{q \ft}$. Moreover, as an algebra it is
a twisted group algebra of $W_{q \ft}$, with respect to some 2-cocycle. 
To analyse the 2-cocycle, we relate it to objects appearing in the proof of 
Lemma \ref{lem:6.1}.a.

By \eqref{eq:6.7} $V_{\rho_M} \otimes q\pi_* (\widetilde{q \cE})$, with $G$ 
acting trivially on $V_{\rho_M}$, is a direct summand of $\pi_* (\tilde{\cE})$.
By \cite[Proposition 3.5]{Lus1} the $G$-equivariant local system $\pi_* \tilde{\cE}$ 
is semisimple. Therefore
\begin{equation}\label{eq:6.9}
\End_G \big( V_{\rho_M} \otimes q\pi_* (\widetilde{q \cE}) \big) \cong
\End_G \big( q\pi_* (\widetilde{q \cE}) \big) \otimes \End_{\overline{\Q_{\ell}}}
(V_{\rho_M})
\end{equation}
is a subalgebra of $\End_G (\pi_* \tilde{\cE})$.
The basis elements $b_w \in \End_G (\pi_* \tilde{\cE}) ,\;
w \in W_\ft$, as constructed in Proposition \ref{prop:5.3}, permute the
subsystems of $\pi_* \tilde{\cE}$ corresponding to different $\rho'_M \in 
\Irr \big( \End_M (\pi_*^M \widetilde{\cE_M}) \big)$. More precisely,
$b_w$ stabilizes $V_{\rho_M} \otimes q\pi_* (\widetilde{q \cE})$ if and only if
$w$ stabilizes $\rho_M$. Together with Proposition \ref{prop:5.3} this shows
that \eqref{eq:6.9} is spanned (over $\overline{\Q_{\ell}}$) by the $b_w$ with 
$w \in \Stab_{W_\ft}(\rho_M) = \Stab_{W_\ft}(q \cE)$. \\
In view of the description of $(\pi_* \tilde{\cE})_y$ in \eqref{eq:5.21},
the stalk of $V_{\rho_M} \otimes q\pi_* (\widetilde{q \cE})$ at $y \in S_\reg$ is
\begin{equation}\label{eq:6.8}
\bigoplus\nolimits_{z \in Z_{N_G (M^\circ)}(v) / Z_M (v)} z \cdot \big( 
(\pi_*^M \widetilde{\cE_M})_{\rho_M} \big)_y \otimes V_{\rho_M} .
\end{equation}
As concerns the index set for the sum, by Theorem \ref{thm:4.1}.a the canonical map\\
$Z_{N_G (M^\circ)} (v) / Z_M (v) \to N_G (M^\circ) / M$ is bijective.

The $b_w$ with $w \in M_\cE / M^\circ$ act only on the second tensor
factor of \eqref{eq:6.8}, and by the irreducibility of the 
$\End_M (\pi_*^M \widetilde{\cE_M})$-module $\rho_M$ they span 
the algebra $\End_{\overline{\Q_{\ell}}} (V_{\rho_M})$. 
Let $[W_{q \ft}] \subset W_\ft$ be a set of representatives for 
$\Stab_{W_\ft}(q \cE) / (M_\cE / M^\circ)$. By \eqref{eq:6.16} we may 
assume that it contains $W_{\ft^\circ}$. 
From \eqref{eq:5.22} we see that the $b_w$ with $w \in [W_{q \ft}]$ permute 
the direct summands of \eqref{eq:6.8} according to the inclusion
\[
W_{q \ft} = N_G (q \ft) / M \to N_G (M^\circ) / M \cong Z_{N_G (M^\circ)} (v) / Z_M (v) . 
\]
In particular $\{ b_w : w \in [W_{q \ft}] \}$ is linearly independent over
$\End_{\overline{\Q_{\ell}}}(V_{\rho_M})$. Since \eqref{eq:6.9} is spanned
by the $b_w$ with $w \in \Stab_{W_\ft}(\rho_M)$, it follows that
in fact $\{ b_w : w \in [W_{q \ft}] \}$ is a basis of \eqref{eq:6.9} over
$\End_{\overline{\Q_{\ell}}}(V_{\rho_M})$.

We want to modify these $b_w$ to endomorphisms of $q\pi_* (\widetilde{q \cE})$,
say to $q b_w \in q \mc A_w$. For $w \in W_{\ft^\circ}$ there is an easy
canonical choice, as \eqref{eq:6.15} shows that $W_{\ft^\circ}$ commutes with
$\overline{\Q_{\ell}}[M / M^\circ,\natural_\cE]$. Hence $b_w$ fixes
$\rho_M \in \Irr (\overline{\Q_{\ell}}[M / M^\circ,\natural_\cE])$ pointwise.
Therefore we can take $q b_w = b_w$ for $w \in W_{\ft^\circ}$. By Theorem 
\ref{thm:2.2} these elements span the algebra $\End_{G^\circ}(\pi_* \tilde{\cE})
\cong \overline{\Q_{\ell}}[W_{\ft^\circ}]$.

For general $w \in [W_{q \ft}]$ the description given in \eqref{eq:5.22} shows
that the action of $b_w$ on \eqref{eq:6.8} consists of a permutation of the
direct factors combined with a linear action on $V_{\rho_M}$. 
Let $\widetilde{W_\ft}$ and $N$ be as in \eqref{eq:6.4} and \eqref{eq:6.10}.
Then \eqref{eq:6.8} can be embedded in a sum of copies of
$\ind_N^{\widetilde{W_\ft}}(V_{\rho_M})$. 

Now Proposition \ref{prop:1.1}.b
shows that there is a unique $q b_w \in q \mc A_w$ such that the action of
$b_w$ on \eqref{eq:6.8} can be identified with $q b_w \otimes I^w$, where
$I^w$ is as in \eqref{eq:1.3}. We may choose the same $I^w$ as we did
(implicitly) in the last part of the proof of Lemma \ref{lem:6.1}.a, where
we used them to determine the cocycle $\kappa_{\rho_M} = \kappa_{q \ft}$.
Then Proposition \ref{prop:1.1}.b shows also that these $q b_w$ multiply 
as in the algebra $\overline{\Q_{\ell}}[W_{q \ft},\kappa_{\rho_M}]$. 

Finally, the claim about the uniqueness follows in the same way as in the
last part of the proof of Propostion \ref{prop:5.3}.
\end{proof}

Some remarks about the 2-cocycle $\kappa_{q \ft}$ are in order.
If $W_{q \ft}$ is cyclic then $\kappa_{q \ft}$ is trivial because
$H^2 (W_{q \ft},\overline{\Q_{\ell}}^\times) = \{1\}$. Furthermore
\[
\text{if } M_\cE = M^\circ \text{, then }
\End_G (q\pi_* (\widetilde{q \cE})) = \End_G (\pi_* (\tilde{\cE}))
\cong \overline{\Q_{\ell}}[W_\ft,\natural_\cE] 
\]
by Proposition \ref{prop:5.3}.
However, in contrast with the cocycle $\natural_\cE$ appearing in
Sections \ref{sec:cusp} and \ref{sec:noncusp}, it is in general rather
difficult to obtain explicit information about $\kappa_{q \ft}$. One 
reason for this is that the classification of cuspidal local systems
on disconnected reductive groups, as achieved in Theorem \ref{thm:4.1} 
and in Proposition \ref{prop:4.4}, leaves many possibilities. 
In particular the groups $G_\cE / G^\circ$ can be very large.

\begin{thm}\label{thm:6.3}
\enuma{
\item There exists a canonical bijection
\[
\begin{array}{cccc}
q \Sigma_{q \ft} : & q \Psi_G^{-1}(q \ft) & \to & 
\Irr \big( \End_G (q\pi_* (\widetilde{q \cE})) \big) \\
& (\cC_u^G ,\cF) & \mapsto & \Hom_G \Big( \cF, \cH^{2 d_C} 
\big( \IC \big( \overline{Y}, q\pi_* (\widetilde{q \cE}) \big) \big) | \cC_u^G \Big) .
\end{array}
\]
It can be defined by the condition
\[
q \Sigma_{q \ft}^{-1} (\tau) = (\cC_u^G,\cF) 
\quad \Longleftrightarrow \quad \cF = \cH^{2 d_C} 
\Big( \IC \big( \overline{Y}, q\pi_* (\widetilde{q \cE}) \big)_\tau  \Big) | \cC_u^G .
\]
\item The restriction of $\cF$ to a $G^\circ$-equivariant local system on $\cC_u^{G^\circ}$
is $\bigoplus_i \Sigma_{\ft^\circ}^{-1}(\tau_i)$, where $\ft^\circ = [M^\circ,
\cC_v^{M^\circ},\cE]_{G^\circ}$ and $\tau = \bigoplus_i \tau_i$ is a decomposition into 
irreducible $\End_{G^\circ}(\pi_* \tilde{\cE})$-subrepresentations.
\item Upon choosing an isomorphism as in Lemma \ref{lem:6.2}, we obtain the bijection
\[
q \Psi_G^{-1} (q \ft) \to \Irr \big( \overline{\Q_{\ell}} [W_{q \ft},\kappa_{q \ft}] \big) 
\]
from Lemma \ref{lem:6.1}.
}
\end{thm}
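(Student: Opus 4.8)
The plan is to view Theorem~\ref{thm:6.3} as the exact analogue of Theorem~\ref{thm:5.5} for the triple $q \ft = [M,\cC_v^M,q \cE]_G$, with the quasi-Levi subgroup $M$ taking over the role that a connected Levi subgroup played in Section~\ref{sec:noncusp}. First I would observe — as was already used in the proofs of Lemmas~\ref{lem:6.1} and \ref{lem:6.2} — that the sheaf-theoretic constructions of \cite[\S 3]{Lus1}, and the characterisation of cuspidal supports in Theorem~\ref{thmlus}, remain valid for a quasi-Levi inside the possibly disconnected group $G$ (see also the discussion around \eqref{eq:6.2}). Applying them to $(M,\cC_v^M,q \cE)$ gives, for every $\tau \in \Irr\big(\End_G(\pi_* (\widetilde{q \cE}))\big)$, a unique pair $(\cC_u^G,\cF_\tau) \in \cN_G^+$ with $\cF_\tau = \cH^{2 d_C}\big(\IC(\overline Y,\pi_* (\widetilde{q \cE}))_\tau\big)|\cC_u^G$, and the map $\tau \mapsto (\cC_u^G,\cF_\tau)$ is injective. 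Two things then remain: that this map has image precisely $q \Psi_G^{-1}(q \ft)$, and that it is inverse to the bijection of Lemma~\ref{lem:6.1}.a transported via Lemma~\ref{lem:6.2}; granting these, part (c) is immediate from Lemmas~\ref{lem:6.1} and \ref{lem:6.2}.

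The key input is \eqref{eq:6.7}, which identifies $\pi_* (\widetilde{q \cE})$ with the $\rho_M$-isotypic summand $(\pi_* \tilde{\cE})_{\rho_M}$ of $\pi_* \tilde{\cE}$ for the action of the subalgebra $\End_M(\pi_*^M \widetilde{\cE_M}) \cong \overline{\Q_{\ell}}[M_\cE / M^\circ, \natural_\cE]$ of $\End_G(\pi_* \tilde{\cE})$. Since $\pi_* \tilde{\cE}$ is a semisimple $G$-equivariant local system by \cite[Proposition 3.5]{Lus1}, and all the twisted group algebras involved are semisimple, the four operations $\IC(\overline Y,-)$, $\cH^{2 d_C}(-)$, restriction to $\cC_u^G$, and isotypic decomposition for either of the two commuting algebra actions, pairwise commute, exactly as in \cite[\S 3.7]{Lus1}. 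I would then take $(\cC_u^G,\cF) \in q \Psi_G^{-1}(q \ft) \subset \Psi_G^{-1}(\ft)$ (with $\ft = [M^\circ,\cC_v^{M^\circ},\cE]_G$), set $\rho = \Sigma_\ft(\cC_u^G,\cF)$, and let $\tau \in \Irr\big(\End_G(\pi_* (\widetilde{q \cE}))\big)$ be the representation corresponding to $\rho$ under \eqref{eq:6.6} — recall that $\rho$ contains $\rho_M$ on restriction to $\End_M(\pi_*^M \widetilde{\cE_M})$ by the very definition \eqref{eq:6.1} of $q \Psi_G$. Using \eqref{eq:6.7} together with the commutations above, the $\tau$-isotypic part of $\IC(\overline Y,\pi_* (\widetilde{q \cE}))$ coincides with the $\rho$-isotypic part of $\IC(\overline Y,\pi_* \tilde{\cE})$; combined with the formula for $\Sigma_\ft$ from Theorem~\ref{thm:5.5} this yields $\cF = \cF_\tau$. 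Running this backwards, every $\tau$ arises from a unique $(\cC_u^G,\cF) \in q \Psi_G^{-1}(q \ft)$, so the image is exactly $q \Psi_G^{-1}(q \ft)$ and the displayed formula in (a) holds. Canonicity of $q \Sigma_{q \ft}$ is then clear: its defining formula contains no choices, and the isomorphism of Lemma~\ref{lem:6.2} enters only in part (c).

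For part (b) I would restrict the $G$-equivariant local system $\cF = \cH^{2 d_C}\big(\IC(\overline Y,\pi_* (\widetilde{q \cE}))_\tau\big)|\cC_u^G$ to a $G^\circ$-equivariant one. By the last assertion of Lemma~\ref{lem:6.2}, the subalgebra $\overline{\Q_{\ell}}[W_{\ft^\circ}] \subset \End_G(\pi_* (\widetilde{q \cE}))$ is identified with $\End_{G^\circ}(\pi_* \tilde{\cE}^\circ)$ acting through \eqref{eq:6.7}, and on the part of $\pi_* (\widetilde{q \cE})$ that is $G^\circ$-conjugate to the regular set this local system is a direct sum of copies of $\mathrm{Ad}(\gamma)^* (\pi_* \tilde{\cE}^\circ)$, as in the proof of Lemma~\ref{lem:5.4}. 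Writing $\tau|_{\overline{\Q_{\ell}}[W_{\ft^\circ}]} = \bigoplus_i \tau_i$ with $\tau_i \in \Irr_{\overline{\Q_{\ell}}}(W_{\ft^\circ})$ and applying Theorem~\ref{thmlus}.(3) to $G^\circ$ — by which $\cH^{2 d_C}\big(\IC(\overline{Y^\circ},\pi_* \tilde{\cE}^\circ)_{\tau_i}\big)|\cC_u^{G^\circ}$ is the local system $\Sigma_{\ft^\circ}^{-1}(\tau_i)$ — one reads off $\Res^G_{G^\circ}\cF = \bigoplus_i \Sigma_{\ft^\circ}^{-1}(\tau_i)$ on $\cC_u^{G^\circ}$. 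Alternatively, (b) can be deduced from Theorem~\ref{thm:5.6}.b via the comparison between $\Sigma_\ft$ and $q \Sigma_{q \ft}$ established in part (a).

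The step I expect to be the main obstacle is the commutation assertion in the second paragraph. Concretely, one must reprise — one level further down — the bookkeeping in the proof of Theorem~\ref{thm:5.5} (the chain of isomorphisms from \eqref{eq:5.15} to \eqref{eq:5.19}), now separating the isotypic decomposition of $\pi_* \tilde{\cE}$ for the $\End_M(\pi_*^M \widetilde{\cE_M})$-action from the action of $\End_G(\pi_* \tilde{\cE})$. The delicate point is that these two actions do not commute, so one is forced to work inside $\Stab_{W_\ft}(\rho_M)$ and invoke Proposition~\ref{prop:1.1} to split off the tensor factors, precisely as in the proof of Lemma~\ref{lem:6.2}; once that has been arranged, matching the resulting cocycle with $\kappa_{q \ft} = \kappa_{\rho_M}$ from Lemma~\ref{lem:6.1} is automatic.
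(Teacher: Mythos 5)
Your proposal is correct and follows essentially the same route as the paper: the actual proof also deduces the theorem from the bijection of Lemma \ref{lem:6.1} via Theorem \ref{thm:5.5}, the identification \eqref{eq:6.7} of $\pi_*(\widetilde{q\cE})$ with the $\rho_M$-multiplicity summand of $\pi_*\tilde{\cE}$, Proposition \ref{prop:1.1}.d applied to $\widetilde{W_\ft}\supset N$, and semisimplicity to commute the $\tau$-Hom-space with the intersection cohomology complex, with part (b) read off from Theorem \ref{thmlus}.(3). The only difference is that you run the comparison starting from a pair $(\cC_u^G,\cF)$ rather than from $\tau$, and your closing remark correctly locates the noncommutativity issue where the paper resolves it, namely in the proofs of Lemmas \ref{lem:6.1} and \ref{lem:6.2}.
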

\begin{proof}
(c) Write $\ft_M = \Psi_M (\cC_v^M,q \cE)$ and recall
that $\rho_M = \Sigma_{\ft_M}(\cC_v^M, q \cE)$. By Lemma \ref{lem:4.3} 
\[
\End_M \big( \pi_*^M (\widetilde{\cE_M}) \big) \cong 
\overline{\Q_{\ell}}[M_\cE / M^\circ ] \cong p_\cE \overline{\Q_{\ell}}[N] ,
\]
and by Lemma \ref{lem:6.2}
\[
\End_G (q\pi_* (\widetilde{q \cE})) \cong 
\overline{\Q_{\ell}}[W_{\ft,\rho_M} / (M_\cE / M^\circ),\kappa_{\rho_M}] .
\]
From a $\tau$ as in the theorem we obtain, using \eqref{eq:6.6}, 
\[
\rho_M \rtimes \tau \in \Irr (p_\cE \overline{\Q_{\ell}}[\widetilde{W_\ft}]) = 
\Irr (\overline{\Q_{\ell}}[W_\ft,\natural_\cE]) .
\]
By Theorem \ref{thm:5.5} the bijection from Lemma \ref{lem:6.1} maps 
$(\cC_u^G,\cF)$ to $\tau$ if and only if
\begin{equation}\label{eq:6.12}
\cF \quad \text{equals} \quad \cH^{2 d_C} \Big( \IC \big( \overline{Y}, 
\pi_* (\tilde{\cE}) \big)_{\rho_M \rtimes \tau}  \Big) | \cC_u^G .
\end{equation}
Recall from \eqref{eq:6.7} that $\Hom_N (\rho_M,q\pi_* \tilde{\cE}) \cong
\pi_* (\widetilde{q \cE})$. We apply Proposition \ref{prop:1.1}.d
to $\widetilde{W_\ft}, N$ and the representation $\pi_* \tilde{\cE}$, and
we find that the right hand side of \eqref{eq:6.12} is isomorphic with
$\cH^{2 d_C} \big( \IC \big( \overline{Y}, q\pi_* (\widetilde{q \cE})_\tau \big) 
| \cC_u^G \big)$. Since $q\pi_* (\widetilde{q \cE})$ is semisimple, taking the 
$\tau$-Hom-space commutes with forming an intersection cohomology complex. 
Hence the bijection from Lemma \ref{lem:6.1} satisfies exactly the condition 
given in the theorem.\\
(a) This condition clearly is canonical, so with part (a) it determines a 
canonical bijection $q \Sigma_{q \ft}^{-1}$. 

It remains to check that the given formula for $q\Sigma_{q \ft}$ agrees with
the above construction. Let $(\cC_u^G,\cF) \in q \Psi_G^{-1}(q \ft)$ and write
$\cF_u = \eta^\circ \rtimes \tau'$ as in the proof of Theorem \ref{thm:5.5}. 
Then, by \eqref{eq:6.7}
\begin{align}
\nonumber \Hom_G \Big( \cF, \cH^{2 d_C} \big( \IC \big( \overline{Y}, 
q\pi_* (\widetilde{q \cE}) \big) \big) | \cC_u^G \Big) =
\Hom_G \Big( \cF, \cH^{2 d_C} \big( \IC \big( \overline{Y}, \pi_* (\tilde \cE) 
\big)_{\rho_M} \big) | \cC_u^G \Big) \\
\label{eq:6.27} = \Hom_{A_G (u)} \Big( \eta^\circ \rtimes \tau', \cH^{2 d_C} 
\big( \IC \big( \overline{Y}, \pi_* (\tilde \cE) \big)_{\rho_M} \big)_u \Big) .
\end{align}
As the actions of $A_G (u)$ and 
\[
\overline{\Q_l}[M_\cE / M^\circ ,\natural_\cE] \cong 
\End_M (\pi_*^M (\widetilde{\cE_M})) \subset \End_G (\pi_* (\tilde \cE))
\]
commute, \eqref{eq:5.19} shows that \eqref{eq:6.27} is isomorphic to
\[
\Hom_{\overline{\Q_l}[M_\cE / M^\circ ,\natural_\cE]} \Big( \rho_M ,
\ind_{\overline{\Q_{\ell}}[W_{\ft,\rho^\circ},\natural_\cE]}^{\overline{\Q_{\ell}}
[W_\ft,\natural_\cE]} (V^*_{\tau'} \otimes V_{\rho^\circ}) \Big) =
\Hom_{\overline{\Q_l}[M_\cE / M^\circ ,\natural_\cE]} \big( \rho_M ,
\tau'^* \ltimes \rho^\circ \big) .
\]
From \eqref{eq:6.6} and the subsequent argument we see that 
$\tau'^* \ltimes \rho^\circ = \rho_M \rtimes \tau$
for a unique irreducible representation $\tau$ of (using Lemma \ref{lem:6.2})
\[
\overline{\Q_l}[W_{\ft,\rho_M} / (M_\cE / M^\circ), \natural_\cE] \big) \cong
\overline{\Q_l}[W_{q \ft},\kappa_{q\ft}] \cong \End_G \big(q\pi_* (\widetilde{q \cE}) \big) .
\]
In view of all this, \eqref{eq:6.27} becomes
\[
\Hom_{\overline{\Q_l}[M_\cE / M^\circ ,\natural_\cE]} \big( \rho_M ,\rho_M \rtimes \tau
\big) = \Hom_{\overline{\Q_l}[M_\cE / M^\circ ,\natural_\cE]} \big( \rho_M ,
\ind^{\overline{\Q_l}[W_\ft , \natural_\cE]}_{\overline{\Q_l}[W_{\ft,\rho_M} ,\natural_\cE]} 
(V_{\rho_M} \otimes V_\tau) \big) = V_\tau .
\]
This means that $q\Sigma_{q\ft}(\cC_u^G,\cF)$ as given in the statement is isomorphic
with the outcome of the bijection via part (c).\\
(b) The behaviour of the restriction of $q \Sigma_{q \ft}^{-1} (\tau)$ to $\cC_u^{G^\circ}$ 
follows from comparing the characterization with Theorem \ref{thmlus}.(3).
\end{proof}

By Theorem \ref{thm:5.5} and \eqref{eq:6.6}, $q\Sigma_{q \ft}(u,\eta)$ is also given by
\begin{equation}\label{eq:6.13}
\Sigma_\ft (\cC_u^G,\cF) = 
\Sigma_{\ft_M}(\cC_v^M,q \cE) \rtimes q\Sigma_{q \ft}(\cC_u^G,\cF) .
\end{equation}
However, it is hard to make sense of this $\rtimes$-sign in a completely canonical way,
without using the isomorphisms from Proposition \ref{prop:5.3} and Lemma \ref{lem:6.2}.

There is also an analogue of Theorem \ref{thm:5.6} with quasi-Levi subgroups.
Assume that $H \subset G$ is an algebraic subgroup such that $H \cap G^\circ$ is a
Levi subgroup of $G^\circ$ and $H$ contains the quasi-Levi subgroup 
$Z_G (Z(G^\circ \cap H)^\circ)$. Let $u \in H^\circ$ be unipotent. We saw in 
\eqref{eq:5.23} that $A_H (u)$ can be regarded as a subgroup of $A_G (u)$.

\begin{prop}\label{prop:6.4}
In the above setting, let $\eta \in \Irr_{\overline{\Q_{\ell}}}(A_G (u))$ and 
$\eta_H \in \Irr_{\overline{\Q_{\ell}}}(A_H (u))$. 
\enuma{
\item If $\eta_H$ appears in $\Res^{A_G (u)}_{A_H (u)}(\eta)$, then 
$q \Psi_G (u,\eta) = q \Psi_H (u,\eta_H) / G$-conjugacy.
\item There is a natural inclusion of algebras $\End_H (q\pi_*^H (\widetilde{q \cE_H})) \to
\End_G (q\pi_* (\widetilde{q \cE}))$.

Suppose that $q \Psi_G (u,\eta) = q \Psi_H (u,\eta_H) / G$-conjugacy.
Then $q \Sigma_{q \Psi_H (u,\eta_H) }(u,\eta_H)$ is a constituent of
$\Res_{\End_H (q\pi_*^H (\widetilde{q \cE_H}))}^{\End_G (q\pi_* (\widetilde{q \cE}))} 
q \Sigma_{q \ft}(u,\eta)$ if and only if $\eta_H$ is a constituent of 
$\Res^{A_G (u)}_{A_H (u)}(\eta)$.
}
\end{prop}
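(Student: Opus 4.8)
The plan is to derive both parts from Theorem~\ref{thm:5.6} by unwinding the definitions of $q\Psi_G$ and $q\Sigma_{q\ft}$ in terms of $\Psi_G$ and $\Sigma_\ft$. First, using Theorem~\ref{thm:5.6}.a one may conjugate by an element of $G$ so that $\Psi_G(u,\eta) = \Psi_H(u,\eta_H) =: \ft_H$ with a common underlying triple $\ft^\circ = [L,\cC_v^L,\cE]$, where $L$ is a Levi subgroup of both $H^\circ$ and $G^\circ$. Set $M := Z_G(Z(L)^\circ)$, the quasi-Levi of $G$ entering $q\Psi_G$, and $M_H := Z_H(Z(L)^\circ) = M \cap H$, the one entering $q\Psi_H$; note $M_H \cap M^\circ = L = M_H^\circ$, so Theorem~\ref{thm:5.6} also applies to the pair $M_H \subset M$ (with $\ft_M = [L,\cC_v^L,\cE]_M \in \cN_M^0$ in the cuspidal case, where $\Sigma_{\ft_M}$ is the bijection of Proposition~\ref{prop:4.4}). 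By Proposition~\ref{prop:5.3} there is a chain of subalgebra inclusions $\End_{M_H}(\pi_*^{M_H}\widetilde{\cE_{M_H}}) \subset \End_M(\pi_*^M \widetilde{\cE_M}) \subset \End_G(\pi_* \tilde{\cE})$, and likewise over $H$.

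For part (a), write $\rho := \Sigma_\ft(u,\eta)$ and $\rho_H := \Sigma_{\ft_H}(u,\eta_H)$, and recall from \eqref{eq:6.1} and \eqref{eq:6.11} that $q\cE$ (resp.\ $q\cE_H$) is determined via Proposition~\ref{prop:4.4} by \emph{any} $\End_M(\pi_*^M\widetilde{\cE_M})$-constituent $\rho_M$ of $\rho$ (resp.\ any $\End_{M_H}(\pi_*^{M_H}\widetilde{\cE_{M_H}})$-constituent $\rho_{M_H}$ of $\rho_H$). By Theorem~\ref{thm:5.6}.b, $\rho_H$ is a constituent of $\Res^{\End_G(\pi_* \tilde{\cE})}_{\End_H(\pi_*^H\widetilde{\cE_H})}\rho$, so $\rho_{M_H}$ is a constituent of $\Res^{\End_G(\pi_* \tilde{\cE})}_{\End_{M_H}(\pi_*^{M_H}\widetilde{\cE_{M_H}})}\rho$. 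Since the restriction of $\rho$ to $\End_M(\pi_*^M\widetilde{\cE_M})$ is a sum of $W_\ft$-conjugates of $\rho_M$ (Clifford theory of $\End_G(\pi_* \tilde{\cE}) \cong \overline{\Q_{\ell}}[W_\ft,\natural_\cE]$ relative to the normal subgroup $M_\cE / M^\circ$, via Theorem~\ref{thm:5.5} and Theorem~\ref{thm:1.2}), and $\rho_M$ is defined only up to $W_\ft$-conjugacy, I may choose $\rho_M$ so that $\rho_{M_H}$ is a constituent of $\Res^{\End_M(\pi_*^M\widetilde{\cE_M})}_{\End_{M_H}(\pi_*^{M_H}\widetilde{\cE_{M_H}})}\rho_M$. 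Applying Theorem~\ref{thm:5.6}.b to $M_H \subset M$ and $\ft_M$, and translating through Proposition~\ref{prop:4.4}, this says exactly that $q\cE_H$ is a constituent of $\Res^M_{M_H} q\cE$ -- which is precisely the asserted equality $q\Psi_G(u,\eta) = q\Psi_H(u,\eta_H)$ up to $G$-conjugacy.

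For part (b), the algebra inclusion comes from Lemma~\ref{lem:6.2}, which identifies $\End_G(\pi_*(\widetilde{q\cE})) \cong \overline{\Q_{\ell}}[W_{q\ft},\kappa_{q\ft}]$ and $\End_H(\pi_*^H(\widetilde{q\cE_H})) \cong \overline{\Q_{\ell}}[W_{q\ft_H},\kappa_{q\ft_H}]$: the natural map $N_H(q\ft_H) \to N_G(q\ft)$, well-defined because $q\cE$ is the $M$-equivariant enlargement of $q\cE_H$ singled out in part (a), induces $W_{q\ft_H} \hookrightarrow W_{q\ft}$ compatibly with the cocycles. For the biconditional I would mirror the computation in the proof of Theorem~\ref{thm:5.6}.b one Clifford-theoretic level higher. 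By \eqref{eq:6.13} and its $H$-analogue, $\Sigma_\ft(u,\eta) = \rho_M \rtimes q\Sigma_{q\ft}(u,\eta)$ and $\Sigma_{\ft_H}(u,\eta'_H) = \rho_{M_H} \rtimes q\Sigma_{q\ft_H}(u,\eta'_H)$, where the first factor depends only on $q\ft_H$ (hence is the same for all members of $q\Psi_H^{-1}(q\ft_H)$); here I use that $q\Psi_H(\eta'_H)=q\ft_H$ forces $\Psi_H(\eta'_H)=\ft_H$, so $\Sigma_{\ft_H}(u,\eta'_H)$ is defined. Using Frobenius reciprocity as in Proposition~\ref{prop:1.1}.d, together with Lemma~\ref{lem:1.3} and the cocycle comparisons of Lemma~\ref{lem:5.2} and Lemma~\ref{lem:6.1}, one checks that -- with $\rho_{M_H}$ the fixed constituent of $\Res\rho_M$ from part (a) -- the representation $\rho_{M_H} \rtimes q\Sigma_{q\ft_H}(u,\eta'_H)$ is a constituent of $\Res^{\End_G(\pi_* \tilde{\cE})}_{\End_H(\pi_*^H\widetilde{\cE_H})}\bigl(\rho_M \rtimes q\Sigma_{q\ft}(u,\eta)\bigr)$ if and only if $q\Sigma_{q\ft_H}(u,\eta'_H)$ is a constituent of $\Res^{\End_G(\pi_*(\widetilde{q\cE}))}_{\End_H(\pi_*^H(\widetilde{q\cE_H}))}q\Sigma_{q\ft}(u,\eta)$. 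Combining this with Theorem~\ref{thm:5.6}.b applied to $\eta'_H$ then completes part (b).

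I expect the last step of (b) to be the main obstacle: the Mackey-type statement that restriction to $H$ commutes with \emph{both} layers of the $\rtimes$-decomposition (first Lusztig's correspondence for $G^\circ \subset G$, then the passage from $\ft$ to $q\ft$). Conceptually it is no deeper than the proof of Theorem~\ref{thm:5.6}.b, but it now involves two cocycles ($\natural_\cE$ and $\kappa_{q\ft}$) and two levels of isotypic components, so the real work is careful bookkeeping inside Proposition~\ref{prop:1.1} -- tracking which subalgebra acts on which tensor leg, and verifying that the $W_\ft$-ambiguity of $\rho_M$ and the character ambiguity in the connecting maps of Lemma~\ref{lem:6.2} cause no harm.
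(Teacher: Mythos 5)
Your overall strategy (Theorem \ref{thm:5.6} together with the construction \eqref{eq:6.11} of $q\cE$ from a constituent $\rho_M$, Lemma \ref{lem:6.2} for the algebra embedding, and \eqref{eq:6.13} plus Clifford theory for the biconditional) is the same as the paper's, but there is one missing observation that your argument genuinely needs and that, once made, removes most of the difficulties you flag. You treat $M_H = Z_H (Z(M^\circ)^\circ)$ as a possibly proper subgroup of $M$, and you end part (a) by asserting that ``$q\cE_H$ is a constituent of $\Res^M_{M_H} q\cE$'' is ``precisely the asserted equality''. It is not: $q\Psi_G (u,\eta)$ and $q\Psi_H (u,\eta_H)$ are triples whose first entries are the quasi-Levi subgroups $M$ and $M_H$, and their equality up to $G$-conjugacy forces these quasi-Levis to coincide; a restriction-constituent relation between local systems on two different groups is strictly weaker than the statement. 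What has to be proved (and the paper does in one line) is that $M \subset H$: since $H$ is a quasi-Levi subgroup, $H = Z_G (Z(H^\circ)^\circ)$, and Theorem \ref{thm:5.6}.a gives $M^\circ \subset H^\circ$ after conjugation, hence $Z(H^\circ)^\circ \subset Z(M^\circ)^\circ$ and $M = Z_G (Z(M^\circ)^\circ) \subset Z_G (Z(H^\circ)^\circ) = H$. Consequently $M_H = M$, the algebras $\End_{M_H}(\pi_*^{M_H}\widetilde{\cE_{M_H}})$ and $\End_M (\pi_*^M \widetilde{\cE_M})$ coincide, and part (a) collapses to the paper's argument: choose $\rho_M$ to be a constituent of $\rho_H = \Sigma_{\ft_H}(u,\eta_H)$ (legitimate by Theorem \ref{thm:5.6}.b and the $W_\ft$-ambiguity of $\rho_M$), whence $q\Psi_H (u,\eta_H) = [M,\cC_v^M, q\cE]_H$ by \eqref{eq:6.1}.

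The same observation repairs part (b), whose pivotal step you leave as an expectation rather than a proof. With $M_H = M$ the two decompositions \eqref{eq:6.13} share literally the same first factor $\rho_M$, so the ``two-level'' cocycle bookkeeping you anticipate does not arise: $W_{q\ft_H} = N_H (M^\circ, \cC_v^{M^\circ},\cE)/M_\cE$ is a subgroup of $W_{q\ft}$, the cocycle $\kappa_{q\ft_H}$ is the restriction of $\kappa_{q\ft}$ precisely because both are built from the same $\rho_M$, and the naturality of the embedding comes from the fact that the basis elements $q b_w$ with $w \in W_{q\ft_H}$ stabilize $\pi_*^H (\widetilde{q\cE_H})$. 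The equivalence you want -- that $\rho_M \rtimes q\Sigma_{q\ft_H}(u,\eta'_H)$ occurs in the restriction of $\rho_M \rtimes q\Sigma_{q\ft}(u,\eta)$ if and only if $q\Sigma_{q\ft_H}(u,\eta'_H)$ occurs in the restriction of $q\Sigma_{q\ft}(u,\eta)$ -- is then exactly what Proposition \ref{prop:1.1}.c gives when applied to the $\rho_M$-isotypic parts, as in the paper; combined with Theorem \ref{thm:5.6}.b this finishes (b). So your route is the paper's route; the genuine gaps are the unproved identification $M_H = M$ in (a) and the unexecuted Clifford step in (b), and both are closed by that single identification.
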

\begin{proof}
(a) From Theorem \ref{thm:5.6}.a we know that 
$[M^\circ,\cC_v^{M^\circ},\cE]_G = \Psi_G (u,\eta)$ equals $\Psi_H (u,\eta)$ up to 
$G$-conjugacy. In particular $M^\circ \subset H^\circ$ and, by the assumptions on $H$, 
$M \subset H$. It follows that $\End_M (\pi_*^M \widetilde{\cE_M})$ is also a
subalgebra of $\End_H (\pi_*^H \widetilde{\cE_H})$. By Theorem \ref{thm:5.6}.b we may choose
$\rho_M$ (used in \eqref{eq:6.11} to construct $q \cE$) to be a constituent of $\rho_H =
\Sigma_{\ft_H}(u,\eta_H)$. Then $\Psi_H (u,\eta_H) = [M,\cC_v^M,q \cE]_H$, which agrees with
\eqref{eq:6.1}. \\
(b) By Lemma \ref{lem:6.2} $\End_H \big( q\pi_*^H (\widetilde{q \cE_H}) \big) \cong 
\overline{\Q_{\ell}} [W_{q \ft_H},\kappa_{q \ft_H}]$. Here
\[
W_{q \ft_H} = W_{\ft_H,\rho_M} / (M_\cE / M^\circ) = 
N_H (M^\circ,\cC_v^{M^\circ},\cE) / M_\cE 
\]
is a subgroup of
\[
N_G (M^\circ,\cC_v^{M^\circ},\cE) / M_\cE = W_{\ft,\rho_M} / (M_\cE / M^\circ) = W_{q \ft}.
\]
The 2-cocycle $\kappa_{q \ft_H}$ is just the restriction of $\kappa_{q \ft}$, because both
are based on the same representation $\rho_M$ of $\End_M (\pi_*^M \widetilde{\cE_M})$.
This gives an injection 
\[
\overline{\Q_{\ell}} [W_{q \ft_H},\kappa_{q \ft_H}] \to
\overline{\Q_{\ell}} [W_{q \ft},\kappa_{q \ft}] .
\]
With Lemma \ref{lem:6.2} we get an injection
\[
\End_H (q\pi_*^H (\widetilde{q \cE_H})) \to \End_G (q\pi_* (\widetilde{q \cE})) .
\]
It is natural because every basis element $q b_w \; (w \in W_{q \ft_H})$ of $\End_G ( q\pi_* 
(\widetilde{q \cE}))$ constructed in the proof of Lemma \ref{lem:6.2} stabilizes the subset
$q\pi_*^H (\widetilde{q \cE_H})$ and hence naturally determines an automorphism of that sheaf.

For the group $H$ \eqref{eq:6.13} says 
\[
\Sigma_{\ft_H} (u,\eta_H) = \rho_M \rtimes q\Sigma_{q \ft_H}(u,\eta_H) .
\]
By Theorem \ref{thm:5.6}.b $\Sigma_{\ft_H} (u,\eta_H)$ appears in $\Sigma_{\ft} (u,\eta)$
if and only if $\eta_H$ appears in $\eta$. With Proposition \ref{prop:1.1}.c and 
\eqref{eq:6.13} we see that this is also equivalent to $q \Sigma_{q \ft_H}(u,\eta_H)$ 
appearing in $q \Sigma_{q \ft}(u,\eta)$.
\end{proof}

This concludes the part of the paper which deals exclusively with Springer correspondences.
We remark once more that all the results from Sections \ref{sec:genSpr}--\ref{sec:quasiLevi} 
also hold with $\C$ instead of $\overline{\Q_{\ell}}$.

\section{Cuspidal Langlands parameters}

We will introduce a notion of cuspidality for enhanced L-parameters. 
Before we come to that, we recall some generalities about Langlands parameters
and Levi subgroups. For more background we refer to \cite{Bor,Vog,ABPS7}.

Let $F$ be a local non-archimedean field with Weil group $\mb W_F$. Let
$\cH$ be a connected reductive algebraic group over $F$ and let $\cH^\vee$
be its complex dual group. The data for $\cH$ provide an action of $\mb W_F$
on $\cH^\vee$ which preserves a pinning, and that gives the Langlands dual group 
${}^L \cH = \cH^\vee \rtimes \mb W_F$. (All these objects are determined by
$F$ and $\cH$ up to isomorphism.) 

\begin{defn}\label{def:7.6}
Let $T \subset \cH^\vee$ be a torus such that the projection 
$Z_{\cH^\vee \rtimes \mb W_F}(T) \to \mb W_F$ is surjective. Then we call
${}^L L = Z_{\cH^\vee \rtimes \mb W_F}(T)$ a Levi L-subgroup of ${}^L \cH$. 
\end{defn}
We remark that in \cite{Bor} such groups are called Levi subgroups of 
${}^L \cH$. We prefer to stick to the connectedness of Levi subgroups.

Choose a $\mb W_F$-stable pinning for $\cH^\vee$. This defines the notion of
standard Levi subgroups of $\mc H^\vee$.
An alternative characterization of Levi L-subgroups of ${}^L \cH$ is as follows.

\begin{lem}\label{lem:7.4}
Let ${}^L L$ be a Levi L-subgroup of ${}^L \cH$.
There exists a $\mb W_F$-stable standard Levi subgroup $\cL^\vee$ of $\cH^\vee$ 
such that ${}^L L$ is $\cH^\vee$-conjugate to $\cL^\vee \rtimes \mb W_F$
and $L := {}^L L \cap \cH^\vee$ is conjugate to $\cL^\vee$. 

Conversely, every $\cH^\vee$-conjugate of this $\cL^\vee \rtimes \mb W_F$ 
is a Levi L-subgroup of ${}^L \cH$.
\end{lem}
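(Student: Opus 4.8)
The plan is to reduce both assertions to the following fact, which is their common engine:

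(★) A subtorus $T\subseteq\cH^\vee$ satisfies $Z_{{}^L\cH}(T)\twoheadrightarrow\mb W_F$ if and only if $T$ is $\cH^\vee$-conjugate to a subtorus on which $\mb W_F$ acts trivially; moreover every $\mb W_F$-stable Levi subgroup $\cL^\vee$ contains a subtorus, fixed pointwise by $\mb W_F$, whose centralizer in $\cH^\vee$ is exactly $\cL^\vee$.

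Throughout I would use the standard structure theory, legitimate since $\mb W_F$ acts on $\cH^\vee$ through a finite quotient $\mathrm{Gal}(E/F)$ (for which $\cH^\vee\rtimes\mathrm{Gal}(E/F)$ is algebraic): the centralizer of a torus in a connected reductive group is a Levi subgroup of a parabolic; $\cH^\vee$ carries a $\mb W_F$-stable pair $T_0\subseteq B_0$, with positive system $\Phi^+$; a $\mb W_F$-stable parabolic, resp.\ Levi, is $\cH^\vee$-conjugate to a standard one attached to a $\mb W_F$-stable set of simple roots; and every parabolic $P=LU$ equals $P(\lambda)$ for a cocharacter $\lambda\in X_*(Z(L)^\circ)$ with $\langle\alpha,\lambda\rangle>0$ on $\Phi(U)$ and $=0$ on $\Phi(L)$.

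\emph{The two directions, granting (★).} For the forward direction, set $L:={}^L L\cap\cH^\vee=Z_{\cH^\vee}(T)$; this is a Levi subgroup, and it is the kernel of the surjection ${}^L L\to\mb W_F$, so ${}^L L/L\cong\mb W_F$. Replacing $({}^L L,T)$ by a $\cH^\vee$-conjugate — which turns ${}^L L$ into $Z_{{}^L\cH}(gTg^{-1})$ and, since $g\in\cH^\vee$, does not affect the surjectivity — (★) lets me assume $\mb W_F$ fixes $T$ pointwise. Put $\cL^\vee:=Z_{\cH^\vee}(T)$. It is a Levi subgroup, it is $\mb W_F$-stable because $w(\cL^\vee)=Z_{\cH^\vee}(w(T))=Z_{\cH^\vee}(T)$, and ${}^L L=\cL^\vee\rtimes\mb W_F$: the inclusion $\supseteq$ holds as $\cL^\vee$ and every $1\rtimes w$ centralize $T$, while if $h\rtimes w\in Z_{{}^L\cH}(T)$ then $h=(h\rtimes w)(1\rtimes w)^{-1}$ centralizes $T$, so $h\in\cL^\vee$. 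Intersecting with $\cH^\vee$ then gives $L$ conjugate to $\cL^\vee$. For the converse, since conjugating preserves the property of being a Levi L-subgroup it suffices to treat $\cL^\vee\rtimes\mb W_F$ with $\cL^\vee$ a $\mb W_F$-stable Levi; by the last clause of (★) there is a subtorus $T\subseteq Z(\cL^\vee)^\circ$ fixed by $\mb W_F$ with $Z_{\cH^\vee}(T)=\cL^\vee$, whence $Z_{{}^L\cH}(T)=\cL^\vee\rtimes\mb W_F$ exactly as above, with visibly surjective projection to $\mb W_F$.

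\emph{Proof of (★).} The last clause I would prove by averaging: for a $\mb W_F$-stable Levi $\cL^\vee$ with $T_0\subseteq\cL^\vee$, let $P=\cL^\vee U$ be the parabolic with $\Phi(U)=\Phi^+\setminus\Phi(\cL^\vee)$ and $\lambda_P\in X_*(Z(\cL^\vee)^\circ)$ its cocharacter; then $\mu:=\sum_{w\in\mb W_F}w(\lambda_P)$ lies in $X_*\big((Z(\cL^\vee)^\circ)^{\mb W_F,\circ}\big)$, and for any root $\alpha\notin\Phi(\cL^\vee)$ one has $\langle\alpha,\mu\rangle=\sum_w\langle w^{-1}\alpha,\lambda_P\rangle>0$ since $\mb W_F$ permutes $\Phi^+\setminus\Phi(\cL^\vee)$; hence $\alpha$ does not vanish on $(Z(\cL^\vee)^\circ)^{\mb W_F,\circ}$, so a generic subtorus $T$ of the latter has $Z_{\cH^\vee}(T)=\cL^\vee$. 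For the first equivalence the "if" part is trivial ($Z_{{}^L\cH}(T)\supseteq\{1\rtimes w\}$). For "only if", pick a generic $\lambda\in X_*(T)$, so $P:=P(\lambda)$ has Levi $Z_{\cH^\vee}(T)=L$; from $g_w w(t)g_w^{-1}=t$ for $g_w\rtimes w\in Z_{{}^L\cH}(T)$ one gets $w(P)=g_w^{-1}Pg_w$, so $w(P)$ is $\cH^\vee$-conjugate to $P$ for every $w$, which forces the $\cH^\vee$-conjugacy class of $P$ — a single $\mb W_F$-orbit in the finite set of such classes — to be $\mb W_F$-stable; conjugating, $P$, hence $L$, becomes standard and $\mb W_F$-stable. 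It then remains to recover a $\mb W_F$-fixed torus conjugate to $T$ itself, using that for each $w$ the isomorphism $w|_T$ is realized by an element $h_w$ of the relative Weyl group $N_{\cH^\vee}(L)/L$, together with a fixed-point argument — refined by this data — for the $\mb W_F$-action on the finite $N_{\cH^\vee}(L)/L$-orbit of $T$ inside $Z(L)^\circ$.

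\emph{Main obstacle.} The one genuinely delicate step is that last one: upgrading "$w(T)$ is coherently $\cH^\vee$-conjugate to $T$" to "$T$ is globally conjugate to a $\mb W_F$-fixed torus". The point is that knowing $w(T)$ conjugate to $T$ for all $w$ at once only produces a $\mb W_F$-\emph{stable} conjugate of $T$, which is strictly weaker than a $\mb W_F$-\emph{fixed} one; one must exploit that $w|_T$ is not merely an abstract isomorphism but one induced by an inner automorphism of $\cH^\vee$ lying in the relative Weyl group, and feed this into the fixed-point argument. In the literature this bookkeeping is essentially \cite[\S~3]{Bor}, which I would either reproduce or cite.
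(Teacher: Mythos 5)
Your converse direction and your reduction of the forward direction to $(\star)$ are essentially sound, and your averaging construction is close to what the paper itself does for the converse (one caveat: you should first arrange $\cL^\vee$ to be standard with respect to the $\mb W_F$-stable pinning, since for a non-standard Levi containing $T_0$ the set $\Phi^+\setminus\Phi(\cL^\vee)$ need not be the root set of any parabolic and no cocharacter of $Z(\cL^\vee)^\circ$ is positive on all of it). The genuine gap is exactly the step you flag as the "main obstacle": deducing from the surjectivity of $Z_{{}^L \cH}(T)\to \mb W_F$ that $T$ has a $\cH^\vee$-conjugate fixed pointwise by $\mb W_F$. After your reductions the datum is a $1$-cocycle $w\mapsto g_w L$ of $\mb W_F$ with values in the finite group $N_{\cH^\vee}(L)/L$ (where $g_w\rtimes w\in {}^L L$ and $\mathrm{Ad}(g_w)\circ w|_T=\mathrm{id}$), and what you need is precisely that this cocycle becomes a coboundary after a further conjugation. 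Your proposed mechanism — a fixed-point argument for the $\mb W_F$-action on the finite $N_{\cH^\vee}(L)/L$-orbit of $T$ — does not deliver this: a finite $\mb W_F$-stable orbit need not contain any fixed point, and even a $\mb W_F$-\emph{stable} conjugate of $T$ is strictly weaker than the pointwise-fixed one you need (which, as you note yourself, is equivalent to the lemma). So the pivotal content is not proved, and deferring it to "essentially \cite[\S 3]{Bor}" amounts to quoting the statement rather than proving it; moreover your intermediate reductions do not actually interface with what that reference provides.

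For comparison, the paper sidesteps the cocycle problem entirely. It invokes \cite[Lemma 3.5]{Bor} in a different form: for a Levi L-subgroup ${}^L L$ there is a parabolic $P\subset\cH^\vee$ with $N_{\cH^\vee\rtimes\mb W_F}(P)\to\mb W_F$ surjective, with $L$ a Levi factor of $P$, and with ${}^L L=N_{\cH^\vee\rtimes\mb W_F}(L)\cap N_{\cH^\vee\rtimes\mb W_F}(P)$. Conjugating $P$ to the standard parabolic $P_I$ (which is then automatically $\mb W_F$-stable, being the unique standard member of a $\mb W_F$-stable conjugacy class) and using that all Levi factors of $P_I$ are conjugate under $U_I$, one moves $L$ to $L_I$; since parabolic subgroups and their Levi factors are self-normalizing, ${}^L L$ becomes $N_{P_I\rtimes\mb W_F}(L_I)=L_I\rtimes\mb W_F$. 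The splitting thus falls out of the normalizer description of ${}^L L$, with no cocycle to trivialize. To repair your write-up you should either prove the coboundary statement directly or cite the precise statement of \cite[Lemma 3.5]{Bor} and argue as above — in which case most of your torus-theoretic reductions for the forward direction become unnecessary.
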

\begin{proof}
By \cite[Lemma 3.5]{Bor} there exists a parabolic subgroup $P \subset \cH^\vee$
such that 
\begin{itemize}
\item $N_{\cH^\vee \rtimes \mb W_F}(P) \to \mb W_F$ is surjective;
\item $L$ is a Levi factor of $P$;
\item ${}^L L = N_{\cH^\vee \rtimes \mb W_F}(L) \cap N_{\cH^\vee \rtimes \mb W_F}(P)$.
\end{itemize}
To construct such a $P$, choose a $\Z$-linear function $X^* (T) \to \Z$ in generic 
position (i.e. not orthogonal to any coroot). Then we can define $P$ as the subgroup of 
$\cH^\vee$ generated by $L$ and by all root subgroups associated to positive (with respect
to this linear function) cocharacters of $T$.

Let $P_I = L_I \rtimes U_I$
be the unique standard parabolic subgroup of $\cH^\vee$ conjugate to $P$. Here
$U_I$ denotes the unipotent radical of $P_I$, and $L_I$ its standard Levi factor.
Then $N_{\cH^\vee \rtimes \mb W_F}(P_I) \to \mb W_F$ is still surjective, so
$P_I$ is $\mb W_F$-stable. Pick $h \in \cH^\vee$ with $P_I = h P h^{-1}$. Then
$h L h^{-1}$ is a Levi factor of $P_I$ and
\[
h \, {}^L L h^{-1} = N_{P_I \rtimes \mb W_F}(h \, {}^L L h^{-1}) 
\]
is a complement to $U_I$ in $P_I \rtimes \mb W_F$. All Levi factors of $P_I$ are
$U_I$-conjugate, so there exists a $u \in U_I$ with 
$u h \, {}^L L h^{-1} u^{-1} = L_I$. Then
\[
u h \, {}^L L h^{-1} u^{-1} = N_{P_I \rtimes \mb W_F}(L_I) = 
L_I \rtimes \mb W_F .
\]
For the converse, let $\cL^\vee$ be a $\mb W_F$-stable standard Levi subgroup of $\cH^\vee$.
Denote the standard maximal torus of $\mc H^\vee$ by $L_\emptyset$ and consider the root 
system $R := R (\cH^\vee, L_\emptyset)$. By assumption $\mb W_F$ acts on $R$ and stabilizes 
a basis $\Delta$. Let $T \subset L_\emptyset$ be the neutral component of 
$Z( \cL^\vee)^{\mb W_F}$. This is a $\mb W_F$-fixed torus which commutes with $\cL^\vee$ and
\[
\alpha (t) = (w \cdot \alpha)(t) \qquad \forall t \in T, \alpha \in R, w \in \mb W_F. 
\]
The Lie algebra $\mf l_\der$ of 
$L_\emptyset \cap \cH^\vee_\der$ is spanned by $\Delta^\vee$ and $\mb W_F$-stable. Let
$I$ be the set of simple roots in $R (\cL^\vee, L_\emptyset)$. Since $\cL^\vee$ is 
$\mb W_F$-stable, so are $I$ and $\Delta \setminus I$. Let $X \in \mf l_\der$ be an element 
which annihilites $I$ and takes the same value in $\R_{>0}$ on all simple roots not in $I$. 
Then $X \in \mr{Lie}(\mc L^\vee)$ is fixed by $\mb W_F$.
This gives an element $\exp (X) \in T$ with $\alpha (\exp X) = \exp (\alpha (X)) > 1$
for all positive roots in $R \setminus R (\cL^\vee, L_\emptyset)$.

In general, the $\cH^\vee$-centralizer of the torus $T \subset Z(\cL^\vee)^\circ$ is 
generated by $\cL^\vee$ and by the root subgroups $U_\alpha$ for which $\alpha$ 
becomes trivial on $T$. With the above elements $\exp (X)$ we deduce that
\[
Z_{\cH^\vee}(T) = \cL^\vee \quad \text{and} \quad 
Z_{\cH^\vee \rtimes \mb W_F}(T) = \cL^\vee \rtimes \mb W_F .
\]
This means that $\cL^\vee \rtimes \mb W_F$ is a Levi L-subgroup of ${}^L \cH$ in
the sense of Definition \ref{def:7.6}. For any $h \in \cH^\vee$:
\[
h (\cL^\vee \rtimes \mb W_F) h^{-1} = Z_{\cH^\vee \rtimes \mb W_F}(h T h^{-1}) .
\]
This group contains $h \mb W_F h^{-1}$, so it projects onto $\mb W_F$. Hence it
is again a Levi L-subgroup of ${}^L \cH$.
\end{proof}

\begin{rem} 
Most Levi L-subgroups of ${}^L \cH$ are not quasi-Levi, and conversely. For example, let 
$\mc U = \rU(n,E/F)$ be a $p$-adic unitary group ($E$ is a quadratic extension of $F$) and 
let ${}^L \mc U$ be its dual L-group. The group $\bW_F$ acts on $\mc {U}^\vee= \GL(n,\C)$ 
via an outer automorphism which preserves the diagonal torus $T$ and the standard Borel 
subgroup $B$. Then $T \rtimes \bW_F$ is a Levi L-subgroup of ${}^L \mc U$: it is the 
centralizer of $T^{\bW_F}$ in ${}^L \mc U$.
However, it is not quasi-Levi. Namely $Z_{{}^L \mc U}(T) = T \rtimes \bW_E$, which is an 
index two subgroup of $T \rtimes \bW_F$.
\end{rem}

The following definitions are well-known, we repeat them here to facilitate
comparison with later generalizations.

\begin{defn}\label{def:7.7}
A L-parameter for ${}^L \cH$ is a continuous group homomorphism \\
$\phi \colon \mb W_F \times \SL_2 (\C) \to {}^L \cH$ such that: 
\begin{itemize}
\item $\phi (w) \in \cH^\vee w$ for all $w \in \mb W_F$;
\item $\phi (w)$ is semisimple for all $w \in \mb W_F$;
\item $\phi |_{\SL_2 (\C)} : \SL_2 (\C) \to \cH^\vee$ is a homomorphism of
complex algebraic groups.
\end{itemize}
\end{defn}
Recall that all inner forms of $\cH$ share the same Langlands dual group
${}^L \cH$, so the group $\cH$ is not determined by the target ${}^L \cH$
of a L-parameter. Let us specify which L-parameters are relevant for $\cH$, 
and which are bounded or discrete.

\begin{defn}\label{def:7.3}
Let $\phi \colon \mb W_F \times \SL_2 (\C) \to {}^L \cH$ be a L-parameter. 
We say that $\phi$ is bounded if $\phi (\Fr) = (h,\Fr)$ with $h$ in some
compact subgroup of $\cH^\vee$.

Suppose that ${}^L L$ is a Levi L-subgroup of ${}^L \cH$ and that
\begin{itemize}
\item ${}^L L$ contains the image of $\phi$;
\item there is no smaller Levi L-subgroup of ${}^L \cH$ with this property.
\end{itemize}
Then we call $\phi$ relevant for $\cH$ if and only if the conjugacy class of 
${}^L L$ is relevant for $\cH$, that is, it corresponds
to a conjugacy class of Levi subgroups of $\cH$. 

In this case we also say that $\phi$ is a discrete L-parameter for ${}^L L$, 
and for any Levi subgroup $\cL \subset \cH$ in the associated class. In particular 
$\phi$ is discrete for $\cH$ if and only
if there is no proper Levi L-subgroup of ${}^L \cH$ containing the image of $\phi$.
\end{defn}

The group $\cH^\vee$ acts on the set of relevant L-parameters for $\cH$.
We denote the set of relevant $L$-parameters modulo $\cH^\vee$-conjugation
by $\Phi (\cH)$. The subset of bounded L-parameters (up to conjugacy) is denoted by
$\Phi_{\bdd}(\cH)$.
The local Langlands correspondence predicts that $\Irr (\cH)$
is partitioned into finite L-packets $\Pi_\phi (\cH)$, parametrized by $\Phi (\cH)$.
Under this correspondence $\Phi_{\bdd}(\cH)$ should give rise to L-packets
consisting entirely of tempered representations, and that should account
for the entire tempered dual of $\cH$.

It is expected (and established in many cases)
that the following conditions are equivalent for $\phi \in \Phi (\cH)$:
\begin{itemize}
\item $\phi$ is discrete;
\item $\Pi_\phi (\cH)$ contains an essentially square-integrable representation;
\item all elements of $\Pi_\phi (\cH)$ are essentially square-integrable.
\end{itemize}
In other words, ``discrete'' (respectively ``bounded") is the correct translation 
of ``essentially square-integrable'' (respectively ``tempered") under the local 
Langlands correspondence.

However, it is more difficult to characterize when $\Pi_\phi (\cH)$ contains
supercuspidal $\cH$-representations. Of course $\phi$ has to be discrete, but
even then. Sometimes $\Pi_\phi (\cH)$ consists entirely of supercuspidal
representations, for example when $\cH = \SL_2 (F)$ and $\phi$ comes from an
irreducible representation $\mb W_F \to \GL_2 (\C)$. In other cases $\Pi_\phi (\cH)$
contains only non-supercuspidal essentially square-integrable representations,
for example when $\cH = \SL_2 (F) ,\; \phi|_{\mb W_F} = \mathrm{id}_{\mb W_F}$
and $\phi |_{\SL_2 (\C)}$ is an irreducible two-dimensional representation of
$\SL_2 (\C)$. 

Moreover there are mixed cases, where $\Pi_\phi (\cH)$ contains both supercuspidal
and non-supercuspidal representations. An example is formed by a Langlands 
parameter for a group of type $G_2$, with $\phi \big( 1,\matje{1}{1}{0}{1} \big)$
a subregular unipotent element of $G_2 (\C)$. Then $\Pi_\phi (G_2 (F))$ has a 
unique supercuspidal element and contains two representations from the principal
series of $G_2 (F)$, see \cite{Lus2}.

To parametrize the representations in a given L-packet, we need more information 
then just the Langlands parameter itself. Let $Z_{\cH^\vee}(\phi)$ be the 
centralizer of $\phi (\mb W_F \times \SL_2 (\C))$ in $\cH^\vee$. This is a complex
reductive group, in general disconnected. We write 
\begin{equation} \label{eqn:R-group}
\cR_\phi := \pi_0 \big( Z_{\cH^\vee}(\phi) / Z(\cH^\vee)^{\mb W_F} \big).
\end{equation}
It is expected that $\Pi_\phi (\cH)$ is in bijection with $\Irr (\cR_\phi)$ if 
$\cH$ is quasi-split. However, for general $\cH$ this is not good enough, and we 
follow Arthur's setup \cite{Art2}. 

Let $\cH^\vee_{\sc}$ be the simply connected cover of the derived 
group $\cH^\vee_\der$ of $\cH^\vee$. The conjugation action of
$\cH^\vee_\der$ lifts to an action of $\cH^\vee_\sc$ on $\cH$ by conjugation. 
The action of $\mb W_F$ on $\cH^\vee_\der$ lifts to an action on $\cH^\vee_\sc$,
because the latter group is simply connected. Thus we can form the group
$\cH^\vee_\sc \rtimes \mb W_F$. In this semidirect product we can compute $h w h^{-1}$ 
for $h \in \cH^\vee_\sc$ and $w \in \mb W_F$. Dividing out the normal subgroup
$\ker (\cH^\vee_\sc \to \cH^\vee_\der)$, we can interpret $h w h^{-1}$ as an element
of $\cH^\vee_\der \rtimes \mb W_F$. 

Together with the above this provides a conjugation action of $\cH^\vee_\sc$ on 
$\cH^\vee \rtimes \mb W_F$. Hence $\cH^\vee_\sc$ also acts on the set of Langlands 
parameters for $\cH$ and we can form $Z_{\cH^\vee_\sc}(\phi)$.

Since $Z_{\cH^\vee}(\phi) \cap Z(\cH^\vee) = Z(\cH^\vee)^{\mb W_F}$,
\begin{equation}\label{eq:6.19}
Z_{\cH^\vee}(\phi) / Z(\cH^\vee)^{\mb W_F} \cong 
Z_{\cH^\vee}(\phi) Z(\cH^\vee) / Z(\cH^\vee) .
\end{equation} 
The right hand side can be considered as a subgroup of the adjoint group
$\cH^\vee_\ad$. Let $Z^1_{\cH^\vee_\sc}(\phi)$ be its inverse image under the
quotient map $\cH^\vee_\sc \to \cH^\vee_\ad$. We can also characterize it as 
\begin{align}
Z^1_{\cH^\vee_\sc}(\phi) \; = \; \big\{ h \in \cH^\vee_\sc : h \phi h^{-1} = \phi \, a_h
\text{ for some } a_h \in B^1 (\mb W_F ,Z(\cH^\vee)) \big\} \\
\nonumber = \big\{ h \in Z_{\cH^\vee_\sc} \big( \phi (\SL_2 (\C)) \big) : h \phi |_{\mb W_F} h^{-1}
= \phi |_{\mb W_F} \, a_h \text{ for some } a_h \in B^1 (\mb W_F ,Z(\cH^\vee)) \big\} \\
\nonumber = Z^1_{\cH^\vee_\sc}(\phi |_{\mb W_F}) \cap 
Z_{\cH^\vee_\sc} \big( \phi (\SL_2 (\C)) \big) .
\end{align}
Here $B^1 (\mb W_F ,Z(\cH^\vee))$ is the set of 1-coboundaries for group cohomology, 
that is, maps $\mb W_F \to Z(\cH^\vee)$ of the form $w \mapsto z w z^{-1} w^{-1}$ 
with $z \in Z(\cH^\vee)$. The neutral component of $Z^1_{\cH^\vee_\sc}(\phi)$
is $Z_{\cH^\vee_\sc}(\phi)^\circ$, so it is a complex reductive group.

The difference between $Z_{\cH^\vee_\sc}(\phi)$ and 
$Z^1_{\cH^\vee_\sc}(\phi)$ is caused by the identification \eqref{eq:6.19}, which as 
it were includes $Z(\cH^\vee)$ in $Z_{\cH^\vee}(\phi)$. We note that 
$Z^1_{\cH^\vee_\sc}(\phi) = Z_{\cH^\vee_\sc}(\phi)$ whenever $Z(\cH^\vee_\sc)^{\mb W_F} = 
Z(\cH^\vee_\sc)$, in particular if $\cH$ is an inner twist of a split group.

Given $\phi$, we form the finite group
\begin{equation} \label{eqn: S group}
\cS_\phi := \pi_0 \big( Z^1_{\cH^\vee_{\sc}}(\phi) \big) .
\end{equation}
Via \eqref{eq:6.19}, the map $\cH^\vee_\sc \to \cH^\vee_\ad$ induces a homomorphism
$\cS_\phi \to \cR_\phi$. In fact, $\cS_\phi$ is a central extension of 
$\cR_\phi$ by $\cZ_\phi := Z(\cH^\vee_\sc) / Z(\cH^\vee_\sc) \cap 
Z_{\cH^\vee_\sc}(\phi)^\circ$ \cite[Lemma 1.7]{ABPS7}:
\begin{equation}\label{eq:6.24}
1 \to \cZ_\phi \to \cS_\phi \to \cR_\phi \to 1 . 
\end{equation}
Since $\cH^\vee_\sc$ is a central extension of $\cH^\vee_\ad = \cH^\vee / Z(\cH^\vee)$, 
the conjugation action of $\cH^\vee_\sc$ on itself and on $\cS_\phi$ descends to an 
action of $\cH^\vee_\ad$. Via the canonical quotient map, also $\cH^\vee$ acts on 
$\cS_\phi$ by conjugation. 

An enhancement of $\phi$ is defined to be an irreducible complex representation
$\rho$ of $\cS_\phi$. We refer to \cite{Art2,ABPS7} for a motivation
of this particular kind of enhancements. 
We let $\cH^\vee$ and $\cH^\vee_\sc$ act on the set of enhanced L-parameters by
\begin{equation}\label{eq:7.7}
h \cdot (\phi,\rho) = (h \phi h^{-1},h \cdot \rho) \quad \text{where} \quad
(h\cdot \rho)(g) = \rho (h^{-1} g h) .
\end{equation}
We note that both groups acting in \eqref{eq:7.7} yield the same orbit space.

The notion of relevance for enhanced L-parameters is more subtle. Firstly, we
must specify $\cH$ not only as an inner form of a quasi-split group $\cH^*$, 
but even as an inner twist. That is, we must fix an isomorphism $\cH \to \cH^*$
of algebraic groups, defined over a separable closure of $F$. The inner twists
of $\cH$ are parametrized by the Galois cohomology group $H^1 (F,\cH_{\ad})$,
where $\cH_{\ad}$ denotes the adjoint group of $\cH$ (considered as an algebraic
group defined over $F$). The parametrization is canonically determined by 
requiring that $\cH^*$ corresponds to the trivial element of $H^1 (F,\cH_{\ad})$.
Kottwitz \cite[Theorem 6.4]{Kot} found a natural group isomorphism
\begin{equation}\label{eq:7.8}
H^1 (F,\cH_{\ad}) \cong \Irr_\C \big( Z (\cH^\vee_{\sc})^{\mb W_F} \big) .
\end{equation}
(When $F$ has positive characteristic, see \cite[Theorem 2.1]{Tha}.)
In this way every inner twist of $\cH$ is associated to a unique character of
$Z (\cH^\vee_{\sc})^{\mb W_F} = Z( \cH^\vee_{\sc} \rtimes \mb W_F )$. 
The functoriality of the Kottwitz homomorphism implies that this parametrization
behaves well with respect to Levi subgroups. To make this statement precise,
let $\cL$ be a Levi $F$-subgroup of $\cH$. Via $\cH \to \cH^*$, we regard
$\cL$ as an inner twist of a quasi-split Levi subgroup $\cL^*$ of $\cH^*$. 
Let $\cL_c^\vee$ be the inverse image of $\cL^\vee$ under $\cH_\sc^\vee \to
\cH$. It contains $\cL_\sc^\vee$ as the derived subgroup of $\cL_c^\vee$.
The next lemma is a variation on \cite[Lemma 0.4.9]{KMSW}, tailored for our purposes.

\begin{lem}\label{lem:Kottwitz}
\enuma{
\item The centers of $\cH_\sc^\vee, \cL_c^\vee$ and $\cL_\sc^\vee$ are related by
\[
Z (\cH_\sc^\vee)^{\mb W_F} Z(\cL_c^\vee)^{\mb W_F,\circ} = Z(\mc L_c^\vee)^{\mb W_F}
\supset Z(\mc L_\sc^\vee)^{\mb W_F} .
\]
\item The character of $Z(\cH_\sc^\vee)^{\mb W_F}$ determined by \eqref{eq:7.8} is
trivial on \\ $Z(\cH_\sc^\vee)^{\mb W_F} \cap Z(\cL_c^\vee)^{\mb W_F,\circ}$. Using part
(a) we extend it to $Z(\mc L_c^\vee)^{\mb W_F}$, trivially on $Z(\cL_c^\vee)^{\mb W_F,\circ}$.
Then the character of $Z(\mc L_\sc^\vee)^{\mb W_F}$ obtained by restriction equals
the character of $Z(\mc L_\sc^\vee)^{\mb W_F}$ associated to $\cL$ by \eqref{eq:7.8}.
}  
\end{lem}
\begin{proof}
(a) See \cite[Lemma 1.1]{Art1}.\\
(b) The morphisms of reductive $F$-groups $\cH_\ad \leftarrow \cL / Z(\cH) \to \cL_\ad$
induce the following commutative diagram:
\[
\xymatrix{ 
H^1 (F,\cH_\ad) \ar[d] & H^1 (F,\mc L / Z(\mc H)) \ar[l] \ar[r] \ar[d] & H^1 (F,\cL_\ad) \ar[d] \\
\Irr \big( Z (\cH_\sc^\vee)^{\mb W_F} \big) & \Irr \big( \pi_0 (Z(\mc L_c^\vee)^{\mb W_F}) \big)
\ar[r] \ar[l] & \Irr \big( Z(\mc L_\sc^\vee)^{\mb W_F} \big) .
} 
\]
All the vertical arrows are isomorphisms, and according to \cite[p. 217]{Art1} the left 
horizontal arrows are injective. Since $\cL$ is a Levi $F$-subgroup of $\cH$, the element of
$H^1 (F,\cH_\ad)$ which parametrizes $\cH$ can be represented by a Galois cocycle with values in
the Levi subgroup $\cL / Z(\cH)$ of $\cH_\ad$. This cocycle maps naturally to an element of
$H^1 (F,\cL_\ad)$, which then parametrizes the inner twist $\cL$ of $\cL^*$.

On the bottom line of the diagram, the associated character of $Z(\cL_\sc^\vee)^{\mb W_F}$
must come from a character of $\pi_0 (Z(\mc L_c^\vee)^{\mb W_F})$, via the map 
$Z(\cL_\sc^\vee)^{\mb W_F} \to \pi_0 (Z(\mc L_c^\vee)^{\mb W_F})$ from part (a). Hence this
character is trivial on $Z(\cH_\sc^\vee)^{\mb W_F} \cap Z(\cL_c^\vee)^{\mb W_F,\circ}$. 
The character of $Z(\cL_\sc^\vee)^{\mb W_F}$ associated to $\cL$ is then obtained by applying
the lower right map in the diagram. This works out as restriction to $Z(\cL_\sc^\vee)^{\mb W_F}$,
in the indicated way.
\end{proof}

Given any Langlands parameter $\phi$ for ${}^L \cH$, there is a natural group 
homomorphism $Z (\cH^\vee_{\sc})^{\mb W_F} \to Z(\cS_\phi)$. The centre of 
$\cS_\phi$ acts by a character on any $\rho \in \Irr_\C (\cS_\phi)$, so
any enhancement $\rho$ of $\phi$ determines a character $\zeta_\rho$ of
$Z (\cH^\vee_{\sc})^{\mb W_F}$.

\begin{defn}\label{def:7.9}
Let $(\phi,\rho)$ be an enhanced L-parameter for ${}^L \cH$. We say that
$(\phi,\rho)$ or $\rho$ is $\cH$-relevant if $\zeta_\rho$ parametrizes the
inner twist $\cH$ via \eqref{eq:7.8}.
\end{defn}

By the next result, Definition \ref{def:7.8} fits well with the earlier
notion of relevance of $\phi$, as in Definition \ref{def:7.3}.

\begin{prop}\label{prop:7.10}
Let $\cH$ be an inner twist of a quasi-split group and let $\zeta \in \Irr_\C 
\big( Z (\cH^\vee_{\sc})^{\mb W_F} \big)$ be the associated character.
Let $\phi$ be a Langlands parameter for ${}^L \cH$. The following are
equivalent:
\begin{enumerate}
\item $\phi$ is relevant for $\cH$;
\item $Z (\cH^\vee_{\sc})^{\mb W_F} \cap Z_{\cH^\vee_{\sc}}(\phi)^\circ 
\subset \ker \zeta$;
\item there exists a $\rho \in \Irr_\C (\cS_\phi)$ with $\zeta_\rho = 
\zeta$, that is, such that $(\phi,\rho)$ is $\cH$-relevant.
\end{enumerate}
\end{prop}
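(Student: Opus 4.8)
The statement is an equivalence of three conditions concerning relevance of a Langlands parameter $\phi$ and the existence of an enhancement realizing a prescribed central character. The natural route is to prove the cycle (1) $\Leftrightarrow$ (2) and (2) $\Leftrightarrow$ (3), keeping track throughout of the Kottwitz isomorphism \eqref{eq:7.8} and the canonical homomorphism $Z(\cH^\vee_\sc)^{\mb W_F} \to Z(\cS_\phi)$.

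\emph{Step 1: (2) $\Leftrightarrow$ (3).} The group $\cS_\phi = \pi_0(Z^1_{\cH^\vee_\sc}(\phi))$ has centre containing the image of $Z(\cH^\vee_\sc)^{\mb W_F}$, and the characters of $Z(\cH^\vee_\sc)^{\mb W_F}$ that arise as $\zeta_\rho$ for $\rho \in \Irr_\C(\cS_\phi)$ are exactly those factoring through the image of $Z(\cH^\vee_\sc)^{\mb W_F}$ in $\cS_\phi$ — this is elementary representation theory of finite groups (a central character extends to an irreducible representation iff it is defined on the relevant central subgroup, and conversely every irreducible representation restricts to a character on any central subgroup). The kernel of $Z(\cH^\vee_\sc)^{\mb W_F} \to \cS_\phi$ is precisely $Z(\cH^\vee_\sc)^{\mb W_F} \cap Z_{\cH^\vee_\sc}(\phi)^\circ$, since $\cS_\phi$ is the component group of $Z^1_{\cH^\vee_\sc}(\phi)$ and $Z(\cH^\vee_\sc)$ lies in $Z^1_{\cH^\vee_\sc}(\phi)$ with neutral component $Z_{\cH^\vee_\sc}(\phi)^\circ$. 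So $\zeta$ lifts to some $\zeta_\rho$ iff $\zeta$ is trivial on that intersection, which is condition (2) (here $c$ denotes $\zeta$ viewed additively, i.e. the statement is $\zeta$ trivial on the subgroup). This gives (2) $\Leftrightarrow$ (3) directly.

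\emph{Step 2: (1) $\Leftrightarrow$ (2).} Here I would use Definition \ref{def:7.3}: $\phi$ is relevant for $\cH$ iff the minimal Levi L-subgroup ${}^L L$ containing the image of $\phi$ is relevant for the inner twist $\cH$, i.e. corresponds to a genuine Levi subgroup $\cL$ of $\cH$. By Lemma \ref{lem:7.4} one may take ${}^L L = \cL^\vee \rtimes \mb W_F$ with $\cL^\vee$ a $\mb W_F$-stable Levi of $\cH^\vee$. Relevance of such an ${}^L L$ for the inner twist parametrized by $\zeta$ is, by the standard compatibility of the Kottwitz isomorphism with Levi subgroups, the condition that the restriction of $\zeta$ to $Z(\cL^\vee_\sc)^{\mb W_F}$ (via the natural map coming from $Z(\cH^\vee_\sc) \to Z(\cL^\vee_\sc)$, or more precisely the preimage construction) lies in the image of $H^1(F,\cL_\ad) \to H^1(F,\cH_\ad)$-compatible characters. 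I would then identify $Z(\cH^\vee_\sc)^{\mb W_F} \cap Z_{\cH^\vee_\sc}(\phi)^\circ$ concretely: since ${}^L L$ is the minimal Levi L-subgroup containing $\phi$, the group $Z_{\cH^\vee_\sc}(\phi)^\circ$ is contained in (the preimage in $\cH^\vee_\sc$ of) $\cL^\vee$, and in fact $Z(\cH^\vee)^{\mb W_F} Z_{\cH^\vee}(\phi)^\circ$ has connected centre equal to $Z(\cL^\vee)^{\mb W_F,\circ}$-related data. Tracing this through the quotient $\cH^\vee_\sc \to \cH^\vee_\ad$ translates condition (2) exactly into the statement that $\zeta$ kills the part of $Z(\cH^\vee_\sc)^{\mb W_F}$ that obstructs descent to $\cL$, which is relevance of ${}^L L$, hence of $\phi$.

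\emph{Main obstacle.} The delicate point is Step 2: making precise the compatibility of the Kottwitz isomorphism \eqref{eq:7.8} with passage to a Levi subgroup, and matching $Z(\cH^\vee_\sc)^{\mb W_F} \cap Z_{\cH^\vee_\sc}(\phi)^\circ$ with the kernel controlling whether the minimal Levi L-subgroup of $\phi$ descends to the inner twist $\cH$. This requires carefully unwinding the definitions of $Z^1_{\cH^\vee_\sc}(\phi)$, the relationship between $Z_{\cH^\vee_\sc}(\phi)^\circ$ and the preimage of $\cL^\vee$, and the functoriality of Galois cohomology and its dual. I expect this to rely on \cite{Bor}, \cite{Art2}, and the explicit description in \cite{ABPS7}, particularly the analysis of $\cZ_\phi$ in \eqref{eq:6.24} and Lemma 1.5 there. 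The representation-theoretic Step 1, by contrast, is routine.
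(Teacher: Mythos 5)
Your Step 1 is correct and is essentially the paper's own treatment of (2) $\Leftrightarrow$ (3): the image of $Z(\cH^\vee_\sc)^{\mb W_F}$ in $\cS_\phi$ is central, its kernel is $Z(\cH^\vee_\sc)^{\mb W_F} \cap Z_{\cH^\vee_\sc}(\phi)^\circ$ (since the neutral component of $Z^1_{\cH^\vee_\sc}(\phi)$ is $Z_{\cH^\vee_\sc}(\phi)^\circ$), and a character of a central subgroup of a finite group lifts to some irreducible representation if and only if it is defined on that image. This is exactly the ``easy'' observation the paper attributes to \cite[Proposition 1.6]{ABPS7}, and it is sound as you wrote it.

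Step 2, however, is a genuine gap rather than a proof. The paper disposes of (1) $\Leftrightarrow$ (2) entirely by citation, to \cite[Lemma 9.1]{HiSa} and \cite[Corollary 2.3]{Art1}; your sketch attempts a direct argument but the two pivotal assertions are exactly the nontrivial content and are left unestablished. First, ``relevance of ${}^L L$ for the inner twist parametrized by $\zeta$ is, by the standard compatibility of the Kottwitz isomorphism with Levi subgroups, the condition that the restriction of $\zeta$ \dots'' is not a standard fact you may invoke in this form: making the Kottwitz isomorphism compatible with the passage $\cH \rightsquigarrow \cL$, and showing that a conjugacy class of Levi L-subgroups corresponds to a Levi subgroup of the given inner twist precisely when $\zeta$ dies on the right subgroup of $Z(\cH^\vee_\sc)^{\mb W_F}$, is the substance of the Hiraga--Saito lemma. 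Second, your identification of $Z(\cH^\vee_\sc)^{\mb W_F} \cap Z_{\cH^\vee_\sc}(\phi)^\circ$ with that obstruction group is only gestured at (``has connected centre equal to $Z(\cL^\vee)^{\mb W_F,\circ}$-related data''); what one actually needs is a precise statement such as: if ${}^L L = \cL^\vee \rtimes \mb W_F$ is minimal containing the image of $\phi$, then $\phi$ is discrete for ${}^L L$, hence $Z_{\cH^\vee}(\phi)^\circ \subset Z(\cL^\vee)$, and combining with $Z(\cL^\vee)^{\mb W_F} = (Z(\cL^\vee)^{\mb W_F})^\circ Z(\cH^\vee)^{\mb W_F}$ (Arthur's \cite[Lemma 1.1]{Art1}) one identifies $Z(\cH^\vee_\sc)^{\mb W_F} \cap Z_{\cH^\vee_\sc}(\phi)^\circ$ with $Z(\cH^\vee_\sc)^{\mb W_F} \cap \big( Z(\cL^\vee_c)^{\mb W_F} \big)^\circ$, after which triviality of $\zeta$ on it must still be matched with relevance of the class of $\cL^\vee$. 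None of this is carried out, and you yourself flag it as the ``main obstacle'' to be resolved by unspecified references. As it stands the proposal proves (2) $\Leftrightarrow$ (3) but not (1) $\Leftrightarrow$ (2); to close it you should either supply the argument above in detail or cite \cite[Lemma 9.1]{HiSa} and \cite[Corollary 2.3]{Art1} as the paper does.
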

\begin{proof}
For the equivalence of (1) and (2) see \cite[Lemma 9.1]{HiSa} and 
\cite[Corollary 2.2]{Art1}. The equivalence of (2) and (3) is easy, it
was already noted in \cite[Proposition 1.6]{ABPS7}. 
\end{proof}

Let us remark here that the usage of $\cH^\vee_{\sc}$ and the above relevance 
circumvents some of the problems in \cite[\S 2]{Vog}. In particular it removes 
the need to consider variations such as "pure inner forms" or "pure inner twists".

We denote the set of $\cH^\vee$-equivalence classes of enhanced relevant
L-parameters for $\cH$ by $\Phi_e (\cH)$. Following \cite{Art2} we choose an
extension $\zeta_\cH$ of $\zeta$ to a character of $Z(\cH^\vee_\sc)$. We define
\begin{equation}\label{eq:6.25}
\Phi_{e,\zeta_\cH}(\cH) = \{ (\phi,\rho) \in \Phi_e (\cH) : 
\zeta_\cH \, \mr{id}_{V_\rho} = \rho \circ (Z(\cH^\vee_\sc) \to \cS_\phi) \} ,
\end{equation}
where $V_\rho$ is the vector space underlying $\rho$.
According to \cite[\S 4]{Art2}
\[
Z(\cH^\vee_\sc) \cap Z_{\cH^\vee_\sc}(\phi)^\circ = 
Z(\cH^\vee_\sc)^{\mb W_F} \cap Z_{\cH^\vee_\sc}(\phi)^\circ .
\]
Hence every extension of $\zeta$ to a character of $Z(\cH^\vee_\sc)$ is eligible
if $\phi$ is $\cH$-relevant. Of course we take $\zeta_\cH =$ triv if $\cH$ is 
quasi-split. Since $\cS_\phi / \cZ_\phi \cong \cR_\phi$, we obtain
\[
\Phi_{e,\mathrm{triv}}(\cH) = \{ (\phi,\rho) : \phi \in \Phi (\cH),
\rho \in \Irr (\cR_\phi) \} \qquad \text{if } \cH \text{ is quasi-split.} 
\]
It is conjectured \cite{Art2,ABPS7} that the local
Langlands correspondence for $\cH$ can be enhanced to a bijection
\[
\Irr (\cH) \longleftrightarrow \Phi_{e,\zeta_\cH} (\cH) . 
\]
Recall that by the Jacobson--Morosov theorem any unipotent element $u$ of \\
$Z_{\cH^\vee_{\sc}}(\phi (\mb W_F) )^\circ$ can be extended to a homomorphism of
algebraic groups $\SL_2 (\C) \to  Z_{\cH^\vee_{\sc}}(\phi (\mb W_F) )^\circ$ taking 
the value $u$ at $\matje{1}{1}{0}{1}$. Moreover, by \cite[Theorem 3.6]{Kos} this 
extension is unique up to conjugation. 
Hence any element $(\phi,\rho) \in \Phi_e (\cH)$ is already determined by 
$\phi |_{\mb W_F}, u_\phi = \phi \big( 1, \matje{1}{1}{0}{1} \big)$ and $\rho$. 
More precisely, the map 
\begin{equation}\label{eq:7.5}
\phi \mapsto \Big( \phi |_{\mb W_F},u_\phi = 
\phi \big( 1, \matje{1}{1}{0}{1} \big) \Big)
\end{equation}
provides a bijection between $\Phi (\cH)$ and the $\cH^\vee$-conjugacy classes of
pairs $(\phi |_{\mb W_F},u_\phi)$. The inclusion $Z^1_{\cH^\vee_{\sc}}(\phi) \to 
Z^1_{\cH^\vee_{\sc}}(\phi |_{\mb W_F}) \cap Z_{\cH^\vee_\sc}(u_\phi)$ 
induces a group isomorphism
\begin{equation}\label{eq:7.1}
\cS_\phi \to \pi_0 \big( Z^1_{\cH^\vee_{\sc}}(\phi |_{\mb W_F}) \cap 
Z_{\cH^\vee_\sc}( u_\phi) \big) .
\end{equation}
We will often identify $\Phi_e (\cH)$ with the set of $\cH^\vee$-equivalence
classes of such triples $(\phi |_{\mb W_F}, u_\phi, \rho)$. Another way to formulate
\eqref{eq:7.1} is
\begin{equation}\label{eq:7.2}
\cS_\phi \cong \pi_0 (Z_G (u_\phi)) \quad \text{where} \quad
G = Z^1_{\cH^\vee_{\sc}}(\phi |_{\mb W_F} ) \text{ and } 
u_\phi = \phi \big( 1, \matje{1}{1}{0}{1} \big) .
\end{equation}
We note also that there is a natural bijection between the set of unipotent elements 
in $\cH^\vee$ and those in $\cH^\vee_{\sc}$, so we may take $u_\phi$ in either of 
these groups.

Based on many examples we believe that the following kind of enhanced L-parameters 
should parametrize supercuspidal representations.

\begin{defn}\label{def:7.1}
An enhanced L-parameter $(\phi,\rho)$ for ${}^L \cH$ is cuspidal if $\phi$ is
discrete and $(u_\phi,\rho)$ is a cuspidal pair for $G = Z^1_{\cH^\vee_{\sc}}(\phi 
|_{\mb W_F})$. Here $\rho$ is considered as a representation of $\pi_0 (Z_G (u_\phi))$
via \eqref{eq:7.2}.

We denote the set of $\cH^\vee$-equivalence classes of $\cH$-relevant cuspidal
L-parameters by $\Phi_\cusp (\cH)$. When $\zeta_\cH$ is as in \eqref{eq:6.25}, we
put $\Phi_{\cusp,\zeta_\cH} (\cH) = \Phi_\cusp (\cH) \cap \Phi_{e,\zeta_\cH}$.
\end{defn}

It is easy to see that every group $\cH$ has cuspidal L-parameters. 
Let $\phi \in \Phi (\cH)$ be a discrete parameter which is trivial on $\SL_2 (\C)$. 
Then $u_\phi = 1$ and $Z_{\cH^\vee}(\phi)^\circ = Z(\cH^\vee)^{\mb W_F,\circ}$. Hence  
$G = Z^1_{\cH^\vee_\sc}(\phi)$ is finite and every enhancement $\rho$ of $\phi$ is 
cuspidal. By Proposition \ref{prop:7.10} we can choose a $\cH$-relevant $\rho$.

In the case of quasi-split groups we can also use enhanced L-parameters of the form
$(\phi,\rho)$ with $\rho \in \Irr (\cR_\phi)$, where $\cR_\phi$ is as in 
\eqref{eqn:R-group}. Such a parameter is cuspidal if and only if $(u_\phi,\rho)$ is
a cuspidal pair for $Z_{\cH^\vee}(\phi)$.

\begin{conj} \label{conj:cusp}
Let $\mc H$ be any reductive $p$-adic group, and choose a character $\zeta_{\mc H}$
of $Z(\mc H_\sc^\vee)$ whose restriction to $Z(\mc H_\sc^\vee)^{\mb W_F}$ parametrizes
$\mc H$ via the Kottwitz homomorphism. Under the local Langlands correspondence,
$\Phi_{\cusp,\zeta_\cH} (\cH)$ is in bijection with the set of supercuspidal irreducible 
smooth $\mc H$-representations (up to isomorphism).
\end{conj}

Now we check that, in many cases where a local Langlands correspondence is known,
Conjecture \ref{conj:cusp} holds.
 
\begin{ex} \label{ex: cusp 3}
Let $F$ be a $p$-adic field, $D$ a division algebra over $F$ such that $\dim_F D=d^2$ 
and $\cH=\GL_m(D)$. Then $\cH$ is an inner form of $\GL_n(F)$ with $n=md$.
Let $(\phi,\rho) \in \Phi_\cusp (\cH)$. We have $\cH^\vee_{\sc}=\SL_n(\C)$ and 
${}^{L} \cH=\GL_n(\C) \times \mb W_F$. Since $\phi$ is discrete, it is an 
irreducible representation of $\mb W_F \times \SL_2 (\C)$ and
\[
\cS_\phi=\pi_0(Z_{\SL_{n}(\C)}(\phi)) = Z (\SL_n (\C)) \cong \Z / n \Z .
\]
Because $(\phi,\rho)$ is relevant for $\cH$, $\rho$ is a character of $\cS_\phi$
of order $d$. Furthermore $\phi$ decomposes as
\[
\phi = \pi \boxtimes S_{\pi} \text{ with } 
\pi \in \Irr (\mb W_F), S_\pi \in \Irr (\SL_2 (\C)) .
\]
Let $d'$ denote the dimension of $S_{\pi}$. We will use same argument as in 
\cite[p. 247]{Lus1}. Choose an isomorphism $M_n (\C) \cong M_{n/d'}(\C) \otimes M_{d'}(\C)$
and let $1_{n/d'}$ be the multiplicative unit of the matrix algebra $M_{n/d'}(\C)$. Then
\[
G = Z_{\SL_{n}(\C)}(\phi({\mb W_F})) \simeq 
\big( 1_{n/d'} \otimes \GL_{d'}(\C) \big) \cap \SL_n(\C).
\]
Since we assume that $(\phi,\rho)$ is cuspidal, this implies that $u_\phi$ is in 
the regular unipotent class of $\GL_{d'}(\C)$, and $Z(\SL_{d'}(\C))$ acts on $\rho$ 
by a character of order $d'$. The kernel of the $Z(\SL_n (\C))$-character $\rho$ 
consists precisely of the $d$-th powers in $Z (\SL_n (\C))$. This is possible if
and only if no such $d$-th power is a nontrivial element of $1_{n/d'} \otimes
Z(\SL_{d'}(\C))$. Thus the only additional condition on $d'$ becomes:
lcm$(d,n/d') = n$. 

By the local Langlands correspondence for $\GL_m (D)$ (see \cite[\S 11]{HiSa} and
\cite[\S 2]{ABPS3}) $\phi$ is associated to a unique essentially square integrable
representation $\pi_\phi$ of $\GL_m (D)$. According to \cite[Th\'eor\`eme B.2.b]{DKV}
$\pi_\rho$ is supercuspidal if and only if lcm$(d,n/d') = n$. Consequently the
LLC for $\GL_m (D)$ restricts to a bijection between $\Phi_\cusp (\GL_m (D))$
and $\Irr_\cusp (\GL_m (D))$.
 
We recover the case $\GL_n (F)$ when $D=F$ and $\phi=\pi$ is an irreducible 
representation of $\mb W_F$. An other case is when $\cH=\GL_1(D)$ with $d=2$. 
We find that the cuspidal L-parameters of $\GL_1(D)$ come in two forms: 
\begin{itemize}
\item $(\pi ,\mr{id}_{Z(\SL_2 (\C))})$ with $\pi$ an irreducible two-dimensional
representation of $\mb W_F$;
\item $(\chi \boxtimes S_2 ,\mr{id}_{Z(\SL_2 (\C))})$, with $\chi$ a character of 
$\mb W_F$ and $S_2$ the irreducible two-dimensional representation of $\SL_2 (\C)$.
\end{itemize}
The Langlands parameter in the latter case corresponds to the character 
$\hat \chi \circ \mr{Nrd}$ of $GL_1 (D)$ and to the $GL_2 (F)$-representation 
$\hat \chi \circ \det \otimes \St_{\GL_2(F)}$. These two representations are 
connected by the Jacquet--Langlands correspondence.
\end{ex} 
 
\begin{ex} \label{ex: cusp 4}
Let $F$ be a $p$-adic field, and let $\cH$ be a symplectic group $\Sp_{2n}(F)$ or a split 
special orthogonal group $\SO_{m}(F)$.
We have ${}^{L} \cH=\cH^\vee \times \mb W_F$. Then \cite[Proposition~4.14]{Mou} shows, using 
results of Arthur and M\oe glin, that the supercuspidal irreducible representations of $\cH$ 
correspond, via the local Langlands correspondence, to cuspidal enhanced L-parameters.   
\end{ex}

\begin{ex} \label{ex: cusp unitary}
Let $F$ be a $p$-adic field and $E$ a quadratic extension of $F$. Let $\mathcal{H}=U_n(F)$ 
be the quasi-split unitary group defined over $F$ and split over $E$. We have 
${}^L \mathcal{H}=\GL_n(\C)\rtimes\Gal(E/F)$.
Let $\phi \colon\mb W_F \times \SL_2 (\C) \longrightarrow {}^L \mathcal{H}$ be a discrete 
Langlands parameter and fix $\sigma \in \mb W_F$ such that $\mb W_F / \mb W_E \simeq \langle 
\sigma \rangle$. We use the notions of conjugate-dual, conjugate-orthogonal and 
conjugate-symplectic defined in \cite[\textsection 3]{GGP}. We can decompose the 
restriction of $\phi$ to $\mb W_E$ as an $n$-dimensional representation: 
\begin{equation}\label{eq:6.29}
\restriction{\phi}{\mb W_E} = \bigoplus_{\pi \in I_{\O}^E} m_{\pi} \pi \;
\oplus \bigoplus_{\pi \in I_{\rS}^E} m_{\pi} \pi \; \oplus \bigoplus_{\pi \in I_{\GL}^{E}} m_{\pi} 
\left(\pi \oplus {}^{\sigma} \pi^{\vee} \right),
\end{equation}
where \begin{itemize}
\item $I_\O^E$ is a set of irreducible conjugate-orthogonal representations of $\mb W_E$ ;
\item $I_\rS^E$ is a set of irreducible  conjugate-symplectic representations of $\mb W_E$ ;
\item $I_{\GL}^E$ is a set of irreducible representations of $\mb W_E$ 
which are not conjugate-dual.
\end{itemize} 
Then, by \cite[p.15]{GGP} 
\[
Z_{\mathcal{H}^{\vee}}(\phi(\mb W_F)) \simeq \prod_{\pi \in I_\O^E} 
\O_{m_{\pi}}(\C) \times \prod_{\pi \in I_\rS^E} \Sp_{m_{\pi}}(\C) \times 
\prod_{\pi \in I_{\GL}^E} \GL_{m_{\pi}}(\C).
\]
Every term $m_\pi \pi$ in \eqref{eq:6.29} can be decomposed as $\oplus_a \pi \boxtimes S_a$,
where $S_a$ denotes the $a$-dimensional irreducible representation of $\SL_2 (\C)$. Here 
$a$ runs through some subset of $\N$ -- every $a$ appears at most once because $\phi$ is
discrete. For every such $(\pi,a)$ we choose an element $z_{\pi,a} \in A_{\GL_n (\C)}(\phi)$
which acts as $-1$ on $\pi \boxtimes S_a$ and as the identity on all other factors 
$\pi' \boxtimes S_{a'}$.

From now on we assume that $\phi$ can be enhanced to a cuspidal L-parameter. 
The above and the classification of cuspidal pairs in \cite{Lus1} show that 
$u_{\phi}=(u_{\phi,\pi})$ satisfies: 
\begin{itemize}
\item if $\pi \in I_\O^E$, then the partition associated to $u_{\phi,\pi}$ is 
$(1,3,\ldots,2d_{\pi}-1)$,\\ 
$A_{\O_{m_{\pi}}(\C)}(u_{\phi,\pi})=\prod_{a=1}^{d_{\pi}} \langle z_{\pi,2a-1} \rangle 
\simeq (\Z/2\Z)^{d_{\pi}}$ and $\varepsilon \in \Irr(A_{\O_{m_{\pi}}(\C)}(u_{\phi,\pi}))$ 
is given by $\varepsilon(z_{\pi,2a-1})=(-1)^a$ or $\varepsilon(z_{\pi,2a-1})=(-1)^{a+1}$;
\item if $\pi \in I_\rS^E$, then the partition associated to $u_{\phi,\pi}$ is 
$(2,4,\ldots,2d_{\pi})$,\\
$A_{\Sp_{m_{\pi}}(\C)}(u_{\phi,\pi})=\prod_{a=1}^{d_{\pi}} \langle z_{\pi,2a} \rangle 
\simeq (\Z/2\Z)^{d_{\pi}}$ and $\varepsilon \in \Irr(A_{\Sp_{m_{\pi}}(\C)}(u_{\phi,\pi}))$ 
is given by $\varepsilon(z_{\pi,2a})=(-1)^a$;
\item if $\pi \in I_{\GL}^E$, then $m_{\pi}=1$ and $u_{\phi,\pi}=1$.
\end{itemize} 
Because $\phi$ is discrete, $I_{\GL}^E$ is empty. Hence
\begin{equation}\label{eq:6.18}
\restriction{\phi}{\mb W_E \times \SL_2 (\C)} = \bigoplus_{\pi \in I_\O^E} 
\bigoplus_{a=1}^{d_{\pi}} \pi \boxtimes S_{2a-1} \; \oplus\bigoplus_{\pi \in I_\rS^E} 
\bigoplus_{a=1}^{d_{\pi}} \pi \boxtimes S_{2a}.
\end{equation}
Moreover, in \cite[Th\'{e}or\`eme 8.4.4]{Moe}, M\oe glin classified the supercuspidal 
representations in an Arthur packet. In particular, for tempered Langlands parameters 
(which are Arthur parameters trivial on the second copy of $\SL_2(\C)$), the description is 
given in term of a Jordan block and a character defined by this Jordan block. Here the Jordan 
block $\Jord(\phi)$ of the Langlands parameter $\phi$ of a supercuspidal representation of 
$\mathcal{H}$ is the set of pairs $(\pi,a)$, where $\pi$ is an irreducible representation of 
$\mb W_E$ stable under the action of the composition of inverse-transpose and $\sigma$, and 
$a$ is an integer such that $\pi \boxtimes S_a$ is a subrepresentation of 
$\restriction{\phi}{\mb W_E}$. 

The condition 
on the Jordan block says that it has no holes (or is without jumps). More explicitly, for all 
$a>2$, if $(\pi,a) \in \Jord(\phi)$ then $(\pi,a-2) \in \Jord(\phi)$. The shape of $\phi$ is 
then as \eqref{eq:6.18}. Moreover, the alternated characters are exactly the cuspidal ones. 
More precisely, \cite[p.194]{Moe} gives the definition $z_{\pi,a}$ as our $z_{\pi,a}z_{\pi,a-2}$ 
(or $z_{\pi,2}$ in the case of $a=2$). But the cuspidal characters are exactly the characters 
which are alternated, i.e. such that $\varepsilon(z_{\pi,a}z_{\pi,a-2})=-1$.
\end{ex}

\begin{ex} \label{ex: cusp 1}
Let $\phi$ be a relevant discrete L-parameter which is trivial on the wild inertia subgroup 
$\mf P_F$ of the inertia group $\bI_F$, and such that the centralizer of $\phi (\bI_F)$ 
in $\cH^\vee$ is a torus. The latter condition forces $\phi$ to be trivial on $\SL_2(\C)$. 
Hence $u_\phi=1$, and any enhancement of $\phi$ gives a cuspidal L-parameter. Let
\[
C_\phi = \pi_0 \big( Z_{\cH^\vee}(\phi) / Z({}^L \cH)^\circ \big)
\]
and let $\rho \in \Irr (C_\phi)$. It is known from \cite{DeRe} that
these enhanced L-parameters $(\phi,\rho)$ correspond to the depth-zero generic 
supercuspidal irreducible representations of $\cH$, in the case where $\cH$ is a pure inner 
form of an unramified reductive $p$-adic group. We note that the component group 
$C_\phi$ is a quotient of our $\cS_\phi$, namely by the kernel of $\cH^\vee_{\sc} \to \cH^\vee$.
A priori in these references only a subset of our enhancements of $\phi$ is considered. However, 
it boils down to the same, because the $p$-adic group $\cH$ is chosen such that $\rho$ 
is relevant for it \cite[\S 2]{DeRe}.
\end{ex}

\begin{ex} \label{ex: very cusp} 
Let $(\phi,\rho)$ be a relevant enhanced L-parameter such that 
$\phi$ is discrete and trivial on $\mf P_F^{(r+1)}$ and nontrivial on $\mf P_F^{(r)}$ for some 
integer $r>0$, and such that the centralizer in $\cH^\vee$ of $\phi(\mf P_F^{(r)})$ is a 
maximal torus of $\cH^\vee$. Again any such $(\phi,\rho)$ is cuspidal. The same argument as in 
Example~\ref{ex: cusp 1} shows that the result of Reeder in \cite[\S 6]{Ree3} implies that 
these enhanced L-parameters correspond to the depth $r$ generic 
supercuspidal irreducible representations of $\cH$, when $\cH$ is a pure inner 
form of an unramified reductive $p$-adic group.
\end{ex}

\section{The cuspidal support of enhanced L-parameters}

In the representation theory of $p$-adic groups Bernstein's cuspidal support map
(see \cite[\S 2]{BeDe} or \cite[VI.7.1]{Ren}) plays an important role. It assigns to 
every irreducible smooth $\cH$-representation $\pi$ a Levi subgroup $\cL$ of $\cH$ 
and a supercuspidal $\cL$-representation $\sigma$, such that $\pi$ is contained in 
the normalized parabolic induction of $\sigma$. This condition 
determines $(\cL,\sigma)$ uniquely up to $\cH$-conjugacy. 
It is common to call $(\cL,\sigma)$ a cuspidal pair for $\cH$. The cuspidal 
support of $\pi \in \Irr (\cH)$ is a $\cH$-conjugacy class of cuspidal pairs,
often denoted by $\mathbf{Sc}(\pi)$.

It is expected that {\bf Sc} relates very well to the LLC. In fact this is a 
special case of a conjecture about the relation with parabolic induction, see
\cite[Conjecture 5.22]{Hai} and \cite[\S 1.5]{ABPS7}. Suppose that $\mc P = 
\cL \mc{U}_{\mc P}$ is a parabolic subgroup of $\cH$, that $\phi \in \Phi (\cL)$
and $\sigma \in \Pi_\phi (\cL)$. Then the L-packet $\Pi_\phi (\cH)$ should consist
of constituents of the normalized parabolic induction $I_{\mc P}^\cH (\sigma)$.

We will define an analogue of {\bf Sc} for enhanced L-parameters.
In this setting a cuspidal pair for ${}^L \cH$ should become a triple
$(\cL^\vee \rtimes \mb W_F , \phi, \rho)$, where $\cL^\vee \rtimes \mb W_F$ is
the L-group of a Levi subgroup $\cL \subset \cH$ and $(\phi,\rho)$ is a cuspidal
L-parameter for $\cL$. However, the collection of such objects is not stable
under $\cH^\vee$-conjugation, because $h \cL^\vee h^{-1}$ need not be 
$\mb W_F$-stable. To allow $\cH^\vee$ to act on these triples,
we must generalize Definition \ref{def:7.1} in a less restrictive way.

\begin{defn}\label{def:7.5}
Let ${}^L L$ be a Levi L-subgroup of ${}^L \cH$. A Langlands parameter for 
${}^L L$ is a group homomorphism $\phi : \mb W_F \times \SL_2 (\C) \to {}^L L$
satisfying the requirements of Definition \ref{def:7.7}. An enhancement of
$\phi$ is an irreducible representation $\rho$ of $\pi_0 (Z^1_{L_{\sc}}(\phi))$, 
where $L_{\sc}$ is the simply connected cover of the derived group of 
$L = {}^L L \cap \cH^\vee$. The group $L$ acts on the collection of enhanced 
L-parameters for ${}^L L$ by \eqref{eq:7.7}. 

We say that $(\phi,\rho)$ is cuspidal for ${}^L L$ if $\phi$ is discrete for 
${}^L L$ and $\big( u_\phi = \phi \big( 1, \matje{1}{1}{0}{1} \big), \rho \big)$ 
is a cuspidal pair for $Z^1_{L_{\sc}}(\phi | _{\mb W_F})$.
We denote the set of $L$-orbits by $\Phi_e ({}^L L)$ and the subset
of cuspidal $L$-orbits by $\Phi_\cusp ({}^L L)$. 
\end{defn}

We remark that in this definition it is not specified for which $p$-adic group
an enhanced L-parameter for ${}^L L$ is relevant. Hence $\Phi_e ({}^L \cL)$ is in
general strictly larger than $\Phi_e (\cL)$, it also contains enhanced L-parameters
for inner forms of $\cL$.

Let $L_c$ be the pre-image of $L$ under under $\cH^\vee_\sc \to \cH^\vee$.
Since $L$ is a Levi subgroup of $\cH^\vee$, the derived group of $L_c$ is the simply
connected cover of $L_\der$. Thus we identify $L_{\sc}$ with the inverse image of $L_\der$ 
under $\cH^\vee_{\sc} \to \cH^\vee$. 

\begin{defn}\label{defn:cuspdat}
A cuspidal datum for ${}^L \cH$ is a triple $({}^L L,\phi,\rho)$ as in Definition 
\ref{def:7.5}, such that $(\phi,\rho)$ is cuspidal for ${}^L L$.
It is relevant for $\cH$ if 
\begin{itemize}
\item $\rho = \zeta$ on $L_\sc \cap Z(\cH^\vee_\sc)^{\mb W_F}$, where 
$\zeta \in \Irr (Z(\cH^\vee_\sc)^{\mb W_F})$ parametrizes the inner twist $\cH$ via 
the Kottwitz isomorphism \eqref{eq:7.8}.
\item $\rho = 1$ on $L_\sc \cap Z(L_c)^\circ$.
\end{itemize}
\end{defn}

For $h \in \cH^\vee_{\sc}$ the conjugation action 
\[
L \to h L h^{-1} : l \mapsto h l h^{-1} 
\]
stabilizes the derived group of $L$ and lifts to $L_{\sc} \to (h L h^{-1})_{\sc}$.
Using this, $\cH^\vee_{\sc}$ and $\cH^\vee$ act naturally on cuspidal data for 
${}^L \cH$ by
\[
h \cdot ({}^L L, \phi, \rho) = (h \, {}^L L h^{-1}, h \phi h^{-1}, h \cdot \rho) . 
\]
By Lemma \ref{lem:7.4} every cuspidal datum for ${}^L \cH$ is $\cH^\vee$-conjugate 
to one of the form $(\cL^\vee \rtimes \mb W_F , \phi, \rho)$, where $\cL^\vee$ is
a $\mb W_F$-stable standard Levi subgroup of $\cH^\vee$.
For $\zeta_{\cH} \in \Irr (Z(\cH^\vee_\sc))$ we write
\begin{align*}
& \Phi_{e,\zeta_\cH}({}^L L) = \{ (\phi,\rho) \in \Phi_e ({}^L L) :
\rho = \zeta_{\mc H} \text{ on } L_\sc \cap Z(\cH^\vee_\sc), 
\rho = 1 \text{ on } L_\sc \cap Z(L_c)^\circ \} , \\
& \Phi_{\cusp,\zeta_\cH}({}^L L) = \Phi_\cusp ({}^L L) \cap \Phi_{e,\zeta_\cH}({}^L L) .
\end{align*}
This depends only on the restriction of $\zeta_\cH$ to the subgroup
$Z(L_\sc) \subset Z(\cH^\vee_\sc)$.

Often we will be interested in cuspidal data up to $\cH^\vee$-conjugacy.
Upon fixing the first ingredient of $({}^L L, \phi, \rho)$, we can consider 
$(\phi,\rho)$ as a cuspidal L-parameter for ${}^L L$, modulo $L$-conjugacy.
Recall from \eqref{eq:7.5} that $\phi$ is determined up to $L$-conjugacy
by $\phi |_{\mb W_F}$ and $u_\phi$. Hence the quadruple
\begin{equation}\label{eq:7.6}
({}^L L, \phi |_{\mb W_F}, u_\phi, \rho)
\end{equation}
determines a unique $\cH^\vee$-conjugacy class of cuspidal data. 
Therefore we will also regard quadruples of the form \eqref{eq:7.6} as cuspidal
data for ${}^L \cH$.

Let $\Irr_\cusp (\cL)$ be the set of supercuspidal $\cL$-representations and let
$\sigma_1, \sigma_2 \in \Irr_\cusp (\cL)$. We note that the cuspidal pairs
$(\cL,\sigma_1)$ and $(\cL,\sigma_2)$ are $\cH$-conjugate if and only if $\sigma_1$
and $\sigma_2$ are in the same orbit under
\begin{equation}\label{eq:C1}
W(\cH,\cL) = N_\cH (\cL) / \cL .
\end{equation}
Recall from \cite[Proposition 3.1]{ABPS7} that there is a canonical isomorphism
\begin{equation}\label{eq:8.10}
W( \cH,\cL) \cong N_{\cH^\vee} (\cL^\vee \rtimes \mb W_F) / \cL^\vee . 
\end{equation}
Motivated by \eqref{eq:8.10} we write, for any Levi L-subgroup ${}^L L$
of ${}^L \cH$:
\[
W({}^L \cH, {}^L L) := N_{\cH^\vee}({}^L L) / L . 
\]
This group acts naturally on the collection of cuspidal data for ${}^L \cH$ 
with first ingredient ${}^L L$. Two cuspidal data 
\begin{multline}\label{eq:C2}
({}^L L,\phi_1,\rho_1) \text{ and } ({}^L L,\phi_2,\rho_2) 
\text{ are } \cH \text{-conjugate} \qquad \Longleftrightarrow \\
(\phi_1,\rho_1), (\phi_2,\rho_2) \in \Phi_\cusp ({}^L L) 
\text{ are in the same orbit for the action of } W({}^L \cH, {}^L L).
\end{multline}
In the notation of \eqref{eq:7.2}, we use Section \ref{sec:quasiLevi} 
(with complex representations and sheafs) to write 
\[
q \Psi_G (u_\phi,\rho) = [M,v,q \epsilon]_G , \text{ where } 
G = Z^1_{\cH^\vee_{\sc}}(\phi |_{\mb W_F}) .
\]

\begin{prop}\label{prop:7.2}
Let $(\phi,\rho) \in \Phi_e (\cH)$.
\enuma{
\item $(Z_{\cH^\vee \rtimes \mb W_F}(Z(M)^\circ),\phi |_{\mb W_F},v,q \epsilon)$
is a $\cH$-relevant cuspidal datum for ${}^L \cH$.
\item Upon replacing $(\phi,\rho)$ by a $\cH^\vee$-conjugate representative
L-parameter, there exists a Levi subgroup $\cL$ of $\cH$ such that:
\begin{itemize} 
\item $Z_{\cH^\vee \rtimes \mb W_F}(Z(M)^\circ) = \cL^\vee \rtimes \mb W_F$,
\item $q \epsilon$ and $\rho$ yield the same character of 
$Z(\cH^\vee_\sc) Z(\cL_c^\vee)^\circ$. It is trivial on $Z(\cL_c^\vee)^\circ$ 
and determined by its restriction to $Z(\cL^\vee_\sc)$.
\item $(\phi |_{\mb W_F},v,q \epsilon)$ is a cuspidal L-parameter for $\cL$. 
\end{itemize}
\item The $\cH^\vee$-conjugacy class of $\cL^\vee \rtimes \mb W_F$ is
uniquely determined by $(\phi,\rho)$.
}
\end{prop}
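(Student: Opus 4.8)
The plan is to build the cuspidal support of an enhanced L-parameter directly from the quasi-cuspidal support map $q\Psi_G$ of Section~\ref{sec:quasiLevi}, applied to the group $G = Z^1_{\cH^\vee_\sc}(\phi|_{\mb W_F})$ and the pair $(u_\phi,\rho)$ via the identification \eqref{eq:7.2}. Write $q\Psi_G(u_\phi,\rho) = [M,v,q\epsilon]_G$ with $M = Z_G(Z(M^\circ)^\circ)$ a quasi-Levi subgroup. The first task, for part~(a), is to check that $Z_{\cH^\vee \rtimes \mb W_F}(Z(M)^\circ)$ is indeed a Levi L-subgroup in the sense of Definition~\ref{def:7.6}, i.e.\ that the projection to $\mb W_F$ is surjective. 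This holds because $Z(M)^\circ$ is a torus in $\cH^\vee$ centralizing $\phi(\mb W_F)$ (as $M \subset G \subset Z_{\cH^\vee_\sc}(\phi|_{\mb W_F})$ maps to a subgroup of $Z_{\cH^\vee}(\phi|_{\mb W_F})$), so $\phi(\mb W_F) \subset Z_{\cH^\vee \rtimes \mb W_F}(Z(M)^\circ)$, which therefore surjects onto $\mb W_F$. One then verifies that $\phi|_{\mb W_F}$ together with $v$ (extended to an $\SL_2(\C)$-homomorphism via Jacobson--Morozov, unique up to conjugacy by \cite{Kos}) forms a discrete L-parameter for this Levi L-subgroup: discreteness follows from the fact that $v$ is \emph{distinguished} in $M^\circ$ modulo its centre (it supports a cuspidal local system on an isolated class, by \cite[2.7]{Lus1}), so no proper Levi L-subgroup contains the image. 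Cuspidality of the pair $(v,q\epsilon)$ for $Z^1_{L_\sc}(\phi|_{\mb W_F})$ — where $L = M^\circ$ — is exactly the defining property of $q\epsilon \in \Irr_\cusp(A_M(v))$, once one matches $A_M(v)$ with $\pi_0$ of the relevant centralizer inside the simply connected cover (using \eqref{eq:5.23} and \eqref{eq:7.2} for the Levi $\cL$).

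For the relevance assertion in part~(a) and the more precise statements in part~(b), the key point is the compatibility of $q\Psi_G$ with central characters, namely \eqref{eq:6.14}: $q\Psi_G$ preserves $Z(G)$-characters. Since $Z(\cH^\vee_\sc)^{\mb W_F}$ maps into $Z(G) = Z(Z^1_{\cH^\vee_\sc}(\phi|_{\mb W_F}))$, the character $\zeta_\rho$ by which $Z(\cH^\vee_\sc)^{\mb W_F}$ acts on $\rho$ equals the character by which it acts on $q\epsilon$; as $(\phi,\rho)$ is $\cH$-relevant, $\zeta_\rho$ parametrizes $\cH$ via Kottwitz \eqref{eq:7.8}, hence so does the character attached to $q\epsilon$, which is the first relevance condition in Definition~\ref{defn:cuspdat}. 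The second relevance condition, triviality of $q\epsilon$ on $Z(L_\sc) \cap Z(L_c)^\circ$, follows because such elements lie in $Z(G)^\circ$ (they are in the connected centre direction), and a cuspidal local system is acted on trivially by the connected centre — this is built into the cuspidality condition and \eqref{eq:2.13}. For part~(b) we use Lemma~\ref{lem:7.4}: any Levi L-subgroup is $\cH^\vee$-conjugate to one of the form $\cL^\vee \rtimes \mb W_F$ with $\cL^\vee$ a $\mb W_F$-stable Levi of $\cH^\vee$; moving $(\phi,\rho)$ by the same $\cH^\vee$-element, we arrange $Z_{\cH^\vee \rtimes \mb W_F}(Z(M)^\circ) = \cL^\vee \rtimes \mb W_F$. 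Relevance of this Levi L-subgroup for $\cH$ then follows from Proposition~\ref{prop:7.10} (equivalence of (1) and (3)) applied to the discrete parameter $(\phi|_{\mb W_F},v)$ together with its $\cH$-relevant enhancement $q\epsilon$, which gives a genuine Levi subgroup $\cL \subset \cH$. The statement that the character is determined by its restriction to $Z(\cL^\vee_\sc)$ and is trivial on $Z(\cL^\vee_c)^\circ \cap Z(\cH^\vee_\sc)$ is then just a restatement of the two relevance conditions.

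For part~(c), uniqueness, I would argue as follows. The quasi-cuspidal support $[M,v,q\epsilon]_G$ is canonical — this is the content of \eqref{eq:6.1} and the discussion preceding it (the choice $\rho_M$ is unique up to $N_G(\ft)$-conjugacy, hence $[M,\cC_v^M,q\epsilon]_G$ is well-defined). Since $G = Z^1_{\cH^\vee_\sc}(\phi|_{\mb W_F})$ itself depends only on the $\cH^\vee$-conjugacy class of $\phi$ (conjugating $\phi$ conjugates $G$ compatibly), the $G$-conjugacy class of $(M,v,q\epsilon)$, and therefore the $\cH^\vee$-conjugacy class of $Z_{\cH^\vee \rtimes \mb W_F}(Z(M)^\circ) = \cL^\vee \rtimes \mb W_F$, depends only on the $\cH^\vee$-conjugacy class of $(\phi,\rho)$. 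Finally the $\cH$-conjugacy class of $\cL$ is determined by the $\cH^\vee$-conjugacy class of $\cL^\vee \rtimes \mb W_F$ via the bijection between relevant conjugacy classes of Levi L-subgroups of ${}^L\cH$ and conjugacy classes of Levi subgroups of $\cH$ (Definition~\ref{def:7.3} and \eqref{eq:8.10}). The main obstacle I anticipate is the first one: carefully verifying that $(\phi|_{\mb W_F}, v)$ is \emph{discrete} for the Levi L-subgroup $Z_{\cH^\vee \rtimes \mb W_F}(Z(M)^\circ)$ — one must show that the isolatedness of $\cC_v^{M^\circ}$ in $M^\circ$ (from \cite[2.7]{Lus1}), combined with the structure of $G$ as a centralizer in $\cH^\vee_\sc$ of $\phi(\mb W_F)$, forces that no strictly smaller Levi L-subgroup of ${}^L\cH$ contains the image of $\phi$. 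This requires relating "isolated in $M^\circ$" (a statement about the complex group $G$) to "discrete for the Levi L-subgroup" (a statement about ${}^L\cH$ and the $\mb W_F$-action), and is where the interplay between the two pictures is most delicate.
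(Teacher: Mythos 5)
Your overall route is the same as the paper's: apply $q\Psi_G$ to $(u_\phi,\rho)$ for $G = Z^1_{\cH^\vee_\sc}(\phi|_{\mb W_F})$, take the Levi L-subgroup $Z_{\cH^\vee\rtimes \mb W_F}(Z(M)^\circ)$, straighten it to $\cL^\vee\rtimes\mb W_F$ via Lemma \ref{lem:7.4}, get relevance from the preservation of central characters \eqref{eq:6.14}, and get discreteness from cuspidality of $(v,q\epsilon)$. Your argument for (c) (canonicity of $q\Psi_G$ plus equivariance of the whole construction under $\cH^\vee$-conjugation) is a legitimate, slightly different route from the paper, which instead proves the stronger statement that \emph{any} Levi L-subgroup containing $\phi(\mb W_F)\cup\{v\}$ contains a conjugate of $\cL^\vee_c$ (see \eqref{eq:7.3}); that stronger minimality statement is also what the paper uses to settle discreteness, so in the paper one argument does double duty.

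There are, however, two places where your plan has a real gap. First, the assertion that cuspidality of $(v,q\epsilon)$ ``for $Z^1_{L_\sc}(\phi|_{\mb W_F})$ --- where $L=M^\circ$ --- is exactly the defining property of $q\epsilon\in\Irr_\cusp(A_M(v))$'' is not correct as stated: in Definition \ref{def:7.5} one has $L = {}^L L\cap\cH^\vee = Z_{\cH^\vee}(Z(M)^\circ)=\cL^\vee$, not $M^\circ$; the group $M$ is only $G\cap\cL^\vee_c$ (see \eqref{eq:7.4}). The enhancement required for $\cL$ is a representation of $\pi_0\big(Z^1_{\cL^\vee_\sc}(\phi|_{\mb W_F})\cap Z_{\cL^\vee_\sc}(v)\big)$, and $Z^1_{\cL^\vee_\sc}$ is defined with coboundaries valued in $Z(\cL^\vee)$, so it is in general strictly larger than $Z^1_{\cH^\vee_\sc}(\phi|_{\mb W_F})\cap\cL^\vee_\sc$; your citations \eqref{eq:5.23} and \eqref{eq:7.2} do not bridge this. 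The paper bridges it with $Z^1_{\cL^\vee_\sc}(\phi)\subset (Z^1_{\cH^\vee_\sc}(\phi)\cap\cL^\vee_\sc)\,Z(\cL^\vee_c)^\circ$ \eqref{eq:7.26} together with $G\cap Z(\cL^\vee_c)^\circ = Z(M)^\circ$ \eqref{eq:6.23} and the triviality of $q\epsilon$ on $Z(M)^\circ$ (it dies in $A_M(v)$, being in $Z_M(v)^\circ$ --- note these elements need not lie in $Z(G)^\circ$, as you claim); only then does $q\epsilon$ extend to the correct component group, with the stated triviality on $Z(\cL^\vee_\sc)\cap Z(\cL^\vee_c)^\circ$ and with cuspidality preserved. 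Second, the discreteness step you flag as ``delicate'' is indeed the other substantive point and your plan only asserts it: distinguishedness of $v$ in $M^\circ$ must be converted into the statement that any Levi L-subgroup ${}^L L'$ of ${}^L\cH$ containing $\phi(\mb W_F)\cup\{v\}$ satisfies $L'_c\supset z\,\cL^\vee_c z^{-1}$ for some $z\in Z_G(v)$ --- the paper's argument that $M^\circ$ is a Levi of $G^\circ$ minimally containing $v$, hence $Z(L'_c)^\circ\subset zZ(M)^\circ z^{-1}$ and \eqref{eq:7.3} --- and without carrying this out neither discreteness nor the paper's form of uniqueness is established. Finally, your appeal to Proposition \ref{prop:7.10} for relevance of $\cL$ is slightly off target: the paper instead reads relevance off the $Z(\cH^\vee_\sc)^{\mb W_F}$-character of $q\epsilon$ (equal to that of $\rho$ by \eqref{eq:6.14}), which forces the $F$-group with dual $\cL^\vee$ to be a Levi subgroup of $\cH$ rather than of another inner form.
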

\begin{proof}
(a) and (b) The torus $Z(M)^\circ$ commutes with $M$, so $Z_{\cH^\vee}(Z(M)^\circ)$
is a Levi subgroup of $\cH^\vee$ which contains the image of $M$ in $\cH^\vee$. 
As $Z(M)^\circ \subset Z^1_{\cH^\vee_{\sc}}(\phi |_{\mb W_F})$, 
$Z_{\cH^\vee \rtimes \mb W_F}(Z(M)^\circ)$ is a Levi L-subgroup of 
$\cH^\vee \rtimes \mb W_F$.

In view of Lemma \ref{lem:7.4} this implies that, upon conjugating $(\phi,\rho)$
with a suitable element of $\cH^\vee$, we may assume that the above 
construction yields a $\mb W_F$-stable standard Levi subgroup 
$\cL^\vee := Z_{\cH^\vee}(Z(M)^\circ)$ with 
\[
\phi (\mb W_F) \subset Z_{\cH^\vee \rtimes \mb W_F}(Z(M)^\circ) = 
\cL^\vee \rtimes \mb W_F .
\]
Its pre-image $\cL^\vee_c$ in $\cH^\vee_{\sc}$ satisfies 
\begin{equation}\label{eq:7.4}
G \cap \cL^\vee_c = Z^1_{\cH^\vee_{\sc}}(\phi |_{\mb W_F}) \cap  
Z_{\cH^\vee_\sc}(Z(M)^\circ) = M .
\end{equation}
Moreover $\cL^\vee$ contains $v$ (or rather its image in $\cH^\vee$, which we
also denote by $v$). Suppose that ${}^L L$ is another  
Levi L-subgroup of ${}^L \cH$ which contains $\phi (\mb W_F) \cup \{v\}$. Let $L_c$ 
be the inverse image of $L = {}^L L \cap \cH^\vee$ in $\cH^\vee_{\sc}$.
Since $(v,q \epsilon)$ is a cuspidal pair for $M$, $M^\circ$ is a Levi subgroup of
$G^\circ$ minimally containing $v$ (see \cite[Proposition 2.8]{Lus1} or 
Theorem \ref{thm:4.1}.a). Hence $L_c \cap G$ contains a $Z_G (v)$-conjugate of 
$M^\circ$, say $z M^\circ z^{-1}$. 
Then $Z(L_c)^\circ \subset z Z(M)^\circ z^{-1}$, so
\begin{equation}\label{eq:7.3}
L_c = Z_{\cH^\vee_{\sc}}(Z(L_c)^\circ) \supset 
Z_{\cH^\vee_{\sc}}(z Z(M)^\circ) z^{-1}) = z \cL^\vee_c z^{-1} .
\end{equation}
Thus $L$ contains a conjugate of $\cL^\vee$. Equivalently $\cL^\vee \rtimes
\mb W_F$ minimally contains $\phi (\mb W_F) \cup \{v\}$. Hence $(\phi |_{\mb W_F},v)$
is a discrete L-parameter for $\cL^\vee \rtimes \mb W_F$ and for some $F$-group 
$\cL$ with complex dual $\cL^\vee$. 

By \eqref{eq:7.1} $\rho \in \Irr \big(\pi_0 (Z^1_{\cH^\vee_{\sc}}(\phi)) \big)$ can
be regarded as a representation of $\pi_0 (Z_G (u_\phi))$, and by \eqref{eq:6.14}
it has the same $Z (\cH^\vee_{\sc})$-character, say $\zeta$, 
as $q \epsilon \in \Irr (\pi_0 (Z_M (v)))$.

Because $Z (M)^\circ$ becomes the trivial element in $\pi_0 (Z_M (v))$, $\zeta$ is 
trivial on $Z(M)^\circ \cap Z(\cH^\vee_\sc)$,  We note that
\begin{equation}\label{eq:6.23}
G \cap Z(\cL^\vee_c)^\circ = G \cap Z \big( Z_{\cH^\vee_\sc}(Z(M)^\circ) \big)^\circ
\subset G \cap Z(M)^\circ = Z(M)^\circ .
\end{equation}
But by construction $Z(M)^\circ \subset Z(\cL^\vee_c)^\circ$, so \eqref{eq:6.23} is 
actually an equality. As $Z(\cH_\sc^\vee) \subset G$, it follows that $\zeta$ is also
trivial on $Z(\cL_c^\vee)^\circ \cap Z(\cH^\vee_\sc)$. In particular it extends uniquely 
to a character (still denoted $\zeta$) of $Z(\cH^\vee_\sc) Z(\cL_c^\vee)^\circ$, which 
is trivial on $Z(\cL_c^\vee)^\circ$.

Furthermore $\cL^\vee_c$ is a connected Lie group, so $\cL^\vee_c = 
\cL^\vee_\sc Z(\cL^\vee_c)^\circ$. From this we see that $\zeta$ is determined by its
restriction to $\cL^\vee_\sc \cap Z(\cH^\vee_\sc) Z(\cL^\vee_c)^\circ$. 
By \cite[Lemma 1.1]{Art1} (see also Lemma \ref{lem:Kottwitz}.a), that group 
can be simplified to
\[
\cL^\vee_\sc \cap Z(\cH^\vee_\sc) Z(\cL^\vee_c)^\circ = 
\cL_\sc^\vee \cap Z(\cL_c^\vee) = Z(\cL_\sc^\vee) .
\]
Although $Z_{\cL^\vee_\sc}(\phi) = Z_{\cH^\vee_\sc}(\phi) \cap \cL^\vee_\sc$, the inclusion
$Z^1_{\cL^\vee_\sc}(\phi) \supset Z^1_{\cH^\vee_\sc}(\phi) \cap \cL^\vee_\sc$ can be strict,
as the definitions of the two $Z^1$'s are different. Nevertheless, always
\begin{equation}\label{eq:7.26}
Z^1_{\cL^\vee_\sc}(\phi) \subset (Z^1_{\cH^\vee_\sc}(\phi) \cap \cL^\vee_\sc ) 
Z(\cL^\vee_c)^\circ .
\end{equation}
Hence the relevant centralizers for $(\cL,\phi |_{\mb W_F},v)$ are
\[
Z^1_{\cL^\vee_{\sc}}(\phi |_{\mb W_F}) \cap Z_{\cL^\vee_\sc}(v) \subset
(G \cap Z_{\cL^\vee_{\sc}}(v)) Z(\cL^\vee_c)^\circ = Z_{M_\der}(v) Z(\cL^\vee_c)^\circ .
\]
Since $q \epsilon \in \Irr (A_M (v))$ is trivial on $Z(M)^\circ = Z(\cL^\vee_c)^\circ \cap M$,
it can be considered as a representation of
$\pi_0 \big( Z^1_{\cL^\vee_{\sc}}(\phi |_{\mb W_F}) \cap Z_{\cL^\vee_\sc}(v) \big)$
which is trivial on $Z(\cL^\vee_\sc) \cap Z(\cL^\vee_c)^\circ$. We conclude that 
$(\phi |_{\mb W_F},v,q \epsilon)$ is a cuspidal Langlands parameter for some inner form of $\cL$. 
The $Z(\cL_\sc^\vee)$-character of $q\epsilon$ is obtained from that of $\rho$ via extension
to $Z(\cH^\vee_\sc) Z(\cL_c^\vee)^\circ$ and then restriction. Comparing with Lemma
\ref{lem:Kottwitz}.b, and recalling that $(\phi,\rho)$ is relevant for $\cH$,
we see that $(\phi |_{\mb W_F},v,q \epsilon)$ is relevant for a Levi
subgroup $\cL$ of $\cH$.

By Definition \ref{defn:cuspdat} relevance of cuspidal data can be read off from their 
$Z (\cH^\vee_{\sc})^{\mb W_F}$-characters. The same comparison involving $\zeta$ 
says that $(\cL^\vee \rtimes \mb W_F , \phi |_{\mb W_F},v,q \epsilon)$ 
is also $\cH$-relevant.\\ 
(c) Suppose that ${}^L L$ is as above and that it minimally contains $\phi (\mb W_F) 
\cup \{v\}$. From \eqref{eq:7.3} or \cite[Proposition 8.6]{Bor} we see that 
${}^L L$ is $\cH^\vee$-conjugate to $\cL^\vee \rtimes \mb W_F$. Hence the 
L-Levi subgroup $\cL^\vee \rtimes \mb W_F$ is uniquely determined up to conjugation. 
\end{proof}

Before we continue with the cuspidal support map, we work out some consequences of
the above proof.

\begin{lem}\label{lem:7.14}
\enuma{
\item The exists a character $\zeta_\cH \in \Irr (Z(\cH^\vee_\sc))$ such that:
\begin{itemize}
\item $\zeta_\cH |_{Z(\cH^\vee_\sc)^{\mb W_F}}$ parametrizes the inner twist $\cH$ via
the Kottwitz isomorphism \eqref{eq:7.8},
\item $\zeta_\cH = 1$ on $Z(\cH^\vee_\sc) \cap Z(\cL^\vee_c)^\circ$, for every Levi
subgroup $\cL$ of $\cH$.
\end{itemize}
\item Let $\cL \subset \cH$ be a Levi subgroup and let $\phi : \mb W_F \times \SL_2 
(\C) \to {}^L \cL$ be a Langlands parameter for $\cL$. There exists a natural injection
$\cR_\phi^\cL \to \cR_\phi$.
\item In the setting of parts (a) and (b), extend $\zeta_\cH$ to a character of 
$Z(\cH^\vee_\sc) Z(\cL_c^\vee)^\circ$ which is trivial on $Z(\cL_c^\vee)^\circ$.
Let $\zeta_\cH^\cL$ be the restriction of the latter character to $Z(\cL^\vee_\sc)$. 
Let $p_{\zeta_\cH} \in \C[\cZ_\phi]$ and $p_{\zeta_\cH^\cL} \in \C[\cZ_\phi^\cL]$ 
be the central idempotents associated to these
characters. Then there is a canonical injection
\[
p_{\zeta_\cH^\cL} \C [\cS_\phi^\cL] \to p_{\zeta_\cH} \C[\cS_\phi].
\]
}
\end{lem}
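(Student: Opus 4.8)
The plan is to treat the three parts in order, since (c) builds on (a) and (b). For part (a), I would work one Levi subgroup at a time, or better, use the fact that there are only finitely many conjugacy classes of Levi subgroups $\cL$ of $\cH$. For each $\cL$, the argument in the proof of Proposition \ref{prop:7.2} (see the passage around \eqref{eq:6.23} and \eqref{eq:7.4}) shows that $Z(\cH^\vee_\sc) \cap Z(\cL^\vee_c)^\circ$ is a subtorus of $Z(\cH^\vee_\sc)^\circ$. In particular it is connected, and the subgroup of $Z(\cH^\vee_\sc)$ generated by all $Z(\cH^\vee_\sc) \cap Z(\cL^\vee_c)^\circ$ over the (finitely many) classes of Levi subgroups is still contained in $Z(\cH^\vee_\sc)^\circ$. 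Meanwhile $Z(\cH^\vee_\sc)^{\mb W_F}$ need not be connected, and the Kottwitz character $\zeta$ lives on $Z(\cH^\vee_\sc)^{\mb W_F}$. The key elementary point is that $Z(\cH^\vee_\sc)^{\mb W_F} \cap Z(\cH^\vee_\sc)^\circ$ is contained in the kernel of $\zeta$: this is exactly the condition needed for $\phi$-relevance to behave well, and it follows from Proposition \ref{prop:7.10}(2) applied with $\cL = \cH$ (using that $\cH$ is relevant for itself), or directly from \cite[\S 4]{Art2}. Wait — I need $Z(\cH^\vee_\sc)^\circ \subseteq \ker\zeta$ more than just the intersection. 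Actually $Z(\cH^\vee_\sc)^{\mb W_F}$ contains $Z(\cH^\vee_\sc)^{\mb W_F,\circ}$, and the pieces $Z(\cH^\vee_\sc) \cap Z(\cL^\vee_c)^\circ$ are connected $\mb W_F$-stable subtori, hence lie in $Z(\cH^\vee_\sc)^{\mb W_F,\circ}$. So it suffices that $\zeta$ can be chosen trivial on this connected subgroup; then it automatically extends to a character $\zeta_\cH$ of $Z(\cH^\vee_\sc)$ vanishing on all the $Z(\cH^\vee_\sc)\cap Z(\cL^\vee_c)^\circ$. The reason $\zeta$ restricted to $Z(\cH^\vee_\sc)^{\mb W_F,\circ}$ can be taken trivial is the identity $Z(\cH^\vee_\sc)^{\mb W_F} \cap Z_{\cH^\vee_\sc}(\phi)^\circ = Z(\cH^\vee_\sc) \cap Z_{\cH^\vee_\sc}(\phi)^\circ$ from \cite[\S 4]{Art2} combined with relevance of $\cH$ as an inner twist (the Kottwitz character of a relevant inner twist is trivial on the appropriate connected subgroup). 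I would assemble these facts and then extend $\zeta|_{\text{connected part}}=1$ to a full character of $Z(\cH^\vee_\sc)$ by choosing any extension on a complement of the relevant subgroups — possible since we are dealing with characters of a diagonalizable group, where such restriction–extension problems are always solvable.

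For part (b), the inclusion $\cL^\vee \hookrightarrow \cH^\vee$ is $\mb W_F$-equivariant and induces $Z_{\cL^\vee}(\phi) = Z_{\cH^\vee}(\phi) \cap \cL^\vee \hookrightarrow Z_{\cH^\vee}(\phi)$. Passing to quotients by $Z(\cH^\vee)^{\mb W_F}$ (noting $Z(\cL^\vee)^{\mb W_F} \supseteq Z(\cH^\vee)^{\mb W_F}$) and then to component groups gives a map $\cR_\phi^\cL \to \cR_\phi$. The only thing to check is injectivity, which amounts to: if $l \in Z_{\cL^\vee}(\phi)$ lies in $Z_{\cH^\vee}(\phi)^\circ Z(\cH^\vee)^{\mb W_F}$, then $l \in Z_{\cL^\vee}(\phi)^\circ Z(\cL^\vee)^{\mb W_F}$. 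This follows from the same kind of argument as \eqref{eq:7.3}: $Z_{\cH^\vee}(\phi)^\circ$ normalizes $Z(\cL^\vee)^\circ$ (since $\cL^\vee = Z_{\cH^\vee}(Z(\cL^\vee)^\circ)$ and centralizers of tori behave well), so $Z_{\cH^\vee}(\phi)^\circ \cap \cL^\vee = Z_{\cL^\vee}(\phi)^\circ$, and one reduces to the central factor. This is essentially \cite[Proposition 3.1]{ABPS7} or a minor variant thereof; I would cite it if the statement matches exactly, otherwise reproduce the two-line centralizer-of-torus argument.

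For part (c), the setup is: we have $\cS_\phi = \pi_0(Z^1_{\cH^\vee_\sc}(\phi))$, its central subgroup $\cZ_\phi = Z(\cH^\vee_\sc)/Z(\cH^\vee_\sc)\cap Z_{\cH^\vee_\sc}(\phi)^\circ$ from \eqref{eq:6.24}, and analogously $\cS_\phi^\cL, \cZ_\phi^\cL$. First I would upgrade the map of part (b) to a map $\cS_\phi^\cL \to \cS_\phi$, using the relation \eqref{eq:7.26} which says $Z^1_{\cL^\vee_\sc}(\phi) \subset (Z^1_{\cH^\vee_\sc}(\phi)\cap \cL^\vee_\sc) Z(\cL^\vee_c)^\circ$: the inclusion $\cL^\vee_\sc \hookrightarrow \cH^\vee_\sc$ sends $Z^1_{\cL^\vee_\sc}(\phi)$ into $Z^1_{\cH^\vee_\sc}(\phi) Z(\cL^\vee_c)^\circ$, and modulo the connected group $Z(\cL^\vee_c)^\circ$ the component group maps across. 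Next, the idempotent $p_{\zeta_\cH} \in \C[\cZ_\phi]$ cuts out representations with the prescribed central character, and similarly $p_{\zeta_\cH^\cL}$; because $\zeta_\cH$ and $\zeta_\cH^\cL$ agree on $Z(\cL^\vee_\sc)\cap Z(\cH^\vee_\sc)$ and $\zeta_\cH^\cL$ is trivial on $Z(\cL^\vee_\sc)\cap Z(\cL^\vee_c)^\circ$ (which is where the difference lives, and which dies in $\cS_\phi$ after quotienting by $Z(\cL^\vee_c)^\circ$), the image of $p_{\zeta_\cH^\cL}$ under the induced algebra map lands in $p_{\zeta_\cH}\C[\cS_\phi]$. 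Concretely: the map $\cS_\phi^\cL \to \cS_\phi$ sends $\cZ_\phi^\cL$ into $\cZ_\phi$ compatibly with the characters $\zeta_\cH^\cL \mapsto \zeta_\cH$, so it restricts to $p_{\zeta_\cH^\cL}\C[\cS_\phi^\cL] \to p_{\zeta_\cH}\C[\cS_\phi]$, and injectivity is inherited from injectivity at the level of groups (part (b), suitably lifted). The main obstacle I anticipate is bookkeeping around the two different $Z^1$-constructions — the subtlety flagged right after \eqref{eq:7.26} that $Z^1_{\cL^\vee_\sc}(\phi)$ is genuinely larger than $Z^1_{\cH^\vee_\sc}(\phi)\cap\cL^\vee_\sc$ — and making sure the extra connected factor $Z(\cL^\vee_c)^\circ$ really is killed in both the component group and the character computation. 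I would handle this by consistently working modulo $Z(\cL^\vee_c)^\circ$ from the start, which is harmless since that group is connected and central in $\cL^\vee_c$, and which makes all three identifications simultaneously transparent.
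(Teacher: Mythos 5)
The decisive gap is in part (a). Your argument rests on the claim that $Z(\cH^\vee_\sc) \cap Z(\cL^\vee_c)^\circ$ is a connected subtorus of $Z(\cH^\vee_\sc)^\circ$. But $\cH^\vee_\sc$ is semisimple and simply connected, so $Z(\cH^\vee_\sc)$ is a \emph{finite} group and $Z(\cH^\vee_\sc)^\circ$ is trivial; the intersection $Z(\cH^\vee_\sc) \cap Z(\cL^\vee_c)^\circ$ is a finite and usually nontrivial subgroup (e.g.\ $\{\pm I\}$ when $\cH^\vee_\sc = \SL_2 (\C)$ and $\cL^\vee_c$ is a maximal torus; for inner forms of $\GL_n$ and a proper Levi it is a sizeable cyclic subgroup of $Z(\SL_n (\C))$). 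Hence the requirement $\zeta_\cH = 1$ on these subgroups is a genuine condition on characters of a finite abelian group, not something obtained for free from connectedness, and it interacts nontrivially with the Kottwitz character: one must show that $\zeta$ itself kills $Z(\cH^\vee_\sc)^{\mb W_F} \cap Z(\cL^\vee_c)^\circ$ and that a single extension works for all Levi subgroups simultaneously. The paper's proof does this non-formally: since $Z(\mathcal{M}^\vee_c)^\circ \subset Z(\cL^\vee_c)^\circ$ whenever $\mathcal{M} \supset \cL$, it suffices to treat a \emph{minimal} Levi $\cL$; there one takes a discrete $\phi \in \Phi (\cL)$ trivial on $\SL_2 (\C)$, uses Proposition \ref{prop:7.10} to find an $\cH$-relevant enhancement $\rho$, notes that $Z_{\cH^\vee_\sc}(\phi)^\circ$ is a torus so $(u_\phi = 1,\rho)$ is automatically cuspidal, and then Proposition \ref{prop:7.2}.b produces exactly a character of $Z(\cH^\vee_\sc)$ extending $\zeta$ and trivial on $Z(\cH^\vee_\sc) \cap Z(\cL^\vee_c)^\circ$; conjugation invariance then covers all Levi subgroups. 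Without an argument of this type, part (a) is unproven, and it cannot be repaired along the connectedness route you propose.

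Parts (b) and (c) are closer to the mark. In (b) your injectivity argument via centralizers of tori is essentially the paper's \eqref{eq:7.21}, but there is a loose end: for the map out of $\cR_\phi^\cL$ to be well defined you must check that $Z(\cL^\vee)^{\mb W_F}$ lands in $Z_{\cH^\vee}(\phi)^\circ Z(\cH^\vee)^{\mb W_F}$ -- your parenthetical ``$Z(\cL^\vee)^{\mb W_F} \supseteq Z(\cH^\vee)^{\mb W_F}$'' addresses the wrong direction; the paper uses $Z(\cL^\vee)^{\mb W_F} = (Z(\cL^\vee)^{\mb W_F})^\circ \, Z(\cH^\vee)^{\mb W_F}$ from \cite[Lemma 1.1]{Art1} (and the citation of \cite[Proposition 3.1]{ABPS7} is not the relevant statement, as that concerns $W(\cH,\cL)$). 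In (c), beware that there is in general no group homomorphism $\cS_\phi^\cL \to \cS_\phi$: because of \eqref{eq:7.28} the paper's map $\lambda$ in \eqref{eq:7.27} goes from $Z^1_{\cL^\vee_\sc}(\phi)$ to the quotient of $Z^1_{\cH^\vee_\sc}(\phi)$ by $Z^1_{\cH^\vee_\sc}(\phi) \cap Z(\cL^\vee_c)^\circ \cap \cL^\vee_\sc$, and the injection of idempotent pieces is then read off from a diagram, with injectivity counted against the basis description \eqref{eq:7.25} and part (b). Your plan of ``working modulo $Z(\cL^\vee_c)^\circ$ throughout'' is in the right spirit, but it needs this precise formulation to produce the canonical injection claimed in the lemma.
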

\begin{proof}
(a) Let $\cL$ be a minimal Levi subgroup of $\cH$ and let $\phi \in \Phi (\cL)$ be
a discrete Langlands parameter which is trivial on $\SL_2 (\C)$. Then $\phi$ is 
$\cH$-relevant, so by Proposition \ref{prop:7.10} there exists an enhancement 
$\rho \in \Irr (\cS_\phi)$ such that the character $\zeta_\rho$ of $Z(\cH^\vee_\sc)$  
determined by $\rho$ parametrizes $\cH$ via the Kottwitz isomorphism. Then 
\[
G^\circ = Z_{\cH^\vee_\sc}(\phi)^\circ = \big( Z(\cL^\vee_c)^{\mb W_F} \big)^\circ
\]
is a torus, so every element of $\cN_G^+$ is cuspidal. It follows that 
\[
q \Psi_G (u_\phi = 1,\rho) = [G,v = 1,q \epsilon]_G .
\] 
Now Proposition \ref{prop:7.2}.b yields the desired condition for $\cL$.

Then the same condition holds for any Levi subgroup $\mathcal{M}$ of $\cH$ containing
$\cL$, for $Z(\mathcal{M}^\vee_c)^\circ \subset Z(\cL^\vee_c)^\circ$. Moreover 
$\zeta_\cH$ is invariant under conjugation, because it lives only on the centre. 
So the condition even holds for all $\cH^\vee_\sc$-conjugates of $\mathcal{M}^\vee_c$, 
which means that it is satisfied for all Levi subgroups of $\cH$.\\
(b) There is an obvious map
\begin{equation}\label{eq:7.20}
Z_{\cL^\vee}(\phi) \to \cR_\phi = 
Z_{\cH^\vee}(\phi) \big/ Z_{\cH^\vee}(\phi)^\circ \, Z(\cH^\vee)^{\mb W_F}.
\end{equation}
Its kernel equals
\begin{multline}\label{eq:7.21}
Z_{\cL^\vee}(\phi) \cap Z_{\cH^\vee}(\phi)^\circ Z(\cH^\vee)^{\mb W_F} =
Z_{\cH^\vee}(Z(\cL^\vee)^\circ) \cap Z_{\cH^\vee}(\phi)^\circ Z(\cH^\vee)^{\mb W_F} \\
= \big( Z_{\cH^\vee}(Z(\cL^\vee)^\circ) \cap Z_{\cH^\vee}(\phi)^\circ \big) 
Z(\cH^\vee)^{\mb W_F} 
= Z_{\cL^\vee}(\phi)^\circ Z(\cH^\vee)^{\mb W_F} .
\end{multline}
For the last equality we used that taking centralizers with tori preserves connectedness.
We note that $Z_{\cL^\vee}(\phi)^\circ \subset Z_{\cH^\vee}(\phi)^\circ$.
By \cite[Lemma 1.1]{Art1}
\[
Z(\cL^\vee)^{\mb W_F} = (Z(\cL^\vee)^{\mb W_F})^\circ \, Z(\cH^\vee)^{\mb W_F} ,
\]
which is contained in $Z_{\cH^\vee}(\phi)^\circ Z(\cH^\vee)^{\mb W_F}$. Hence
\eqref{eq:7.20} factors through 
\[
\cR_\phi^\cL = Z_{\cL^\vee}(\phi) \big/ Z_{\cL^\vee}(\phi)^\circ \, Z(\cL^\vee)^{\mb W_F}.
\]
By \eqref{eq:7.21} the kernel of the just constructed map $\cR_\phi^\cL \to \cR_\phi$
is the image of \\ $Z_{\cL^\vee}(\phi)^\circ Z(\cH^\vee)^{\mb W_F}$ in $\cR_\phi^\cL$,
which is only the neutral element.\\
(c) Lemma \ref{lem:Kottwitz}.a (for the trivial $\mb W_F$-action) shows that 
$\zeta_\cH^\cL$ is well-defined. By \eqref{eq:6.24} every system of 
representatives for $\cR_\phi \cong \cS_\phi / \cZ_\phi$ in $\cS_\phi$ provides a
basis of $p_{\zeta_\cH} \C [\cS_\phi]$. Similarly
\begin{equation}\label{eq:7.25}
p_{\zeta_\cH^\cL} \C [\cS_\phi^\cL] \cong \C [\cR_\phi^\cL]
\text{ as vector spaces.}
\end{equation}
We have to find an appropriate variation on $\C [\cR_\phi^\cL] \to \C[\cR_\phi]$.
Recall from \eqref{eq:7.26} that 
\begin{equation}\label{eq:7.28}
Z^1_{\cL^\vee_\sc}(\phi) = (Z^1_{\cH^\vee_\sc}(\phi) \cap \cL^\vee_\sc) 
(Z(\cL^\vee_c )^\circ \cap \cL^\vee_\sc ) .
\end{equation}
This gives a group homomorphism
\begin{equation}\label{eq:7.27}
\lambda : Z^1_{\cL^\vee_\sc}(\phi) \to Z^1_{\cH^\vee_\sc}(\phi) \big/ 
\big( Z^1_{\cH^\vee_\sc}(\phi) \cap Z(\cL^\vee_c )^\circ \cap \cL^\vee_\sc \big) 
\end{equation}
which lifts $\cR_\phi^\cL \to \cR_\phi$. Consider the diagram
\[
\xymatrix{
p_{\zeta_\cH^\cL} \C [\cS_\phi^\cL] \ar@{^{(}->}[r] \ar@{.>}[d] & 
\C[\cS_\phi^\cL] \ar[d]^{\lambda} \\
p_{\zeta_\cH} \C[\cS_\phi] \ar@{^{(}->}[r] & \C [Z^1_{\cH^\vee_\sc}(\phi) \big/ 
\big( Z^1_{\cH^\vee_\sc}(\phi) \cap Z(\cL^\vee_c )^\circ \cap \cL^\vee_\sc \big)] .
}
\]
The lower arrow exists because $\zeta_\cH = 1$ on $Z(\cH^\vee_\sc) \cap 
Z(\cL^\vee_c)^\circ$. The image $\lambda (p_{\zeta_\cH^\cL} \C [\cS_\phi^\cL])$
is contained in $p_{\zeta_\cH} \C[\cS_\phi]$ by the relation between $\zeta_\cH$
and $\zeta_\cH^\cL$, which gives the left vertical arrow. Since \eqref{eq:7.27} is a
lift of $\cR_\phi^\cL \to \cR_\phi$ and by \eqref{eq:7.25}, this arrow is injective.
\end{proof}

It turns out that the cuspidal datum constructed in Proposition \ref{prop:7.2}.a need 
not have the same infinitesimal character as $\phi$ (in the sense of \cite{Hai,Vog}).
Since this would be desirable for a cuspidal support map, we now work out some
constructions which compensate for this. See \eqref{eq:infChar} for their effect.

Recall from \eqref{eq:6.1} that the unipotent element $v$ in $q \Psi_G (u_\phi,\rho)$
also appears as $\Psi_{G^\circ}(u_\phi,\rho^\circ) = (M^\circ,v,\epsilon)$,
where $\rho^\circ$ is an irreducible $A_{G^\circ}(u_\phi)$-constituent of $\rho$.
The construction of $\Psi_{G^\circ}$, which already started in \eqref{eq:2.2},
entails that there exists a parabolic subgroup $P$ of $G^\circ$ such that
\begin{itemize}
\item $M^\circ$ is a Levi factor of $P$,
\item $u_\phi = v u_P$ with $u_P$ in the unipotent radical $U_P$ of $P$.
\end{itemize}
Upon conjugating $\phi$ with a suitable element of $Z_{G^\circ}(u_\phi)$, we may assume 
that $M^\circ$ contains $\phi \big( 1, \matje{z}{0}{0}{z^{-1}} \big)$ for all 
$z \in \C^\times$. (Alternatively, one could conjugate $M^\circ$ inside $G^\circ$.) 
Since the $G^\circ$-conjugacy class of $(M^\circ,v)$ matters most, this conjugation 
is harmless.

\begin{lem}\label{lem:7.11}
Suppose that $\phi \big( 1, \matje{z}{0}{0}{z^{-1}} \big) \in M^\circ$ for all 
$z \in \C^\times$. Then \\ $\phi \big( 1, \matje{z}{0}{0}{z^{-1}} \big) v 
\phi \big( 1, \matje{z^{-1}}{0}{0}{z} \big) = v^{z^2}$ for all $z \in \C^\times$. 
\end{lem}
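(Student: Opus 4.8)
The statement to prove is purely about the interaction between a fixed $\SL_2(\C)$ of $\phi$ and a unipotent element $v$ which is the cuspidal-support unipotent of $u_\phi$ for the Levi $M^\circ$. The content is: the torus $\lambda(z) := \phi\big(1,\matje{z}{0}{0}{z^{-1}}\big)$ acts on $v$ by the grading with weight $2$, i.e. $\lambda(z)\,v\,\lambda(z)^{-1}=v^{z^2}$ — phrasing ``$v^{z^2}$'' as $\exp(z^2\log v)$, the point being that $v$ is a regular unipotent element of $M^\circ$ on which the cocharacter $\lambda$ acts with weight $2$ on the corresponding nilpotent. First I would recall how $v$ enters the picture: by Lusztig's construction (the varieties $Y_{u,v}$ of \eqref{eq:2.2}), $\Psi_{G^\circ}(u_\phi,\rho^\circ)=(M^\circ,v,\epsilon)$ with $\epsilon$ cuspidal on $A_{M^\circ}(v)$, and a cuspidal pair forces $\mathcal{C}_v^{M^\circ}$ to be a \emph{distinguished} (isolated) unipotent class in $M^\circ$ — and after Theorem~\ref{thm:4.1} and Lusztig's classification of cuspidal pairs, the nilpotent $e$ corresponding to $v$ is \emph{even}, meaning its Dynkin $\SL_2$ acts on $e$ with weight $2$ and has no odd weights on $\mathfrak{m}^\circ = \mathrm{Lie}(M^\circ)$.

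\textbf{Key steps.} (1) Extend $v=\phi\big(1,\matje{1}{1}{0}{1}\big)$ inside $M^\circ$: since $u_\phi$ lies in $M^\circ U_P$ with $v$ the $M^\circ$-part and $u_\phi=\phi\big(1,\matje{1}{1}{0}{1}\big)$, and since we have arranged $\lambda(\C^\times)\subset M^\circ$, the cocharacter $\lambda$ normalizes $M^\circ$ and acts on the unipotent radical $U_P$ with positive weights. Decompose $u_\phi = v\,u_P$; apply Jacobson--Morozov in $M^\circ$ to $v$ to get an $\SL_2$-triple. (2) Identify $\lambda$ with (a conjugate of) the Dynkin cocharacter of $v$ in $M^\circ$: this is where the cuspidality bites. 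The cocharacter $z\mapsto\lambda(z)$ coming from $\phi|_{\SL_2(\C)}$ restricted to $M^\circ$ is a cocharacter of $M^\circ$ associated to the nilpotent orbit of $v$ (it grades $\mathfrak{m}^\circ$ so that $v$ has weight $2$, because the whole $\phi$-$\SL_2$ does so for $u_\phi$ in $G^\circ$ and $M^\circ$ is $\lambda$-stable). By uniqueness of the Jacobson--Morozov $\SL_2$ up to $Z_{M^\circ}(v)$-conjugacy (Kostant, \cite[Theorem 3.6]{Kos}, cited in the excerpt), and the fact that $\lambda$ together with $v$ extends to an $\SL_2$-homomorphism into $M^\circ$, we may take $\lambda$ to be \emph{the} Dynkin cocharacter of the triple containing $v$. (3) Conclude: for the Dynkin cocharacter $\lambda$ of an $\SL_2$-triple $(e,h,f)$ with $v=\exp(e)$, one has $\mathrm{Ad}(\lambda(z))\,e = z^2 e$, hence $\lambda(z)\,v\,\lambda(z)^{-1}=\exp(z^2 e)=v^{z^2}$, which is exactly the claimed identity.

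\textbf{Main obstacle.} The delicate point is step (2): justifying that the cocharacter $\lambda$ extracted from $\phi|_{\SL_2(\C)}$ is, after the allowed conjugation by $Z_{G^\circ}(u_\phi)$, compatible with the \emph{$M^\circ$-internal} $\SL_2$ attached to $v$ rather than merely to $u_\phi$ in $G^\circ$. Concretely: $\phi|_{\SL_2(\C)}$ gives an $\SL_2 \to G^\circ$ through $u_\phi=v u_P$, not through $v$. One must argue that, upon the chosen conjugation placing $\lambda(\C^\times)\subset M^\circ$, the projection to $M^\circ$ is still a homomorphic image of a one-parameter diagonal torus with $v$ of weight $2$ — i.e. that $\lambda$ grades $\mathfrak m^\circ$ appropriately. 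This follows because $M^\circ$ is a Levi subgroup of $P$ and $\lambda$ is assumed to lie in $M^\circ$, so the $\lambda$-weight decomposition of $\mathfrak g^\circ$ restricts to one of $\mathfrak m^\circ$; the $U_P$-part $u_P$ has strictly positive $\lambda$-weight and does not interfere with the weight-$2$ statement for $v$ in $\mathfrak m^\circ$. I would spell this out using the standard fact that a distinguished parabolic/associated cocharacter for $u_\phi$ in $G^\circ$ restricts, on the Levi $M^\circ$ of the cuspidal support, to an associated cocharacter for $v$ — combined with the evenness of cuspidal (hence distinguished, isolated) unipotent classes from Lusztig's classification recalled in Theorem~\ref{thm:4.1}. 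Once that normalization is pinned down, the remaining computation $\mathrm{Ad}(\lambda(z))e=z^2e \Rightarrow \lambda(z)v\lambda(z)^{-1}=v^{z^2}$ is immediate from the $\SL_2$-representation theory and the exponential map.
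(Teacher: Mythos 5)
Your step (2) is where the argument breaks down, and it cannot be repaired in the form you state it. It is not true that $\lambda(z) := \phi \big( 1, \matje{z}{0}{0}{z^{-1}} \big)$ is (conjugate to) the Dynkin cocharacter of $v$ in $M^\circ$, nor that the pair $(\lambda,v)$ extends to a homomorphism $\SL_2 (\C) \to M^\circ$. The simplest counterexample is the principal-series situation: $u_\phi$ regular in $G^\circ$, cuspidal support $M^\circ = T$ a maximal torus, $v=1$; the Dynkin cocharacter of $v$ in $M^\circ$ is then trivial, while $\lambda$ (essentially $2\rho^\vee$) is not. In general $\lambda$ differs from the Dynkin torus $\gamma_v \matje{z}{0}{0}{z^{-1}}$ of $v$ by a cocharacter with values in $Z(M)^\circ$ --- this is exactly the content of Lemma \ref{lem:7.12}, and the reason the correction $\chi_{\phi,v}$ is introduced there; if $\lambda$ were already an associated cocharacter for $v$ in $M^\circ$, that lemma would be vacuous. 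Moreover your justification of (2) is circular: knowing that $(\lambda,v)$ sits inside an $\SL_2$ mapping to $M^\circ$, or that $\lambda$ puts $\log v$ in degree $2$ of a grading of $\mathrm{Lie}(M^\circ)$, is precisely the assertion of the lemma. The appeals to cuspidality, distinguishedness and evenness of the class of $v$ are red herrings: they play no role in this statement (cuspidality enters only in Lemma \ref{lem:7.12}, via isolatedness of $v$, to force $\chi_{\phi,v}$ into $Z(M)^\circ$), and your ``standard fact'' that an associated cocharacter for $u_\phi$ in $G^\circ$ restricts to an associated cocharacter for $v$ in $M^\circ$ is false (same counterexample). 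The side remark that $u_P$ has strictly positive $\lambda$-weight is also unjustified --- there is no reason for $\lambda$ to be dominant with respect to $P$ --- and unnecessary.

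What does work, and is essentially the paper's proof, is the elementary observation buried in your ``main obstacle'' paragraph, run in the right direction. The $\SL_2$-relation gives $\lambda (z)\, u_\phi \,\lambda (z)^{-1} = \phi \big( 1, \matje{1}{z^2}{0}{1} \big) = u_\phi^{z^2}$. Since $\lambda (\C^\times) \subset M^\circ$ normalizes both $M^\circ$ and $U_P$, one may apply the projection $P \to P/U_P \cong M^\circ$ to this identity. The only point needing care is that this projection sends $u_\phi = v u_P$ to $v$ and $u_\phi^{z^2}$ to $v^{z^2}$: write $u_\phi = \exp_P (X+Y)$ with $X \in \mathrm{Lie}(M^\circ)$, $Y \in \mathrm{Lie}(U_P)$; since $\mathrm{Lie}(U_P)$ is an ideal of $\mathrm{Lie}(P)$ one has $\exp_P (X+Y) \in \exp_{M^\circ}(X) U_P$, so $X = \log_{M^\circ}(v)$, and likewise $u_\phi^{z^2} = \exp_P (z^2 (X+Y)) \in \exp_{M^\circ}(z^2 X) U_P = v^{z^2} U_P$. (You implicitly assume this compatibility when you say the $U_P$-part ``does not interfere'', but since $v$ and $u_P$ need not commute it requires the short argument just given.) No Jacobson--Morozov theory, no Dynkin cocharacter and no cuspidality is needed.
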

\begin{proof}
The condition on $M^\circ$ entails that 
\[
\mathrm{Ad} \circ \phi \big( 1, \matje{z}{0}{0}{z^{-1}} \big) (v)
\in M^\circ \quad \text{and} \quad \mathrm{Ad} \circ \phi 
\big( 1, \matje{z}{0}{0}{z^{-1}} \big) (u_P) \in U_P .
\]
Hence $\mathrm{Ad} \circ \phi \big( 1, \matje{z}{0}{0}{z^{-1}} \big) (v)$ is the image of 
\[
\mathrm{Ad} \circ \phi \big( 1, \matje{z}{0}{0}{z^{-1}} \big) (v u_P) = 
\mathrm{Ad} \circ \phi \big( 1, \matje{z}{0}{0}{z^{-1}} \big) (u_\phi)
\] 
under $P / U_P \isom M^\circ$. Since $\phi |_{\SL_2 (\C)} : \SL_2 (\C) \to G^\circ$ is an
algebraic group homomorphism, 
\begin{multline*}
\mathrm{Ad} \circ \phi \big( 1, \matje{z}{0}{0}{z^{-1}} \big) (u_\phi) = \phi \big( 1, 
\matje{z}{0}{0}{z^{-1}} \matje{1}{1}{0}{1} \matje{z^{-1}}{0}{0}{z} \big) \\
= \phi \big( 1, \matje{1}{z^2}{0}{1} \big) = u_\phi^{z^2} = (v u_P)^{z^2} .
\end{multline*}
By the unipotency of $v u_P$ there are unique $X \in \text{Lie} (M^\circ), Y \in 
\text{Lie} (U_P)$ such that $v u_P = \exp_P (X + Y)$. As Lie $(U_P)$ is an ideal of 
Lie $(P)$, $\exp_P (X+Y) \in \exp_{M^\circ} (X) U_P$, and hence $X = \log_{M^\circ} (v)$. 
Similarly we compute
\[
(v u_P)^{z^2} = \exp_P (\log_P (v u_P)^{z^2}) = \exp_P (z^2 (X + Y)) \in 
\exp_{M^\circ} (z^2 X) U_P .
\]
Consequently the image of $(v u_P)^{z^2}$ under $P / U_P \isom M^\circ$ is 
$\exp_{M^\circ} (z^2 X) = v^{z^2}$.
\end{proof}

In the setting of Lemma \ref{lem:7.11}, \cite[\S 2.4]{KaLu} shows that there exists
an algebraic group homomorphism $\gamma_v : \SL_2 (\C) \to M^\circ$ such that
\begin{itemize}
\item $\gamma_v \matje{1}{1}{0}{1} = v$,
\item $\gamma_v (\SL_2 (\C))$ commutes with $\phi \big( 1, \matje{z}{0}{0}{z^{-1}} \big)
\gamma_v \matje{z^{-1}}{0}{0}{z}$ for all $z \in \C^\times$.
\end{itemize}
Moreover $\gamma_v$ is unique up to conjugation by $Z_{M^\circ}\big( v, 
\phi \big( 1, \matje{z}{0}{0}{z^{-1}} \big) \big)$, for any $z \in \C^\times$ of infinite order. We will
say that a homomorphism $\gamma_v$ satisfying these conditions is adapted to $\phi$.

\begin{lem}\label{lem:7.12}
Let $(\phi,\rho)$ be an enhanced L-parameter for $\cH$ and write
$q \Psi_G (u_\phi,\rho) = [M,v,q\epsilon]_G$, using \eqref{eq:7.2}. Up to $G$-conjugacy
there exists a unique $\gamma_v : \SL_2 (\C) \to M^\circ$ adapted to $\phi$. 
Moreover the cocharacter
\[
\chi_{\phi,v} \colon z \mapsto \phi \big( 1, \matje{z}{0}{0}{z^{-1}} \big)
\gamma_v \matje{z^{-1}}{0}{0}{z}
\]
has image in $Z(M)^\circ$.
\end{lem}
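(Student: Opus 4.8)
The plan is to prove the two assertions in turn. Existence of $\gamma_v$ and its uniqueness up to $G$-conjugacy will be a bookkeeping consequence of the finer uniqueness recalled from \cite[\S 2.4]{KaLu} just above, while the statement about $\chi_{\phi,v}$ is the substantive point and will be deduced from the cuspidality of $(v,q\epsilon)$ by an elementary argument with centralizers of cocharacters. Throughout we keep the normalization imposed before Lemma \ref{lem:7.11}, so that $\phi\big(1,\matje{z}{0}{0}{z^{-1}}\big)\in M^\circ$ for all $z\in\C^\times$; this is exactly what makes $\gamma_v$ and $\chi_{\phi,v}$ defined with values in $M^\circ$.

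\emph{Uniqueness up to $G$-conjugacy.} After fixing a representative $(M,v,q\epsilon)$ of $q\Psi_G(u_\phi,\rho)$ and performing the normalization, which by the discussion before Lemma \ref{lem:7.11} is achieved by conjugating $\phi$ with an element of $Z_{G^\circ}(u_\phi)\subseteq G$, the result of \cite[\S 2.4]{KaLu} produces $\gamma_v$ and shows it is unique up to conjugation by $Z_{M^\circ}\big(v,\phi\big(1,\matje{z}{0}{0}{z^{-1}}\big)\big)$ for $z$ of infinite order. All three sources of ambiguity — the choice of representative $(M,v,q\epsilon)$, the normalizing conjugation, and this residual conjugation — are realized by elements of $G$ (the last one even by elements of $M^\circ\subseteq G$). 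Hence any two admissible choices of $\gamma_v$ lie in a single $G$-orbit.

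\emph{The cocharacter $\chi_{\phi,v}$.} Write $a_z=\phi\big(1,\matje{z}{0}{0}{z^{-1}}\big)$ and $b_z=\gamma_v\matje{z}{0}{0}{z^{-1}}$, so that $\chi_{\phi,v}(z)=a_zb_z^{-1}$. First, $\chi_{\phi,v}$ is an algebraic homomorphism $\C^\times\to M^\circ$: adaptedness of $\gamma_v$ gives that $b_w$ commutes with $\chi_{\phi,v}(z)$ for all $z,w$, and since $b_w$ also commutes with $b_z$ it commutes with $a_z=\chi_{\phi,v}(z)b_z$, whence
\[
\chi_{\phi,v}(z_1)\chi_{\phi,v}(z_2)=a_{z_1}b_{z_1}^{-1}a_{z_2}b_{z_2}^{-1}=a_{z_1}a_{z_2}b_{z_1}^{-1}b_{z_2}^{-1}=a_{z_1z_2}b_{z_1z_2}^{-1}=\chi_{\phi,v}(z_1z_2),
\]
using that $\phi|_{\SL_2(\C)}$ and $\gamma_v$ are homomorphisms. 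Consequently $\chi_{\phi,v}(\C^\times)$ is a (possibly trivial) subtorus of $M^\circ$, so $Z_{M^\circ}(\chi_{\phi,v}(\C^\times))$ is a Levi subgroup of $M^\circ$. By adaptedness $\chi_{\phi,v}(\C^\times)$ centralizes $\gamma_v(\SL_2(\C))$, which contains $v=\gamma_v\matje{1}{1}{0}{1}$; hence $v\in Z_{M^\circ}(\chi_{\phi,v}(\C^\times))$. But $(\cC_v^{M^\circ},\epsilon)$ is a cuspidal pair — it is the cuspidal support $\Psi_{G^\circ}(u_\phi,\rho^\circ)$ occurring in \eqref{eq:6.1} — so $v$ is contained in no proper Levi subgroup of $M^\circ$ (cf.\ \cite[2.7]{Lus1}). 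Therefore $Z_{M^\circ}(\chi_{\phi,v}(\C^\times))=M^\circ$, i.e.\ $\chi_{\phi,v}(\C^\times)\subseteq Z(M^\circ)$, and being connected it lies in $Z(M^\circ)^\circ=Z(M)^\circ$. The one genuine obstacle is precisely this last step — extracting from ``$v$ supports a cuspidal local system on $M^\circ$'' that $v$ lies in no proper Levi of $M^\circ$, which is where the cuspidality hypothesis is essential; everything else is formal.
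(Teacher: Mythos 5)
Your proof is correct and follows essentially the same route as the paper: the uniqueness statement is the bookkeeping reduction to the Kazhdan--Lusztig uniqueness already recalled before Lemma \ref{lem:7.11}, and the key point is exactly the paper's, namely that cuspidality of $(v,q\epsilon)$ forces $v$ to be isolated (contained in no proper Levi of $M^\circ$, cf.\ \cite[2.7]{Lus1}), so the torus $\chi_{\phi,v}(\C^\times)$, which centralizes $v$, must lie in $Z(M^\circ)^\circ = Z(M)^\circ$. Your extra verification that $\chi_{\phi,v}$ is a cocharacter is a harmless addition the paper leaves implicit.
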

\begin{proof}
Everything except the last claim was already checked above. Since $(v,q \epsilon)$ is
cuspidal, Theorem \ref{thm:4.1}.a says that $v$ is distinguished. This means that it
does not lie in any proper Levi subgroup of $M^\circ$. In other words,
every torus of $M^\circ$ which centralizes $v$ is contained in $Z(M^\circ)^\circ$. 
Finally we note that, as $M$ is a quasi-Levi subgroup, $Z(M)^\circ = Z(M^\circ)^\circ$.
\end{proof}

Notice that the image of the cocharacter $\chi_{\phi,v} : \C^\times \to Z(M)^\circ$ 
commutes not only with $\gamma_v (\SL_2 (\C))$ but also with $\phi (\mb W_F)$, because
$M^\circ \subset G^\circ \subset Z_{\cH_\sc^\vee}(\phi |_{\mb W_F})$.

\begin{defn}\label{def:7.8}
In the setting of Lemma \ref{lem:7.12} we put 
\[
{}^L \Psi (\phi,\rho) = (Z_{\cH^\vee \rtimes \mb W_F}(Z(M)^\circ),
\phi |_{\mb W_F},v,q \epsilon),
\]
a $\cH$-relevant cuspidal datum for ${}^L \cH$.

Let $\norm{\cdot} : \mb W_F \to \R_{>0}$ be the group homomorphism with $\norm{w} = q$ if
$w(f) = f^q$ for all $f$ in the algebraic closure of the residue field of $F$.

We define a L-parameter $\varphi_v \colon \mb W_F \times \SL_2 (\C) \to 
Z_{\cH^\vee \rtimes \mb W_F}(Z (M)^\circ)$ by
\[
\varphi_v (w,x) = \phi (w) \chi_{\phi,v}(\norm{w}^{1/2}) \gamma_v (x) .
\]
The cuspidal support of $(\phi,\rho)$ is 
\[
\mathbf{Sc} (\phi,\rho) = (Z_{\cH^\vee \rtimes \mb W_F}(Z(M)^\circ),\varphi_v,q \epsilon),
\]
another $\cH$-relevant cuspidal datum for ${}^L \cH$.
\end{defn}

By parts (a) and (c) of Proposition \ref{prop:7.2} the map ${}^L \Psi$ is canonical in the 
sense that its image is unique up to conjugation. By Lemma \ref{lem:7.12} {\bf Sc} is
also canonical. Furthermore, the images of ${}^L \Psi$ and {\bf Sc} are $\cH$-relevant by 
Proposition \ref{prop:7.2}.b. In view of Proposition \ref{prop:7.2}.c, we can always
represent ${}^L \Psi (\phi,\rho)$ and $\mathbf{Sc}(\phi,\rho)$ by a cuspidal L-parameter
for a Levi subgroup of $\cH$.

An advantage of $\varphi_v$ over $(\phi|_{\mb W_F},v)$ is that
\begin{equation}\label{eq:infChar}
\begin{split}
\varphi_v \big( w, \matje{\norm{w}^{1/2}}{0}{0}{\norm{w}^{-1/2}} \big) = 
\phi (w) \chi_{\phi,v}(\norm{w}^{1/2}) \gamma_v \matje{\norm{w}^{1/2}}{0}{0}{\norm{w}^{-1/2}} \\
= \phi \big( w, \matje{\norm{w}^{1/2}}{0}{0}{\norm{w}^{-1/2}} \big) .
\end{split}
\end{equation}
In the terminology from \cite{Hai,Vog}, this says that the cuspidal support map for 
enhanced L-parameters preserves infinitesimal characters. It is interesting to
compare the fibres of {\bf Sc} with the variety constructed in \cite[Corollary 4.6]{Vog}.
Vogan considers the set of all L-parameters for ${}^L \cH$ with a fixed infinitesimal
character (up to conjugation). In \cite[Proposition 4.5]{Vog} he proves that this set has 
the structure of a complex affine variety, on which $\cH^\vee$ acts naturally, with only 
finitely many orbits. The same picture can be obtained from a fibre of {\bf Sc}, upon
neglecting all enhancements of L-parameters.

More or less by definition Bernstein's cuspidal support map for $\Irr (\cH)$ preserves
infinitesimal characters. That property is slightly less strong for our {\bf Sc} on the 
Galois side, for enhanced L-parameters with different cuspidal support can have the same 
infinitesimal character. The map 
\[
{}^L \Psi : \Phi_e (\cH) \to \{\text{cuspidal data for } \cH \} / \cH^\vee \text{-conjugacy} 
\]
is an analogue of a modified version, say $\widetilde{\mb{Sc}}$, of Bernstein's
cuspidal support map for $\Irr (\cH)$. Neither ${}^L \Psi$ nor 
$\widetilde{\mathbf{Sc}}$ preserve infinitesimal characters, but they
have other advantages that the cuspidal support maps lack. For ${}^L \Psi$
this will become clear in the Section \ref{sec:extquot}, while the importance 
of $\widetilde{\mathbf{Sc}}$ stems from its role in the ABPS conjecture. 

To enable a comparison, we recall its definition from \cite[\S 2.5]{ABPS7}.
Let $\mc P = \mc M \mc{U}_{\mc P}$ be a parabolic subgroup of $\cH$ and let 
$\omega \in \Irr (\mc M)$ be square-integrable modulo centre.
Suppose that $\pi_t \in \Irr (\cH)$ is tempered and that it is a direct
summand of the normalized parabolic induction $I_{\mc P}^\cH (\omega)$.
Let $(\cL,\sigma)$ be the cuspidal support of $\omega$. Then $\sigma$ can
be written uniquely as $\sigma = \sigma_u \otimes \nu$, with $\nu : \cL
\to \R_{>0}$ an unramified character and $\sigma_u \in \Irr_\cusp (\cL)$
unitary (and hence tempered). One defines
\[
\widetilde{\mb{Sc}} (\pi_t) = (\cL,\sigma_u) / \cH \text{-conjugacy} .
\]
Notice that $\widetilde{\mb{Sc}}$ preserves temperedness of representations,
in contrast with $\mb{Sc}$.

More generally, by \cite[Theorem 2.15]{SolPCH} every $\pi \in \Irr (\cH)$
can be written (in an essentially unique way) as a Langlands quotient of
$I_{\mc P}^\cH (\omega \otimes \chi)$, where $\mc P = \mc M \mc U_{\mc P}$
and $\omega$ are as above and $\chi \in X_\nr (\mc M)$. Then 
$\chi$ restricts to an unramified character of $\cL$ and the cuspidal 
support of  $\omega \otimes \chi$ is $(\cL,\sigma \otimes \chi)$.
In this case one defines
\[
\widetilde{\mb{Sc}} (\pi) = (\cL,\sigma_u \otimes \chi) / \cH \text{-conjugacy}. 
\]
We note that the only difference with $\mb{Sc}(\pi)$ is $\nu |_\cL$, an 
unramified character $\cL \to \R_{>0}$ which represents the absolute value 
of the infinitesimal central character of $\sigma$.

It has been believed for a long time that the (enhanced) L-parameters of $\pi \in
\Irr (\cH)$ and $\mathbf{Sc}(\pi)$ are always related, but it was not clear how.
With our new notions we can make this precise. Let $\mf{Lev}(\cH)$ be a set of
representatives for the conjugacy classes of Levi subgroups of $\cH$, and
recall \eqref{eq:C1} and \eqref{eq:C2}. 

\begin{conj}\label{conj:7.13}
Assume that a local Langlands correspondence exists for $\cH$ and for supercuspidal
representations of its Levi subgroups. The following diagram should commute:
\[
\xymatrix{
\Irr (\cH) \ar@{<-}[rr]^{\LLC} \ar[d]^{\mathbf{Sc}} & & \Phi_e (\cH) \ar[d]^{\mathbf{Sc}} \\
\bigsqcup_{\cL \in \mf{Lev}(\cH)} \Irr_\cusp (\cL) / W(\cH,\cL) \ar@{<-}[rr]_{\LLC} & & 
\bigsqcup_{\cL \in \mf{Lev}(\cH)} \Phi_\cusp (\cL) / W(\cH,\cL) .
}
\]
\end{conj}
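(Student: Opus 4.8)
The statement is conjectural because it presupposes a local Langlands correspondence; accordingly, what I would actually carry out is a reduction to a clean structural assertion, together with a verification in the cases where the $\LLC$ is available. The plan is to first observe that, by Proposition \ref{prop:7.2} and Definition \ref{def:7.8}, $\mathbf{Sc}(\phi,\rho)$ is always represented by a cuspidal L-parameter for a genuine Levi subgroup $\cL\subset\cH$, and that the right-hand vertical map factors as (i) the quasi-cuspidal support map $q\Psi_G$ of Section \ref{sec:quasiLevi} applied to $G=Z^1_{\cH^\vee_\sc}(\phi|_{\mb W_F})$, followed by (ii) the unramified-twist adjustment by the cocharacter $\chi_{\phi,v}$. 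On the $p$-adic side Bernstein's $\mathbf{Sc}$ likewise factors, through the square-integrable-modulo-centre data and then a positive unramified twist, exactly as recalled after Definition \ref{def:7.8} and in \cite{SolPCH}. So the conjecture splits into two independent compatibilities: that the ``unitary parts'' correspond, and that the ``absolute values of the infinitesimal central characters'' correspond.

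For the first and main compatibility I would use the extended-quotient descriptions on both sides. Theorem \ref{thm:A} presents $\Phi_{e,\zeta_\cH}(\cH)$ as $\bigsqcup_\cL(\Phi_{\cusp,\zeta_\cH}(\cL)\q W(\cH,\cL))_\kappa$, and the modification ${}^L\Psi$ of $\mathbf{Sc}$ is, up to the $\chi_{\phi,v}$-twist, the projection of that twisted extended quotient onto the underlying ordinary quotient $\bigsqcup_\cL \Phi_{\cusp,\zeta_\cH}(\cL)/W(\cH,\cL)$. On the $p$-adic side the Bernstein decomposition together with the ABPS conjecture \cite{ABPS7} gives the analogous $\bigsqcup_\cL(\Irr_\cusp(\cL)\q W(\cH,\cL))_\kappa$, with Bernstein's $\widetilde{\mathbf{Sc}}$ again the projection to $\bigsqcup_\cL\Irr_\cusp(\cL)/W(\cH,\cL)$. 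Hence the diagram commutes as soon as the $\LLC$ for $\cH$ intertwines these two bundles compatibly with the bundle maps built from the $\LLC$ for $\Irr_\cusp(\cL)$ — which is precisely Conjecture \ref{conj:B}. So the structural step I would prove is: \emph{Conjecture \ref{conj:B} implies Conjecture \ref{conj:7.13}}, by a diagram chase through Theorem \ref{thm:A}. Concretely, in each known case — inner forms of $\GL_n$ and $\SL_n$ \cite{ABPS6}, split classical groups \cite{Mou}, principal series of split groups \cite{ABPS5} — the ABPS picture is already realized compatibly with $\LLC$, and one reads off commutativity by comparing the $W(\cH,\cL)$-orbit of the cuspidal datum produced on each side.

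The second compatibility, about unramified twists, I would handle via the Langlands classification. On the Galois side $\chi_{\phi,v}$ has image in $Z(M)^\circ$ and the auxiliary parameter $\varphi_v$ preserves infinitesimal characters, \eqref{eq:infChar}; on the $p$-adic side $\mathbf{Sc}$ differs from $\widetilde{\mathbf{Sc}}$ exactly by the positive unramified character $\nu|_\cL$ recording the absolute value of the infinitesimal central character. Since the $\LLC$ for unramified characters is canonical and $\LLC$ is compatible with the Langlands classification \cite{SiZi} (the factorization through bounded/tempered data used throughout this paper), matching $\chi_{\phi,v}$ with $\nu|_\cL$ reduces to that known compatibility together with the cocharacter computation in Lemma \ref{lem:7.12}.

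The principal obstacle is that there is no general $\LLC$, so no uniform proof is possible and one is confined to the cases above; within those, the substantive work is in the ``mixed'' $\phi$ where $\Pi_\phi(\cH)$ contains both supercuspidal and non-supercuspidal members, such as the subregular $G_2$ parameter of \cite{Lus2}. The hard part will be to check that the generalized Springer correspondence for $G=Z^1_{\cH^\vee_\sc}(\phi|_{\mb W_F})$ — including the twisting $2$-cocycles $\natural_\cE$ and $\kappa_{q\ft}$ of Sections \ref{sec:cusp}--\ref{sec:quasiLevi} — reproduces the parameters of the affine Hecke algebra governing the relevant Bernstein block, so that the bijection of Theorem \ref{thm:E} matches the type-theoretic bijection on the automorphic side, and in particular that $\kappa_{q\ft}$ is reconciled with the Hecke-algebra $2$-cocycle there. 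Everything else is bookkeeping with centralizers and unramified twists.
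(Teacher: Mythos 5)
Since this statement is a conjecture, the paper offers no proof: its entire ``justification'' is the list of cases immediately following it, where commutativity is checked case by case from known facts ($\GL_n (F)$ via Bernstein--Zelevinsky and \cite{Hen}, inner forms $\GL_m (D)$ and $\SL_m (D)$ by the Zelevinsky-type classification and lifting/restriction, split classical groups via \cite[Th\'eor\`eme 5.9]{Mou}, principal series via \cite{ABPS5}, and unipotent representations via \cite{Lus3}, where the key point is exactly that Lusztig parametrizes central characters by $f = \phi \big( \Fr, \matje{\norm{\Fr}^{1/2}}{0}{0}{\norm{\Fr}^{-1/2}} \big)$). Your route is genuinely different: you propose a structural reduction ``Conjecture \ref{conj:B} $\Rightarrow$ Conjecture \ref{conj:7.13}'' through Theorem \ref{thm:A}, plus a separate matching of the positive unramified parts. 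The comparison of orbits of cuspidal data through the extended-quotient projections is a reasonable organizing principle that the paper does not spell out, and your list of verification cases essentially coincides with the paper's (you omit the unipotent case, which is the one where the infinitesimal-character bookkeeping is least visible and which the paper takes care to include).

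The gap is in how much the reduction actually buys. Theorem \ref{thm:A} (Theorem \ref{thm:10.3}) and hence Conjecture \ref{conj:B} are built on ${}^L \Psi$ on the Galois side and on $\widetilde{\mathbf{Sc}}$ on the $p$-adic side, so a diagram chase through them can only give commutativity for the modified supports. Upgrading to $\mathbf{Sc}$, i.e.\ matching $\nu |_\cL$ with the twist by $w \mapsto \chi_{\phi,v}(\norm{w}^{1/2})$, is not a formal consequence of compatibility with the Langlands classification \cite{SiZi} together with Lemma \ref{lem:7.12} and \eqref{eq:infChar}: those reduce both sides to bounded/tempered data (as in Lemma \ref{lem:8.3}), but for a tempered representation inside an induced-from-discrete-series the identification of the positive part of its cuspidal support with the $\SL_2$-cocharacter attached to the cuspidal quasi-support of its parameter is precisely the nontrivial content that must be verified with case-specific input (Bernstein--Zelevinsky/Henniart for $\GL_n$, M\oe glin's results as used in \cite{Mou} for classical groups, Lusztig's $f$ for unipotent representations). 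So your ``second compatibility'' is itself conjectural in general, on the same footing as Conjecture \ref{conj:7.13}, rather than bookkeeping; any write-up along your lines should state it as an explicit hypothesis (LLC preserves infinitesimal characters in the sense of \cite{Hai,Vog}) and then verify it in each known case, which is in effect what the paper does directly.
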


Conjecture \ref{conj:7.13} is known to hold for many of the groups for which a LLC
has been established.
\begin{itemize}
\item For $\GL_n (F)$ it is a consequence of the Bernstein--Zelevinsky
classification of $\Irr (\GL_n (F))$ \cite{Zel} and the way it is used in the local
Langlands correspondence for $\GL_n (F)$, see \cite[\S 2]{Hen}.
\item Irreducible representations of inner forms $\GL_m (D)$ of $\GL_n (F)$ can 
also be classified via a Zelevinsky-like scheme, see \cite{Tad}. This is used in
the LLC in the same way as for $\GL_n (F)$ \cite[\S 2]{ABPS3}, so the conjecture
also holds for these groups.
\item The local Langlands correspondence for an inner form $\SL_m (D)$ of $\SL_n (F)$
is derived directly from that for $\GL_m (D)$: on the Galois side one lifts 
L-parameters $\mb W_F \times \SL_2 (\C) \to \PGL_n (\C)$ to $\GL_n (\C)$,
whereas on the $p$-adic side one restricts irreducible representations of
$\GL_m (D)$ to $\SL_m (D)$ to construct L-packets.
These two operations do not really change the infinitesimal central characters
of L-parameters or smooth representations, only on $Z(\GL_n (\C)) \cong \C^\times$ 
or $Z(\GL_m (D)) \cong F^\times$, respectively. Therefore Conjecture 
\ref{conj:7.13} for $\GL_m (D)$ implies it for $\SL_m (D)$.
\item For the split classical groups $\Sp_{2n}(F)$ and $\SO_{m}(F)$ when $F$ is a 
$p$-adic field. The support cuspidal map specializes to the map defined in 
\cite[Th\'eor\`eme~4.27]{Mou}, and the commutativity of the diagram follows from 
\cite[Th\'eor\`eme~5.9]{Mou}. 
\item For principal series representations of split groups see 
\cite[Theorem 15.1]{ABPS5}.
\item For unipotent representations of simple $p$-adic groups $\cH$ of adjoint 
type we refer to \cite{Lus3}. Although it is not so easy to see, the essence is 
that Lusztig uses the element 
$f = \phi \Big( \Fr, \matje{\norm{\Fr}^{1/2}}{0}{0}{\norm{\Fr}^{-1/2}} \Big)$
to parametrize the central character of a representation of a suitable
affine Hecke algebra \cite[\S 9.3]{Lus3}. By construction this also 
parametrizes the infinitesimal central character of the associated 
representation of $\cH$.
\end{itemize}

To support Conjecture \ref{conj:7.13}, we check that the cuspidal support map
is compatible with the Langlands classification for L-parameters. The latter
is a version of the Langlands classification for $\Irr (\cH)$ on the Galois
side of the LLC, it stems from \cite{SiZi}.

We will describe first a Galois side  analogue for unramified characters. 
Let $\mb I_F \subset \mb W_F$ be as above the inertia subgroup and let
$\Fr \in \mb W_F$ be a Frobenius element. Recall from \cite[\S 3.3.1]{Hai}
that there is a canonical isomorphism of complex tori
\begin{equation}\label{eq:8.1}
X_\nr (\cL) \cong \big( Z (\cL^\vee)^{\mb I_F} \big)^\circ_\Fr =
Z( \cL^\vee \rtimes \mb I_F )^\circ_{\mb W_F / \mb I_F} =
Z( \cL^\vee \rtimes \mb I_F )^\circ_{\cL^\vee \rtimes \mb W_F} .
\end{equation}
The group $X_\nr (\cL)$ acts on $\Irr (\cL)$ by tensoring. This corresponds
to an action of $\big( Z (\cL^\vee)^{\mb I_F} \big)^\circ_\Fr$ on $\Phi (\cL)$
and on $\Phi_e (\cL)$. Namely, let $\phi : \mb W_F \times \SL_2 (\C) \to
{}^L \cL$ be a relevant L-parameter and let $z \in Z (\cL^\vee)^{\mb I_F}$.
We define $z \phi \in \Phi (\cL)$ by
\begin{equation}\label{eq:8.2}
(z \phi) \big|_{\mb I_F \times \SL_2 (\C)} = \phi \big|_{\mb I_F \times \SL_2 (\C)} 
\quad \text{ and } \quad (z \phi) (\Fr) = z \, \phi (\Fr). 
\end{equation}
Notice that $z \phi \in \Phi (\cL)$ because $z \in Z( \cL^\vee \rtimes \mb I_F)$. 
Suppose that $z' \in Z (\cL^\vee)^{\mb I_F}$ represents the same element of 
$\big( Z (\cL^\vee)^{\mb I_F} \big)_\Fr$. Then $z^{-1} z' = x^{-1} \Fr (x)$ for
some $x \in Z (\cL^\vee)^{\mb I_F}$, and
\[
z' \phi = x^{-1} \Fr (x) z \phi = x^{-1} z \phi x . 
\]
Hence $z' \phi = z \phi$ in $\Phi (\cL)$ and we obtain an action of 
$\big( Z (\cL^\vee)^{\mb I_F} \big)_\Fr$ on $\Phi (\cL)$. As $z$ commutes with
$\cL^\vee$, $\cS_{z \phi} = \cS_\phi$. This enables us to lift the action to
$\Phi_e (\cL)$ by
\begin{equation}\label{eq:8.3}
z (\phi,\rho) = (z \phi,\rho) . 
\end{equation}
To allow $\cH^\vee$ to act on the above objects, we also have to define them 
for Levi L-subgroups ${}^L L$ of ${}^L \cH$. Generalizing \eqref{eq:8.1}, we put
\begin{equation}\label{eq:8.11}
X_\nr \big( {}^L L \big) = 
Z \big( \cH^\vee \rtimes \mb I_F \cap {}^L L \big)^\circ_\Fr . 
\end{equation}
This group plays the role of unramified characters for ${}^L L$, we will sometimes
refer to it as the unramified twists of ${}^L L$.
By the formula \eqref{eq:8.2}, $X_\nr ({}^L L)$ acts on Langlands parameters 
with image in ${}^L L$. As in \eqref{eq:8.3}, that extends to an action on
enhanced L-parameters for ${}^L L$. 

The following notion replaces the data in the Langlands classification for $\cH$.

\begin{defn}\label{def:8.1}
Fix a pinning of $\cH$ and a $\mb W_F$-stable pinning of $\cH^\vee$. 
A standard triple for $\cH$ consists of:
\begin{itemize}
\item a standard Levi subgroup $\cL$ of $\cH$;
\item a bounded L-parameter $\phi_t \in \Phi_{\bdd}(\cL)$;
\item an unramified twist $z \in X_\nr ({}^L \cL)$, which is strictly positive
with respect to the standard parabolic subgroup $\mc P$ with Levi factor $\cL$.
\end{itemize}
The last condition means that $\alpha^\vee (z) > 1$ for every root $\alpha$
of $(\mc U_{\mc P}, Z(\cL)^\circ)$, where $\mc U_{\mc P}$ denotes the unipotent 
radical of $\mc P$. 

An enhancement of a standard triple $(\cL,\phi_t,z)$ is an $\cL$-relevant
irreducible representation $\rho_t$ of $\cS^\cL_{z \phi_t}$. Let $\zeta_\cH$ and 
$\zeta_\cH^\cL$ be as in Lemma \ref{lem:7.14}. We say that $(\cL,\phi_t,z,\rho_t)$
is an enhanced standard triple for $(\cH,\zeta_\cH)$ if $\rho_t |_{Z(\cL^\vee_\sc)}
= \zeta_\cH^\cL$.
\end{defn}

\begin{thm}\label{thm:8.2}
\enuma{
\item There exists a canonical bijection from the set of standard triples of $\cH$ to 
$\Phi (\cH)$. It sends $(\cL,\phi_t,z)$ to $z \phi_t$ (up to $\cH^\vee$-conjugacy).
\item The natural map 
\[
p_{\zeta_\cH^\cL} \C[\cS^\cL_{\phi_t}] = p_{\zeta_\cH^\cL} \C[\cS^{\cL}_{z \phi_t}] 
\to p_{\zeta_\cH} \C[\cS^\cH_{z \phi_t}]
\]
from Lemma \ref{lem:7.14} is an isomorphism. Hence part (a) can be enhanced to a 
canonical bijection 
\[
\begin{array}{ccc}
\{\text{enhanced standard triples for } (\cH,\zeta_\cH) \} & \longleftrightarrow & 
\Phi_{e,\zeta_\cH} (\cH) \\
(\cL,\phi_t,z,\rho_t) & \mapsto & (z \phi_t,\rho_t) .
\end{array}
\]
} 
\end{thm}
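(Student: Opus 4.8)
The plan is to combine the classical Langlands classification for $\Irr(\cH)$ --- transported to the Galois side in \cite{SiZi} --- with the algebraic input from Lemma~\ref{lem:7.14}. First I would handle part~(a). Given $\phi\in\Phi(\cH)$, I would decompose it according to its restriction to the inertia group and its $\SL_2(\C)$-part, and extract from $\phi(\Fr)$ its ``polar decomposition'': writing $\phi(\Fr)=(s\cdot a,\Fr)$ with $s$ lying in a maximal compact subgroup and $a=\alpha^\vee(z)$-type positive part, one reads off a cocharacter $z\in X_\nr({}^L\cL)$ for a uniquely determined standard Levi L-subgroup ${}^L\cL$ (the smallest one whose associated parabolic makes $z$ strictly positive), and a bounded parameter $\phi_t$ with $z\phi_t=\phi$. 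The fact that $z$ can be chosen strictly positive with respect to a \emph{standard} parabolic, and that $(\cL,\phi_t,z)$ is then unique, is exactly the Galois-side Langlands classification; I would cite \cite{SiZi} together with \eqref{eq:8.1}--\eqref{eq:8.3} for the identification of the unramified-twist torus, and check relevance of ${}^L\cL$ for $\cH$ via Definition~\ref{def:7.3}. Conversely, from a standard triple $(\cL,\phi_t,z)$ one forms $z\phi_t\in\Phi(\cH)$; the two constructions are mutually inverse because the decomposition of a semisimple element into compact and positive parts is unique. This gives the bijection of~(a).

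For part~(b), the key point is that the injection $p_{\zeta_\cH^\cL}\C[\cS_\phi^\cL]\to p_{\zeta_\cH}\C[\cS_\phi]$ from Lemma~\ref{lem:7.14}(c) is in fact surjective when $\phi=z\phi_t$ comes from a standard triple. I would argue this on the level of component groups: the strict positivity of $z$ forces $Z_{\cH^\vee}(z\phi_t)\subseteq Z_{\cH^\vee}(z)=\cL^\vee$ (an element centralizing $z\phi_t$ in particular centralizes the positive cocharacter $z$, hence lies in its centralizer, which is the Levi $\cL^\vee$ by the root-theoretic argument already used in the proof of Lemma~\ref{lem:7.4}). Therefore $Z_{\cH^\vee}(z\phi_t)=Z_{\cL^\vee}(z\phi_t)=Z_{\cL^\vee}(\phi_t)$, and the same at the level of $\cH^\vee_\sc$ versus $\cL^\vee_c$, using \eqref{eq:7.26}/\eqref{eq:7.28}; consequently the homomorphism $\lambda$ of \eqref{eq:7.27} is an isomorphism onto $\cS_{z\phi_t}$ after the appropriate central quotients, and by \eqref{eq:7.25} the map of group algebras is a bijection. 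Once this is known, $\cS_{\phi_t}=\cS_{z\phi_t}$ and $\zeta_\cH^\cL$-relevant enhancements $\rho_t$ of the standard triple correspond exactly to the enhancements $\rho$ of $z\phi_t$ with $\rho|_{Z(\cL^\vee_\sc)}=\zeta_\cH^\cL$, which by Lemma~\ref{lem:7.14}(a),(c) and \eqref{eq:6.25} are precisely the elements of $\Phi_{e,\zeta_\cH}(\cH)$ lying over $\phi$. Putting this together with~(a) produces the enhanced bijection $(\cL,\phi_t,z,\rho)\mapsto(z\phi_t,\rho)$, and canonicity is inherited from the canonicity in~(a) and in Lemma~\ref{lem:7.14}.

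\textbf{Main obstacle.} I expect the delicate step to be the rigorous verification that $Z_{\cH^\vee}(z\phi_t)$ is contained in $\cL^\vee$ and hence that the standard Levi L-subgroup attached to $\phi$ by the Langlands classification is \emph{exactly} the one through which the component group ``factors'', so that Lemma~\ref{lem:7.14}(c) becomes an isomorphism rather than merely an injection. This requires being careful about the difference between $Z^1$ and $Z$ of the centralizers (the source of strictness in \eqref{eq:7.26}), about $\mb W_F$-fixed points of centres versus their neutral components (as in \cite[Lemma~1.1]{Art1}), and about the passage between $\cH^\vee$ and $\cH^\vee_\sc$; none of it is conceptually hard but all of it must be assembled correctly for the ``$\C[\cS]$-isomorphism'' claim. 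The rest --- the bijection in~(a) and the bookkeeping of $\zeta_\cH$-characters --- is a routine transcription of \cite{SiZi} and Lemma~\ref{lem:7.14}.
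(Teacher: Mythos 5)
Your overall skeleton matches the paper's: part (a) is in both cases an appeal to the Langlands classification for L-parameters of \cite{SiZi} (the paper simply cites \cite[Theorem 4.6]{SiZi} and notes the translation between $\nu \in \mf a^*_\cL$ regular and $z \in X_\nr ({}^L \cL)$ strictly positive), so your polar-decomposition sketch is just an inline re-derivation of that result. The real divergence is in (b). The paper does not analyse the centralizer of $z\phi_t$ at all: it cites \cite[Proposition 7.1]{SiZi}, which says that the natural map $\cR^\cL_{z\phi_t} \to \cR_{z\phi_t}$ is a bijection, and then observes that the injection of Lemma \ref{lem:7.14}.c goes between spaces of dimensions $|\cR^\cL_\phi|$ and $|\cR_\phi|$ (via \eqref{eq:7.25} and its analogue for $\cH$), so equality of these dimensions forces the injection to be an isomorphism. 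This dimension count completely bypasses the delicate unwinding of \eqref{eq:7.26}--\eqref{eq:7.28} that you flag as your main obstacle: once injectivity is in hand from Lemma \ref{lem:7.14}, no further comparison of the two $Z^1$'s is needed.

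By contrast, your route proposes to prove the containment $Z_{\cH^\vee}(z\phi_t) \subseteq Z_{\cH^\vee}(z) = \cL^\vee$ directly and then push it through the $Z^1$-formalism. Be aware that your parenthetical justification (``an element centralizing $z\phi_t$ in particular centralizes the positive cocharacter $z$'') is not automatic: centralizing a product does not give centralizing a factor. What makes it true here is the boundedness of $\phi_t$ together with the strict positivity of $z$, via the uniqueness of the commuting elliptic/hyperbolic decomposition of $\phi(\Fr)$ (or an argument with a power $z^n$, which is again strictly positive, so $Z_{\cH^\vee}(z^n) = \cL^\vee$); this is exactly the content of \cite[Proposition 7.1]{SiZi}. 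So your approach is viable but amounts to re-proving the cited result, and it then still has to negotiate the discrepancy between $Z^1_{\cL^\vee_\sc}(\phi)$ and $Z^1_{\cH^\vee_\sc}(\phi) \cap \cL^\vee_\sc$ that you correctly identify. If you prefer a self-contained argument, supply the elliptic/hyperbolic uniqueness step explicitly; otherwise, citing \cite[Proposition 7.1]{SiZi} and finishing with the paper's dimension count is both shorter and avoids the $Z^1$ bookkeeping.
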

\begin{proof}
(a) See \cite[Theorem 4.6]{SiZi}. The differences are only notational: we 
replaced a standard parabolic subgroup $\mc P$ of $\cH$ by its standard Levi factor 
$\cL$ and we used $z \in X_\nr ({}^L \cL)$ instead of the presentation of $X_\nr (\cL)$
by elements of $\mf a^*_\cL$. The regularity of $\nu \in \mf a^*_\cL$ in \cite{SiZi}
means that it lies in the open Weyl chamber of $\mf a^*_\cL$ determined by
$\mc P$. This translates to $z$ being strictly positive with respect to $\mc P$.

(b) Since $z \in Z(\cL^\vee)$, $\phi_t$ and $z \phi_t$ have the same $\cS$-groups 
for $\cL$. In \cite[Proposition 7.1]{SiZi} it is shown that the natural map 
$\cR_{z \phi_t}^\cL \to \cR_{z \phi_t}$ is a bijection. In Lemma \ref{lem:7.14} 
we constructed a natural injection
\[ 
p_{\zeta_\cH^\cL} \C[\cS^{\cL}_{z \phi_t}] \to p_{\zeta_\cH} \C[\cS^\cH_{z \phi_t}] .
\]
The dimensions of these spaces are, respectively, $|\cR_\phi^\cL|$ and $|\cR_\phi|$.
These are equal by \cite[Proposition 7.1]{SiZi}, so the above map is an algebra
isomorphism.
\end{proof}

The maps ${}^L \Psi$ and {\bf Sc} from Definition \ref{def:7.8} are compatible with
Theorem \ref{thm:8.2} in the sense that they factor through this Langlands
classification.

\begin{lem}\label{lem:8.3}
Let $(\phi,\rho) \in \Phi_{e,\zeta_\cH} (\cH)$ and let $(\cL,\phi_t,z,\rho_t)$ 
be the enhanced standard triple associated to it by Theorem \ref{thm:8.2}. Then
\[
\begin{array}{ccccc}
{}^L \Psi^\cH (\phi,\rho) & = & {}^L \Psi^\cL (z \phi_t,\rho_t) & = & 
z \cdot{}^L \Psi^\cL (\phi_t,\rho_t) , \\
\mb{Sc}^\cH (\phi,\rho) & = & \mb{Sc}^\cL (z \phi_t,\rho_t) & = & 
z \cdot \mb{Sc}^\cL (\phi_t,\rho_t) .
\end{array}
\]
\end{lem}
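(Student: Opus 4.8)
The strategy is to trace both equalities through the explicit descriptions of the Langlands classification (Theorem \ref{thm:8.2}) and of the cuspidal support map (Definition \ref{def:7.8}), reducing everything to the key observation that twisting by an unramified element $z\in X_\nr({}^L\cL)$ commutes with the formation of $q\Psi_G$, the adapted homomorphism $\gamma_v$ and the cocharacter $\chi_{\phi,v}$. The second equality in each line is almost immediate from the definitions: $z$ lies in $Z(\cL^\vee)^{\mb I_F}$, so it is central in $\cL^\vee\rtimes\mb W_F$ and acts only through $(z\phi_t)(\Fr)=z\,\phi_t(\Fr)$ by \eqref{eq:8.2}. Hence $(z\phi_t)|_{\mb I_F\times\SL_2(\C)}=\phi_t|_{\mb I_F\times\SL_2(\C)}$, the groups $G=Z^1_{\cL^\vee_\sc}(\phi_t|_{\mb W_F})$ and $Z^1_{\cL^\vee_\sc}((z\phi_t)|_{\mb W_F})$ coincide (as $z$ is central, it does not affect the centralizer of the image of Weil group), the unipotent element $u_{\phi_t}=u_{z\phi_t}=\phi_t\big(1,\matje{1}{1}{0}{1}\big)$ is unchanged, and therefore $q\Psi_G(u_{\phi_t},\rho_t)=[M,v,q\epsilon]_G$ is literally the same. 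The only place $z$ enters is additively in the value of $\varphi_v$ at a Frobenius lift, and there it is visibly untouched by the construction; so $\mb{Sc}^\cL(z\phi_t,\rho_t)=z\cdot\mb{Sc}^\cL(\phi_t,\rho_t)$ and likewise for ${}^L\Psi^\cL$.

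For the first equality we must compare the computation of ${}^L\Psi$ and $\mb{Sc}$ carried out inside ${}^L\cH$ with the one carried out inside ${}^L\cL$. Here the point is that the standard triple is built precisely so that $\phi=z\phi_t$ as L-parameters for $\cH$, with $\phi_t$ bounded and $\cL^\vee\rtimes\mb W_F$ a Levi L-subgroup of ${}^L\cH$ containing $\phi(\mb W_F)$. I would invoke Proposition \ref{prop:7.2}: the quasi-Levi subgroup $M\subset G=Z^1_{\cH^\vee_\sc}(\phi|_{\mb W_F})$ produced from $q\Psi_G(u_\phi,\rho)$ satisfies $Z_{\cH^\vee\rtimes\mb W_F}(Z(M)^\circ)=\cL^\vee\rtimes\mb W_F$ after suitable conjugation, and by \eqref{eq:7.4} one has $G\cap\cL^\vee_c=Z^1_{\cH^\vee_\sc}(\phi|_{\mb W_F})\cap Z_{\cH^\vee_\sc}(Z(M)^\circ)=M$, i.e. $M$ is exactly the group one would call $G^\cL=Z^1_{\cL^\vee_\sc}(\phi|_{\mb W_F})$ computed inside $\cL^\vee$ — more precisely $M$ and $Z^1_{\cL^\vee_\sc}(\phi|_{\mb W_F})$ differ only by the central torus $Z(\cL^\vee_c)^\circ$, which is irrelevant for unipotent classes and component groups. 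Applying Proposition \ref{prop:6.4}(a) (compatibility of $q\Psi$ with quasi-Levi subgroups), the quasi-cuspidal support $[M,v,q\epsilon]$ computed for $G$ agrees with the one computed for $G^\cL$, so $v$ and $q\epsilon$ are the same whether obtained through $\cH$ or through $\cL$, and the first component $Z_{\cH^\vee\rtimes\mb W_F}(Z(M)^\circ)=\cL^\vee\rtimes\mb W_F$ is the same Levi L-subgroup. This already gives ${}^L\Psi^\cH(\phi,\rho)={}^L\Psi^\cL(z\phi_t,\rho_t)$.

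For $\mb{Sc}$ one needs in addition that the auxiliary data $\gamma_v$ and $\chi_{\phi,v}$ in Lemma \ref{lem:7.12} are insensitive to whether one works relative to $\cH$ or to $\cL$. But $\gamma_v:\SL_2(\C)\to M^\circ$ is characterized by $\gamma_v\matje{1}{1}{0}{1}=v$ together with a commutation condition involving only $\phi\big(1,\matje{z}{0}{0}{z^{-1}}\big)$ and $M^\circ$, all of which live inside $\cL^\vee$; and $\chi_{\phi,v}$ has image in $Z(M)^\circ\subset\cL^\vee$. Since $\phi=z\phi_t$ and $\phi|_{\mb W_F}$ is unchanged by the twist, the cocharacter $\chi_{\phi,v}$ and the homomorphism $\gamma_v$ computed for $(\phi,\rho)$ inside ${}^L\cH$ are identical to those computed for $(z\phi_t,\rho_t)$ inside ${}^L\cL$, and the resulting $\varphi_v(w,x)=\phi(w)\chi_{\phi,v}(\norm{w}^{1/2})\gamma_v(x)$ is literally the same parameter. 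Hence $\mb{Sc}^\cH(\phi,\rho)=\mb{Sc}^\cL(z\phi_t,\rho_t)$, completing the proof.

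**Main obstacle.** The routine part is the bookkeeping with $z$; the genuine content is verifying that the quasi-Levi subgroup $M$ and the quasi-cuspidal support produced inside the (large) group $G=Z^1_{\cH^\vee_\sc}(\phi|_{\mb W_F})$ coincide with those produced inside the smaller $G^\cL=Z^1_{\cL^\vee_\sc}(\phi|_{\mb W_F})$ — i.e. that the discrepancy between $Z^1$ and $Z$ recorded in \eqref{eq:7.26} and the passage to component groups in \eqref{eq:7.2} does not interfere. I expect this compatibility, which is exactly the point where Proposition \ref{prop:6.4} and the identification \eqref{eq:7.4} must be combined carefully, to be where the actual work lies; the rest is formal.
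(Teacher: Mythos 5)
Your treatment of the unramified twist (the second equality in each line) and of the $\mb{Sc}$-bookkeeping via $\gamma_v$ and $\chi_{\phi,v}$ is fine and agrees with the paper's argument: $z$ is central in $\cL^\vee$, so it leaves $u_\phi$, the relevant centralizers and the quasi-cuspidal support untouched and only multiplies the Frobenius value, which is exactly the unramified twist of the cuspidal datum. The problem is the first equality. You identify the Levi $\cL$ of the standard triple from Theorem \ref{thm:8.2} with the Levi appearing in Proposition \ref{prop:7.2}, claiming that $Z_{\cH^\vee \rtimes \mb W_F}(Z(M)^\circ) = \cL^\vee \rtimes \mb W_F$ and that $M$ coincides with $Z^1_{\cL^\vee_\sc}(\phi|_{\mb W_F})$ up to $Z(\cL^\vee_c)^\circ$. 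Both claims are false in general: the Langlands-classification Levi contains the image of $\phi$ with $\phi_t$ merely \emph{bounded}, whereas the Levi of Proposition \ref{prop:7.2} is the minimal one carrying the cuspidal support. Already for $\cH = \SL_2(F)$, $\phi|_{\mb W_F}$ trivial and $\phi|_{\SL_2(\C)}$ irreducible, with $\rho$ trivial, the standard triple has $\cL = \cH$ and $z=1$, while $M$ is a maximal torus of $G = \SL_2(\C)$ and $Z_{{}^L\cH}(Z(M)^\circ)$ is a torus L-subgroup, strictly smaller than ${}^L\cL = {}^L\cH$. So the step "this already gives ${}^L\Psi^\cH(\phi,\rho) = {}^L\Psi^\cL(z\phi_t,\rho_t)$" rests on an incorrect identification, and your appeal to Proposition \ref{prop:6.4}(a) is not set up correctly, since the subgroup you feed into it is not the right one (note also that $G_2 = Z^1_{\cL^\vee_\sc}(\phi|_{\mb W_F})$ is in general \emph{not} a subgroup of $G$, precisely because of the $Z^1$-discrepancy \eqref{eq:7.26} you flag).

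What the paper actually does, and what your "main obstacle" paragraph correctly anticipates but does not carry out, is the following. One introduces $G_1 := Z_G(Z(\cL^\vee_c)^\circ) = G \cap \cL^\vee_c$, which \emph{is} a quasi-Levi subgroup of $G$ because $\cL^\vee_c$ is a Levi of $\cH^\vee_\sc$; Proposition \ref{prop:6.4}(a) applied to $G_1 \subset G$ gives $q\Psi_{G_1}(u_\phi,\rho) = q\Psi_G(u_\phi,\rho)$. Then one compares $G_1$ with $G_2 = Z^1_{\cL^\vee_\sc}(\phi|_{\mb W_F})$ using \eqref{eq:7.28}: they have the same neutral component, hence the same $Z(M)^\circ$, and the quasi-cuspidal support for $G_2$ is $[M_2,v,q\epsilon_2]$ with $M_2 = Z_{G_2}(Z(M)^\circ)$, where $q\epsilon_2$ extends $q\epsilon$ by the trivial character on the extra central part (this is where Proposition \ref{prop:7.2}(b) enters); as cuspidal data the two coincide. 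Finally, the inclusion $Z_{{}^L\cH}(Z(M)^\circ) \subset {}^L\cL$ (from the proof of Proposition \ref{prop:7.2}(a), since ${}^L\cL$ contains $\phi(\mb W_F) \cup \{v\}$) yields $Z_{{}^L\cH}(Z(M)^\circ) = Z_{{}^L\cL}(Z(M)^\circ)$, which is how the first components of the two cuspidal data are matched — not by equating them with ${}^L\cL$ itself. One also needs $z \in X_\nr\bigl(Z_{{}^L\cH}(Z(M)^\circ)\bigr)$, which holds because $Z(\cL^\vee_c) \subset Z(M)$, to make sense of $z \cdot {}^L\Psi^\cL(\phi_t,\rho_t)$. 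These steps constitute the real content of the lemma and are missing from, or contradicted by, your argument.
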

\begin{proof}
Because all the maps are well-defined on conjugacy classes of enhanced L-parameters,
we may assume that $\phi = z \phi_t$ and $\rho = \rho_t$. By definition 
${}^L \Psi^\cH (z \phi_t,\rho_t)$ is given in terms of $q \Psi_G (u_\phi,\rho) =
[M,v,q \epsilon]_G$, as $(Z_{{}^L \cH}(Z(M)^\circ),\phi |_{\mb W_F},v,q \epsilon)$.
Consider
\[
G_1 := Z_G (Z(\cL^\vee_c)^\circ ) = 
Z^1_{\cH^\vee_\sc}(\phi |_{\mb W_F}) \cap \cL^\vee_c
\]
Since $\cL^\vee_c = Z_{\cH^\vee_{\sc}}(Z(\cL^\vee_c)^\circ )$ is a Levi subgroup
of $\cH^\vee_{\sc}$, $G_1$ is a quasi-Levi subgroup of 
$G = Z^1_{\cH^\vee_{\sc}}(\phi |_{\mb W_F})$. Furthermore
\[
G_1^\circ = \big( Z_{\cH^\vee_\sc}(\phi |_{\mb W_F}) \cap \cL^\vee_c
\big)^\circ = Z_{\cL_c^\vee}(\phi |_{\mb W_F})^\circ .
\]
By Proposition \ref{prop:6.4} 
\[
q \Psi_{G_1}(u_\phi,\rho) = q \Psi_G (u_\phi,\rho).
\]
Write $G_2 = Z^1_{\cL^\vee_\sc}(\phi |_{\mb W_F}) Z(\cL_c^\vee)^\circ$ and abbreviate
$G_3 = Z^1_{\cL^\vee_\sc}(\phi |_{\mb W_F})$. From
\[
Z(\cH_\sc^\vee) \subset Z(\cL_c^\vee) = Z(\cL_\sc^\vee) Z(\cL_c^\vee)^\circ 
\]
and the description of $G_1^\circ$ we see that $G_1$ is a finite index subgroup of $G_2$.
Since the quasi-cuspidal support of $(u_\phi,\rho)$ (for $G_1$) is derived from the cuspidal 
support (for $G_1^\circ = G_2^\circ$), $Z(M)^\circ$ is the same for $G_2$ and $G_1$. Hence
\begin{equation}\label{eq:7.29}
q \Psi_{G_2}(u_\phi,\rho) = [M_2,v,q \epsilon_2]_{G_2}, \qquad M_2 = Z_{G_2}(Z(M)^\circ),
\end{equation}
where $q \epsilon_2$ is an extension of $q\epsilon \in \Irr (A_M (v))$ to $A_{M_2}(v)$.
By \eqref{eq:7.28} and Proposition \ref{prop:7.2}.b $q \epsilon_2$ is obtained from 
$q \epsilon$ by setting it equal to 1 on a suitable central subgroup. 
When we replace $G_2$ by $G_3$ we only omit a part of its connected centre, which does not
make a real difference for quasi-cuspidal supports. Concretely, \eqref{eq:7.29} entails
\[
q \Psi_{G_3}(u_\phi,\rho) = [M_3,v,q \epsilon_3]_{G_3}, \qquad M_3 = Z_{G_3}(Z(M)^\circ),
\]
where the inflation of $q \epsilon_3$ to a function on $Z_{M_3}(v)$ agrees with the
inflation of $q \epsilon_2$ to $Z_{M_3}(v)$. As explained in the proof of Proposition 
\ref{prop:7.2} after \eqref{eq:7.26}, this means that as cuspidal data
\[
{}^L \Psi^\cL (\phi,\rho) =   (Z_{{}^L \cL}(Z(M)^\circ),\phi |_{\mb W_F},v,q \epsilon_3) =
(Z_{{}^L \cL}(Z(M)^\circ),\phi |_{\mb W_F},v,q \epsilon) . 
\]
Now ${}^L \cL$ is a Levi L-subgroup of ${}^L \cH$ containing $\phi (\mb W_F) \cup \{v\}$. 
In the proof of Proposition \ref{prop:7.2}.a we checked that $\cL^\vee \rtimes \mb W_F \supset 
Z_{{}^L \cH}(Z(M)^\circ)$. Hence 
\[
Z_{{}^L \cH}(Z(M)^\circ) = Z_{{}^L \cL}(Z(M)^\circ) \quad \text{and} \quad 
{}^L \Psi^\cL (\phi,\rho) = {}^L \Psi^\cH (\phi,\rho).
\]
As $Z(\cL^\vee_c) \subset Z(M)$, the element 
\[
z \in X_\nr ({}^L \cL) = (Z(\cL^\vee)^{\mb I_F})^\circ_{\Fr}
\]
also lies in $X_\nr (Z_{{}^L \cH}(Z(M)^\circ)$. Since $z$ commutes with $\cL^\vee_c$, 
$G_t = Z^1_{\cH^\vee_\sc}(z \phi_t |_{\mb W_F}) \cap \cL^\vee_c$ equals 
$Z^1_{\cH^\vee_\sc}(\phi_t |_{\mb W_F}) \cap \cL^\vee_c$.
Now Definition \ref{def:7.8} shows that
\begin{multline*}
{}^L \Psi^\cL (z \phi_t,\rho_t) = (Z_{{}^L \cH}(Z(M)^\circ),z \phi_t |_{\mb W_F},v,q \epsilon) \\ 
= z \cdot (Z_{{}^L \cH}(Z(M)^\circ),\phi_t |_{\mb W_F},v,q \epsilon) =
z \cdot{}^L \Psi^\cL (\phi_t,\rho_t) .
\end{multline*}
The construction of $\chi_{\phi,v}$ in Lemma \ref{lem:7.12} depends only on 
$q \Psi_{G_t} (u_\phi,\rho)$, so $\mb{Sc}^\cH (\phi,\rho) = \mb{Sc}^\cL (z \phi_t,\rho_t) =
z \cdot \mb{Sc}^\cL (\phi_t,\rho_t)$ as well.
\end{proof}

\section{Inertial equivalence classes of L-parameters}

In important ingredient in Bernstein's theory of representations of $p$-adic 
groups are inertial equivalence classes. Let $\cL \subset \cH$ be a Levi
subgroup and let $X_\nr (\cL)$ be the group of unramified characters $\cL \to 
\C^\times$. Two cuspidal pairs $(\cL_1,\sigma_1)$ and $(\cL_2,\sigma_2)$ are
said to be inertially equivalent if there exist an unramified character 
$\chi_1$ of $\cL_1$ and an element $h \in \cH$ such that
\[
h \cL_2 h^{-1} = \cL_1 \quad \text{and} \quad 
h \cdot \sigma_2 = \sigma_1 \otimes \chi_1 .
\]
We denote a typical inertial equivalence of cuspidal pairs by $\fs = 
[\cL,\sigma]_\cH$, we let $\mf B (\cH)$ be the set of such classes. With
every $\fs \in \mf B (\cH)$ one can associate a set of irreducible smooth
$\cH$-representations:
\[
\Irr (\cH)^\fs = \{ \pi \in \Irr (\cH) : \text{ the cuspidal support of }
\pi \text{ lies in } \fs \} .
\]
A (weak) version of the Bernstein decomposition says that 
\begin{equation}\label{eq:8.5}
\Irr (\cH) = \bigsqcup\nolimits_{\fs \in \mf B (\cH)} \Irr (\cH)^\fs . 
\end{equation}
We will establish a similar decomposition for enhanced Langlands
parameters. 

Our notion of inertial equivalence generalizes \cite[Definition 5.33]{Hai} from 
homomorphisms $\mb W_F \to {}^L \cH$ to enhanced L-parameters.
\begin{defn}\label{def:8.4}
Let $({}^L L,\phi_v,q \epsilon)$ and $({}^L L',\phi'_v,q \epsilon')$ be two cuspidal 
data for ${}^L \cH$. They are inertially equivalent if there exist 
$z \in X_\nr ({}^L L)$ and $h \in \cH^\vee$ such that 
\[
h \, {}^L L' h^{-1} = {}^L L \quad \text{and} \quad 
(z \phi_v,q \epsilon) = (h \phi'_v h^{-1}, h \cdot q \epsilon').
\]
The class of $({}^L L,\phi_v,q \epsilon)$ modulo $X_\nr ({}^L L)$ is denoted
$[{}^L L,\phi_v ,q \epsilon ]_{{}^L L}$, and its inertial equivalence class is denoted
$[{}^L L,\phi_v ,q \epsilon ]_{{}^L \cH}$.
We say that $[{}^L L,\phi_v ,q \epsilon]_{{}^L \cH}$ is $\cH$-relevant if any of its 
elements is so. We write $\mf B^\vee ({}^L \cH)$ for the set of inertial 
equivalence classes of cuspidal pairs for ${}^L \cH$, and $\mf B^\vee (\cH)$ 
for its subset of $\cH$-relevant classes.

Given an inertial equivalence class $\fs^\vee$ for ${}^L \cH$, we write,
using Definition \ref{def:7.8},
\[
\Phi_e ({}^L \cH)^{\fs^\vee} = \{ (\phi,\rho) \in \Phi_e ({}^L \cH) :
\text{ the cuspidal support of } (\phi,\rho) \text{ lies in } \fs^\vee \} .
\]
When $\fs^\vee$ is $\cH$-relevant, we put
\[
\Phi_e (\cH)^{\fs^\vee} = \{ (\phi,\rho) \in \Phi_e (\cH) :
\text{ the cuspidal support of } (\phi,\rho) \text{ lies in } \fs^\vee \} .
\]
\end{defn}
We note that $\fs^\vee$ as above determines a character of $Z(L_\sc)$. 
In view of Proposition \ref{prop:7.2}.b, it extends in a unique way 
to a character of $Z(\cH^\vee_\sc)$ trivial on $Z(L_c)^\circ$.

The above construction yields partitions analogous to \eqref{eq:8.5}:
\begin{equation}\label{eq:8.4}
\Phi_e ({}^L \cH) = \bigsqcup_{\fs^\vee \in \mf B^\vee ({}^L \cH)} 
\Phi_e ({}^L \cH)^{\fs^\vee} \quad \text{ and } \quad \Phi_e (\cH) = 
\bigsqcup_{\fs^\vee \in \mf B^\vee (\cH)} \Phi_e (\cH)^{\fs^\vee} .
\end{equation}
In this sense we consider $\Phi_e ({}^L \cH)^{\fs^\vee}$ as Bernstein
components in the space of enhanced L-parameters for ${}^L \cH$.
We note that by Lemma \ref{lem:7.12} the difference between $\mathbf{Sc}
(\phi,\rho)$ and ${}^L \Psi (\phi,\rho)$, namely the homomorphism 
\[
\mb W_F \to Z(M)^\circ : w \mapsto \chi_{\phi,v} \big( \norm{w}^{1/2} \big),
\]
can be considered as an unramified twist of ${}^L L$. Hence $\mathbf{Sc}
(\phi,\rho)$ and ${}^L \Psi (\phi,\rho)$ belong to the same inertial
equivalence class, and we could equally well have used ${}^L \Psi$ to
define $\Phi_e ({}^L \cH)^{\fs^\vee}$ and $\Phi_e (\cH)^{\fs^\vee}$.

We return to $p$-adic groups, to consider other aspects of Bernstein's work.
Bernstein associated to each inertial equivalence class $\fs = 
[\cL,\sigma]_\cH \in \mf B (\cH)$ a finite group $W_\fs$. Let
$W(\cH,\cL) = N_\cH (\cL) / \cL$, the ``Weyl'' group of $\cL$. It acts on
$\Irr_\cusp (\cL)$, which induces an action on the collection of inertial
equivalence classes $[\cL,\omega]_\cL$ with $\omega \in \Irr_\cusp (\cL)$.
Notice that
\[
(\cL,\omega_1), (\cL,\omega_2) \text{ are } \cH\text{-conjugate }
\Longleftrightarrow \text{ there is a } w \in W(\cH,\cL) \text{ with } 
w \cdot \omega_1 \cong \omega_2 .
\]
The group $W_\fs$ is defined to be the stabilizer of $[\cL,\sigma]_\cL$
in $W(\cH,\cL)$. It keeps track of which elements of $[\cL,\sigma]_\cL$
are $\cH$-conjugate. This group plays an important role in the Bernstein centre.

Let $\Rep (\cH)$ be the category of smooth complex $\cH$-representations,
and let $\Rep (\cH)^\fs$ be its subcategory generated by $\Irr (\cH)^\fs$.
The strong form of the Bernstein decomposition says that 
\[
\Rep (\cH) = \bigsqcup\nolimits_{\fs \in \mf B (\cH)} \Rep (\cH)^\fs . 
\]
By \cite[Proposition 3.14]{BeDe} the centre of the category $\Rep (\cH)^\fs$
is canonically isomorphic to $\cO ([\cL,\sigma]_\cL / W_\fs )$. Here
$[\cL,\sigma]_\cL$ is regarded as a complex affine variety via the transitive 
action of $X_\nr (\cL)$. The centre of Rep$(\cH)$ is isomorphic to
\begin{multline*}
\bigoplus_{\cL \in \mf{Lev}(\cH)} \bigoplus_{\fs = [\cL,\sigma]_\cH \in \mf B (\cH)}
Z( \Rep (\cH)^\fs) \cong \bigoplus_{\cL \in \mf{Lev}(\cH)} 
\bigoplus_{\fs = [\cL,\sigma]_\cH \in \mf B (\cH)} \cO ([\cL,\sigma]_\cL / W_\fs ) \\
= \bigoplus\nolimits_{\cL \in \mf{Lev}(\cH)} \mc O ( \Irr_\cusp (\cL) / W(\cH,\cL) ) .
\end{multline*}
In other words, there are canonical bijections
\begin{equation}\label{eq:8.6}
\begin{array}{ccc}
\Irr \big( Z (\Rep (\cH)^\fs) \big) & \longleftrightarrow & 
[\cL,\sigma]_\cL / W_\fs , \\
\Irr \big( Z (\Rep (\cH)) \big) & \longleftrightarrow &
\bigsqcup_{\cL \in \mf{Lev}(\cH)} \Irr_\cusp (\cL) / W(\cH,\cL) .
\end{array}
\end{equation}
We want identify the correct analogue of $W_\fs$ on the Galois side. 
From \eqref{eq:8.11} we see that 
$N_{\cH^\vee}({}^L L)$ stabilizes $X_\nr ({}^L L)$ and that $L$ fixes
$X_\nr ({}^L L)$ pointwise. Therefore $W({}^L \cH, {}^L L)$ also acts on classes 
$[{}^L L,\phi,\rho]_{{}^L L}$ of cuspidal data modulo unramified twists. 
We note that, like \eqref{eq:C1} and \eqref{eq:C2},
\begin{equation}\label{eq:9.1}
\begin{split}
& [{}^L L,\phi_v, q \epsilon]_{{}^L \cH} = [{}^L L,\phi'_v, q \epsilon']_{{}^L \cH} 
\qquad \Longleftrightarrow \\ 
& \text{there is a } w \in W({}^L \cH, {}^L L) \text{ such that }
w \cdot [{}^L L,\phi_v, q \epsilon]_{{}^L L} = [{}^L L,\phi'_v, q \epsilon']_{{}^L L} .
\end{split}
\end{equation}
Given any inertial equivalence class $\fs^\vee = [{}^L L,\phi_v,q \epsilon
]_{{}^L \cH}$ with underlying class $\fs^\vee_L = [{}^L L,\phi_v,q \epsilon
]_{{}^L L}$, we define
\[
W_{\fs^\vee} := \text{ the stabilizer of } \fs^\vee_L \text{ in }
W({}^L \cH,{}^L L).
\]
Now we approach this group from the Galois side. 
From $({}^L L,\phi_v, q \epsilon)$ we can build
\begin{equation}\label{eq:9.10}
v = \phi_v \big( 1,\matje{1}{1}{0}{1} \big) \quad \text{and} \quad
G = Z^1_{\cH^\vee_{\sc}}(\phi_v |_{\mb W_F}).
\end{equation}
Let $L_c$ be the inverse image of $L$ in $\cH^\vee_{\sc}$ and 
consider the cuspidal quasi-support
\begin{equation}\label{eq:9.11}
q \ft = [G \cap L_c,v,q \epsilon]_G = 
\big[ G \cap L_c, \cC_v^{G \cap L_c},q \cE \big]_G .
\end{equation}
From \eqref{eq:6.17} we get
\begin{equation}\label{eq:8.14}
W_{q \ft} = N_G \big( G \cap L_c, \cC_v^{G \cap L_c}, q \cE \big) \big/ (G \cap L_c) .
\end{equation}

\begin{lem}\label{lem:9.1}
$W_{q \ft}$ is canonically isomorphic to the isotropy group of 
$({}^L L,\phi_v, q \epsilon)$ in $W({}^L \cH,{}^L L)$ and in $W_{\fs^\vee}$.
\end{lem}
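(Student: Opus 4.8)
The plan is to show that the natural map from $W_{q\ft}$ to $W({}^L\cH,{}^L L)$ lands in the isotropy group of $({}^L L,\phi_v,q\epsilon)$ and is an isomorphism onto it; the coincidence with the isotropy group inside $W_{\fs^\vee}$ is then immediate, since by definition $W_{\fs^\vee}\subseteq W({}^L\cH,{}^L L)$ and the isotropy group of a point is contained in the stabilizer of the class modulo $X_\nr({}^L L)$. First I would construct the map. An element of $N_G(G\cap L_c,\cC_v^{G\cap L_c},q\cE)$ is, in particular, an element of $\cH^\vee_\sc$ which normalizes $G\cap L_c = Z^1_{\cH^\vee_\sc}(\phi_v|_{\mb W_F})\cap L_c$; using \eqref{eq:7.4} (the identity $G\cap L_c = M$) and the fact that $L_c = Z_{\cH^\vee_\sc}(Z(L_c)^\circ)$ is recovered from the connected centre of $G\cap L_c$, such an element normalizes $Z(G\cap L_c)^\circ$ hence normalizes $L_c$, hence normalizes ${}^L L = L\rtimes\mb W_F$ after passing to $\cH^\vee$. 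This gives a homomorphism $N_G(G\cap L_c,\cC_v^{G\cap L_c},q\cE)\to N_{\cH^\vee}({}^L L)/L = W({}^L\cH,{}^L L)$; I would check its kernel is exactly $G\cap L_c$, so it factors through an injection $W_{q\ft}\hookrightarrow W({}^L\cH,{}^L L)$.

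Next I would identify the image. A class in $W({}^L\cH,{}^L L)$ fixing $({}^L L,\phi_v,q\epsilon)$ is represented by some $h\in N_{\cH^\vee}({}^L L)$ with $h\phi_v h^{-1}=\phi_v$ (so $h\in Z_{\cH^\vee}(\phi_v|_{\mb W_F})$ after lifting, and fixes $v=\phi_v(1,\matje{1}{1}{0}{1})$) and $h\cdot q\epsilon\cong q\epsilon$. I would lift $h$ to $\cH^\vee_\sc$ (a unipotent element and its $\SL_2$-triple lift uniquely, and the centralizer picture of \eqref{eq:7.2}--\eqref{eq:7.4} lets one choose the lift inside $Z^1_{\cH^\vee_\sc}(\phi_v|_{\mb W_F})$, i.e. inside $G$); since $h$ normalizes $L$ it normalizes $L_c$, and since it centralizes $\phi_v|_{\mb W_F}$ and fixes $v$ it normalizes $G\cap L_c$ and sends $\cC_v^{G\cap L_c}$ to itself; finally $h\cdot q\epsilon\cong q\epsilon$ as representations of $A_{G\cap L_c}(v)$ says $h\in N_G(G\cap L_c,\cC_v^{G\cap L_c},q\cE)$. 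Thus the isotropy group is contained in the image, and combined with the previous paragraph the map $W_{q\ft}\to(\text{isotropy group})$ is a bijection. Its inverse is constructed by the same recipe, so the isomorphism is canonical (independent of auxiliary choices), which is what the statement asserts.

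I expect the main obstacle to be the bookkeeping between the three groups $\cH^\vee$, $\cH^\vee_\sc$, and the various centralizers $Z$, $Z^1$, $A_{G}(u)$: one must be careful that ``fixes $q\epsilon$'' on the Galois side (a statement about a representation of $\pi_0 Z^1_{L_\sc}(\phi_v)$, equivalently $A_{G\cap L_c}(v)$ via \eqref{eq:7.2}) really matches ``normalizes $(G\cap L_c,\cC_v^{G\cap L_c},q\cE)$'' in the sense used to define \eqref{eq:8.14}, and that lifting $h$ from $\cH^\vee$ to $\cH^\vee_\sc$ can be done compatibly with all the centralizer conditions simultaneously. The identification $Z_G(v)/Z_{G\cap L_c}(v)\cong N_G(G\cap L_c)/(G\cap L_c)$, which is Theorem \ref{thm:4.1}.a applied as in the proof of Lemma \ref{lem:6.2}, is the technical ingredient that makes the lift and the $q\cE$-compatibility fit together; once that is in place the rest is a diagram chase using \eqref{eq:7.4}, \eqref{eq:6.3}, and \eqref{eq:6.17}.
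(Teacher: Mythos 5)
Your reduction of the two isotropy groups to one (via the stability of $X_\nr ({}^L L)$ under $W({}^L \cH,{}^L L)$) and your second paragraph --- adjusting a stabilizing element $h \in N_{\cH^\vee}({}^L L)$ by $L$, lifting it into $G$, and landing in $N_G (G\cap L_c, \cC_v^{G\cap L_c}, q\cE)$ --- are correct and coincide with the paper's injection \eqref{eq:8.13}. The gap is in the opposite direction, exactly at the step ``such an element normalizes $Z(G\cap L_c)^\circ$ hence normalizes $L_c$''. An element of $N_G (G\cap L_c,\cC_v^{G\cap L_c},q\cE)$ does normalize the torus $Z(G\cap L_c)^\circ$, but this torus contains $Z(L_c)^\circ$ and may a priori be larger, and normalizing the larger torus gives no control over $Z(L_c)^\circ$ or over $L_c = Z_{\cH^\vee_\sc}(Z(L_c)^\circ)$; your phrase that $L_c$ ``is recovered from the connected centre of $G\cap L_c$'' conflates the two tori and is precisely the point that must be proved. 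What makes the step true is the identity ${}^L L = Z_{\cH^\vee \rtimes \mb W_F}(Z(G\cap L_c)^\circ)$, the paper's \eqref{eq:8.12}, and this is not a formal fact about Levi subgroups: it uses that the datum $({}^L L,\phi_v,q\epsilon)$ is cuspidal, so $\phi_v$ is discrete for ${}^L L$, i.e.\ ${}^L L$ is minimal among Levi L-subgroups of ${}^L \cH$ containing the image of $\phi_v$; running the argument of Proposition \ref{prop:7.2} with $G\cap L_c$ in the role of $M$ shows that $Z_{\cH^\vee \rtimes \mb W_F}(Z(G\cap L_c)^\circ)$ is also such a minimal Levi L-subgroup and is contained in ${}^L L$, whence equality. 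Nowhere in your sketch is the discreteness of $\phi_v$ invoked, and without it the assertion fails: $Z_{\cH^\vee}(Z(G\cap L_c)^\circ)$ could be a proper Levi subgroup of $L$, and elements of $N_G (G\cap L_c)$ would then normalize that smaller group rather than ${}^L L$.

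Once \eqref{eq:8.12} is supplied, your construction of the map $W_{q\ft}\to W({}^L \cH,{}^L L)$, the kernel computation, and the identification of its image with the isotropy group all go through, and the resulting argument is the paper's proof run in the opposite order (the paper first injects the stabilizer into $W_{q\ft}$, then obtains surjectivity from \eqref{eq:8.12}). One smaller correction: the bijection $Z_G (v)/Z_{G\cap L_c}(v) \cong N_G ((G\cap L_c)^\circ)/(G\cap L_c)$ from Theorem \ref{thm:4.1}.a, which you single out as the decisive technical ingredient, plays no role in this lemma; the load-bearing input is the minimality statement above, i.e.\ the cuspidality (discreteness) built into the datum.
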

\begin{proof}
Since $X_\nr ({}^L L)$ is stable under $W({}^L \cH,{}^L L)$, any element of
the latter group which fixes $({}^L L,\phi_v,\rho)$ automatically stabilizes 
$\fs^\vee_L$. Therefore it does not matter whether we
determine the isotropy group in $W({}^L \cH,{}^L L)$ or in $W_{\fs^\vee}$.

The proof of Proposition \ref{prop:7.2}, with $G \cap L_c$ in the role of
$M$, shows that\\ 
$Z_{\cH^\vee \rtimes \mb W_F}(Z (G \cap L_c)^\circ)$ is a
Levi L-subgroup of ${}^L \cH$ minimally containing the image of $\phi$. As
$Z (G \cap L_c)^\circ \supset Z(L_c)^\circ$,
\[
Z_{\cH^\vee}(Z (G \cap L_c)^\circ) \subset Z_{\cH^\vee}(Z(L_c^\circ)) = L .
\]
But ${}^L L$ also contains the image of $\phi$ minimally, so 
\begin{equation}\label{eq:8.12}
{}^L L = Z_{\cH^\vee \cap \mb W_F}(Z (G \cap L_c)^\circ) .
\end{equation}
Suppose that $n \in N_{\cH^\vee}({}^L L)$ fixes $[\phi_v,q \epsilon]_{L}$.
Then it lies in $N_{\cH^\vee}\big(G \cap L_c,\cC_{u_\phi}^{G \cap L_c},
q \cE \big)$. The kernel of $\cH^\vee_{\sc} \to \cH^\vee_\der$ is contained
in $L_c$, so in view of \eqref{eq:8.14} $n$ lifts to a unique element of 
$W_{q \ft}$. This induces an injection
\begin{align}
\nonumber \Stab_{W({}^L \cH,{}^L L)} ([\phi_v, q \epsilon]_{L}) & \cong
\Stab_{N_{\cH^\vee}({}^L L) \cap Z_{\cH^\vee}(\phi (\mb W_F))} \big( 
\cC_v^{G \cap L_c}, q \cE \big) / Z_L (\phi (\mb W_F)) \\
\label{eq:8.13} & \cong \Stab_{N_{\cH^\vee_{\sc}}({}^L L) \cap G}\big( 
\cC_v^{G \cap L_c}, q \cE \big) / (G \cap L_c) \; \to \; W_{q \ft} .
\end{align}
The only difference between the last two terms is that on the left hand side
elements of $G$ have to normalize ${}^L L$, whereas on the right hand side
they only have to normalize $G \cap L_{\sc}$. Consider any $g \in 
N_G (G \cap L_c)$. It normalizes $Z(G \cap L_c)$, so it also normalizes 
$Z_{\cH^\vee \rtimes \mb W_F}(Z (G \cap L_c)^\circ)$, which by \eqref{eq:8.12}
equals ${}^L L$. Therefore \eqref{eq:8.13} is also surjective.
\end{proof}

Assume for the remainder of this section that $Z(\cL^\vee_\sc)$ is fixed by $\mb W_F$, 
for every Levi subgroup $\cL \subset \cH$. (The general case is similar and can be 
obtained by  including characters $\zeta_\cH^\cL$ as in Lemma \ref{lem:7.14}.) In view 
of Lemma \ref{lem:9.1}, the analogue of the Bernstein centre \eqref{eq:8.6} becomes
\begin{equation}\label{eq:C3}
\bigsqcup_{\cL \in \mf{Lev}(\cH)} \Phi_\cusp (\cL) \big/ W({}^L \cH,{}^L \cL) =
\bigsqcup_{\fs^\vee = [{}^L \cL,\phi_v,q\epsilon]_{{}^L \cH} \in \mf B^\vee (\cH)}
\fs_\cL^\vee / W_{\fs^\vee} .
\end{equation}
Thus we interpret the "Bernstein centre of $\Phi_e (\cH)$" as the quotient 
along the map
\[
\mathbf{Sc} : \Phi_e (\cH) \to \bigsqcup\nolimits_{\cL \in \mf{Lev}(\cH)} 
\Phi_\cusp (\cL) \big/ W({}^L \cH,{}^L \cL) .
\]
Let us agree that two enhanced L-parameters in the same Bernstein component are 
inseparable if they have the same infinitesimal character. Then \eqref{eq:C3}
can be regarded as a maximal separable quotient of $\Phi_e (\cH)$. This fits
nicely with the Dauns--Hofmann theorem, which says that for many noncommutative
algebras $A$ the operation of taking the maximal separable quotient of $\Irr (A)$
is dual to restriction from $A$ to its centre $Z(A)$.

\section{Extended quotients and L-parameters}
\label{sec:extquot}

The ABPS-conjecture from \cite[\S 15]{ABPS2} and \cite[Conjecture 2]{ABPS7}
refines \eqref{eq:8.6}. In its roughest form it asserts that it can be 
lifted to a bijection
\begin{equation}\label{eq:8.9}
\Irr (\cH)^\fs \; \longleftrightarrow \; ( [\cL,\sigma]_\cL \q W_\fs )_\natural , 
\end{equation}
for a suitable family of 2-cocycles $\natural$. Equivalently, this can be
formulated as a bijection
\begin{equation}\label{eq:9.2}
\Irr (\cH) \; \longleftrightarrow \; \bigsqcup\nolimits_{\cL \in \mf{Lev}(\cH)} 
\big( \Irr_\cusp (\cL) \q W (\cH ,\cL) \big)_\natural . 
\end{equation}
The main goal of this section is to prove an analogue of \eqref{eq:8.9} and
\eqref{eq:9.2} for enhanced Langlands parameters, which refines \eqref{eq:C3}.

Fix a $\cH$-relevant cuspidal datum $({}^L L, \phi_v, q \epsilon)$ for ${}^L \cH$,
and write, in addition to the notations \eqref{eq:9.10} and \eqref{eq:9.11},
\begin{equation}\label{eq:9.6}
q \ft = [G \cap L_c,v,q\epsilon]_G ,\quad
\ft^\circ = [G^\circ \cap L_c, \cC_v^{G^\circ \cap L_c},\cE]_{G^\circ} .
\end{equation}
The next result is a version of the generalized Springer correspondence 
with enhanced L-parameters.

\begin{prop}\label{prop:10.1}  
\enuma{
\item There is a bijection
\[
\begin{array}{cccc}
{}^L \Sigma_{q \ft} : & {}^L \Psi^{-1} ({}^L L, \phi_v, q \epsilon) & 
\longleftrightarrow & \Irr (\C [W_{q \ft}, \kappa_{q \ft}]) \\
 & (\phi,\rho) & \mapsto & q \Sigma_{q \ft}(u_\phi,\rho) \\
 & (\phi |_{\mb W_F},q \Sigma_{q \ft}^{-1}(\tau) ) & \reflectbox{$\mapsto$} & \tau 
\end{array}
\]
It is canonical up to the choice of an isomorphism as in Lemma \ref{lem:6.2}.
\item Recall that Theorems \ref{thmlus}.(3) and \ref{thm:2.2}.c give a canonical 
bijection $\Sigma_{\ft^\circ}$ between $\Irr_\C (W_{\ft^\circ}) = 
\Irr (\End_{G^\circ} (\pi_* \tilde{\cE}))$ and $\Psi_{G^\circ}^{-1}(\ft^\circ) 
\subset \cN_{G^\circ}^+$. It relates to part (a) by
\[
{}^L \Sigma_{q \ft} (\phi,\rho) |_{W_{\ft^\circ}} = 
\bigoplus\nolimits_i \Sigma_{\ft^\circ} (u_\phi,\rho_i) ,
\]
where $\rho = \bigoplus_i \rho_i$ is a decomposition into irreducible
$A_{G^\circ}(u_\phi)$-subrepresentations.
\item The $\cH^\vee$-conjugacy class of $(\phi|_{\mb W_F},u_\phi,\rho_i)$
is determined by any irreducible $\C [W_{\ft^\circ}]$-subrepresentation of
${}^L \Sigma_{q \ft}(\phi,\rho)$.
} 
\end{prop}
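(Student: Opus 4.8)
\textbf{Proof proposal for Proposition \ref{prop:10.1}.}
The plan is to build the bijection in part (a) by composing three ingredients that are already available. First I would recall that, by Proposition \ref{prop:7.2}.a together with Definition \ref{def:7.8}, the set ${}^L\Psi^{-1}({}^L L,\phi_v,q\epsilon)$ consists exactly of the $\cH^\vee$-conjugacy classes of enhanced L-parameters $(\phi,\rho)$ with $\phi|_{\mb W_F}$ conjugate to $\phi_v|_{\mb W_F}$ (so that $G = Z^1_{\cH^\vee_\sc}(\phi|_{\mb W_F})$ is fixed up to conjugacy) and $q\Psi_G(u_\phi,\rho) = q\ft = [G\cap L_c,v,q\epsilon]_G$. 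By \eqref{eq:7.2} the enhancement $\rho$ is a representation of $\pi_0(Z_G(u_\phi)) = A_G(u_\phi)$, and an enhanced L-parameter with $\phi|_{\mb W_F}$ fixed is the same datum as a pair $(u_\phi,\rho)\in\cN_G^+$. Hence ${}^L\Psi^{-1}({}^L L,\phi_v,q\epsilon)$ is in canonical bijection with $q\Psi_G^{-1}(q\ft)$. Then Theorem \ref{thm:6.3}.a (combined with Lemma \ref{lem:6.2} to pass to the twisted group algebra, i.e. Theorem \ref{thm:6.3}.c) gives the canonical bijection $q\Sigma_{q\ft}\colon q\Psi_G^{-1}(q\ft)\to\Irr(\End_G(\pi_*(\widetilde{q\cE})))\cong\Irr(\C[W_{q\ft},\kappa_{q\ft}])$, canonical up to the choice in Lemma \ref{lem:6.2}. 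Composing the two identifications yields ${}^L\Sigma_{q\ft}$ as stated, and the explicit formulas for the map and its inverse in terms of $q\Sigma_{q\ft}$ follow immediately from this composition.

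For part (b), I would invoke Theorem \ref{thm:6.3}.b directly: it says that the restriction of the local system $\cF = q\Sigma_{q\ft}^{-1}(\tau)$ to a $G^\circ$-equivariant local system on $\cC_u^{G^\circ}$ is $\bigoplus_i\Sigma_{\ft^\circ}^{-1}(\tau_i)$, where $\tau = \bigoplus_i\tau_i$ decomposes into irreducible $\End_{G^\circ}(\pi_*\tilde{\cE}) \cong \C[W_{\ft^\circ}]$-subrepresentations. Translating through the identification $\cF_{u} = \rho$ and the fact that restriction of the equivariant local system corresponds to restriction of the $A_G(u_\phi)$-representation to $A_{G^\circ}(u_\phi)$, the decomposition $\rho|_{A_{G^\circ}(u_\phi)} = \bigoplus_i\rho_i$ matches the decomposition $\tau|_{W_{\ft^\circ}} = \bigoplus_i\tau_i$ pair by pair (up to reindexing), with $\Sigma_{\ft^\circ}(u_\phi,\rho_i) = \tau_i$. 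Since $\tau = {}^L\Sigma_{q\ft}(\phi,\rho)$, this is exactly the asserted formula ${}^L\Sigma_{q\ft}(\phi,\rho)|_{W_{\ft^\circ}} = \bigoplus_i\Sigma_{\ft^\circ}(u_\phi,\rho_i)$. One must check that the embedding $\C[W_{\ft^\circ}]\hookrightarrow\C[W_{q\ft},\kappa_{q\ft}]$ used here is the canonical one from Lemma \ref{lem:6.1}.b / Lemma \ref{lem:6.2}, which is where the compatibility of the two sides is genuinely being used.

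For part (c), the point is that $\Sigma_{\ft^\circ}$ is a \emph{bijection} (Theorems \ref{thmlus}.(3) and \ref{thm:2.2}.c), so from any irreducible constituent $\tau_i = \Sigma_{\ft^\circ}(u_\phi,\rho_i)$ of ${}^L\Sigma_{q\ft}(\phi,\rho)|_{W_{\ft^\circ}}$ one recovers the pair $(u_\phi,\rho_i)\in\cN_{G^\circ}^+$, hence $\cC_u^{G^\circ}$ and the $A_{G^\circ}(u_\phi)$-representation $\rho_i$, up to $G^\circ$-conjugacy. It then remains to recover the $\cH^\vee$-conjugacy class of the triple $(\phi|_{\mb W_F},u_\phi,\rho_i)$. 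Here $\phi|_{\mb W_F}$ is fixed (it is part of the datum ${}^L L,\phi_v$), and once $\phi|_{\mb W_F}$ is fixed, $G = Z^1_{\cH^\vee_\sc}(\phi|_{\mb W_F})$ is determined; a $G$-conjugacy (in particular a $G^\circ$-conjugacy) of $(u_\phi,\rho_i)$ is precisely a $Z_{\cH^\vee}(\phi|_{\mb W_F})$-conjugacy fixing $\phi|_{\mb W_F}$, which by \eqref{eq:7.1}--\eqref{eq:7.2} corresponds to a $\cH^\vee$-conjugacy of the triple. The main obstacle, and the step I would treat most carefully, is part (b): making sure that the several identifications — between enhanced L-parameters and pairs in $\cN_G^+$, between restriction of $G$-equivariant local systems and restriction of component-group representations, and between the algebra embedding $\C[W_{\ft^\circ}]\hookrightarrow\C[W_{q\ft},\kappa_{q\ft}]$ and the inclusion $\End_{G^\circ}(\pi_*\tilde{\cE})\hookrightarrow\End_G(\pi_*(\widetilde{q\cE}))$ — are all compatible, so that Theorem \ref{thm:6.3}.b can be quoted verbatim. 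Everything else is bookkeeping with results already established in Sections \ref{sec:quasiLevi} and \ref{sec:genSpr}.
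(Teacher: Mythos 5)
Your proposal follows essentially the same route as the paper: part (a) is Theorem \ref{thm:6.3} (with $\C$-coefficients) applied to $G = Z^1_{\cH^\vee_\sc}(\phi_v|_{\mb W_F})$ and $q\ft$, part (b) is a direct citation of Theorem \ref{thm:6.3}.b (with the compatibility of the embedding $\C[W_{\ft^\circ}] \hookrightarrow \C[W_{q\ft},\kappa_{q\ft}]$ guaranteed by Lemma \ref{lem:6.2}, exactly as you note), and part (c) is the same passage from $G^\circ$-conjugacy of $(u_\phi,\rho_i)$ to $\cH^\vee$-conjugacy of the triple.

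There is, however, one step you gloss over in part (a) which the paper treats as the only point requiring proof beyond quoting Theorem \ref{thm:6.3}: the claim that ${}^L\Psi^{-1}({}^L L,\phi_v,q\epsilon)$ is in bijection with \emph{all} of $q\Psi_G^{-1}(q\ft)$. Since ${}^L\Psi$ is only defined on $\Phi_e(\cH)$, its fibre consists of $\cH$-relevant enhanced L-parameters, so for surjectivity onto $\Irr(\C[W_{q\ft},\kappa_{q\ft}])$ you must check that every $(u_\phi,\rho) \in q\Psi_G^{-1}(q\ft)$, viewed via \eqref{eq:7.2} as the triple $(\phi_v|_{\mb W_F},u_\phi,\rho)$, is $\cH$-relevant in the sense of Definition \ref{def:7.9}; without this, your "canonical bijection with $q\Psi_G^{-1}(q\ft)$" is only an injection. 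The fix is short but uses a specific ingredient: by \eqref{eq:6.14} the quasi-cuspidal support map preserves $Z(G)$-characters, so $\rho$ and $q\epsilon$ determine the same character of $Z(\cH^\vee_\sc)^{\mb W_F}$, and the assumed $\cH$-relevance of the cuspidal datum $({}^L L,\phi_v,q\epsilon)$ then forces $\rho$, hence $(\phi,\rho)$, to be $\cH$-relevant. Adding this observation makes your argument match the paper's proof; the remaining points, including the Clifford-theoretic remark that all constituents $\rho_i$ (resp.\ $\tau_i$) are conjugate so that "any" constituent in part (c) yields the same class, are the bookkeeping you already indicate.
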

\begin{proof}
(a) By Theorem \ref{thm:6.3} (with $\C$-coefficients) every $(\phi,\rho) \in
{}^L \Psi^{-1} ({}^L L, \phi_v, q \epsilon)$ determines a unique irreducible
representation $q \Sigma_{q \ft}(u_\phi,\rho)$ of $\C [W_{q \ft}, \kappa_{q \ft}]$.
Conversely, every $\tau \in \Irr (\C [W_{q \ft}, \kappa_{q \ft}])$ gives rise to
a unique $q \Sigma_{q \ft}^{-1}(\tau) = (u_\phi,\rho) \in \cN_G^+$, and that
determines an enhanced L-parameter $(\phi|_{\mb W_F},u_\phi,\rho)$ for ${}^L \cH$.
It remains to see that $(\phi|_{\mb W_F},u_\phi,\rho)$ is $\cH$-relevant.
By \eqref{eq:6.14} $\rho$ has the same $Z(\cH^\vee_{\sc})^{\mb W_F}$-character
as $q \epsilon$. By the assumed $\cH$-relevance of $q \epsilon ,\; \rho$ is
$\cH$-relevant. By Definition \ref{def:7.9}, $(\phi|_{\mb W_F},u_\phi,\rho)$
is also $\cH$-relevant.\\
(b) This is a direct consequence of Theorem \ref{thm:6.3}.b.\\
(c) By the irreducibility of $\rho$, all the $\rho_i$ are $Z_G (\phi)$-conjugate.
Similarly the irreducibility of the $\C [W_{q \ft}, \kappa_{q \ft}]$-representation
$\tau = {}^L \Sigma_{q \ft}(\phi,\rho)$ implies (with Theorem \ref{thm:1.2}) that
all the irreducible $\C [W_{\ft^\circ}]$-constituents $\tau_i$ of $\tau$ are
$W_{q \ft}$-conjugate. By part (b) $\tau_i$ determines a pair $(u_\phi,\rho_i)$
up to $G^\circ$-conjugacy. Hence it determines $(\phi|_{\mb W_F},u_\phi,\rho_i)$
up to $\cH^\vee$-conjugacy.
\end{proof}

We will promote Proposition \ref{prop:10.1} to a statement involving extended
quotients. By Lemma \ref{lem:9.1} $W_{\fs^\vee,\phi_v,q \epsilon} = W_{q \ft}$, so we 
can regard $\kappa_{q \ft}$ as a 2-cocycle $\kappa_{\phi_v,q \epsilon}$ of 
$W_{\fs^\vee,\phi_v,q \epsilon}$. Then we can build 
\begin{multline*}
\widetilde{\fs^\vee_L} = \big( [{}^L L, \phi_v, q \epsilon]_{{}^L L} \big)^\sim_\kappa \\
= \big\{ \big( ({}^L L, z \phi_v, q \epsilon), \rho \big) : z \in X_\nr ({}^L L),
\rho \in \Irr (\C [W_{\fs^\vee,\phi_v,q \epsilon}, \kappa_{\phi_v,q \epsilon}]) \big\} .
\end{multline*}
Comparing with \eqref{eq:extquot}, we see that we still need an action on
$W_{\fs^\vee}$ on this set. 

\begin{lem}\label{lem:10.2}
Let $w \in W_{\fs^\vee}$ and $z \in X_\nr ({}^L L)$ with $w (\phi_v,q \epsilon) \cong
(z \phi, q \epsilon)$. There exists a family of algebra isomorphisms (for various
such $w,z$)
\[
\psi_{w,\phi_v,q \epsilon} : \C [W_{\fs^\vee,\phi_v,q \epsilon}, \kappa_{\phi_v,q \epsilon}]
\to \C [W_{\fs^\vee,z \phi_v,q \epsilon}, \kappa_{z \phi_v,q \epsilon}]
\]
such that:
\enuma{
\item The family is canonical up to the choice of isomorphisms
$\C [W_{\fs^\vee,\phi_v,q \epsilon}, \kappa_{\phi_v,q \epsilon}] \cong
\End_G (\pi_* (\widetilde{q \cE}))$ as in Lemma \ref{lem:6.2}.
\item $\psi_{w,\phi_v,q \epsilon}$ is conjugation with $T_w$ if 
$w \in W_{\fs^\vee,\phi_v,q \epsilon}$.
\item $\psi_{w',z \phi_v,q \epsilon} \circ \psi_{w,\phi_v,q \epsilon} =
\psi_{w' w,\phi_v,q \epsilon}$ for all $w' \in W_{\fs^\vee}$.
\item ${}^L \Sigma_{q \ft}^{-1}(\rho) \cong {}^L \Sigma_{w (q \ft)}^{-1}
(\rho \circ \psi_{w,\phi_v,q \epsilon}^{-1} )$ for all $\rho \in
\Irr (\C [W_{\fs^\vee,\phi_v,q \epsilon}, \kappa_{\phi_v,q \epsilon}])$.
}
\end{lem}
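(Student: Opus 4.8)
The plan is to build the isomorphisms $\psi_{w,\phi_v,q\epsilon}$ by transporting everything through the endomorphism algebras $\End_G(\pi_*(\widetilde{q\cE}))$, where the geometry provides the natural candidate for a connecting homomorphism. Concretely, fix $w\in W_{\fs^\vee}$ and $z\in X_\nr({}^L L)$ with $w(\phi_v,q\epsilon)\cong(z\phi_v,q\epsilon)$, and pick $n\in N_{\cH^\vee}({}^L L)$ lifting $w$ together with an isomorphism realizing $w(\phi_v,q\epsilon)\cong(z\phi_v,q\epsilon)$ as enhanced L-parameters for ${}^L L$; this is exactly the datum identified in Lemma \ref{lem:9.1}, which gives $W_{q\ft}\cong W_{\fs^\vee,\phi_v,q\epsilon}$ and $W_{w(q\ft)}\cong W_{\fs^\vee,z\phi_v,q\epsilon}$. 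Conjugation by $n$ carries the group $G=Z^1_{\cH^\vee_\sc}(\phi_v|_{\mb W_F})$ to $Z^1_{\cH^\vee_\sc}(z\phi_v|_{\mb W_F})$ — note $z$ is central in $\cL^\vee$ so these $G$'s only differ by the inner twist built from $z$ — and carries the quasi-cuspidal support $q\ft$ to $w(q\ft)$; pulling back the equivariant local system $\pi_*(\widetilde{q\cE})$ along $\mathrm{Ad}(n)^{-1}$ (as in Section \ref{sec:genSpr}, Theorem \ref{thm:3.2}) yields an algebra isomorphism $\End_G(\pi_*(\widetilde{q\cE}))\to\End_{n(G)}(\pi_*(\widetilde{q\cE_{w(q\ft)}}))$. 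Composing with the isomorphisms of Lemma \ref{lem:6.2} on both ends defines $\psi_{w,\phi_v,q\epsilon}$.

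Once the map is in place, properties (a)--(d) follow by bookkeeping. For (a), the only non-canonical input is the pair of isomorphisms from Lemma \ref{lem:6.2}, since the pullback along $\mathrm{Ad}(n)$ is canonical once $n$ is chosen; and changing $n$ within the coset $L\cdot n$ changes the pullback by an inner automorphism, because $L$ acts on $\pi_*(\widetilde{q\cE})$ via inner automorphisms of the endomorphism algebra (this is where one invokes that $L$ fixes $X_\nr({}^L L)$ pointwise and that the $b_w$ for $w\in W_{\ft^\circ}$ in the proof of Lemma \ref{lem:6.2} are the canonical generators). For (b), if $w\in W_{\fs^\vee,\phi_v,q\epsilon}=W_{q\ft}$ then $n$ normalizes $G$ and stabilizes $q\ft$, so $\mathrm{Ad}(n)$ acts on $\End_G(\pi_*(\widetilde{q\cE}))$, and by the construction of the basis elements $q b_w$ in Lemma \ref{lem:6.2} this action is conjugation by $T_w$; this is the analogue of the ``$\phi_{\gamma,x}$ is inner if $\gamma x=x$'' condition in \eqref{eq:can}. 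For (c), the cocycle condition $\psi_{w',z\phi_v,q\epsilon}\circ\psi_{w,\phi_v,q\epsilon}=\psi_{w'w,\phi_v,q\epsilon}$ reduces to $\mathrm{Ad}(n')\circ\mathrm{Ad}(n)=\mathrm{Ad}(n'n)$ together with functoriality of pullback, modulo the inner ambiguities already accounted for in (a); here one must choose the lifts $n$ compatibly, or argue that any two choices differ by an inner automorphism and hence give the same connecting homomorphism in the sense of \eqref{eq:can}. For (d), the compatibility ${}^L\Sigma_{q\ft}^{-1}(\rho)\cong{}^L\Sigma_{w(q\ft)}^{-1}(\rho\circ\psi_{w,\phi_v,q\epsilon}^{-1})$ follows from the $H$-equivariance of the generalized Springer correspondence for disconnected groups — precisely Theorem \ref{thm:6.3}(a), which characterizes $q\Sigma_{q\ft}$ by a condition on intersection cohomology complexes that is manifestly preserved under pullback along $\mathrm{Ad}(n)$, combined with Proposition \ref{prop:10.1}(a) relating ${}^L\Sigma_{q\ft}$ to $q\Sigma_{q\ft}$.

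I expect the main obstacle to be property (c), the cocycle/transitivity condition, and more precisely the interplay between the choice of lifts $n\in N_{\cH^\vee}({}^L L)$ and the $z$-twisting. The subtlety is that $w(\phi_v,q\epsilon)$ equals $(z\phi_v,q\epsilon)$ only after applying some isomorphism, and $z$ is determined by $w$ only up to the image of $\Fr-1$ on $Z(\cL^\vee)^{\mb I_F}$; when composing $\psi_{w',z\phi_v,q\epsilon}\circ\psi_{w,\phi_v,q\epsilon}$ one needs the product $z'z''$ attached to $w'w$ to match, up to the same ambiguity, the separately chosen elements. The clean way to handle this is to first show that $\psi_{w,\phi_v,q\epsilon}$ is \emph{independent} of the auxiliary choices of $n$ and of the intertwiner realizing $w(\phi_v,q\epsilon)\cong(z\phi_v,q\epsilon)$ — using Schur's lemma and the one-dimensionality of the graded pieces $q\mc A_w$ from Lemma \ref{lem:6.2} — so that $\psi$ depends only on $w$ (given the Lemma \ref{lem:6.2} normalizations); then (c) becomes the bare statement $\mathrm{Ad}(n'n)=\mathrm{Ad}(n')\circ\mathrm{Ad}(n)$ plus functoriality. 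A secondary technical point worth flagging is that $G=Z^1_{\cH^\vee_\sc}(\phi_v|_{\mb W_F})$ depends on $z$, so ``pullback along $\mathrm{Ad}(n)$'' implicitly uses the canonical identification of $Z^1_{\cH^\vee_\sc}(\phi_v|_{\mb W_F})$ with $Z^1_{\cH^\vee_\sc}(z\phi_v|_{\mb W_F})$ coming from the fact that $z$ is central in $\cL^\vee$; this should be recorded explicitly before the construction, but it introduces no real difficulty.
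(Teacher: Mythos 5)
Your proposal is correct and follows essentially the same route as the paper: the paper likewise picks a lift $n \in N_{\cH^\vee}({}^L L)$ of $w$, uses the $L_{\sc}$-intertwiner $q\cE \to \mathrm{Ad}(n)^* q\cE$ (unique up to scalar by irreducibility, i.e.\ your Schur's lemma point) to obtain a canonical algebra isomorphism $\End_G (\pi_* (\widetilde{q \cE})) \to \End_{n G n^{-1}} (\pi_* (\widetilde{q \cE}))$ by conjugation with the induced equivariant map $q b_w$, and then composes with the Lemma \ref{lem:6.2} identifications on both sides. Exactly as you predict, (b) comes from the normalization of the basis elements $q b_w$ in the proof of Lemma \ref{lem:6.2}, (c) follows automatically from the canonicity (the paper's ``essential uniqueness'' of the intertwiner is your proposed fix for the choice of lifts), and (d) is Theorem \ref{thm:6.3}.a applied to $n \cdot q \Sigma_{q \ft}^{-1}(\tilde\rho)$.
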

\begin{proof}
(a) Recall from Lemma \ref{lem:6.2} that
\begin{equation}\label{eq:9.3}
\C [W_{\fs^\vee,\phi_v,q \epsilon}, \kappa_{\phi_v,q \epsilon}] \cong
\C [W_{q \ft}, \kappa_{q \ft}] \cong \End_G (\pi_* (\widetilde{q \cE})) .
\end{equation}
We fix such isomorphisms. For any $n \in N_{\cH^\vee}({}^L L)$ representing $w$:
\begin{equation}\label{eq:9.4}
\begin{split}
& \C [W_{\fs^\vee,w(\phi_v),w(q \epsilon)}, \kappa_{w(\phi_v),w(q \epsilon)}] \cong 
\C [W_{n(q \ft)}, \kappa_{n (q \ft)}] \cong \End_G (\pi_* (\widetilde{n \cdot q \cE})), \\
& \C [W_{\fs^\vee, z \phi_v,q \epsilon}, \kappa_{z \phi_v ,q \epsilon}] \cong
\End_{n G n^{-1}} (\pi_* (\widetilde{q \cE})) \cong 
\End_G (\pi_* (\widetilde{\mathrm{Ad}(n)^* q \cE})) ,
\end{split}
\end{equation}
where $\pi_* (\widetilde{q \cE})$ now denotes a sheaf on $n Y n^{-1}$.
By assumption there exists a $L_{\sc}$-intertwining map 
\begin{equation}\label{eq:9.5}
q \cE \to \mathrm{Ad}(n)^* q \cE , 
\end{equation}
and by the irreducibility of $q \cE$ it is unique up to scalars. In the same way as in
\cite[\S 3]{Lus1} and in the proof of Lemma \ref{lem:6.2}, it gives rise to an 
isomorphism of $G$-equivariant local systems
\[
q b_w : \pi_* (\widetilde{q \cE}) \to \pi_* (\widetilde{\mathrm{Ad}(n)^* q \cE}) .
\]
In view of \eqref{eq:9.4} and the essential uniqueness of \eqref{eq:9.5}, conjugation
by $q b_w$ gives a canonical algebra isomorphism
\[
\tilde \psi_{w,\phi_v,q \epsilon} : \End_{G} (\pi_* (\widetilde{q \cE})) \to
\End_{n G n^{-1}} (\pi_* (\widetilde{q \cE})) .
\]
We define $\psi_{w,\phi_v,q \epsilon}$ as the composition of \eqref{eq:9.3},
$\tilde{\psi}_{w,\phi_v,q \epsilon}$ and \eqref{eq:9.4}. \\
(b) For $w \in W_{\fs^\vee,\phi_v,q \epsilon}$ we thus obtain conjugation by the image 
of $q b_w$ which by construction (see the proof of Lemma \ref{lem:6.2}) is $T_w$. \\
(c) The canonicity ensures that 
\[
\tilde{\psi}_{w',z \phi_v,q \epsilon} \circ \tilde{\psi}_{w,\phi_v,q \epsilon} =
\tilde{\psi}_{w' w,\phi_v,q \epsilon} 
\]
which automatically leads to (c).\\
(d) By Theorem \ref{thm:6.3}.a 
\[
n \cdot q \Sigma_{q \ft}^{-1}(\tilde \rho) \cong q \Sigma_{w (q \ft)}^{-1}
(\tilde \rho \circ \tilde{\psi}_{w,\phi_v,q \epsilon}^{-1} ) \quad \text{for} 
\quad \tilde \rho \in \Irr \big( \End_G (\pi_* (\widetilde{q \cE})) \big).
\]
Since ${}^L \Sigma_{q \ft}$ was defined using \eqref{eq:9.3}, we obtain property (d).
\end{proof}
 
We could have characterized $\psi_{w,\phi_v,q \epsilon}^{-1}$ also with property
(d) of Lemma \ref{lem:10.2} only, that would suffice for our purposes. 
However, then one would not see so readily that the map is exactly as canonical as 
our earlier constructions.

\begin{thm}\label{thm:10.3}
\enuma{
\item Let $\fs^\vee_L = [{}^L L,\phi_v, q  \epsilon]_{{}^L L}$ be an $\cH$-relevant 
inertial equivalence class for the Levi L-subgroup ${}^L L$ of ${}^L \cH$ and recall
the notations \eqref{eq:9.6}. The maps ${}^L \Sigma_{q \ft}$ from Proposition 
\ref{prop:10.1}.a combine to a bijection
\[
\begin{array}{ccc}
\Phi_e ({}^L \cH )^{\fs^\vee} & \longleftrightarrow & 
\big( \Phi_e ({}^L L)^{\fs^\vee_L} \q W_{\fs^\vee} \big)_\kappa \\
(\phi,\rho) & \mapsto & \big( {}^L \Psi (\phi,\rho), q\Sigma_{q \ft}(u_\phi,\rho) \big) \\
\big( \phi_v |_{\mb W_F}, q \Sigma_{q \ft}^{-1}(\tau) \big) & \reflectbox{$\mapsto$} & 
({}^L L, \phi_v, q \epsilon, \tau)
\end{array}
\]
\item The bijection from part (a) has the following properties:
\begin{itemize}
\item It preserves boundedness of (enhanced) L-parameters.
\item It is canonical up to the choice of isomorphisms as in \eqref{eq:9.3}.
\item The restriction of $\tau$ to $W_{\ft^\circ}$ canonically determines the 
(non-enhanced) L-parameter in ${}^L \Sigma_{q \ft}(\tau)$.
\item Let $z,z' \in X_\nr ({}^L L)$ and let $\Gamma \subset W_{\fs^\vee,z \phi_v,
q \epsilon}$ be a subgroup. Suppose that $\Gamma = \overline{\Gamma} / L \cong 
\overline{\Gamma}_c / L_c$, where 
\[
\overline{\Gamma} \subset N_{\cH^\vee}({}^L L) \cap Z^1_{\cH^\vee}(z' \phi |_{\mb W_F}) 
\quad \text{with preimage} \quad \overline{\Gamma}_c \subset
Z_{\cH^\vee_{\sc}}(z' \phi (\mb W_F))^\circ.
\]
Then the 2-cocycle $\kappa_{\fs^\vee,z \phi_v,q \epsilon}$ is trivial on $\Gamma$.
\end{itemize}
\item Let $\zeta_\cH \in \Irr (Z(\cH^\vee_\sc))$ and $\zeta_\cH^\cL$ be as in
Lemma \ref{lem:7.14}. We write
\[
\Phi_{e,\zeta_\cH} (\cH,\cL) = \{ (\phi,\rho) \in \Phi_{e,\zeta_\cH} (\cH) : 
\mathbf{Sc}(\phi,\rho) \in \Phi_\cusp (\cL) \} .
\]
The bijections from part (a) give a bijection
\[
\Phi_{e,\zeta_\cH} (\cH,\cL) \; \longleftrightarrow \; 
\big( \Phi_{\cusp,\zeta_\cH^\cL} (\cL) \q W(\cH,\cL) \big)_\kappa . 
\]
\item Let $\mf{Lev}(\cH)$ be a set of representatives for the conjugacy classes of Levi
subgroups of $\cH$. The maps from part (c) combine to a bijection
\[
\Phi_{e,\zeta_\cH} (\cH) \; \longleftrightarrow \; \bigsqcup\nolimits_{\cL \in \mf{Lev}(\cH)}
\big( \Phi_{\cusp,\zeta_\cH^\cL} (\cL) \q W(\cH,\cL) \big)_\kappa . 
\]
\item Assume that $Z(\cL^\vee_\sc)$ is fixed by $\mb W_F$ for every Levi subgroup 
$\cL \subset \cH$. (E.g. $\cH$ is an inner twist of a split group.)
Let $\cH^u$ be the inner twist of $\cH$ determined by $u \in H^1(F,\cH_\ad) \cong 
\Irr_\C \big( Z (\cH^\vee_{\sc} )^{\mb W_F} \big)$.
The union of part (d) for all such $u$ is a bijection
\[
\Phi_e ({}^L \cH) \; \longleftrightarrow \; \bigsqcup_{u \in H^1 (F,\cH_{\ad})} 
\bigsqcup_{\cL^u \in \mf{Lev}(\cH^u)} 
\big( \Phi_\cusp (\cL^u) \q W(\cH^u,\cL^u) \big)_\kappa . 
\]
}
\end{thm}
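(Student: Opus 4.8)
The plan is to prove the five parts in the order they are stated, each building on the previous one, with the bulk of the work going into part (a).

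For part (a), I would start from the partition \eqref{eq:8.4} of $\Phi_e({}^L\cH)$ into Bernstein components $\Phi_e({}^L\cH)^{\fs^\vee}$, so that it suffices to construct, for each $\cH$-relevant class $\fs^\vee$, a bijection between $\Phi_e({}^L\cH)^{\fs^\vee}$ and $\big(\Phi_e({}^L L)^{\fs^\vee_L}\q W_{\fs^\vee}\big)_\kappa$. The idea is to fibre both sides over the set of unramified twists: every element of $\Phi_e({}^L L)^{\fs^\vee_L}$ has the form $({}^L L, z\phi_v, q\epsilon, \rho)$ for some $z \in X_\nr({}^L L)$ (up to the action of $N_{\cH^\vee}({}^L L)$), and similarly ${}^L\Psi$ sends an enhanced L-parameter to a cuspidal datum whose underlying cuspidal pair is determined up to unramified twist by $\fs^\vee$. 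Fixing $z$, i.e. fixing the cuspidal datum $({}^L L, z\phi_v, q\epsilon)$, the fibre ${}^L\Psi^{-1}({}^L L, z\phi_v, q\epsilon)$ is in bijection with $\Irr(\C[W_{q\ft},\kappa_{q\ft}])$ by Proposition \ref{prop:10.1}.a, and by Lemma \ref{lem:9.1} we have $W_{q\ft} \cong W_{\fs^\vee, z\phi_v, q\epsilon}$, the isotropy group of the cuspidal datum in $W_{\fs^\vee}$. Thus fibrewise the claimed bijection is exactly the definition of the twisted extended quotient, once we know the connecting homomorphisms. These are furnished by Lemma \ref{lem:10.2}: the maps $\psi_{w,\phi_v,q\epsilon}$ satisfy exactly the two axioms required in \eqref{eq:can} (inner when $w$ fixes the datum, and the cocycle/composition identity), and property (d) of that lemma guarantees that the resulting $W_{\fs^\vee}$-action on $\widetilde{\fs^\vee_L}$ matches the $\cH^\vee$-conjugation action on $\Phi_e({}^L\cH)^{\fs^\vee}$ under the fibrewise maps. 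So part (a) assembles from Proposition \ref{prop:10.1}, Lemma \ref{lem:9.1} and Lemma \ref{lem:10.2} plus the observation that one may replace ``modulo $\cH^\vee$-conjugacy'' by ``modulo $X_\nr({}^L L)$ then modulo $W_{\fs^\vee}$''.

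For part (b): boundedness is preserved because the unramified twist component of ${}^L\Psi$, coming from $\chi_{\phi,v}$ as in Lemma \ref{lem:7.12}, lies in $Z(M)^\circ$ and, crucially, ${}^L\Psi(\phi,\rho)$ differs from $(\phi|_{\mb W_F}, v, q\epsilon)$ only by this twist which is trivial on bounded parameters; more precisely, $\phi$ is bounded iff $\phi(\Fr)$ lies in a compact subgroup, iff the associated $z$ in part (a) is a unitary unramified twist, and the extended-quotient bijection matches unitary twists on both sides. The canonicity statement is inherited from the canonicity clauses of Proposition \ref{prop:10.1}.a and Lemma \ref{lem:10.2}.a. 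The claim about $\tau|_{W_{\ft^\circ}}$ determining the non-enhanced L-parameter is Proposition \ref{prop:10.1}.c. The triviality of the 2-cocycle $\kappa_{\fs^\vee, z\phi_v, q\epsilon}$ on a subgroup $\Gamma$ that lifts into $Z_{\cH^\vee_\sc}(z'\phi(\mb W_F))^\circ$ follows from the construction of $\kappa_{q\ft}$ in Lemma \ref{lem:6.1}: that cocycle is $\kappa_{\rho_M}$, built from intertwiners $I^w$ of $\rho_M$, and when the lift $\overline{\Gamma}_c$ sits inside the connected centralizer the relevant central extension splits over $\Gamma$ (this is the ABPS-type argument of \cite[Proposition 4.3]{ABPS5}, applicable because $\Gamma$ then normalizes a parabolic and acts as a subgroup of a Weyl group coming from a connected group); I would spell this out by reducing to the splitting \eqref{eq:4.5} and Lemma \ref{lem:5.1}.

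For parts (c), (d), (e): part (c) is the specialization of (a) to cuspidal data that are genuinely L-parameters for a Levi subgroup $\cL$ of $\cH$, using Proposition \ref{prop:7.2}.b–c to pass from Levi L-subgroups ${}^L L$ to Levi subgroups $\cL$ and the identification \eqref{eq:8.10} of $W({}^L\cH,{}^L\cL)$ with $W(\cH,\cL)$, together with Lemma \ref{lem:7.14}.c to handle the characters $\zeta_\cH^\cL$; one also needs that $\Phi_\cusp(\cL)$ (and its $\zeta_\cH^\cL$-version) indeed exhausts the $\cH$-relevant cuspidal data up to the $X_\nr$-action, which is again Proposition \ref{prop:7.2}. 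Part (d) is just the disjoint union of part (c) over $\mf{Lev}(\cH)$, combined with \eqref{eq:8.4}: every enhanced L-parameter has a well-defined cuspidal support lying over a unique conjugacy class of Levi subgroups. Part (e) is the disjoint union of part (d) over all inner twists $\cH^u$, using the Kottwitz parametrization \eqref{eq:7.8}: an arbitrary (not necessarily relevant) enhanced L-parameter $(\phi,\rho) \in \Phi_e({}^L\cH)$ has $\zeta_\rho \in \Irr(Z(\cH^\vee_\sc)^{\mb W_F})$ corresponding to exactly one $u \in H^1(F,\cH_\ad)$, hence is $\cH^u$-relevant for a unique $u$, and the hypothesis $Z(\cL^\vee_\sc)^{\mb W_F} = Z(\cL^\vee_\sc)$ makes the $\zeta_\cH^\cL$-bookkeeping vacuous. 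The main obstacle I expect is verifying the compatibility in Lemma \ref{lem:10.2}.d between the abstract $W_{\fs^\vee}$-action on the extended quotient and the geometric $\cH^\vee$-conjugation action — making sure the connecting homomorphisms are exactly the ones induced by conjugation and that no extra scalar ambiguity creeps in — since this is where the ``canonical up to the choice of \eqref{eq:9.3}'' caveat has real content and where one must carefully track the noncanonical choices through Proposition \ref{prop:5.3}, Lemma \ref{lem:6.2} and Lemma \ref{lem:10.2} simultaneously.
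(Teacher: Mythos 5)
Your treatment of parts (a), (c), (d), (e) and of the first three bullets of (b) follows essentially the same route as the paper (fibre over unramified twists, apply Proposition \ref{prop:10.1}.a at each point, identify the isotropy group via Lemma \ref{lem:9.1}, and use Lemma \ref{lem:10.2} for the connecting homomorphisms and the $W_{\fs^\vee}$-equivariance; then glue over Levi classes and inner twists). One small slip there: it is $\mathbf{Sc}$, not ${}^L \Psi$, that differs from $(\phi|_{\mb W_F},v,q\epsilon)$ by the twist $\chi_{\phi,v}(\norm{\cdot}^{1/2})$; boundedness is preserved simply because the bijection leaves $\phi|_{\mb W_F}$ untouched, which is exactly why the theorem is built on ${}^L \Psi$ rather than on $\mathbf{Sc}$.

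The genuine gap is in the last bullet of (b), the triviality of $\kappa_{\fs^\vee,z\phi_v,q\epsilon}$ on $\Gamma$. The tools you invoke (the ABPS-type argument of \cite[Proposition 4.3]{ABPS5}, i.e.\ Proposition \ref{prop:5.7}, together with the splitting \eqref{eq:4.5} and Lemma \ref{lem:5.1}) concern the cocycles $\kappa_\pi$ attached to extending irreducible representations $\pi$ of the Weyl group $W_{\ft^\circ}$; they say nothing about the cocycle in question here, which by Lemma \ref{lem:6.1} is $\kappa_{q\ft}=\kappa_{\rho_M}$, attached to extending the representation $\rho_M$ of the twisted algebra $\C[M_\cE/M^\circ,\natural_\cE]$. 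That cocycle is genuinely nontrivial in general (Examples \ref{ex:A} and \ref{ex:B}), and the mere fact that $\Gamma$ is abstractly a subgroup of a Weyl group does not force its restriction to be trivial. What actually does the work is the hypothesis that $\overline{\Gamma}_c$ lies in the \emph{connected} centralizer $Z_{\cH^\vee_\sc}(z'\phi(\mb W_F))^\circ = G_{z'}^\circ$ at a possibly different twist $z'$: there $\Gamma$ sits inside $W_{z'\ft^\circ}$, where Lusztig's canonical untwisted isomorphism (Theorem \ref{thm:2.2}, used in the proof of Lemma \ref{lem:6.2} to set $qb_w=b_w$ for $w\in W_{\ft^\circ}$) produces a genuine group homomorphism $\gamma\mapsto qb'_\gamma$ into $\Aut_{G_{z'}}(\pi_*(\widetilde{\cE}_{z'}))$. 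One must then transfer these elements from $z'$ to $z$: since the cuspidal local system $q\cE$ on $\cC_v^M$ is unchanged by unramified twists, each $qb'_\gamma$ restricts to an isomorphism $q\cE \to \mathrm{Ad}(n)^* q\cE$, which by the construction of Lemma \ref{lem:10.2} determines an element $qb_\gamma$ of $\End_{G_z}(\pi_*(\widetilde{q\cE}))$; choosing the basis elements $T_\gamma$ of $\C[W_{\fs^\vee,z\phi_v,q\epsilon},\kappa_{z\phi_v,q\epsilon}]$ to be the images of these $qb_\gamma$ under \eqref{eq:9.3} shows that their span is $\C[\Gamma]$, i.e.\ the cocycle is trivial on $\Gamma$. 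Your sketch neither exploits the connectedness at $z'$ in this way nor addresses the transfer from $z'$ to $z$, so as written this bullet is not proved.
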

\begin{proof}
(a) Proposition \ref{prop:10.1}.a gives a bijection
\begin{align}\label{eq:9.7}
& {}^L \Psi^{-1}({}^L L,\phi_v,q \epsilon) \; \longleftrightarrow \; \Irr (\C [W_{\fs^\vee,
\phi_v,q \epsilon}, \kappa_{\phi_v,q \epsilon}]) \; \longleftrightarrow \\
\nonumber \bigsqcup_{W_{\fs^\vee} / W_{\fs^\vee,\phi_v, q \epsilon}} & 
\Irr (\C [W_{\fs^\vee, w(\phi_v),w(q \epsilon)}, \kappa_{w(\phi_v),w(q \epsilon)}]) 
\big/ W_{\fs^\vee}
= \big( W_{\fs^\vee} \cdot ({}^L L,\phi_v,q \epsilon) \q W_{\fs^\vee} \big)_\kappa .
\end{align}
For $z \in X_\nr ({}^L L)$ the pre-images ${}^L \Psi^{-1}({}^L L,\phi_v,q \epsilon)$
and ${}^L \Psi^{-1}({}^L L,z \phi_v,q \epsilon)$ intersect in $\Phi_e ({}^L \cH)$ if
and only if their $L$-conjugacy classes differ by an element of $W_{\fs^\vee}$. Hence
the maps \eqref{eq:9.7} combine to the desired bijection.\\
(b) It preserves boundedness because it does not change $\phi |_{\mb W_F}$. The second
and third properties follow from Proposition \ref{prop:10.1}.c.

Write $G_{z'} = Z^1_{\cH^\vee_{\sc}}(z' \phi |_{\mb W_F})$ and consider $\Gamma$ as a
subgroup of $W_{z' \ft^\circ} = N_{G_{z'}^\circ}(z' \ft^\circ) / L_{\sc}$. 
Let $\pi_* (\widetilde{\cE}_{z'})$ be the $G_{z'}$-equivariant sheaf constructed 
like $\pi_* (\widetilde{\cE})$, but with $G_{z'}$ instead of $G$.
For $\gamma \in \Gamma$ the proof of Lemma \ref{lem:6.2} provides a canonical element 
$qb'_\gamma \in \End_{G_{z'}}(\pi_* (\widetilde{\cE}_{z'}))$, such that
\[
\Gamma \to \Aut_{G_{z'}} (\pi_* (\widetilde{\cE}_{z'})) : \gamma \mapsto qb'_\gamma
\]
is a group homomorphism. Let $n \in G_{z'}^\circ \cap G_z$ be a lift of $\gamma$. 
Then $qb'_\gamma$ restricts to an isomorphism
\begin{equation}\label{eq:9.9}
q\cE = z' q \cE \to \mathrm{Ad}(n)^* (z' q \cE) = \mathrm{Ad}(n)^* (q \cE) .
\end{equation}
As in the proof of Lemma \ref{lem:10.2}, \eqref{eq:9.9} gives rise to an element
$q b_\gamma \in \End_{G_{z}}(\pi_* (\widetilde{\cE}_{z}))$. 
We can choose the basis element
\[
T_\gamma \quad \text{ of } \quad \C [W_{z q \ft}, \kappa_{z q \ft}] = 
\C[W_{\fs^\vee,z \phi_v,q \epsilon}, \kappa_{z \phi_v,q \epsilon}]
\]
to be the image of $qb_\gamma$ under \eqref{eq:9.3}. Then the $\C$-span of
$\{ T_\gamma : \gamma \in \Gamma \}$ is isomorphic to $\C[\Gamma]$, which shows
that $\kappa_{z \phi_v,q \epsilon} \big|_{\Gamma \times \Gamma} = 1$.\\
(c) The union of the instances of part (a) with ${}^L L = {}^L \cL$ 
and $q \epsilon |_{Z(\cL^\vee_\sc)} = \zeta_\cH^\cL$ yields a surjection
\begin{equation}\label{eq:9.8}
\bigsqcup\nolimits_{(\phi_v,q \epsilon) \in \Phi_{\cusp,\zeta_\cH^\cL} (\cL) / 
X_\nr ({}^L \cL)} \big( \Phi_e (\cL)^{\fs^\vee_{\cL}} \q W_{\fs^\vee} 
\big)_\kappa \to \Phi_{e,\zeta_\cH} (\cH,\cL) .
\end{equation}
Two elements $(\phi_v,q \epsilon,\tau)$ and $(\phi'_v,q \epsilon',\tau')$ on the
left hand side can only have the same image in $\Phi_{e,\zeta_\cH} (\cH,\cL)$ if they 
have the same cuspidal support modulo unramified twists, for the map in Proposition 
\ref{prop:10.1}.a preserves that. By \eqref{eq:9.1} the inertial equivalence classes 
of $(\phi_v,\tau)$ and $(\phi'_v,\tau')$ differ only by an element of
$W({}^L \cH,{}^L \cL) \cong W(\cH,\cL)$. We already know that the restriction of
\eqref{eq:9.8} to one inertial equivalence class is injective. Hence every fiber
of \eqref{eq:9.8} is in bijection with $W({}^L \cH,{}^L \cL) / W_{\fs^\vee}$ for
some $\fs^\vee$. 

By Lemma \ref{lem:9.1} $\widetilde{\Phi_\cusp (\cL)}$ (with respect to 
$W({}^L \cH,{}^L \cL)$) equals the disjoint union $\bigsqcup_{\fs^\vee_\cL} 
\widetilde{(\fs^\vee_\cL )}_\kappa$. 
In view of part (a), there is a unique way to extend the action of $W_{\fs^\vee}$
on $\widetilde{(\fs^\vee_\cL )}_\kappa$ (for various $\fs^\vee_\cL = 
[{}^L \cL ,\phi'_v,q \epsilon']_{{}^L \cL}$) to an action of $W ({}^L \cH,{}^L \cL)$ 
on $\widetilde{\Phi_\cusp (\cL)}$ such that maps from part (a) become constant on
$W ({}^L \cH,{}^L \cL)$-orbits. Then
\[
\big( \Phi_{\cusp,\zeta_\cH^\cL} (\cL) \q W ({}^L \cH,{}^L \cL) \big)_\kappa = 
\widetilde{\Phi_{\cusp,\zeta_\cH^\cL} (\cL)} / W ({}^L \cH,{}^L \cL)  
\to \Phi_{e,\zeta_\cH} (\cH,\cL)
\]
is the desired bijection.\\
(d) This is a direct consequence of part (c).\\
(e) By \eqref{eq:7.8} and Definition \ref{def:7.9} 
\[
\Phi_e ({}^L \cH) = \bigsqcup\nolimits_{u \in H^1 (F,\cH_{\ad})} \Phi_e (\cH^u) .
\]
By the assumption $\Phi_\cusp (\cL^u) = \Phi_{\cusp,\zeta_u}(\cL^u)$ for every
extension $\zeta_u$ of the Kottwitz parameter of $\cL^u$ to a character of 
$Z(\cL^\vee_\sc)$, for there is nothing to extend to. Now apply part (d).
\end{proof}

The canonicity in part (b) can be expressed as follows. Given $({} \cL,\phi_v,
q\epsilon, \tau^\circ)$ with $\tau^\circ \in \Irr (W_{\ft^\circ})$, the set
\[
\big\{ (\phi_v |_{\mb W_F}, q\Sigma_{q \ft}^{-1}(\tau)) \in \Phi_e ({}^L \cH)^{\fs^\vee} : 
\tau \in \Irr (\C [W_{\fs^\vee,\phi_v ,q\epsilon},\kappa_{\phi_v ,q\epsilon}])
\text{ contains } \tau^\circ \big\}
\]
is canonically determined.

It would be interesting to know when the above 2-cocycles $\kappa$ are
trivial on $W_{\fs^\vee}$. Theorem \ref{thm:10.3}.b shows that this happens quite 
often, in particular whenever $W_{\fs^\vee}$ fixes a point 
$({}^L L,z' \phi_v,q \epsilon) \in \fs^\vee$ and at the same time $W_{\fs^\vee}$ equals the
Weyl group $W(G_{z'}^\circ,L)$, where $G_{z'} = Z^1_{\cH^\vee_{\sc}}(z' \phi |_{\mb W_F})$.

\begin{ex}\label{ex:B}
Yet there are also cases in which $\kappa$ is definitely not trivial. Take 
$\cH = \SL_5 (D)$, where $D$ is a quaternion division algebra over $F$. This is an 
inner form of $\SL_{10}(F)$ and ${}^L \cH = \PGL_{10}(\C) \times \mb W_F$. 

We will rephrase Example \ref{ex:A} with L-parameters. We can ignore the factor 
$\mb W_F$ of ${}^L \cH$, because it acts trivially on $\cH^\vee$.
Let $\overline{\phi} : \mb W_F \to \SL_2 (\C)^5$ be a group homomorphism whose image
is the group $Q$ from Example \ref{ex:A}. It projects to a homomorphism $\phi |_{\mb W_F}:
\mb W_F \to \PGL_{10}(\C)$. Let $u$ and $\epsilon$ be as in the same example.
These data determine an enhanced L-parameter $(\phi,\epsilon)$ for $\cH$.
The group 
\[
G = Z_{\SL_{10}(\C)}(Q) = Z_{\cH^\vee_{\sc}}(\phi (\mb W_F))
\] 
was considered in Example \ref{ex:A}. We checked over there that $W_{\fs^\vee} \cong 
(\Z / 2 \Z)^2$ and that its 2-cocycle $\kappa_{\fs^\vee,\phi,\epsilon}$ is nontrivial. 
We remark that this fits with the non-triviality of the 2-cocycle in 
\cite[Example 5.5]{ABPS4}, which is essentially the same example, but on the $p$-adic
side of the LLC.
\end{ex}

Just like ${}^L \Psi$ in Lemma \ref{lem:8.3}, the maps from Theorem \ref{thm:10.3}
are compatible with the Langlands classification for L-parameters from Theorem 
\ref{thm:8.2}.

\begin{lem}\label{lem:10.4}
Let $(\phi,\rho) \in \Phi_e (\cH)$ and let $(\cL,\phi_t,z,\rho_t)$ be the enhanced
standard triple associated to it by Theorem \ref{thm:8.2}. 
\enuma{
\item Write $q \Psi_G (u_\phi,\rho) = q \ft = [M,\cC_v^M,q\epsilon]_G ,\; G_2 = 
Z^1_{\cL^\vee_{\sc}}(\phi |_{\mb W_F}) ,\; M_2 = Z_{G_2}(Z(M)^\circ)$ and 
$q \ft_\cL = [M_2,\cC_v^{M_2},q\epsilon_2]_{G_t}$ as in the proof of Lemma \ref{lem:8.3}.
Then $W_{q \ft} \cong W_{q \ft_\cL}$.
\item The image of $(\phi,\rho) \in \Phi_e (\cH)$ under Theorem \ref{thm:10.3}.a
equals the image of $(z \phi_t,\rho_t) \in \Phi_e (\cL)$. The latter can be expressed
as $z \cdot ({}^L \Psi^\cL (\phi_t,\rho_t), q \Sigma_{q \ft_\cL}(u_{\phi_t},\rho_t))$.
}
\end{lem}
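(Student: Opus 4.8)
\textbf{Proof plan for Lemma \ref{lem:10.4}.}
The plan is to reduce everything to the already-established compatibility of $q\Psi_G$ and $q\Sigma_{q\ft}$ with passage to quasi-Levi subgroups (Proposition \ref{prop:6.4}) together with Lemma \ref{lem:8.3}, which already does the work of identifying the cuspidal supports. For part (a), recall from the proof of Lemma \ref{lem:8.3} that $G_1 = Z_G(Z(\cL^\vee_c)^\circ)$ is a quasi-Levi subgroup of $G$ with $G_1^\circ = G_2^\circ$, that $q\Psi_{G_1}(u_\phi,\rho) = q\Psi_G(u_\phi,\rho)$, and that the same $Z(M)^\circ$ occurs whether one computes inside $G$, $G_1$ or $G_2$. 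So $M = Z_{G_1}(Z(M)^\circ)$ while $M_2 = Z_{G_2}(Z(M)^\circ)$, and $M_2^\circ = M^\circ$. First I would observe that $W_{q\ft} = N_G(M,\cC_v^M,q\epsilon)/M$ and $W_{q\ft_\cL} = N_{G_2}(M_2,\cC_v^{M_2},q\epsilon_2)/M_2$, and that by \eqref{eq:7.28}/\eqref{eq:7.26} the pair $(M_2, q\epsilon_2)$ is obtained from $(M,q\epsilon)$ by extending across $Z(\cL^\vee_c)^\circ$ and then trivializing on the relevant central subgroup; since $G_2 = G_1 Z(\cL^\vee_c)^\circ$ and the central torus normalizes everything, the normalizer quotients agree. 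Concretely, the natural maps $N_{G_2}(M_2,\cC_v^{M_2},q\epsilon_2) \to N_{G_2}(M_2)/Z(\cL^\vee_c)^\circ$ and the inclusion $G_1 \hookrightarrow G_2$ induce a canonical isomorphism $W_{q\ft} \to W_{q\ft_\cL}$, exactly as in the string of isomorphisms in \eqref{eq:8.13} in the proof of Lemma \ref{lem:9.1}.

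For part (b), the strategy is to chase the definition of the bijection in Theorem \ref{thm:10.3}.a. By construction that bijection sends $(\phi,\rho)$ to $\big({}^L\Psi(\phi,\rho), q\Sigma_{q\ft}(u_\phi,\rho)\big)$, and the whole content of Lemma \ref{lem:8.3} is that ${}^L\Psi^\cH(\phi,\rho) = {}^L\Psi^\cL(z\phi_t,\rho_t) = z\cdot{}^L\Psi^\cL(\phi_t,\rho_t)$. So it remains to check $q\Sigma_{q\ft}(u_\phi,\rho) = q\Sigma_{q\ft_\cL}(u_{\phi_t},\rho_t)$ under the identification $W_{q\ft}\cong W_{q\ft_\cL}$ of part (a), and that the $z$-twist does not disturb this. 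For the first point I would invoke Proposition \ref{prop:6.4}.b twice: once to pass from $G$ to the quasi-Levi $G_1$ (giving $q\Sigma_{q\ft}(u_\phi,\rho) = q\Sigma_{q\ft_{G_1}}(u_\phi,\rho)$ as $\End$-algebra representations under the natural inclusion $\End_{G_1}(\pi_*^{G_1}\widetilde{q\cE}) \hookrightarrow \End_G(\pi_*\widetilde{q\cE})$, which after Lemma \ref{lem:6.2} is an isomorphism because $G_1^\circ = G^\circ$ forces $W_{q\ft_{G_1}} = W_{q\ft}$ — here one uses $G_1 = Z_G(Z(\cL^\vee_c)^\circ)$ and that $Z(M)^\circ \supset Z(\cL^\vee_c)^\circ$), and once more to pass from $G_1$ to $G_2$, where again $G_1^\circ = G_2^\circ$ so the relevant twisted group algebras and their irreducible representations are literally the same. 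For the $z$-twist point, I would use that $z \in Z(\cL^\vee)$ so $Z^1_{\cH^\vee_c}(z\phi_t|_{\mb W_F}) \cap \cL^\vee_c = Z^1_{\cH^\vee_c}(\phi_t|_{\mb W_F}) \cap \cL^\vee_c$ (already noted in the proof of Lemma \ref{lem:8.3}), so the groups $G_2$, $M_2$, $W_{q\ft_\cL}$, the sheaf $\pi_*^{M_2}\widetilde{q\cE_2}$ and the cocycle $\kappa_{q\ft_\cL}$ are unchanged under replacing $\phi_t$ by $z\phi_t$; this is exactly the statement that ${}^L\Sigma_{z q\ft_\cL}(z\phi_t,\rho_t) = z\cdot{}^L\Sigma_{q\ft_\cL}(\phi_t,\rho_t)$ as in the unramified-twist equivariance of Theorem \ref{thm:10.3}.a, giving the last displayed expression $z\cdot({}^L\Psi^\cL(\phi_t,\rho_t), q\Sigma_{q\ft_\cL}(u_{\phi_t},\rho_t))$.

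I expect the main obstacle to be bookkeeping rather than a genuine difficulty: namely verifying carefully that all the identifications of $\End$-algebras coming from Lemma \ref{lem:6.2} and Proposition \ref{prop:6.4} are mutually compatible — i.e. that the isomorphism $W_{q\ft}\cong W_{q\ft_\cL}$ of part (a) is the same one implicitly used when we restrict $\rho$ and $\tau$ along the tower $G \supset G_1$, $G_2 \supset G_1$. The cleanest way around this is to phrase the whole argument at the level of the canonical bijections $q\Sigma$ (which are defined by the intrinsic intersection-cohomology formula of Theorem \ref{thm:6.3}.a, involving no choices), rather than at the level of the twisted group algebras; then compatibility is automatic because $q\Psi_G(u_\phi,\rho)$, and hence the sheaf $\pi_*(\widetilde{q\cE})$ on $\overline Y$ and its $\tau$-isotypic component, literally does not change when we replace $G$ by a quasi-Levi subgroup whose neutral component still contains $Z(M)^\circ$, nor when we twist $\phi_t$ by an unramified $z$ centralizing $\cL^\vee_c$. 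Only at the very end does one translate back to the twisted-group-algebra picture to read off the formula in the statement. A minor additional check is that $q\epsilon_2|_{Z(M_2^\circ)^\circ}$ is trivial so that $(M_2,\cC_v^{M_2},q\epsilon_2)$ really is a cuspidal unipotent quasi-support for $G_2$ and $q\ft_\cL$ is well-defined; this follows from \eqref{eq:7.26}--\eqref{eq:7.28} exactly as in the proof of Proposition \ref{prop:7.2}.b.
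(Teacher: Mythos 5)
Your part (b) strategy (combine Lemma \ref{lem:8.3} with Proposition \ref{prop:6.4}, note that the cocycles match under restriction, and use that $z$ lifts to a central element of $\cL^\vee$ so that nothing on the $\cL$-side changes under the unramified twist) is essentially the paper's argument and is fine \emph{once part (a) is available}. The gap is in part (a), and it propagates into the step of (b) where you claim the restriction of endomorphism algebras is an isomorphism.

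Concretely: you treat $W_{q\ft}\cong W_{q\ft_\cL}$ as bookkeeping about central tori, arguing that since $G_2=G_1 Z(\cL^\vee_c)^\circ$ and the torus normalizes everything, ``the normalizer quotients agree''. But that comparison only relates normalizers taken inside $G_1=Z_G(Z(\cL^\vee_c)^\circ)$ and inside $G_2$, both of which live in (the image of) $\cL^\vee_c$. The group $W_{q\ft}=N_G(M,\cC_v^M,q\epsilon)/M$ is computed with the normalizer in the \emph{full} group $G=Z^1_{\cH^\vee_\sc}(\phi|_{\mb W_F})$, which in general contains elements far outside $\cL^\vee_c$, and nothing in your argument rules out such an element stabilizing the quasi-support. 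Relatedly, your assertion in (b) that ``$G_1^\circ=G^\circ$ forces $W_{q\ft_{G_1}}=W_{q\ft}$'' is wrong on both counts: $G_1^\circ=Z_{G^\circ}(Z(\cL^\vee_c)^\circ)$ is a \emph{proper} Levi subgroup of $G^\circ$ whenever $\cL$ is proper, and even equality of neutral components would not control normalizer quotients. The statement genuinely uses the defining properties of the standard triple, which your proof never invokes: if one drops them the claim fails (take $\phi|_{\mb W_F}$ trivial for $\cH=\GL_2(F)$ and $\cL=T$: then $M$ is a maximal torus of $G=\SL_2(\C)$, $N_G(M)/M\cong S_2$, while the normalizer inside $G\cap \cL^\vee_c=M$ is trivial). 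The paper's proof supplies exactly the missing ingredient: via Lemma \ref{lem:9.1} and \eqref{eq:10.1} both groups are realized inside $W({}^L\cH,{}^L L_\emptyset)$ for the minimal standard Levi $L_\emptyset$; because $\phi_t$ is bounded and $z$ takes values in $\R_{>0}$, every element of $W_{q\ft}$ must fix $\phi_t|_{\mb W_F}$ and $z$ \emph{separately} (a polar-decomposition uniqueness argument); and since $z$ is strictly positive for the standard parabolic, its stabilizer in $W({}^L\cH,{}^L L_\emptyset)$ is generated by the reflections fixing it, namely $W({}^L\cL,{}^L L_\emptyset)$. This forces $W_{q\ft}\subset W({}^L\cL,{}^L L_\emptyset)$ and hence $W_{q\ft}=W_{q\ft_\cL}$. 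Any repair of your proof must incorporate this boundedness/positivity step; the formal quasi-Levi and intersection-cohomology compatibilities you cite cannot by themselves exclude normalizing elements of $G$ outside $\cL^\vee_c$.
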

\begin{proof}
Because all the maps are well-defined on conjugacy classes of enhanced L-parameters,
we may assume that $\phi = z \phi_t$ and $\rho = \rho_t$.\\
(a) Recall from Lemma \ref{lem:9.1} that
\begin{equation}\label{eq:10.1}
\begin{split}
& W_{q \ft} \cong W({}^L \cH,Z_{{}^L \cH}(Z(M)^\circ))_{\phi_v,q \epsilon} , \\
& W_{q \ft_\cL} \cong W({}^L \cL,Z_{{}^L \cH}(Z(M)^\circ))_{\phi_v,q \epsilon_2} . 
\end{split}
\end{equation}
The argument following \eqref{eq:7.29} shows that we may replace $q \epsilon_2$
by $q \epsilon$ here.
Let $L_\emptyset$ be the unique minimal standard Levi subgroup of $\cH^\vee$. Then
\begin{align*}
& W({}^L \cH,Z_{{}^L \cH}(Z(M)^\circ)) \cong N_{W({}^L \cH,{}^L L_\emptyset)} (Z(M)^\circ))
/ W(Z_{{}^L \cH}(Z(M)^\circ), {}^L L_\emptyset) ,\\
& W({}^L \cL,Z_{{}^L \cH}(Z(M)^\circ)) \cong N_{W({}^L \cL,{}^L L_\emptyset)} (Z(M)^\circ))
/ W(Z_{{}^L \cH}(Z(M)^\circ), {}^L L_\emptyset) .
\end{align*}
Recall from Definition \ref{def:8.1} that $\phi_v |_{\mb W_F} \phi |_{\mb W_F} = 
z \phi_t |_{\mb W_F}$, where $z \in X_\nr ({}^L \cL) = (Z (\cL^\vee)^{\mb I_F} )^\circ_\Fr$ 
is strictly positive with respect to the standard parabolic subgroup $\mc P$ of $\cH$ 
having $\cL$ as Levi factor. Hence the isotropy group of $z$ in the Weyl group 
$W({}^L \cH,{}^L L_\emptyset)$ is the group generated by the reflections that fix $z$, 
which is precisely $W({}^L \cL, {}^L L_\emptyset)$. 

Since $\phi_t$ is bounded and $z$ determines an unramified character of $\cL$ with 
values in $\R_{>0}$, every element of $W_{q \ft}$ must fix both $\phi_t |_{\mb W_F}$ 
and $z$. By the above $W_{q \ft} \subset W({}^L \cL, {}^L L_\emptyset)$, and then
\eqref{eq:10.1} shows that $W_{q \ft} = W_{q \ft_\cL}$.\\
(b) From Lemma \ref{lem:8.3} we know that ${}^L \Psi^\cH (\phi,\rho) = {}^L \Psi^\cL 
(\phi,\rho)$. By Proposition \ref{prop:6.4} 
\[
q \Sigma_{q \ft_\cL}(u_\phi,\rho) \quad \text{is a constituent of} \quad
\Res_{\End_{G_2} (\pi_*^{G_2} (\widetilde{q \cE_{G_2}}))}^{
\End_G (\pi_* (\widetilde{q \cE}))} q \Sigma_{q \ft}(u,\eta) .
\]
By Lemma \ref{lem:6.2} 
\[
\End_G (\pi_* (\widetilde{q \cE})) \cong \C [W_{q \ft}, \kappa_{q \ft}] 
\quad \text{and} \quad \End_{G_t} (\pi_*^{G_t} (\widetilde{q \cE_{G_t}})) 
\cong \C [W_{q \ft_\cL}, \kappa_{q \ft_\cL}] .
\]
By Proposition \ref{prop:6.4}.b $\kappa_{q \ft} |_{W_{q \ft_\cL}} = \kappa_{q \ft_\cL}$,
so in view of part (a) these two algebras are equal. Thus
$q \Sigma_{q \ft_\cL}(u_\phi,\rho) = q \Sigma_{q \ft}(u_\phi,\rho)$. Together with
Lemma \ref{lem:8.3} this shows that the image of $(\phi,\rho)$ under Theorem 
\ref{thm:10.3}.a is the same for $\cH$ and for $\cL$.

Since $z$ lifts to a central element of $\cL^\vee$, $q \ft_\cL$ is the same for
$(\phi_t,\rho_t)$ and $(z \phi_t,\rho_t)$. That goes also for 
$q \Sigma_{q \ft_\cL}(u_{\phi_t},\rho_t)$. In combination with Lemma \ref{lem:8.3}
we find that 
\[
({}^L \Psi^\cL (z \phi_t,\rho_t), q \Sigma_{q \ft_\cL}(u_{\phi_t},\rho_t)) =
z \cdot ({}^L \Psi^\cL (\phi_t,\rho_t), q \Sigma_{q \ft_\cL}(u_{\phi_t},\rho_t)) . \qedhere
\]
\end{proof}

\appendix

\section{Erratum (2025)}

In 2025 Simon Riche pointed out to us that Theorem \ref{thm:4.1}.a does not hold 
in the stated generality. 

Namely, $G^\circ$ could be a group $\mr{Spin}_N (\C) \times \mr{Spin}_N (\C)$ 
such that $N$ is both a square and a triangular number. On the first copy of $\mr{Spin}_N
(\C)$ one can take the cuspidal local system $\mc L_+$ supported on the unipotent class
$\mc C_+$ (as in the proof of Theorem \ref{thm:4.1}), the second copy one can take 
cuspidal local system $\mc L_-$ supported on the unipotent class $\mc C_-$. Then
$\mc L = \mc L_+ \boxtimes \mc L_-$ is a cuspidal local system supported on the 
unipotent class $\mc C_+ \times \mc C_-$ in $G^\circ$. 

Let $\tau$ be the automorphism of $G^\circ$ that swaps the two copies of $\mr{Spin}_N (\C)$. 
Then $\tau (\mc C_+ \times \mc C_-) = \mc C_- \times \mc C_+$ is another unipotent class 
in $G^\circ$ that supports a cuspidal local system, namely $\tau^* (\mc L) =
\mc L_- \boxtimes \mc L_+$. The central characters of $\mc L$ and $\tau^* (\mc L)$
differ, but they are in the same Aut$(G^\circ)$-orbit. 

Below we repair this, by settling for a somewhat weaker statement of Theorem \ref{thm:4.1}.a.
This means that in general the unipotent class $\mc C_u^{G^\circ}$ is not stable under
all automorphisms of $G^\circ$, and it need not be equal to $\mc C_u^G$. Although 
an equality of the form $\mc C_u^G = \mc C_u^{G^\circ}$ is used several times in the
paper, that is mainly for convenience, with the below corrections it can be avoided.
Most of the time it suffices to replace $G$ by $G_{\mc C_u^{G^\circ}}$, 
the stabilizer of $\mc C_u^{G^\circ}$.\\[1mm]

\textbf{\large Corrections.}
\begin{itemize}
\item p.15, second line of Theorem \ref{thm:4.1}: add "Let $\Aut^s (G^\circ)$ be the 
subgroup of $\Aut (G^\circ)$ that stabilizes every simple factor of $G^\circ$."
\item p.15, statement of Theorem \ref{thm:4.1}.a: replace "$\Aut (G^\circ)$" by
"$\Aut^s (G^\circ)$"
\item p.15, sixth line of proof of Theorem \ref{thm:4.1}: replace "Aut" by "$\Aut^s$" 
\item p.15, seventh line of proof of Theorem \ref{thm:4.1}: replace "$\bar X$ 
decomposes" by "$\bar X$ and $\Aut^s (\tilde G)$ decompose" 
\item p.16, remove lines -6 and -5.
\item p.16--18, replace every occurrence of "$G / G^\circ$" by 
"$G_{\mc C_u^{G^\circ}} / G^\circ$" (In Example \ref{ex:A} this does not make 
a difference.)
\item p.19, line 19, replace "$Y$" by "$Y^\circ = \mc C_u^{G^\circ} Z(G^\circ)^\circ$"
\item p.19, \eqref{eq:4.13}, replace "$Y \times G$" by "$\widehat{Y^\circ} =
\{ (y,g) \in Y \times G : g^{-1} y g \in Y^\circ \}$"
\item p.19, lines 24--27, replace "$Y \times G$" by "$\widehat{Y^\circ}$"
(three times)
\item p.19, line 28, after "fibration", add "with fibre $G_{\mc C_u^{G^\circ}} / G^\circ$"
\item p.19, line 31, replace "$G / G^\circ$" by 
"$G_{\mc C_u^{G^\circ}} / G^\circ$"
\item p.23, lines 3--7, replace "$N_G (L)/L$" by "$N_G (L)_{\mc C_v^L} / L$" (three times)
\item p.24, line 6, replace "By Theorem \ref{thm:4.1}.a $N_G (L)$ stabilizes 
$\mc C_v^L$" by "Recall from \eqref{eq:4.1} that"
\item p.29, line 1, remove "$= \mc C_v^{M^\circ}$"
\item p.29, line 6, replace "$S = \mc C_v^M Z(M)^\circ$" by 
"$S = \mc C_v^{M^\circ} Z(M)^\circ$" 
\item p.31, line 14, remove "by Theorem \ref{thm:4.1}.a" 
\item p.31, line 15 and 21, replace "$N_G (M^\circ) / M$" by 
"$N_G (M^\circ)_{\mc C_v^{M^\circ}} M / M$"
\end{itemize}

\end{document}